\documentclass[a4paper,12pt]{article}
\usepackage{amsthm,amsfonts,amssymb,euscript}
\usepackage{latexsym, multicol, fancybox}
\usepackage{graphicx}
\usepackage{color}
\usepackage{amsmath, amsthm, amssymb, bm}
\usepackage{epstopdf}
\usepackage{caption}
\usepackage{psfrag}


\DeclareFontFamily{U}{mathx}{\hyphenchar\font45}
\DeclareFontShape{U}{mathx}{m}{n}{
      <5> <6> <7> <8> <9> <10>
      <10.95> <12> <14.4> <17.28> <20.74> <24.88>
      mathx10
      }{}
\DeclareSymbolFont{mathx}{U}{mathx}{m}{n}
\DeclareFontSubstitution{U}{mathx}{m}{n}
\DeclareMathAccent{\widecheck}{0}{mathx}{"71}


\def\ombc{\widecheck{\omb}}


\newtheorem{theorem}{Theorem}[section]
\newtheorem{lemma}[theorem]{Lemma}
\newtheorem{proposition}[theorem]{Proposition}
\newtheorem{corollary}[theorem]{Corollary}
\newtheorem{definition}[theorem]{Definition}
\newtheorem{remark}[theorem]{Remark}


\setlength{\textwidth}{16cm} \setlength{\oddsidemargin}{0cm}
\setlength{\evensidemargin}{0cm}
\numberwithin{equation}{section}

\parindent = 0 pt
\parskip = 12 pt

\newcommand{\bea}{\begin{eqnarray}}
\newcommand{\eea}{\end{eqnarray}}
\def\beaa{\begin{eqnarray*}}
\def\eeaa{\end{eqnarray*}}
\def\ba{\begin{array}}
\def\ea{\end{array}}
\def\be#1{\begin{equation} \label{#1}}
\def \eeq{\end{equation}}
\def\bsplit{\begin{split}}
\newcommand{\nn}{\nonumber}


\def\lab{\label}
\def\les{\lesssim}

\def\c{\cdot}
\def\lot{\mbox{l.o.t.}}
\def\dual{{\,^\star \mkern-2mu}}

\def\tr{\mbox{tr}}
\def\atr{\,^{(a)}\mbox{tr}}

\def\ov{\overline}


\renewcommand{\div}{\mbox{div}\,}
\newcommand{\curl}{\mbox{curl}\,}
\def\nab{\nabla}
\def\lap{\Delta}
\def\pr{\partial}

\def\dkb{ \, \mathfrak{d}     \mkern-9mu /}
\def\dk{\mathfrak{d}}

\def\dddS{\ddd^{\,\S}}

\def\dddSdiez{\ddd^{\,\S, \#}}
\def\ddsSdiez{\big( \ddd^{\,\S, \#}\big)^\star}


\def\dds{ \, d  \hspace{-1.5pt}\dual    \mkern-14mu /\,\,}
\def\ddd{ \,  d \hspace{-2.4pt}    \mkern-6mu /\,}

\def\ddsS{{\ddd^{\,\S,\star}}}

\def\ddsSone{{\ddd_1^{\,\S,\star}}}
\def\ddsStwo{{\ddd_2^{\,\S,\star}}}

\def\curlSn{\curl^{\S(n)}}
\def\divSn{\div^{\S(n)}}

\def\rhod{ \,^\star  \hspace{-2.2pt} \rho}


\def\ovu{\overset{\circ}{ u}}

\def\ovs{\overset{\circ}{ s}}

\def\ovr{\overset{\circ}{ r}}
\def\ovla{\protect\overset{\circ}{ \la}\,}
\def\ovb{\protect\overset{\circ}{b}\,}

\def\ovS{\overset{\circ}{ \S}}

\def\epg{\protect\overset{\circ}{\ep}}  

\def\ug{\overset{\circ}{u}}   
\def\sg{\overset{\circ}{s}}               
\def\rg{\overset{\circ}{r}}   
 
\def\mg{\overset{\circ}{m}}   

\def\ovu{\overset{\circ}{ u}}

\def\ovs{\overset{\circ}{ s}}

\def\ovr{{\overset{\circ}{ r}\,}}
\def\ovm{{\overset{\circ}{ m}\,}}
\def\ovS{\overset{\circ}{ S}}
\def\ovga{\overset{\circ}{\ga}\,}

\def\ovg{\overset{\circ}{ g}}

\def\ovGa{\overset{\circ}{ \Ga}}


   \def\Unn{U^{(n+1)}}
  \def\Un{U^{(n)}}

                 \def\lapSn{\lap^{\S(n)}}

             \def\hb{\underline{h}}


\def\atrchS{{\atrch^\S}}

\def\atrchbS{{\atrch^\S}}


\def\a{\alpha}
\def\b{\beta}
\def\ga{\gamma}
\def\Ga{\Gamma}
\def\de{\delta}

\def\ep{\epsilon}
\def\la{\lambda}
\def\La{\Lambda}
\def\si{\sigma}
\def\Si{\Sigma}
\def\om{\omega}

\def\rhod{\dual \rho}
\def\th{{\theta}}

\def\ka{\kappa}
\def\ze{\zeta}
\def\Up{\Upsilon}

\def\atrch{\atr\chi}
\def\atrchb{\atr\chib}


\def\vphi{{\varphi}}

\def\vsi{\varsigma}


\renewcommand{\aa}{\protect\underline{\a}}
\newcommand{\bb}{\protect\underline{\b}}
\def\omb{\protect\underline{\om}}
\def\Lb{{\underline{L}}}

\def\Omb{\underline{\Omega}}
\def\ub{{\underline{u}} }
\newcommand{\chib}{\protect\underline{\chi}}
\newcommand{\xib}{\protect\underline{\xi}}
\newcommand{\etab}{\protect\underline{\eta}}

\def\kab{\protect\underline{\kappa}}


\def\AA{{\mathcal A}}

\def\CC{{\mathcal C}}
\def\DD{{\mathcal D}}

\def\GG{{\mathcal G}}
\def\HH{{\mathcal H}}
\def\II{{\mathcal I}}

\def\MM{{\mathcal M}}
\def\NN{{\mathcal N}}

\def\QQ{{\mathcal Q}}
\def\RR{{\mathcal R}}
\def\SS{{\mathcal S}}
\def\TT{{\mathcal T}}
\def\UU{{\mathcal U}}
\def\YY{{\mathcal Y}}


\def\D{{\bf D}}

\def\O{{\bf O}}

\def\R{{\bf R}}

\def\S{{\bf S}}

\def\U{{\bf U}}

\def\Z{{\bf Z}}

\def\g{{\bf g}}


\def\Bb{\underline{B}}

\def\fb{\protect\underline{f}}
\def\CCb{\underline{\CC}}
\def\Cb{{\underline{C}}}



\def\NNN{{\Bbb N}}

\def\RRR{{\Bbb R}}
\def\SSS{{\Bbb S}}


\def\hk{\mathfrak{h}}


\def\Gac{\check \Gamma}

\def\ombc{\underline{\check \omega}}

\def\chih{\widehat{\chi}}
\def\chibh{\widehat{\chib}}
\def\trch{\tr \chi}
\def\trchb{\tr \chib}

\def\hot{\widehat{\otimes}}
\def\c{\cdot}
\def \f12{\frac 1 2 }
\def\ov{\overline}
\def\err{\mbox{Err}}

\def\gS{g}

\def\err{\mbox{Err}}

\def\Lab{\protect\underline{\La}}

\def\aka{\,^{(a)} \ka}
\def\akab{\,^{(a)} \kab}

\def\epg{\protect\overset{\circ}{\ep}}  
\def\dg{\overset{\circ}{\de}}

\def\undB{\underline{B}}

  \def\ovJq{J^{(q)}}

\def\Un{U^{(n)}}
\def\Sn{{S^{(n)}}}
\def\fn{f^{(n)}}
\def\fbn{\fb^{(n)}}
\def\fnn{f^{(n+1)}}
\def\fbnn{\fb^{(n+1)}}
\def\ovlan{\ovla^{(n)}}
\def\ovlann{\ovla^{(n+1)}}

\def\curln{\curl^{(n)}}

\def\divn{\div^{(n)}}
\def\lapn{\lap^{(n)}}
\def\Sn{ S^{(n)}}

\def\Snn{ S^{(n+1)}}

\def\gn{g^{\S(n)}}

\def\Sinfty{ S^{(\infty)}}
\def\Uinfty{ U^{(\infty)}}

\def\finfty{f^{(\infty)}}
\def\fbinfty{\fb^{(\infty)}}
\def\ovlainfty{\ovla^\infty}

\def\divzero{{\overset{\circ}{ \div}}}

\def\lapzero{{\overset{\circ}{ \lap}}}
\def\nabzero{{\overset{\circ}{ \nab}}}

\def\jp{j^{(p)}}
\def\Jp{J^{(p)}}
\def\JpS{{J^{(\S, p)}}} 
\def\Cbp{\Cb^{(p)}} 
\def\Mp{M^{(p)}}

\def\CbpS{\Cb^{(\S, p)}} 
\def\MpS{M^{(\S, p)}}

\def\Uinfty{U^{(\infty)}}
\def\Sinfty{S^{(\infty)}}
\def\finfty{f^{(\infty)}}
\def\fbinfty{\fb^{(\infty)}}
\def\ovlainfty{\ovla^{(\infty)}}
\def\einfty{e^{(\infty)}}

\def\curlSinfty{\curl^{\S^{(\infty)}}}
\def\divSinfty{\div^{\S^{(\infty)}}}
\def\kainfty{\ka^{(\infty)}}
\def\kabinfty{\kab^{(\infty)}}
\def\muinfty{\mu^{(\infty)}}
\def\lapSinfty{\lap^{\S(\infty)}}

\def\Minfty{M^{(\infty)}}
\def\Cbinfty{\Cb^{(\infty)}}
\def\Cbpinfty{\Cb^{(\infty),p}}
\def\Mpinfty{M^{(\infty),p}}
\def\Jpinfty{J^{(\infty),p}}

\def\kadot{\dot{\ka}}
\def\kabdot{\dot{\kab}}
\def\mudot{\dot{\mu}}

\def\Cbdot{\dot{\Cb}}
   \def\Mdot{\dot{M}}
   \def\Cbpdot{{\dot{\Cb}^{(p)}}}
   \def\Mpdot{{\dot{M}^{(p)}}}

   \def\Cbnn{\Cb^{(n+1)}}
   \def\Cbpnn{\Cb^{(n+1), p}}
   \def\Mpnn{M^{(n+1), p}}
   \def\Mnn{M^{(n+1)} }
   
     \def\Cbnndot{\Cbdot^{(n+1)}}
   \def\Cbpnndot{\Cbdot^{(n+1), p}}
   \def\Mpnndot{\Mdot^{(n+1), p}}
   \def\Mnndot{\Mdot^{(n+1)} }

   \def\Cbn{\Cb^{(n)}}
   \def\Cbpn{\Cb^{(n), p}}
   \def\Mpn{M^{(n), p}}
   \def\Mn{M^{(n)} }
   
     \def\Cbndot{\Cbdot^{(n)}}
   \def\Cbpndot{\Cbdot^{(n), p}}
   \def\Mpndot{\Mdot^{(n), p}}
   \def\Mndot{\Mdot^{(n)} }
   
   \def\fnndot{(\de f)^{(n+1)}}
    \def\fbnndot{ (\de\fb)^{(n+1)}}
   \def\ovlanndot{ (\de\ovla)^{(n+1)}}
   
    \def\fndot{(\de f)^{(n)}}
    \def\fbndot{ (\de\fb)^{(n)}}
   \def\ovlandot{ (\de\ovla)^{(n)}}

\def\Cbnnde{\, \de\Cbnn}
  \def\Cbpnnde{\, \de\Cbpnn}
 \def\Mnnde{\, \de \mkern-3mu\Mnn}
  \def\Mpnnde{\, \de \mkern-3mu\Mpnn}

\def\ovlanndiez{\ovla^{n+1,\#}}
\def\fnndiez{f^{n+1,\#}}
\def\fbnndiez{\fb^{n+1,\#}}
\def\ovlandiez{\ovla^{n,\#}}
\def\fndiez{f^{n,\#}}
\def\fbndiez{\fb^{n,\#}}
\def\ovlanndiez{\ovla^{n+1,\#}}
\def\fnndiez{f^{n+1,\#}}
\def\fbnndiez{\fb^{n+1,\#}}

\def\curln{\curl^{(n)}}
\def\divn{\div^{(n)}}
\def\lapn{\lap^{(n)}}

\def\curlnmin{\curl^{(n-1)}}
\def\divnmin{\div^{(n-1)}}
\def\lapnmin{\lap^{(n-1)}}

\def\gn{g^{(n)}}
\def\Fndiez{ F^{(n), \#}}

\def\JpSn{J^{\S(n), p}}

\def\curlnmin{\curl^{(n-1)}}
\def\divnmin{\div^{(n-1)}}
\def\lapnmin{\lap^{(n-1)}}

\newcommand{\Mext}{\,{}^{(ext)}\mathcal{M}}

\newcommand{\Mint}{\,{}^{(int)}\mathcal{M}}

\begin{document}

\title{Construction of GCM spheres in perturbations of Kerr}
\author{Sergiu Klainerman, J\'er\'emie Szeftel}

\maketitle

{\bf Abstract.} \textit{This the first in a series of papers   whose  ultimate goal is to establish the full nonlinear stability  of the Kerr family for $|a|\ll m$.    The paper  builds on the strategy  laid out in  \cite{KS} in the context of  the nonlinear stability of Schwarzschild for axially symmetric polarized  perturbations.   In fact  the central idea of    \cite{KS}  was  the  introduction and construction  of  generally covariant modulated  (GCM) hypersurfaces on which specific  geometric quantities take Schwarzschildian values.  This was made possible by taking into account the full  general covariance of the Einstein vacuum equations.  The goal of   this paper  is to  get rid  of   the symmetry  restriction  in the construction of GCM spheres  and thus  remove   an  essential obstruction  in extending the  result of \cite{KS}  to a full stability proof of the   Kerr family. }

 \tableofcontents

    
  \section{Introduction}



\subsection{Stability of Kerr conjecture}


 This the first in a series of papers   whose  ultimate goal is to establish the full nonlinear stability  of the Kerr family for $|a|\ll m$. 
  
     {\bf Conjecture} (Stability of Kerr conjecture).\,\,{\it  Vacuum initial data sets, 
   sufficiently close to Kerr initial data, have a maximal development with complete
   future null infinity\footnote{This means, roughly, that observers  which are  far away  
    from the black hole  may live forever.   } and with   domain of outer communication which
   approaches  (globally)  a nearby Kerr solution.}
   
For an in depth introduction to  the conjecture, see our introduction in \cite{KS}
 as well as the survey article \cite{Daf} and the lecture notes \cite{DafRod}.

   
   \subsection{Stability of Schwarzschild  in the polarized case} 
   

   
   \subsubsection{GCM admissible spacetimes  in \cite{KS}}      
   

   In \cite{KS} we were able to prove the nonlinear stability of  the Schwarzschild space under axially symmetric polarized perturbations.  These are spacetimes possessing\footnote{Condition which, if  imposed on the initial  data, is preserved  by   evolution.}  a   spacelike, axial,   hypersurface orthogonal      Killing vectorfield  $\Z$.

   The  final spacetime  in \cite{KS}  was constructed    as the limit    of a continuous   family  of finite  GCM admissible  spacetimes as   represented  in Figure \ref{fig1-introd} below,
   whose  future boundaries   consist of the union $\AA\cup \CCb_* \cup \CC_* \cup \Si_*$ 
 where $\AA$  and  $ \Si_*$  are spacelike,  $   \CCb_*$ is incoming null, and $\CC_*$ outgoing null.   The boundary $\AA$ is chosen so  that,  in the limit when   $\MM$ converges to the final state, it  is included in  the  perturbed    black hole. The spacetime $\MM$ also contains a timelike hypersurface  $\TT$  which divides  $\MM$ into  an exterior region we call $\Mext$ and an  interior one $\Mint$.       Both $\Mext$ and $\Mint$  are foliated by $2$ surfaces as follows.
\begin{figure}[h!]
\centering
\includegraphics[scale=0.5]{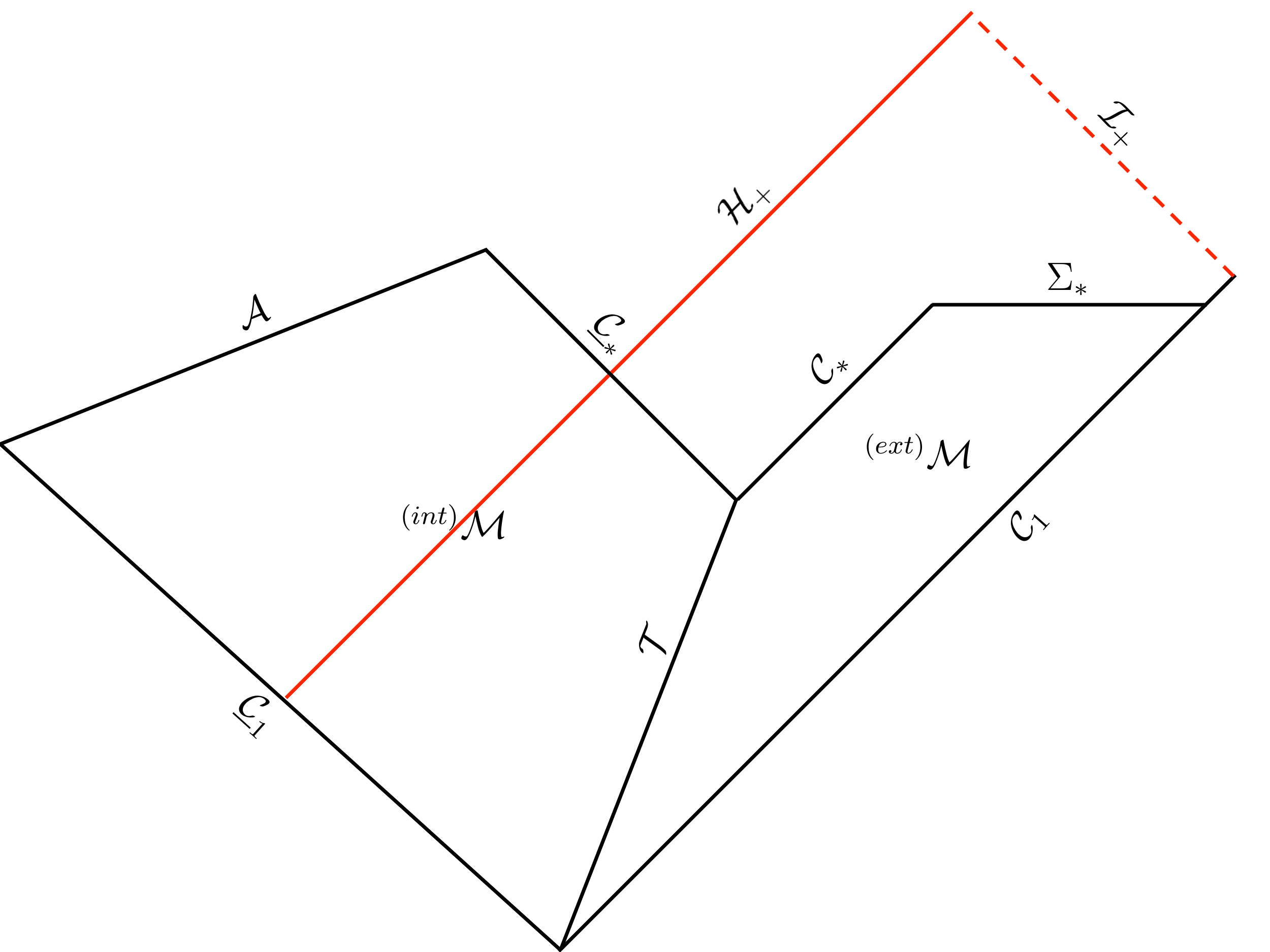}
\caption{The GCM admissible space-time $\mathcal{M}$}
\lab{fig1-introd}
\end{figure} 
 \begin{enumerate}
 \item[(i)]  The  far region  $\Mext  $ is foliated by a geodesic foliation $S(u, s)$        induced by   an outgoing  optical function $u$  initialized on  $\Si_*$ with $s$ the affine parameter along the null geodesic generators of $\Mext$.  We denote by $r=r(u, s)$ the area radius of $S(u,s)$. On the boundary $\Si_*$ of $\Mext  $ we also assume that $r$ is sufficiently large.
  
\item[(ii)]  The  near region $\Mint $ is foliated by a geodesic foliation induced by   an incoming   optical function $\ub$ 
initialized at $\TT$ such that its level sets on $\TT$ coincide with  those of $u$.  
\end{enumerate}

To prove convergence to   the final state we had to establish   precise decay estimates  for  all Ricci   and curvature coefficients    decomposed relative  to the    null geodesic  frames associated  to the foliations in $\Mext$ and $\Mint$. We note  that the estimates for $\Mint$   are  relatively simple once  the estimates in $\Mext $  have been derived;   most difficulties had to do  with this latter region. In fact    the    decay properties of both  Ricci and curvature  coefficients in $\Mext$  depend heavily on   the choice of   the boundary $\Si_*$  as well  as  on   the choice of  the cuts of the optical function   $u$ on it.             As such, the central idea of    \cite{KS}  was  the  introduction and construction  of  generally covariant modulated  (GCM) hypersurfaces on which specific  geometric quantities take Schwarzschildian values.  This was made possible by taking into account 
the full  general covariance of the Einstein vacuum equations.   More precisely,     the GCM spacelike boundary  $\Si_*$ are  foliated by spheres   $S$   on which three key geometric quantities  are set to have  the same values as  in the case of canonical spheres in  Schwarzschild.
 To make sense of this, we recall that  the  Schwarzschild metric  in  outgoing Eddington-Finkelstein coordinates has the form\footnote{Here $u=t-r_*$,    $\frac{dr_*} {dr}= \Up^{-1}$ and $r=s$.  Recall also that  in standard  spherical coordinates, we have   $g_{m} =-\Up  dt^2 +\Up^{-1} dr^2 + r^2  d\si^2$.}
\bea
\g_{m} &=&- 2 du ds -\Up  du^2 + r^2  d\si, \qquad   \Up= 1-\frac{2m}{r},
\eea
where  $r=r(u, s)$ denotes the area radius   of the spheres $S(u, s)$ of constant $u$ and $s$,  and  $d\si$ denotes the  standard metric on $\SSS^2$.   For a      given canonical  sphere $S(u, s)$,  the   expansions\footnote{See section \ref{sec:defnullpairandhorizontalstructurre} and \eqref{def:massaspectfunctions.general} for  the precise definition of these quantities.} 
 $\ka=\trch$ and  $\kab=\trchb$,  and the mass aspect function $\mu$       are given by 
 \bea
 \lab{Introd:GCM spheres}
 \ka=\frac 2 r , \qquad \kab=-\frac{2\Up}{r}, \qquad \mu=\frac{2m}{r^3}.
 \eea
Thus a sphere $S$ on the above mentioned foliation of $\Si_*$ is said to be a GCM sphere  if, relative  to the   canonical frame of $\Mext$,  the conditions\footnote{In reality \eqref{Introd:GCM spheres} had to  be slightly modified on  the $ \ell =0, 1$ modes of $\kab$ and $\mu$, see more explanations below.    } 
  \eqref{Introd:GCM spheres}  are verified.  Note that the three  exact conditions  in \eqref{Introd:GCM spheres} are  matched by the number of degree of  freedoms 
  of gauge transformations  which preserve the polarization condition. Another  way to express this is by noticing that a sphere $S$   in  a given spacetime  
   can be specified by  two  scalar functions  while   a  future  null  pair\footnote{A null pair adapted to $S$ is a pair of null vectors such that $e_3$ and $e_4$ are orthogonal to the tangent space of $S$ and $\g(e_3, e_4)=-2$, see section \ref{sec:defnullpairandhorizontalstructurre}.}  $(e_3, e_4)$   adapted to $S$   is uniquely determined   by one scalar  function\footnote{A null pair adapted to $S$ is uniquely determined up to the transformation $(e_4, e_3)\to (\la e_4, \la^{-1}e_3)$ for any scalar function $\la>0$.}.


  \subsubsection{The role played by GCM admissible spacetimes}    
  
  
  As mentioned  above the  final spacetime was constructed    as the limit    of a continuous   family  of finite  GCM admissible  spacetimes. At every stage one assumes that all Ricci and curvature coefficients of a fixed  GCM admissible spacetime $\MM$   verify precise    bootstrap assumptions.  One   makes use of the GCM admissibility  properties of $\Si_*$ and the smallness of the initial conditions  to show that  all  the   bounds of the  Ricci and curvature coefficients of $\MM$ depend only on the size of the initial data  and thus, in particular, improve the bootstrap assumptions.  This  
  allows us to extend the spacetime to a larger one  $\MM'$ in which  the bootstrap assumptions are still valid.  Note that  the exact  conditions  \eqref {Introd:GCM spheres}  cannot be maintained in the extended spacetime $\MM'$ but we can control the size of  the quantities 
  \bea
  \lab{Introd:GCM spheres-small}
 \ka-\frac 2 r , \qquad \kab+\frac{2\Up}{r}, \qquad \mu- \frac{2m}{r^3},
  \eea 
  defined relative to the   geodesic foliation of $\MM'$, extended from that of $\MM$. 
       To  make sure that the  extended spacetime is admissible, one has to construct  a new  GCM    hypersurface $\widetilde{\Si}_* $ in $\MM'\setminus\MM$  and  use it to  define a  new extended GCM admissible spacetime $\widetilde{\MM}$.  It is at this stage  that we have to prove the existence  of GCM spheres  in  $\MM'\setminus \MM$.  More precisely, using the bounds on  the Ricci and curvature coefficients  on $\MM'$, we have to construct  GCM spheres $S$  in  $\MM'\setminus \MM$ as building blocks for  $\widetilde{\Si}_* $.  This was done   in \cite{KS}  by   a  deformation argument  in which  the polarization assumption  seemed to play  an important role, as it will be explained below.


 \subsection{Construction of  GCM  spheres in perturbations of Kerr}  
 
 
 The goal of   this paper  is to  get rid  of   the polarization  restriction  in the construction of GCM spheres  and thus  remove   an  essential obstruction  in extending the  result of \cite{KS}  to a full stability proof of the   Kerr family.   The construction of GCM spheres and GCM hypersurfaces in  perturbations of Kerr  are meant to  play a  role similar  to that  discussed above,  i.e.  their construction is needed in  spacetime regions\footnote{Corresponding  to $\MM'\setminus\MM$.} $\RR$ where $r$ is sufficiently large and  where  we  already have  complete 
  control of the Ricci and curvature  components, denoted $\Ga$ and $R$,   relative to a prescribed  outgoing geodesic foliation $S(u, s)$ and adapted null frames $(e_1, e_2, e_3, e_4)$  with
  $e_1, e_2$  tangent to  the spheres $S$.    The size of  the quantities in \eqref{Introd:GCM spheres-small} is assumed  to be  controlled by  a small constant\footnote{This depends on the size of the extension mentioned above.}  $\dg>0$ while the size of all other  linearized  Ricci and curvature coefficients  is controlled  by  a second small constant\footnote{Depending on the size of the initial data.} $\epg>0$ with   $\dg\leq \epg$.  We also    control  the coefficients  
    of  the  spacetime metric  in adapted  coordinate charts\footnote{See Lemma \ref{Lemma:geodesic-coordinates} for details.}  $(u, s, y^1, y^2) $.
    
  Given a sphere  $ \ovS=S(\ovu, \ovs)$ of this background foliation of $\RR$, we look for  a   $O(\dg)$  deformation of  it, i.e a map $\Psi:\ovS\longrightarrow \S $   of the form
   \bea\lab{eq:definitionofthedeformationmapPsiinintroduction}
 \Psi(\ovu, \ovs,  y^1, y^2)=\left( \ovu+ U(y^1, y^2 ), \, \ovs+S(y^1, y^2 ), y^1, y^2  \right)
 \eea
  with $(U, S)$ smooth functions on $\ovS$, vanishing at a fixed point of $\ovS$,  of size  proportional to the small constant  $\dg$. The   goal  is then to show that there exist   spheres $\S$, described by the  functions $(U, S) $,  and   adapted   null pairs $(e_3^\S, e_4^\S)$    such that\footnote{It needs recalling that in reality  we  only impose these conditions  for the $\ell\ge 2$ modes of $\kab$ and $\mu$.}$^{,}$\footnote{While \eqref{Introd:GCM spheres-S} corresponds to prescribing the Schwarzschild values, note that such spheres also exists in Kerr for a sufficiently large $r$, see Corollary \ref{cor:ExistenceGCMS1inKerr}.}
\bea
 \lab{Introd:GCM spheres-S}
 \ka^\S=\frac{2}{r^\S}  , \qquad \kab^\S=-\frac{2\Up^\S}{r^\S}, \qquad \mu^\S=\frac{2m^\S}{(r^\S)^3},
 \eea
where $r^\S$ is the  area radius of $\S$,  $m^\S$ is the Hawing mass of $\S$ and  $\Up^\S= 1-\frac{2m^\S}{r^\S}$.  Note that, given such a deformation, at any point on $\S$ we  have two  different null frames: the null frame $( e_3, e_4, e_1, e_2)$ of the background foliation  of $\RR$ and the  null frame  $( e^\S_3, e^\S_4, e^\S_1, e^\S_2)$.  In general, two  null frames   $(e_3, e_4, e_1, e_2)$ and $(e_3', e_4', e_1', e_2') $ are related by  a  frame transformation of the form, see Lemma \ref{Lemma:Generalframetransf}, 
 \bea
 \lab{eq:Generalframetransf-intro}
 \bsplit
  e_4'&=\la\left(e_4 + f^b  e_b +\frac 1 4 |f|^2  e_3\right),\\
  e_a'&= \left(\de_{ab} +\frac{1}{2}\fb_af_b\right) e_b +\frac 1 2  \fb_a  e_4 +\left(\frac 1 2 f_a +\frac{1}{8}|f|^2\fb_a\right)   e_3,\\
 e_3'&=\la^{-1}\left( \left(1+\frac{1}{2}f\c\fb  +\frac{1}{16} |f|^2  |\fb|^2\right) e_3 + \left(\fb^b+\frac 1 4 |\fb|^2f^b\right) e_b  + \frac 1 4 |\fb|^2 e_4 \right),
 \end{split}
 \eea
  where the scalar $\la$    and the 1-forms $f$ and $\fb$      are    called the transition coefficients of the transformation\footnote{The dot product and magnitude  $|\c |$ are taken with respect to the standard euclidian norm of $\RRR^2$.}. One can then  relate all  Ricci and curvature  coefficients  of the primed frame in terms of  the Ricci and curvature coefficients of the  un-primed one, see Proposition \ref{Prop:transformation-formulas-generalcasewithoutassumptions}.    In particular, the GCM  conditions \eqref{Introd:GCM spheres-S}   can be expressed in terms of differential  conditions for    the transition coefficients $(f, \fb, \la) $.    The condition that  the horizontal  part of the frame $(e'_1, e_2')$  is tangent to $\S$  also leads to a relation  between the gradients of $U, S$, defined in \eqref{eq:definitionofthedeformationmapPsiinintroduction},  and $(f, \fb) $.  Roughly we thus expect   to derive a  coupled system of the form
   \bea
 \lab{Compatibility-Deformation2-intro}
 \bsplit
 \pr_{y^a} S&=  \Big( \big(\SS(f, \fb, \Ga)\big)^\#_a \Big), \quad a=1,2,\\
 \pr_{y^a}  U&=\Big(\big(\UU(f, \fb, \Ga)\big)^\#_a\Big),\quad a=1,2,\\
  \DD^\S(f, \fb, \ovla) &=\GG(\Ga)+\HH(f, \fb, \ovla, \Ga),
 \end{split}
   \eea
  where   the terms  $\SS, \UU, \HH, \GG, \DD^\S$ have the following meaning.
   \begin{enumerate}
   \item  The expressions        $\SS(f, \fb, \Ga), \,  \UU(f, \fb, \Ga)$ are  1-forms depending on $f, \fb$ and $\Ga$,  with $\Ga$ denoting the Ricci coefficients  of the background foliation of $\RR$ and   with  $\#$ denoting  the pull back  by the map $\Psi$ defined in \eqref{eq:definitionofthedeformationmapPsiinintroduction}.
   \item    The expression $\HH$ refers to a system of scalar functions  on $\S$   depending on $( f, \fb, \ovla) $  and $\Ga$, where $ \ovla=\la-1$.
   \item The expressions    $(\UU, \SS)$  and   $\HH$ satisfy,  schematically, the following.
 \beaa
 \big|\SS, \UU  \big| \les  \big|(f, \fb) \big| +  \big|(f, \fb) \big|^2, \qquad \big |\HH \big|\les \big(r^{-1}+  \epg \big) \big|(f, \fb, \ovla)  \big| +   \big|(f, \fb, \ovla ) \big| ^2.
  \eeaa
  \item The  expression $\DD^\S$  denotes   a linear    differential  operator  on $\S$.
  \item The term  $ \GG(\Ga)$  denotes a system of scalars  involving the GCM  quantities for the $\RR$-foliation appearing in \eqref{Introd:GCM spheres-small}.
   \end{enumerate}

The construction of  a GCM sphere can thus be reduced to the problem of finding   solutions $ (U, S, f, \fb, \ovla)$  to the system \eqref{Compatibility-Deformation2-intro}   of size $\dg$.   There are however  various difficulties in solving  \eqref{Compatibility-Deformation2-intro}  which we emphasize below.

  
\subsubsection{Integrability}    


Note  that  the transition coefficients  have in fact  five degrees of freedom while   \eqref{Introd:GCM spheres-S} provides us with only three scalar constraints. The additional degrees of freedom of the triplet $(f, \fb, \la)$  have to be constrained by integrability conditions, that is 
    integrability in the sense of Frobenius. 
     Indeed,   since  the  vectorfields   $(e'_1, e_2')$   have to be tangent to the sphere $\S$,   the distribution generated  by them  has to be integrable\footnote{Recall that a distribution generated by linearly independent  vectorfields  $X, Y$ is  integrable if  the commutator $[X,Y]$  belongs to the distribution.}, see  a more detailed  discussion in section \ref{sec:defnullpairandhorizontalstructurre}.   Given an arbitrary frame  $(e_1', e_2', e_3', e_4')$, related to the background  frame  $(e_1, e_2, e_3, e_4)$   by  the formula  \eqref{eq:Generalframetransf-intro}, the lack of     of integrability of the  distribution generated by $(e_1', e_2') $  translates into  lack  of symmetry  for  the  null second fundamental forms,
      \beaa
   \chi'_{ab} &=& \g(\nab_{e_a'} e'_4, e'_b), \qquad \chib'_{ab} = \g(\nab_{e_a'} e'_3, e_b'),
   \eeaa
   which     can be measured by  the  scalar functions\footnote{See  precise definitions   in  section \ref{sec:defnullpairandhorizontalstructurre}.},
   \beaa
   \atrch'=\in^{ab}   \chi'_{ab}, \qquad  \atrchb'=\in^{ab}   \chib'_{ab}.
   \eeaa
      We note that in  the axial polarized   situation of \cite{KS}, we can always   choose the primed  frame  $(e_3', e_4', e_1', e_2') $ such that  $e_2'$ is  collinear to the axially symmetric Killing vectorfield   $\Z$  and 
    all other elements of the frame commute  with $\Z$. This automatically  ensures the  integrability of the frame without any additional conditions.

    To deal with the issue of integrability,  in the  general  case,  we are    led to add two more  conditions  to \eqref{Introd:GCM spheres-S}
   \bea
   \atrch^\S=\atrchb^\S=0,
   \eea
translating  into two  additional  differential  relations  for $f, \fb$  which  can be incorporated in the definition of  $\DD^\S$ above.  This provides us with the  correct  number of  equations  in the last row of \eqref{Compatibility-Deformation2-intro}, but,  as we discuss below, it  does  not ensure  that  the kernel of $\DD^\S$  is trivial which would be a necessary condition for solvability.

  
\subsubsection{Non-triviality  of $\ker \DD^\S$}  


Upon inspection, the  linear operator $\DD^\S$, though elliptic, has a non-trivial kernel.  To circumvent this difficulty we need to modify the  conditions \eqref{Introd:GCM spheres-S} by  requiring instead that  only the $\ell\ge 2$ modes\footnote{We refer here to a generalization of the   spherical harmonics   of the   standard sphere $\SSS^2$.  This   is itself an additional  difficulty one has to overcome, i.e. to  define a suitable  generalization of modes  for  deformed spheres.} of  $\trchb^\S +\frac{2\Up^\S}{r^\S} $ and    $\mu^\S-\frac{2m^\S}{(r^\S)^3} $  are set to vanish.   As a consequence,  we have the freedom to fix the $\ell=1$ modes of $f, \fb$. These modifications allow us to assume that $\DD^\S$ is both elliptic and  coercive.

  
\subsubsection{Solvability}   


Note that   the first two equations   in  \eqref{Compatibility-Deformation2-intro} require a compatibility condition i.e. 
   $$ \pr_{y^b} \Big( \SS(f, \fb, \Ga)\Big)^\#_a = \pr_{y^a} \Big( \SS(f, \fb, \Ga) \Big)^\#_b.$$  
   In the  axial polarized case,   this can be  avoided
    by   a simple symmetry  reduction argument, but in the general  case, this  becomes an issue.  We deal with it by modifying the first two equations  in   \eqref{Compatibility-Deformation2-intro}, i.e. we consider instead the system\footnote{Note that the equations for $(U,S)$ in \eqref{Compatibility-Deformation3-intro} do not imply the ones in \eqref{Compatibility-Deformation2-intro}. It is thus a priori not clear that solving \eqref{Compatibility-Deformation3-intro} will lead to a GCM sphere. The fact that it does is discussed in section \ref{sec:haveweproducedasphere-intro}.}
     \bea
 \lab{Compatibility-Deformation3-intro}
 \bsplit
 \lap^{\ovS} S&= \div^{\ovS} \Big( \big(\SS(f, \fb, \Ga)\big)^\# \Big),\\
  \lap^{\ovS} U&=  \div^{\ovS}\Big(\big(\UU(f, \fb, \Ga)\big)^\#\Big),\\
  \DD^\S(f, \fb, \ovla) &=\GG(\Ga)+\HH(f, \fb, \ovla, \Ga).
 \end{split}
   \eea
   We also fix the values  of  $U, S$ to be zero  at  a given point of $\ovS$  to ensure uniqueness.

  
\subsubsection{Nonlinear implicit  nature of    \eqref{Compatibility-Deformation3-intro}}   


To disentangle 
     the highly nonlinear and implicit  nature of  \eqref{Compatibility-Deformation3-intro}, we  proceed 
     by an iterative procedure which starts with   the  trivial quintet
  \beaa
  \QQ^{(0)}:=(U^{(0)}, S^{(0)}, \ovla^{(0)},  f^{(0)},  \fb^{(0)})=\big(0,0,0,0,0\big), 
  \eeaa
    corresponding to the un-deformed sphere $\ovS$,   and, making us of the $n$-th iterate  $\QQ^{(n)}$,    produces
\beaa
 \QQ^{(n+1)}= \Big(U^{(n+1)}, S^{(n+1)}, \ovla^{(n+1)},  f^{(n+1)},  \fb^{(n+1)}\Big)
 \eeaa
   as follows.
  \begin{itemize}
  \item  The pair  $(U^{(n)}, S^{(n)})$ defines the deformation  sphere $\S(n)$ and the corresponding pull back map  $\#_n$ given by the map $\Psi^{(n)} :\ovS\longrightarrow \S(n)$,
  \beaa
  (\ovu, \ovs,y^1, y^2) \longrightarrow (\ovu+U^{(n)}(y^1, y^2), \ovs+S^{(n)}(y^1, y^2),  y^1, y^2).
  \eeaa
\item  We  define the triplet $(\fnn, \fbnn, \ovlann)$ as the  solution of the following  linear  system
\beaa
 \DD^{\S(n)} (\fnn, \fbnn, \ovlann)&=\GG(\Ga)+\HH(\fn, \fbn, \ovlan, \Ga).
 \eeaa
 Note that   $  \DD^{\S(n)}$  is  defined  with respect to the geometric structure of $\S(n)$.
 \item   We use  the new pair     $( f^{(n+1)}, \fb^{(n+1)})$   to solve the equations on $\ovS$,
\bea
\lab{systemUU-SS-derivedn+1}
\bsplit
\Delta^{\ovS} \Unn&=\div^{\ovS} \Big(\big(\UU(\fnn, \fbnn , \Ga)\big)^{\#_{n}}\Big),\\
\Delta^{\ovS} \Snn&=\div^{\ovS}\Big(\big(\SS(\fnn, \fbnn , \Ga)\big)^{\#_{n}}\Big),
\end{split}
\eea
with $ \Unn, \Snn$ vanishing at the same given point of $\ovS$ and 
where the pull back  $\#_{n}$ is defined with respect to the  map $\Psi^{(n)} :\ovS\longrightarrow \S(n)$.  The new  pair  $( U^{(n+1)} ,  S^{(n+1)}  )$ defines  the  new sphere $\S(n+1)$ and we can proceed with the next step  of the iteration. 
  \end{itemize}

  
\subsubsection{Have we produced a GCM sphere?}
\lab{sec:haveweproducedasphere-intro}


    If  $ \epg$  is sufficiently small one  can show  that  the iterative  procedure  mentioned above  leads to a solution 
     $\big(U^{(\infty)}, S^{(\infty)}, \ovla^{(\infty)}, f^{(\infty)}, \fb^{(\infty)}\big)$   verifying  the system
     
    \bea
\lab{systemUU-SS-derivedinfty-intro}
\bsplit
\lapzero \Uinfty&=\divzero \Big(\big(\UU(\finfty, \fbinfty , \Ga)\big)^{\#_{\infty}}\Big),\\
\lapzero \Sinfty&=\divzero \Big(\big(\SS(\finfty, \fbinfty , \Ga)\big)^{\#_{\infty}}\Big),\\
\DD^{\infty} (\finfty, \fbinfty ,\ovlainfty)&=  \GG(\Ga)+\HH(\finfty, \fbinfty ,\ovlainfty,  \Ga),
\end{split}
\eea
where the elliptic operator $\DD^{\infty} $ is defined on the sphere  $\S(\infty)$, i.e.    the deformation of $\ovS$ induced by 
  $( \Uinfty,  \Sinfty)$. Is  $\S(\infty)$  the desired   solution to the problem, i.e. is it a GCM  sphere in the sense  discussed above? This is a priori not clear as the equations for $(\Uinfty, \Sinfty)$ in \eqref{systemUU-SS-derivedinfty-intro} do not imply those in \eqref{Compatibility-Deformation2-intro}. As a result,  we have potentially two different   frames  associated to $\S=\S(\infty)$.
\begin{itemize}
\item The frame  $\big(\einfty_1, \einfty_2, \einfty_3, \einfty_4\big)$ induced by
 the transition functions $(\ovlainfty, \finfty, \fbinfty )$,  with  the quintet  $\big(\Uinfty, \Sinfty, \ovlainfty, \finfty, \fbinfty \big)$  verifying the limiting  system  \eqref{systemUU-SS-derivedinfty-intro}.
  
\item  The geometric frame\footnote{With a proper normalization for  the null pair $e_3 ^\S, e_4^\S$, in fact  the one corresponding  to $\la=\la^{(\infty)}$.} $(e_1^\S, e_2^\S, e_3^\S, e_4^\S)$,  adapted to $\S$.
\end{itemize}
    The  main remaining  hurdle is to show that these two null frames coincide, see section \ref{sec:wheretheproofofthemaintheoremisfinallyconcluded},  so that $\S$ is indeed the desired GCM deformation.

    
    \subsection{First version of the main theorem}

    
    We give below a  bare bones version of our main theorem, see Theorem \ref{Theorem:ExistenceGCMS1} for the precise version.
  \begin{theorem}[Existence of GCM spheres, version 1]
\lab{Theorem:ExistenceGCMS1-intro}
Let  $\RR$   be  fixed  spacetime region, endowed with an  outgoing geodesic   foliation  $S(u, s)$,   verifying 
 specific asymptotic assumptions  expressed  in terms  of two parameters $0<\dg\leq \epg$.  In particular we assume that
 the  GCM quantities\footnote{This   requires  a careful definition of modes, i.e. analogues of the spherical harmonics.}
   \bea
  \lab{Introd:GCM spheres-small-again}
 \ka-\frac 2 r , \qquad \left( \kab+\frac{2\Up}{r}\right)_{\ell\ge 2 }, \qquad \left( \mu- \frac{2m}{r^3} \right)_{\ell\ge 2},
  \eea 
  are small with respect to the parameter $\dg$.
 Let  $\ovS=S(\ovu, \ovs)$   be  a fixed    sphere of the foliation with  $\rg$ and $\mg$ denoting respectively its area radius and  Hawking mass, with $\rg$ sufficiently large. 
 Then, 
for any fixed triplets   $\La, \Lab \in \RRR^3$  verifying
\bea
|\La|,\,  |\Lab|  &\les & \dg,
\eea
 there 
exists a unique  GCM sphere $\S=\S^{(\La, \Lab)}$, which is a deformation of $\ovS$, 
such that 
 \bea
  \lab{Introd:GCM spheres-small-again}
 \ka^\S -\frac{2}{r^\S} =0, \qquad \left( \kab^\S+\frac{2\Up^\S}{r^\S}\right)_{\ell\ge 2 }=0, \qquad \left( \mu^\S - \frac{2m^\S}{(r^\S)^3} \right)_{\ell\ge 2}=0,
  \eea 
  and
  \bea
  \lab{Introd:GCM spheres-LaLab}
  (\div^\S f)_{\ell=1}=\La, \qquad (\div^\S\fb)_{\ell=1}=\Lab,
  \eea
  where $(f, \fb, \la)$ denote  the transition coefficients of the transformation \eqref{eq:Generalframetransf-intro}  from the background frame of $\RR$ to the frame adapted to $\S$.  
    \end{theorem}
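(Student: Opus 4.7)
The plan is to implement the iterative scheme sketched in the introduction and to verify its convergence to a genuine GCM sphere. First I would set up the space of unknowns as the quintet $\QQ=(U,S,\ovla,f,\fb)$ and reformulate the GCM requirements \eqref{Introd:GCM spheres-S} together with the integrability conditions $\atrch^\S=\atrchb^\S=0$ and the modal normalizations \eqref{Introd:GCM spheres-LaLab} as the fully coupled system \eqref{Compatibility-Deformation3-intro}. The two scalar equations for $(U,S)$ come from applying $\div^{\ovS}$ to the pointwise gradient identities that force the horizontal frame of $\S$ to be tangent to the image of $\Psi$; the Laplacian formulation eliminates the nontrivial compatibility condition that blocks a direct integration, while the base point normalization $U(p_0)=S(p_0)=0$ removes the kernel of $\lap^{\ovS}$. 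The last equation for $(f,\fb,\ovla)$ packages the modified GCM conditions into an elliptic system $\DD^\S$ whose principal part, after restricting $(\kab^\S,\mu^\S)$ to their $\ell\ge 2$ projections and fixing the $\ell=1$ modes of $\div^\S f$, $\div^\S\fb$ via $(\La,\Lab)$, is coercive on the orthogonal complement of its kernel.

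Second, I would define the iterates $\QQ^{(n)}$ starting from $\QQ^{(0)}=0$ exactly as in the introduction. Given $\QQ^{(n)}$, the pair $(U^{(n)},S^{(n)})$ generates the deformation map $\Psi^{(n)}:\ovS\to\S(n)$ and the corresponding pull-back $\#_n$; then $(\fnn,\fbnn,\ovlann)$ is produced by solving the linear elliptic system $\DD^{\S(n)}(\fnn,\fbnn,\ovlann)=\GG(\Ga)+\HH(\fn,\fbn,\ovlan,\Ga)$ on $\S(n)$, using the coercivity established in the first step together with the prescribed $\ell=1$ data $(\La,\Lab)$. Finally $(\Unn,\Snn)$ is obtained by solving the two scalar Poisson problems \eqref{systemUU-SS-derivedn+1} on $\ovS$, with vanishing value at the base point. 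At each stage one has to check that $\S(n)$ remains a genuine $O(\dg)$ deformation on which the geometric operators inherit uniform elliptic and coercive estimates from those on $\ovS$; this reduces to transferring the background Sobolev calculus from $\ovS$ to $\S(n)$ using the bounds on $(U^{(n)},S^{(n)})$.

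Third, using the structural bounds recalled after \eqref{Compatibility-Deformation2-intro}, namely $|\SS|,|\UU|\lesssim |(f,\fb)|+|(f,\fb)|^2$ and $|\HH|\lesssim (r^{-1}+\epg)|(f,\fb,\ovla)|+|(f,\fb,\ovla)|^2$, I would prove two estimates by induction: a uniform-in-$n$ bound $\|\QQ^{(n)}\|\lesssim\dg$ in an appropriate Sobolev norm on $\ovS$, and a contraction estimate $\|\QQ^{(n+1)}-\QQ^{(n)}\|\lesssim (r^{-1}+\epg)\|\QQ^{(n)}-\QQ^{(n-1)}\|$. The smallness of $\epg$ and the largeness of $\rg$ drive the Banach fixed point argument; the delicate nonlinearity here is that the elliptic operator $\DD^{\S(n)}$ itself varies with $n$, so the differences must be split into a variation of the data through $\GG$ and $\HH$ and a variation of the operator through $\S(n)$ versus $\S(n-1)$, with the latter controlled by $\|(U^{(n)}-U^{(n-1)},S^{(n)}-S^{(n-1)})\|$. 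The limit $\QQ^{(\infty)}$ then satisfies the system \eqref{systemUU-SS-derivedinfty-intro}, and uniqueness for fixed $(\La,\Lab)$ follows from the same contraction estimate.

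The main obstacle, flagged in the introduction as Section \ref{sec:haveweproducedasphere-intro}, is the last step: showing that the limit really is a GCM sphere. The Poisson formulation gives $\lapzero U^{(\infty)}$ and $\lapzero S^{(\infty)}$ in terms of divergences, but not the pointwise gradient identities whose pull-back ensures that $(\einfty_1,\einfty_2)$ built from the transition coefficients $(\finfty,\fbinfty,\ovlainfty)$ coincides with the intrinsic horizontal frame $(e^\S_1,e^\S_2)$ tangent to $\S(\infty)$. The plan is to introduce the two-form measuring the discrepancy between these two frames, derive from \eqref{eq:Generalframetransf-intro} and the limit equations a homogeneous elliptic equation for this discrepancy, and use the smallness of $(\finfty,\fbinfty,\ovlainfty)$ together with the coercivity of the operator to conclude that it vanishes identically. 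Once the frames are matched, the GCM conditions \eqref{Introd:GCM spheres-S} and the normalizations \eqref{Introd:GCM spheres-LaLab} follow directly from the third equation in \eqref{systemUU-SS-derivedinfty-intro}, completing the existence and uniqueness statement.
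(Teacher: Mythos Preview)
Your proposal is correct and tracks the paper's own proof (Sections~4--6) closely: the same iterative scheme on quintets, the same Poisson reformulation for $(U,S)$, the same linearized elliptic system for $(f,\fb,\ovla)$ with coercivity forced by restricting to $\ell\ge 2$ modes and prescribing the $\ell=1$ data, and the same final step of matching the limit frame with the geometric adapted frame via an elliptic estimate on the discrepancy. Three technical refinements in the paper are worth flagging as you carry this out. First, the linearized GCM system is not quite closed in $(f,\fb,\ovla)$ alone: the quantity $r-r^\S$ enters linearly on the right-hand sides of the $\div^\S f$ and $\div^\S\fb$ equations, so the paper introduces an auxiliary scalar $\ovb$ with its own Poisson equation (Lemma~\ref{Lemma-adaptedGCM-equations:bis} and Corollary~\ref{Lemma-adaptedGCM-equations:ter}) and solves for the quadruple $(f,\fb,\ovla,\ovb)$; only at the very end does one verify $\ovb^{(\infty)}=r-r^\S$. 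Second, the contraction (Proposition~\ref{Prop:contractionforNN}) is not one-step but three-step, $\|\NN^{n+1,\#}-\NN^{n,\#}\|\lesssim (r^{-1}+\epg)\sum_{j=0}^{2}\|\NN^{n-j,\#}-\NN^{n-1-j,\#}\|$, because of the staggered dependence $(f,\fb,\ovla)\to (U,S)\to\S$ and the additional GCM constants $(\Cb_0,\Cb^p,M_0,M^p)$ that must be carried in the iteration as part of a ninetet rather than a quintet. Third, in the frame-matching step the discrepancy is the pair of $1$-forms $(f-f^\infty,\fb-\fb^\infty)$ (not a $2$-form), and the argument splits: control of $\div^\S(f-f^\infty)$ comes from comparing the Laplacian and gradient forms of the $(U,S)$ equations, while control of $\curl^\S(f-f^\infty)$ comes from the integrability equations, after which the Hodge estimate for $\ddd_1^\S$ closes.
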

 
 \begin{remark}
 We emphasize again that, unlike the GCM construction in \cite{KS}, Theorem \ref{Theorem:ExistenceGCMS1-intro} does not rely on any symmetry assumption and can thus be used in a general setting.
\end{remark}

\begin{remark}
Note that there is an obvious ambiguity in the statements  \eqref{Introd:GCM spheres-small-again} \eqref{Introd:GCM spheres-LaLab} of Theorem \ref{Theorem:ExistenceGCMS1-intro} due to the   arbitrariness in the choice of the $\ell=1$ modes on $\S$. We will remove this  ambiguity in \cite{KS:Kerr2} where we  show  that the results  of  Theorem  \ref{Theorem:ExistenceGCMS1-intro} hold true for a canonical  basis of ${\ell=1} $ modes on $\S$ based on an effective version of the classical  uniformization theorem. 
\end{remark}
\begin{remark}
The assumptions on the spacetime region $\RR$ in Theorem \ref{Theorem:ExistenceGCMS1-intro} are in particular satisfied in Kerr for $r$ sufficiency large, see Lemma \ref{lemma:controlfarspacetimeregionKerrassumptionRR}. We can thus apply Theorem \ref{Theorem:ExistenceGCMS1-intro} in that context, and obtain the existence of GCM spheres $\S^{(\La, \Lab)}_{Kerr}$ in Kerr for $r$ sufficiency large, see Corollary \ref{cor:ExistenceGCMS1inKerr}. The GCM spheres  $\S^{(\La, \Lab)}$ of Theorem \ref{Theorem:ExistenceGCMS1-intro} thus correspond to the analog of $\S^{(\La, \Lab)}_{Kerr}$ in perturbations of Kerr for $r$ sufficiency large.
 \end{remark}
    
\begin{remark}
 We note  that a  related notion  of preferred spheres\footnote{The spheres in   \cite{HuYau}  have  constant mean curvature}  in an asymptotically euclidean Riemannian 3-manifold  has been  introduced in \cite{HuYau}. In contrast with our work  here    the spheres  in   \cite{HuYau}   have  codimension 1,  while  ours  have  codimension 2 in a 4 dimensional Lorentzian manifold.   
\end{remark}

    
\subsection{Structure of the paper}


The structure of the paper is as follows 
\begin{itemize}
\item In section \ref{sec:geometricsetup}, we introduce the geometric set-up and provide our main assumptions for the background foliation of the spacetime region $\RR$. 

\item In section \ref{sec:frametransformation}, we introduce general frame transformations, including the frame transformations for the main GCM quantities. 

\item In section \ref{section:GCMspheres}, we provide the definition of GCM spheres. In particular, we derive the elliptic system for the transition coefficients $(f, \fb, \la)$, and we analyze the corresponding linearized system.

\item In section \ref{sec:deformationofsurfaces}, we study deformations of the background spheres of $\RR$, and derive in particular the equations for the scalar functions $(U,S)$ defining the deformation. 

\item In section \ref{sec:exitstenceofGCMspheres}, we prove Theorem \ref{Theorem:ExistenceGCMS1-intro} on the existence of GCM spheres by relying on an iterative scheme. We also obtain the existence of GCM spheres in Kerr as a by-product. 
\end{itemize}


\section{Geometric set up}\lab{sec:geometricsetup}



\subsection{General formalism}


We  review the part relevant for this paper of the general formalism we have introduced in \cite{GeKSz}. 


\subsubsection{Null pairs and horizontal structures}
\lab{sec:defnullpairandhorizontalstructurre}


Let $(\MM, \g)$ a Lorentzian space-time. Consider a fixed    null pair $e_3, e_4$, i.e. 
\beaa
\g(e_3, e_3)=\g(e_4, e_4)=0, \qquad  \g(e_3, e_4)=-2, 
\eeaa
and denote  by  $\O(\MM)$ the vector space  of horizontal vectorfields $X$  on $\MM$, i.e.  $\g(e_3, X)= \g(e_4, X)=0$.
  Given a fixed   orientation  on $\MM$,  with corresponding  volume form  $\in$,  we define  the induced 
 volume form on   $\O(\MM)$ by,  
 \beaa
  \in(X, Y):=\frac 1 2\in(X, Y, e_3, e_4). 
  \eeaa
 A null  frame on $\MM$ consists of a choice of horizontal vectorfields  $e_1, e_2$, such that
 \beaa
 \g(e_a, e_b)=\de_{ab}\qquad  a, b=1,2.
 \eeaa  
 The commutator $[X,Y]$ of two horizontal vectorfields
may fail however to be horizontal. We say that the pair $(e_3, e_4 )$ is integrable if   $\O(\MM)$  forms an integrable distribution,
i.e. $X, Y\in\O(\MM) $ implies that $[X,Y]\in\O(\MM)$. As  it is well-known,  the  principal null pair in Kerr fails to be integrable, 
see also Remark \ref{rem:nonintegrabilityandatrchatrchb}. Given an arbitrary vectorfield $X$ we denote by $^{(h)}X$
its  horizontal projection, 
$$^{(h)}X=X+ \frac 1 2 g(X,e_3)e_4+ \frac 1 2   g(X,4) e_3.$$ 
A  $k$-covariant tensor-field $U$ is said to be horizontal,  $U\in \O_k(\MM)$,
if  for any $X_1,\ldots X_k$ we have $U(X_1,\ldots X_k)=U( ^{(h)} X_1,\ldots  ^{(h)} X_k)$.

\begin{definition}
We denote by $\SS_0=\SS_0(\MM)$ the set of scalar functions on $\MM$, $\SS_1=\SS_1(\MM)$ the  set of horizontal $1$-forms  on $\MM$, and by $\SS_2=\SS_2(\MM)$
  the set of symmetric traceless   horizontal $2$-forms on $\MM$.
  \end{definition}

For any $ X, Y\in \O(\MM)$ we define  the induced metric $g(X, Y)=\g(X, Y)$ and the null second fundamental forms
\bea
\chi(X,Y)=g(\D_Xe_4 ,Y), \qquad \chib(X,Y)=g(\D_X\Lb,Y).
\eea
Observe that    $\chi$
 and $\chib$  are  symmetric if and 
 only if   the horizontal structure is 
 integrable. Indeed this follows easily from
 the formulas,
 \beaa
 \chi(X,Y)-\chi(Y,X)&=&\g(\D_X e_4, Y)-\g(\D_Ye_4,X)=-\g(e_4, [X,Y]),\\
 \chib(X,Y)-\chib(Y,X)&=&\g(\D_X e_3, Y)-\g(\D_Ye_3,X)=-\g(e_3, [X,Y]).
\eeaa
  Note  that  we  can view  $\chi$ and $\chib$ as horizontal 2-covariant tensor-fields
 by extending their definition to arbitrary vectorfields  $X, Y$  by setting  $\chi(X, Y)= \chi( ^{(h)}X, ^{(h)}Y)$,  $\chib(X, Y)= \chib( ^{(h)}X, ^{(h)}Y)$. We define their trace $\trch$, $\trchb$,  and anti-trace $\atrch$, $\atrchb$ as follows
\beaa
\trch:=\de^{ab}\chi_{ab}, \qquad \trchb:=\de^{ab}\chib_{ab}, \qquad \atrch:=\in^{ab}\chi_{ab}, \qquad \atrchb:=\in^{ab}\chib_{ab}.
\eeaa
Accordingly we  decompose $\chi, \chib$ as follows,
\beaa
\chi_{ab}&=&\chih_{ab} +\frac 1 2 \de_{ab} \trch+\frac 1 2 \in_{ab}\atrch,\\
\chib_{ab}&=&\chibh_{ab} +\frac 1 2 \de_{ab} \trchb+\frac 1 2 \in_{ab}\atrchb.
\eeaa

\begin{remark}\lab{rem:nonintegrabilityandatrchatrchb}
The non integrability of $(e_3, e_4)$  corresponds to non trivial $\atrch$ and $\atrchb$. A celebrated example of  a non integrable null frame  is the principal null frame of Kerr for which $\atrch$ and $\atrchb$ are indeed non trivial.
\end{remark}

We define the horizontal covariant operator $\nab$ as follows:
 \bea
 \nab_X Y&:=&^{(h)}(\D_XY)=\D_XY- \frac 1 2 \chib(X,Y)e_4 -  \frac 1 2 \chi(X,Y) e_3, \quad X, Y\in \O(\MM).
 \eea
Note that,
\beaa
 \nab_X Y-\nab_Y X   &=&[X, Y]-\frac 1 2 (\atrchb\,  L+\atrch \, \Lb)\in(X, Y).
\eeaa
In particular,
 \bea
 [X, Y]^\perp&=&\frac 1 2(\atrchb\,  L+\atrch \, \Lb)\in(X, Y).
 \eea
 Also, for  all  $X,Y, Z\in \O(\MM)$,
 \beaa
 Z g (X,Y)=g(\nab_Z X, Y)+ g(X, \nab_ZY).
 \eeaa

 \begin{remark} 
 In the integrable case, $\nab$ coincides with the Levi-Civita connection
 of the metric induced on the integral surfaces of   $\O(\MM)$.  
 Given $X$ horizontal, $\D_4X$ and $\D_3 X$ are in general not horizontal.
 We define $\nab_4 X$ and $\nab_3 X$  to be the horizontal projections
 of the former.  More precisely,
 \beaa
 \nab_4 X&:=&^{(h)}(\D_4 X)=\D_4 X- \frac 1 2 g(X, \D_4 e_3 ) e_4- \frac 1 2  g(X, \D_4 e_4)  e_3 ,\\
 \nab_3 X&:=&^{(h)}(\D_3 X)=\D_3 X-   \frac 1 2 g(X, \D_3e_3) e_3 - \frac 1 2   g(X, \D_3 e_4 ) e_3. 
 \eeaa
The definition can be easily extended to arbitrary  $  \O_k(\MM) $ tensor-fields  $U$ 
\beaa
 \nab_4U(X_1,\ldots, X_k)&=&e_4 U(X_1,\ldots, X_k))- \sum_i U( X_1,\ldots, \nab_4 X_i, \ldots X_k),\\
  \nab_3 U(X_1,\ldots, X_k)&=&e_3 (U(X_1,\ldots, X_k)) -\sum_i U( X_1,\ldots, \nab_3 X_i, \ldots X_k).
 \eeaa 
 \end{remark}


\subsubsection{Ricci and curvature  coefficients}


Given a null frame $e_1, e_2, e_3, e_4$ we define  the     connection coefficients as follows 
 \bea
 \begin{split}
\chib_{ab}&=g(\D_ae_3, e_b),\qquad \,\,\chi_{ab}=g(\D_ae_4, e_b),\\
\xib_a&=\frac 1 2 \g(\D_3 e_3 , e_a),\qquad \xi_a=\frac 1 2 \ g(\D_4 e_4, e_a),\\
\omb&=\frac 1 4 g(\D_3e_3 , e_4),\qquad\,\, \om=\frac 1 4 g(\D_4 e_4, e_3),\qquad \\
\etab_a&=\frac 1 2 (\D_4 e_3, e_a),\qquad \quad \eta_a=\frac 1 2 g(\D_3 e_4, e_a),\qquad\\
 \ze_a&=\frac 1 2 g(\D_{e_a}e_4,  e_3).
 \end{split}
\eea

We have,
\bea
\D_a e_b&=&\nab_a e_b+\frac 1 2 \chi_{ab} e_3+\frac 1 2  \chib_{ab}e_4,\nn\\
\D_a e_4&=&\chi_{ab}e_b -\ze_a e_4,\nn\\
\D_a e_3&=&\chib_{ab} e_b +\ze_ae_3,\nn\\
\D_3 e_a&=&\nab_3 e_a +\eta_a e_3+\xib_a e_4,\nn\\
\D_3 e_3&=& -2\omb e_3+ 2 \xib_b e_b,\label{ricci}\\
\D_3 e_4&=&2\omb e_4+2\eta_b e_b,\nn\\
\D_4 e_a&=&\nab_4 e_a +\etab_a e_4 +\xi_a e_3,\nn\\
\D_4 e_4&=&-2 \om e_4 +2\xi_b e_b,\nn\\
\D_4 e_3&=&2 \om e_3+2\etab_b e_b.\nn
\eea 

For a given horizontal   $1$ -form $\xi$,
we  define the frame independent   operators\footnote{Note that the definition of $\nab\hot $ differs from the given in \cite{GeKSz} by a factor $1/2$.},   
\beaa
\div\xi&=&\de^{ab}\nab_b\xi_a,\qquad 
\curl\xi=\in^{ab}\nab_a\xi_b,\qquad 
(\nab\hot \xi)_{ba}=\nab_b\xi_a+\nab_a  \xi_b-\de_{ab}( \div \xi).
\eeaa
We also define the usual  curvature components, see \cite{Ch-Kl}, 
\beaa
&&\a_{ab}=\R_{a4b4},\qquad \b_a=\frac 12 \R_{a434}, \qquad   \bb_a=\frac 1 2 \R_{a334},  \qquad \aa_{ab}=\R_{a3b3},\\
&&\rho=\frac 1 4 \R_{3434} , \qquad \rhod=\frac 1 4 \R_{3434}.
\eeaa


\subsection{Outgoing geodesic foliations}
\lab{sec:defoutgoinggeodesicfoliation}



\subsubsection{Definition of an outgoing geodesic foliation}


Assume  given an outgoing  optical function $u$, i.e.   a solution of the equation,
\beaa
\g^{\a\b}\pr_\a u\pr_\b u  =0
\eeaa
and let $L=-g^{ab} \pr_b u \pr_a  $ its  null geodesic generator. We choose $e_4$ such that,
\bea
e_4=\vsi L, \qquad  L(\vsi)=0.
\eea

 We then choose $s$    such that 
 \bea
 e_4 (s)=1.
 \eea
The scalar functions $(u, s)$ generate  what is called an  outgoing  geodesic foliation.  Let $S(u,s)$    be       the    $2$-surfaces  of intersection    between the level surfaces of $u$ and $s$. We  choose  $e_3 $  the unique    null vectorfield orthogonal to  $S(u,s)$   and such that $g(e_3, e_4)=-2$. We  then  let $(e_1, e_2)$  an orthogonal basis of the tangent space of $S(u, s)$.    We also  introduce
 \bea 
 \underline{\Omega} := e_3(s).
 \eea
 
 \begin{lemma}
 \label{le:outgoinggeodesicfoliation}
We have
\beaa
\atrch=\atrchb=0, \qquad \om=\xi=0,  \qquad  \etab = -\ze, \qquad  \vsi=\frac{2}{e_3(u)}.
\eeaa
\end{lemma}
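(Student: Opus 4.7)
The proof will amount to unpacking the four claims from the construction of the foliation. The statement has four parts which I will handle in this order: first the formula for $\vsi$, then $\atrch=\atrchb=0$, then $\om=\xi=0$, and finally the identity $\etab=-\ze$, which is the only part requiring more than a direct calculation.

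For the formula $\vsi = 2/e_3(u)$, the plan is to use that $L=-g^{ab}\pr_b u\, \pr_a$ has one-form dual $L_\flat = -du$. Then $e_4 = \vsi L$ gives $-2 = g(e_3,e_4) = \vsi\, g(e_3,L) = -\vsi\, e_3(u)$, which is exactly the claimed formula. For $\atrch=\atrchb=0$, the observation is that $e_4=\vsi L$ annihilates $TS(u,s)$ as a one-form (since $L_\flat = -du$ and $e_a(u)=0$ for $e_a$ tangent to $S$), and $e_3$ is orthogonal to $S(u,s)$ by construction. Thus the horizontal distribution $\O(\MM)$ coincides with the tangent distribution of the integrable foliation $\{S(u,s)\}$, so $[e_a,e_b]\in TS(u,s)$. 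Plugging into the two identities $\chi(X,Y)-\chi(Y,X) = -\g(e_4,[X,Y])$ and $\chib(X,Y)-\chib(Y,X) = -\g(e_3,[X,Y])$ from the main text, both right-hand sides vanish, so $\chi$ and $\chib$ are symmetric, forcing the antisymmetric traces to vanish.

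For $\om = \xi = 0$, I would verify that $e_4$ is geodesic by direct computation. Using $e_4 = \vsi L$ and $L(\vsi)=0$, one has $\D_4 e_4 = \vsi L(\vsi) L + \vsi^2 \D_L L = \vsi^2 \D_L L$; and since $u$ is optical, the standard argument (differentiate $g^{ab}\pr_a u\, \pr_b u = 0$) gives $\D_L L = 0$. Hence $\D_4 e_4 = 0$, and comparing with the Ricci formula $\D_4 e_4 = -2\om e_4 + 2\xi_b e_b$ yields $\om=0$ and $\xi=0$.

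The least routine step is $\etab = -\ze$, and this is where I expect the one genuine idea to be used: showing that $[e_4,e_a]$ is horizontal. The plan is to test this commutator against the coordinate functions $u$ and $s$. Since $e_a(u)=0$ (as $e_a\in TS(u,s)$) and $e_4(u) = \vsi L(u) = 0$ (from $L(u) = -g^{ab}\pr_a u\,\pr_b u = 0$), one gets $[e_4,e_a](u)=0$. Similarly $[e_4,e_a](s) = -e_a(e_4(s)) = -e_a(1) = 0$. Therefore $[e_4,e_a]$ is tangent to $S(u,s)$, so $\g(e_3,[e_4,e_a]) = 0$. On the other hand, expanding using the connection formulas from the excerpt,
\[
\g(e_3,\D_a e_4) = -\ze_a\,\g(e_3,e_4) = 2\ze_a, \qquad \g(e_3,\D_4 e_a) = \etab_a\,\g(e_3,e_4) = -2\etab_a,
\]
where in the second computation I used $\xi_a = 0$ from Step 2 and that $\nab_4 e_a$ is horizontal. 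Subtracting gives $0 = \g(e_3,[e_4,e_a]) = -2\etab_a - 2\ze_a$, so $\etab = -\ze$. This completes the four claims.
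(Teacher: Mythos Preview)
Your proof is correct and follows essentially the same approach as the paper. The only cosmetic difference is in the $\etab=-\ze$ step: the paper writes down the commutator expansion $[e_4,e_a] = (\etab_a+\ze_a)e_4 + \xi_a e_3 - \chi_{ab}e_b$ and applies it to $s$, whereas you first show $[e_4,e_a]$ is tangent to $S(u,s)$ by testing against both $u$ and $s$, and then read off the vanishing of the $e_3$-pairing directly from the connection formulas --- the two arguments are equivalent. (As a minor remark, you do not actually need $\xi_a=0$ in the computation of $\g(e_3,\D_4 e_a)$, since $\g(e_3,e_3)=0$ already kills that term.)
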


\begin{proof}
Since $(e_1, e_2)$ is a basis of the tangent space of $S(u, s)$, it is integrable, and hence 
\beaa
\atrch=\atrchb=0.
\eeaa 

Next, recall that $L$ is geodesic, $e_4=\vsi L$ and $L(\vsi)=0$. This immediately implies that $e_4$  is geodesic, and hence we have 
\beaa
\om=\xi=0.
\eeaa
Also, applying the vectorfield  
\beaa
[e_4, e_a]=(\etab_a+\ze_a) e_4+\xi_a e_3 -\chi_{ab} e_b
\eeaa
 to $s$, and since $e_4(s)=1$ and $e_a(s)=0$, we derive,
 \beaa
 \etab+\ze=0.
 \eeaa 
 
Finally, note that
\beaa
e_3(u)=g(e_3, -L)=-\vsi^{-1}  g(e_3,e_4)=\frac{2}{\vsi}
\eeaa
and hence
\beaa
\vsi &=& \frac{2}{e_3(u)}
\eeaa
 as desired. 
\end{proof}

We  define    the  area radius $r(u,s)$ of $S(u,s)$   by the formula
\bea
|S|=4\pi r^2
\eea
where $|S|$ is the volume of the surface $S$. Also, the Hawking  mass  $m=m(u,s)$  of $S(u,s)$  is    defined by the formula, 
  \bea
\frac{2m}{r}=1+\frac{1}{16\pi}\int_{S_{}}\trch \trchb.
\eea
The Gauss curvature of $S$ is denoted by $K$ and satisfies from the Gauss equation,
\bea\lab{eq:Gaussequation}
K=-\rho -\frac{1}{4} \trch \trchb +\frac{1}{2}\chih\c\chibh.
\eea
Finally, we define     the mass aspect function $\mu$ as follows
\bea
\label{def:massaspectfunctions.general}
\mu:&=& - \div \ze -\rho+\frac 1 2  \chih\c \chibh.
\eea


\subsubsection{Coordinates adapted to an outgoing geodesic foliation}


\begin{definition}
 A coordinate system $(u, s, y^1, y^2)$ is  said to be adapted to  an outgoing geodesic foliation  on $\MM$  as above if
 \bea
 e_4(y^1)= e_4(y^2)=0.
 \eea
\end{definition}

\begin{lemma}
\lab{Lemma:geodesic-coordinates}
Given a  coordinates system $(u, s, y^1, y^2)$ adapted to a geodesic foliation as above  the following hold true.
\begin{enumerate}
\item The spacetime metric takes the form
\bea
\lab{spacetimemetric-y-coordinates}
\g &=& - 2\vsi du ds + \vsi^2\Omb  du^2 +g_{ab}\big( dy^a- \vsi \undB^a du\big) \big( dy^b-\vsi \undB^b du\big),
\eea
where
\bea
\Omb=e_3(s), \qquad \undB^a =\frac{1}{2} e_3(y^a), \qquad  g_{ab}=\g(\pr_{y^a}, \pr_{y^b}).
\eea

\item  The null pair $(e_3, e_4) $  take the form
\bea
e_4=\pr_s, \qquad \pr_u = \vsi\left(\frac{1}{2}e_3-\frac{1}{2}\Omb e_4-\undB^a\pr_{y^a}\right).
\eea
Moreover 
\bea
\pr_{y^a}= \sum_{c=1,2} Y_{(a)}^c e_c, \qquad   a=1,2,
\eea
with coefficients  $ Y_{(a)}^b $ verifying
\bea
g_{ab}=\sum_{c=1, 2} Y_{(a)}^c Y_{(b)}^c.
\eea
We also write, 
\bea
\bsplit
\pr_s &= e_4,\\
\pr_u &= \vsi \left(\frac{1}{2}e_3-\frac{1}{2}\Omb e_4 -  \sum_{c=1,2}  Z^c  e_c\right),\\
\pr_{y^a}&= \sum_{c=1,2} Y_{(a)}^c e_c, \qquad   a=1,2,
\end{split}
\eea
where 
\bea
Z^c:=\Bb^a  Y_{(a)}^c.
\eea

\item We have 
\bea\lab{eq:propagationequationforundBandgab}
e_4(\undB^a) = -(\eta+\ze)\c\nab(y^a), \qquad \pr_s  g_{ab} &=& 2\chi\left(\pr_{y^a}, \pr_{y^b}\right).
\eea 
\end{enumerate}
\end{lemma}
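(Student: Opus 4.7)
My strategy is to read everything off from the dual pairings between the coordinate basis $(\pr_u,\pr_s,\pr_{y^1},\pr_{y^2})$ and the null frame $(e_3,e_4,e_1,e_2)$. The adapted condition $e_4(y^a)=0$ together with Lemma \ref{le:outgoinggeodesicfoliation} (giving $e_4(u)=0$, $e_4(s)=1$, $e_3(u)=2/\vsi$) immediately identifies $\pr_s$ with $e_4$, since both act identically on $u,s,y^a$. Because $e_a$ is tangent to $S(u,s)$ one has $e_a(u)=e_a(s)=0$, forcing $\pr_{y^a}$ to be horizontal; defining $Y^c_{(a)}$ by $\pr_{y^a}=\sum_c Y^c_{(a)}e_c$ gives $g_{ab}=\sum_c Y^c_{(a)}Y^c_{(b)}$ by direct expansion of $\g(\pr_{y^a},\pr_{y^b})$. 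To pin down $\pr_u$, I expand it in the frame as $\alpha e_3+\beta e_4+\gamma^c e_c$ and solve the dual-basis system $du(\pr_u)=1$, $ds(\pr_u)=dy^b(\pr_u)=0$, using $e_3(s)=\Omb$ and $e_3(y^a)=2\undB^a$. This forces $\alpha=\vsi/2$, then $\beta=-\vsi\Omb/2$, and a horizontal piece $-\vsi Z^c e_c$ with $Z^c:=\undB^a Y^c_{(a)}$ after inverting the system $\gamma^c (Y^{-1})^b_c=-\vsi\undB^b$. Reading off the metric in the coordinate cobasis, the $du^2$ coefficient becomes $\vsi^2\Omb+\vsi^2|Z|^2$, and since $|Z|^2=g_{ab}\undB^a\undB^b$, the contributions reassemble exactly as the completed-squares form $-2\vsi\,du\,ds+\vsi^2\Omb du^2+g_{ab}(dy^a-\vsi\undB^a du)(dy^b-\vsi\undB^b du)$.

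For the propagation of $g_{ab}$, since $\pr_s=e_4$ and $\g$ is parallel, $\pr_s g_{ab}=\g(\D_{e_4}\pr_{y^a},\pr_{y^b})+\g(\pr_{y^a},\D_{e_4}\pr_{y^b})$. The commutator $[e_4,\pr_{y^a}]=[\pr_s,\pr_{y^a}]=0$ lets me swap $\D_{e_4}\pr_{y^a}=\D_{\pr_{y^a}}e_4$, and the formula $\D_a e_4=\chi_{ab}e_b-\ze_a e_4$ combined with $\g(e_4,\pr_{y^b})=0$ collapses the expression to $\chi(\pr_{y^a},\pr_{y^b})+\chi(\pr_{y^b},\pr_{y^a})=2\chi(\pr_{y^a},\pr_{y^b})$, the last equality by the symmetry of $\chi$ granted by $\atrch=0$ in the outgoing geodesic foliation.

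For $e_4(\undB^a)$, I rely on a commutator computation. By definition $2e_4(\undB^a)=e_4 e_3(y^a)=[e_4,e_3](y^a)$, using $e_3(e_4(y^a))=0$. From the Ricci table \eqref{ricci}, $[e_4,e_3]=\D_4 e_3-\D_3 e_4=2\om e_3-2\omb e_4+2(\etab-\eta)_b e_b$; since $\om=0$ and $e_4(y^a)=0$ drop the $e_3$ and $e_4$ contributions, and $\etab=-\ze$ from Lemma \ref{le:outgoinggeodesicfoliation} converts the horizontal part into $-2(\eta+\ze)_b e_b$, yielding $e_4(\undB^a)=-(\eta+\ze)_b e_b(y^a)=-(\eta+\ze)\c\nab(y^a)$. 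The only genuinely error-prone step is the index bookkeeping in the derivation of $\pr_u$: the inverse-matrix relation $dy^b(e_c)=(Y^{-1})^b_c$ dual to $\pr_{y^a}=Y^c_{(a)}e_c$ must be handled consistently to produce the clean final form $\gamma^c=-\vsi Z^c$; everything else is an algebraic consequence of identities already in hand.
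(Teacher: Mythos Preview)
Your proof is correct and reaches the same conclusions as the paper's argument, but the organization differs in a useful way. The paper first computes the \emph{inverse} metric components $\g^{u\alpha}$ from the eikonal structure (since $L=-\g^{\alpha\beta}\pr_\beta u\,\pr_\alpha$), deduces $e_4=\pr_s$ and the vanishing of $\g_{ss},\g_{sa}$ from the matrix relation $\g^{\mu\alpha}\g_{\alpha\nu}=\de^\mu_\nu$, then introduces $\undB^a$ through $\g_{ua}=-\vsi g_{ab}\undB^b$ and only afterwards verifies $\undB^a=\tfrac12 e_3(y^a)$. You instead take $\undB^a:=\tfrac12 e_3(y^a)$ as given by the statement, identify $e_4=\pr_s$ directly by comparing actions on the coordinate functions, and solve the dual-pairing system for $\pr_u$ without ever touching the inverse metric. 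Your route is more direct for this particular lemma; the paper's route is the standard one when one starts from an optical function and wants to read off the metric lapse/shift decomposition. One small imprecision: $e_4(u)=0$ is not actually stated in Lemma~\ref{le:outgoinggeodesicfoliation}; it follows immediately from $e_4=\vsi L$ and the eikonal equation, so you should cite that instead. The commutator computations for $e_4(\undB^a)$ and $\pr_s g_{ab}$ are identical to the paper's, and your explicit invocation of $\atrch=0$ for the symmetry of $\chi$ is a point the paper leaves implicit.
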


\begin{proof}
Since $u$ is an optical function, we  deduce
\beaa
0&=&\g^{uu}\pr_u u \pr_u u+\g^{ui} \pr_u u \pr_{y^i}  u+\g^{ij} \pr_{y^i} u  \pr_{y^j} u= \g^{uu}.
\eeaa
 Thus 
\beaa
L=-   \g^{us} \pr_s -   \g^{ua} \pr_{y^a}, \qquad e_4= -   \vsi\g^{us} \pr_s -   \vsi\g^{ua} \pr_{y^a}.
\eeaa
Since $e_4(y^1)=e_4(y^2)=0$ we deduce
\beaa
\g^{u1}=\g^{u2}=0.
\eeaa
Thus $e_4=  -\vsi\g^{us} \pr_s $
and since $e_4(s)=1$ we  deduce,
\beaa
\g^{us}=-\frac{1}{\vsi}, \qquad e_4=\pr_s. 
\eeaa
Since,
\beaa
0&=&\g^{u u} \g_{us}+ \g^{us}\g_{ss}+\g^{u 1} \g_{1s}+ \g^{u2} \g_{2s}= \g^{us}\g_{ss},\\
0&=&\g^{u u} \g_{ua}+ \g^{us}\g_{sa}+\g^{u 1} \g_{1a}+ \g^{u2} \g_{2a}= \g^{us}\g_{sa},\\
1&=&\g^{u u} \g_{uu}+ \g^{us}\g_{su}+\g^{u 1} \g_{1u}+ \g^{u2} \g_{2u}= \g^{us}\g_{su},
\eeaa
we deduce
\beaa
\g_{ss}=0, \qquad \g_{s1}=\g_{s2}=0, \qquad \g_{us}=-\vsi.
\eeaa
Thus the metric $\g$ can be expressed in the form,
\beaa
\g&=& - 2\vsi du ds + \g_{uu}  du^2 + 2  \g_{u a } du dy^a +\g_{ab} dy^a dy^b.
\eeaa
We introduce  $\undB^a $ by the condition
\beaa
\g_{ua} =-\g_{ab}\vsi \undB^b.
\eeaa
Therefore
\beaa
\g&=& - 2\vsi du ds + \g_{uu}  du^2 - 2 \g_{ab}\vsi \undB^b du dy^a +\g_{ab} dy^a dy^b\\
&=& - 2\vsi du ds + \g_{uu}  du^2 +\g_{ab}\big( dy^a- \undB^a\vsi du\big) \big( dy^b-\undB^b\vsi du\big)-\g_{ab} \undB^a \undB^b\vsi^2  du^2.
\eeaa
Thus the metric  takes the form
\beaa
\g &=& - 2\vsi du ds + \big( \g_{uu} -\g_{ab} \undB^a \undB^b\vsi^2\big)  du^2 +g_{ab}\big( dy^a- \undB^a\vsi du\big) \big( dy^b-\undB^b \vsi du\big)
\eeaa
where $g_{ab}=\g_{ab}=\g(\pr_{y^a}, \pr_{y^b}) $.

Also, note that we have, since $e_4(u)=0$, $e_4(s)=1$, $e_3(u)=2/\vsi$, and $e_3(s)=\Omb$, 
\beaa
\pr_u &=& \frac{\vsi}{2}\Big(e_3-\Omb e_4-e_3(y^1)\pr_{y^1}-e_3(y^2)\pr_{y^2}\Big).
\eeaa
Since  $\pr_{y^1}, \pr_{y^2}$ span the tangent space  to $S(u, s)$  and are thus    perpendicular to  $e_3, e_4$, we  deduce
\beaa
\g_{au}=\g(\pr_u, \pr_{y^a} )=-\frac{\vsi}{2} e_3(y^b)\g_{ab}
\eeaa
and hence
\beaa
\undB^a &=& \frac{1}{2}e_3(y^a). 
\eeaa
In the same vein
\beaa
\g_{uu} &=&\g(\pr_u, \pr_u)= \vsi^2\Omb+\g_{ab}\undB^a\undB^b\vsi^2,
\eeaa
and hence
\beaa
\g_{uu} -\g_{ab} \undB^a \undB^b\vsi^2 &=& \vsi^2\Omb. 
\eeaa
We deduce, as stated,
\beaa
\g &=& - 2\vsi du ds + \vsi^2\Omb  du^2 +g_{ab}\big( dy^a- \undB^a\vsi du\big) \big( dy^b-\undB^b\vsi du\big).
\eeaa
Also, as we have seen $e_4=\pr_s$ and
\beaa
\pr_u &=& \frac{\vsi}{2}\Big(e_3-\Omb e_4-e_3(y^1)\pr_{y^1}-e_3(y^2)\pr_{y^2}\Big)= \vsi\left(\frac{1}{2}e_3-\frac{1}{2}\Omb e_4-\undB^a\pr_{y^a}\right).
\eeaa
On the other hand, since $ \pr_{y^1}, \pr_{y^2}$ span the same space as $e_1, e_2$, we can write
\beaa
\pr_{y^a}= \sum_{c=1,2} Y_{(a)}^c e_c, \qquad   a=1,2.
\eeaa
Since $\g(e_a, e_b)=\de_{ab}$ we deduce,
\beaa
g_{ab}=\g(\pr_{y^a}, \pr_{y^b})=\g\left(  \sum_{c=1,2} Y_{(a)}^c e_c,  \sum_{c=1,2} Y_{(b)}^d e_d\right)=\sum_{c=1, 2} Y_{(a)}^c Y_{(b)}^c
\eeaa
as stated.

Finally, since we have $\undB^a = e_3(y^a)/2$ and $e_4(y^a)=0$, we infer
\beaa
e_4(\undB^a) &=& \frac{1}{2}[e_4, e_3]y^a=\frac{1}{2}\Big( -2\omb e_4 +2(-\eta_b+\etab_b)e_b\Big)(y^a)= -(\eta+\ze)\c\nab(y^a).
\eeaa
Moreover, since $e_4=\pr_s$, we have 
\beaa
\pr_s\g(\pr_{y^a}, \pr_{y^b})&=&\g(\D_{\pr_s}\pr_{y^a}, \pr_{y^b})+\g(\pr_{y^a}, \D_{\pr_s}\pr_{y^b})= \g(\D_{\pr_{y^a}}\pr_s, \pr_{y^b})+\g(\pr_{y^a}, \D_{\pr_{y^b}}\pr_s)\\
&=& 2 \chi(\pr_a, \pr_b).
\eeaa
This concludes the proof of the lemma.
\end{proof}


\subsubsection{Linearized connection coefficients for geodesic foliations}
\lab{subsection:R-Ga_g-Ga_b}


We  recall that for an outgoing geodesic foliation  we have, 
\beaa
\atrch=\atrchb=0, \qquad \xi=\om=0, \qquad \etab=-\ze.
\eeaa
We define the following renormalized quantities
\beaa
&&\widecheck{\trch} := \trch -\frac{2}{r}, \qquad
\widecheck{\trchb} := \trchb +\frac{2\Up}{r},\qquad 
\widecheck{\omb} := \omb -\frac{m}{r^2},\\
&&\widecheck{K} := K -\frac{1}{r^2},\qquad\,\,\,\,\widecheck{\rho} := \rho +\frac{2m}{r^3},\qquad \quad\,\,\,
\widecheck{\mu} := \mu -\frac{2m}{r^3}, \\
&&\widecheck{\Omb} :=\Omb+\Up, \qquad\quad 
\widecheck{\varsigma} := \varsigma-1,
\eeaa
where 
\beaa
\Up :=1-\frac{2m}{r}.
\eeaa

We  define  the sets 
\bea
\lab{definition:Ga_gGa_b}
\bsplit
\Ga_g &:= \Bigg\{\widecheck{\trch},\,\, \chih, \,\, \ze, \,\, \widecheck{\trchb},\,\,  r\widecheck{\mu} ,\,\,  r\widecheck{\rho}, \,\, r\dual\rho, \,\, r\b, \,\, r\a, \,\, r\widecheck{K}, \,\, r^{-1} \big(e_4(r)-1\big), \,\, r^{-1}e_4(m)\Bigg\},\\
\Ga_b &:= \Bigg\{\eta, \,\,\chibh, \,\, \ombc, \,\, \xib,\,\,  r\bb, \,\, \aa, \,\, r^{-1}\widecheck{\Omb}, \,\,r^{-1}\widecheck{\varsigma}, \,\, r^{-1}(e_3(r)+\Up\big), \,\, r^{-1}e_3(m)  \Bigg\}.
\end{split}
\eea


\subsubsection{Norms on 2-spheres and Hodge operators}


Given a 2-sphere $S(u,s)$ and   $f\in \SS_p(S)$, $p=0,1,2$,    we consider   the following   norms, 
  \bea
  \label{Norms-spacetimefoliation-GSMS}
  \bsplit
  \| f\|_{\infty} :&=\| f\|_{L^\infty(S)}, \qquad  \| f\|_{2} :=\| f\|_{L^2(S)}, \\
  \|f\|_{\infty,k} &= \sum_{i=0}^k \|\dk^i f\|_{\infty },  \qquad 
\|f\|_{2,k}=\sum_{i=0}^k \|\dk^i f\|_{2},
\end{split}
  \eea
  where $\dk^i$ stands for any   combination  of length $i$ of operators  of the form 
   $e_3, r e_4,  r\nab $.

We consider the following Hodge operators acting on  $2$ surface $S$:
\begin{enumerate}
\item The operator $\ddd_1$ takes any $1$-form $f$ into the pairs of
functions $(\div  f  \,,\, \curl f  )$.
\item The operator $\ddd_2$ takes any  2-covariant $S$ tangent symmetric, traceless  tensor $v$ into the $S$ tangent 1-form $\div v$.
\item The operator $\dds_1$ takes the pair of scalar functions 
$(\la, \dual\la)$ into the $S$-tangent 1-form $-\nab\la+\dual\nab\dual\la$.
\item The operator $\dds_2$ takes 1-forms $f$  on $S$ into  the 2-covariant, symmetric,
traceless tensors $-\frac{1}{2}  \nab\hot f$.
\end{enumerate}
Observe that $\dds_1$, resp. $\dds_2$ are  the $L^2$ adjoints of
 $\ddd_1$, respectively $\ddd_2$. 
   
The standard Hodge operators $\ddd_1, \ddd_2$ and their a formal adjoints  $\dds_1, \dds_2$ verify the following identities (see \cite{Ch-Kl} page 38).
\bea
\begin{split}
\label{eq:dcalident}
\dds_1\c\ddd_1&=-\Delta_1+K,\qquad\,\,\,\, \ddd_1 \c\dds_1=-\Delta,\\
\dds_2 \c\ddd_2 &=-\f12\Delta_2+K,\qquad \ddd_2\c\dds_2=-\f12(\Delta_1+K).
\end{split}
\eea


\subsection{The   far spacetime region  $\RR$}
\lab{sec:defintionspacetimeregionRR}


In this paper  we consider a spacetime region $\RR$ foliated by two functions $(u,s)$ such that 
\begin{enumerate}
\item On $\RR$, $(u, s)$  is a geodesic foliation  of lapse $\vsi$ as in section \ref{sec:defoutgoinggeodesicfoliation}.

\item We denote by $(e_4, e_3, e_1, e_2)$ the null frame adapted to the outgoing geodesic foliation $(u,s)$ on $\RR$.

\item Let $(\ug, \sg)$ to real numbers. Let
\bea
\ovS &:=& S(\ug, \sg),
\eea
$\rg$ the area radius of $\ovS$, and $\mg$ the Hawking mass of $\ovS$, where $S(u,s)$ denote the 2-spheres of the outgoing geodesic foliation $(u,s)$ on $\RR$.

\item $\RR$ is covered by two coordinates charts $\RR=\RR_N\cup \RR_S$ such that
\begin{enumerate}
\item The North coordinate chart   $\RR_N$ is given by the coordinates
$(u, s, y_{N}^1, y_{N}^2)$ with    $(y^1_{N})^2+(y^2_{N})^2<2$. 

\item The South coordinate chart  $\RR_S$ is  given by the coordinates
$(u, s, y_{S}^1, y_{S}^2)$  with $(y^1_{S})^2+(y^2_{S})^2<2$. 

 \item   The two coordinate charts   intersect in  the  open equatorial region
 $\RR_{Eq}:=\RR_N\cap \RR_S$ in which both coordinate systems are defined.
 
 \item  In $\RR_{Eq} $   the transition functions  between the two coordinate  systems are given by  the smooth  functions $ \varphi_{SN}$ and $\varphi_{NS}= \varphi_{SN}^{-1} $. 
 \end{enumerate}
 
 \item The metric coefficients for the two coordinate systems   are given by (see Lemma \ref{Lemma:geodesic-coordinates}) 
 \beaa
\g &=& - 2\vsi du ds + \vsi^2\Omb  du^2 +g^{N}_{ab}\big( dy_N^a- \vsi \undB_{N}^a du\big) \big( dy_N^b-\vsi \undB_N^b du\big),\\
\g &=& - 2\vsi du ds + \vsi^2\Omb  du^2 +g^{S}_{ab}\big( dy_S^a- \vsi \undB_{S}^a du\big) \big( dy_S^b-\vsi \undB_S^b du\big),
\eeaa
where
\beaa
\Omb=e_3(s), \qquad \undB_N^a =\frac{1}{2} e_3(y_N^a), \qquad \undB_S^a =\frac{1}{2} e_3(y_S^a).
\eeaa
\end{enumerate}

 \begin{definition} 
\label{defintion:regionRRovr}
Let $m_0>0$ a constant.   Let $\epg>0$   a sufficiently   small   constant, and let  $(\ug, \sg, \rg)$ three real numbers with $\rg$ sufficiently large so that
\bea\lab{eq:rangeofrgandepsilon}
\epg\ll m_0, \qquad\qquad  \rg\gg m_0.
\eea
We define  $\RR$  to be the region
\bea
\lab{definition:RR(dg,epg)}
\RR:=\left\{|u-\ug|\leq\epg,\quad |s-\sg|\leq  \epg \right\},
\eea
such that  assumptions {\bf A1-A3} below  with constant $\epg$  on  the background foliation of $\RR$,   are verified. 
\end{definition}


\subsubsection{Assumptions for the   far region  $\RR$}
\lab{sec:defintionspacetimeregionRR:mainassumptions}


Given an integer $s_{max}\geq 3$, we assume the following.
\begin{enumerate}
\item[\bf A1.]
For  $k\le s_{max}$
\bea\lab{eq:assumtioninRRforGagandGabofbackgroundfoliation}
\bsplit
\| \Ga_g\|_{k, \infty}&\leq  \epg  r^{-2},\\
\| \Ga_b\|_{k, \infty}&\leq  \epg  r^{-1}.
\end{split}
\eea

\item[\bf A2.]  The Hawking mass $m=m(u,s)$ of  $S(u, s)$ verifies 
\bea\lab{eq:assumtionsonthegivenusfoliationforGCMprocedure:Hawkingmass} 
\sup_{\RR}\left|\frac{m}{m_0}-1\right| &\leq& \epg.
\eea

\item[\bf A3.] 
In the  region of their respective validity\footnote{That is  the quantities on the left verify the  same estimates as those for $\Ga_b$, respectively $\Ga_g$.}   we have
\bea
 \undB_N^a,\,\, \undB_S^a \in r^{-1}\Ga_b, \qquad  Z_N^a,\,\, Z_S^a \in \Ga_b,
 \eea
 and,
 \bea
 r^{-2} \widecheck{g}^{N}_{ab},  \,\, r^{-2} \widecheck{g}^{S}_{ab} \in r\Ga_g,
 \eea
 where
 \beaa
 \widecheck{g} ^{N}\!_{ab} &=&   g^N_{ab}   -   \frac{4r^2}{1+(y^1_{N})^2+(y^2_{N})^2) }\de_{ab},\\
 \widecheck{g}^{S}\!_{ab} &=&   g^S_{ab}   -   \frac{4r^2}{(1+(y^1_{S})^2+(y^2_{S})^2) } \de_{ab}.
 \eeaa
\end{enumerate}

\begin{remark}
\lab{remark:GagcupGab}
In  view of \eqref{eq:assumtioninRRforGagandGabofbackgroundfoliation}, we will often replace $\Ga_g$ by $r^{-1} \Ga_b$.
\end{remark}


\subsubsection{Basis of $\ell=1$ modes for the $\RR$-foliation}


{\bf A4.} We assume  the existence of a   smooth family of  scalar functions $\Jp:\RR\longrightarrow\RRR$, for $p=0,+,-$,   verifying the following properties, for all surfaces $S$ of the background foliation.
 \bea
 \lab{eq:Jpsphericalharmonics}
\bsplit
  \Big(r^2\lap+2\Big) \Jp  &= O(\epg),\qquad p=0,+,-,\\
\frac{1}{|S|} \int_{S}  \Jp J^{(q)} &=  \frac{1}{3}\de_{pq} +O(\epg),\qquad p,q=0,+,-,\\
\frac{1}{|S|}  \int_{S}   \Jp   &=O(\epg),\qquad p=0,+,-,
\end{split}
\eea
where $S$ is any sphere of the background foliation of $\RR$.

\begin{remark}
The property of the scalar functions $\Jp$ above is motivated by the fact that the $\ell=1$ spherical harmonics  on the standard sphere  $\SSS^2$, which  are given by
\beaa
J^{(0)}=\cos\th, \qquad J^{(+)}=\sin\th\cos\vphi, \qquad J^{(-)}=\sin\th\sin\vphi,
\eeaa
satisfy\footnote{Note in particular that the following holds true on the standard unit  sphere $\SSS^2$
\beaa
\int_{\mathbb{S}^2}(\cos\th)^2=\int_{\mathbb{S}^2}(\sin\th\cos\vphi)^2=\int_{\mathbb{S}^2}(\sin\th\sin\vphi)^2=\frac{4\pi}{3}, \qquad |\SSS^2|=4\pi.
\eeaa}  \eqref{eq:Jpsphericalharmonics} with $\epg=0$.  
\end{remark}


\subsubsection{Coordinate vectorfields in $\RR$}


Recall that we have,
\beaa
\pr_s &=& e_4,\\
\pr_u &=& \vsi\left(\frac{1}{2}e_3-\frac{1}{2}\Omb e_4-\undB^a\pr_{y^a}\right),\\
\pr_{y^a}&=& \sum_{c=1,2} Y_{(a)}^c e_c, \qquad   a=1,2,
\eeaa
with coefficients  $ Y_{(a)}^c $ verifying
\beaa
g_{ab}=\sum_{c=1, 2} Y_{(a)}^c Y_{(b)}^c.
\eeaa
To simplify we  can choose  $ e_1$  in the direction of $\pr_{y^1}$  so that $ Y_{(1)}^2=0$. In that case 
\beaa
 Y_{(1)}^1 = \sqrt{g_{11}}, \qquad   Y_{(2)}^1=\frac{g_{12} } { \sqrt{g_{11}}}, \qquad  Y^2_{(2)}&=&\sqrt{g_{22}- \frac{g^2_{12} } { g_{11}}}.
\eeaa 
We deduce,
\bea
\lab{eq:assumptionY_a^b}
\bsplit
 Y_{(1)}^1 &=\frac{2r}{(1+|y|^2)^{\frac{1}{2}} } + r^2 \Ga_g,\\
 Y^1_{(2)} &= r^2 \Ga_g,\\
 Y^2_{(2)}&=\frac{2r}{(1+|y|^2)^{\frac{1}{2}} } + r^2 \Ga_g.
 \end{split}
\eea


\subsubsection{Far spacetime region in Kerr}
\lab{section:farspacetimeregionKerr}


We  denote by $(t_0, r_0, \th_0, \vphi_0)$ the standard Boyer-Lindquist coordinates of a Kerr metric $\g_{a_0, m_0}$ with $|a_0|\le m_0$. 
 It is easy to check from the  explicit form of the Kerr metric that for   large $r$, the following asymptotic expansion holds
\bea\lab{eq:asymptoticexpansionfortheKerrmetric}
\g_{a_0,m_0} = \g_{m_0}+O\left(\frac{a_0m_0}{(r_0)^2}\right)\Bigg(    (dt_0)^2+ (dr_0)^2 +r^2 _0\Big(  (d\th_0)^2 + \sin^2 \th_0  (d\vphi_0)^2 \Big) \Bigg),
\eea
where $\g_{m_0}$ denotes the Schwarzschild metric of mass $m_0$.

The following lemma shows that the assumptions on $\RR$ are true in Kerr for sufficiently large $r_0$.
\begin{lemma}\lab{lemma:controlfarspacetimeregionKerrassumptionRR}
Let $\g_{a_0, m_0}$, with $|a_0|\le m_0$, denote a member of the Kerr family of metrics.  Let $u$ a canonical optical function for $\g_{a_0, m_0}$ normalized on the standard foliation of  $\II^+$ by round spheres. Let $S(u, s)$ 
  be the   spheres       of the induced  geodesic foliation, with $s$ the affine parameter, and  $r$ the area radius, normalized such that $\frac{s}{r} =1$ on $ \II^+$.     Define also the  corresponding angular coordinates $\th, \vphi$,  properly normalized  at infinity,  and the corresponding   $\Jp$  defined by  them.         Then,  for $r\geq r_0$ with $r_0=r_0(m_0)$ sufficiently large, the  region $\RR=\{r\geq r_0\}$ satisfies the  assumptions ${\bf A1}$-${\bf A4}$ with the smallness constants 
\beaa
\epg=\frac{a_0m_0}{r_0}, \qquad \dg=\frac{a_0m_0}{r_0}.
\eeaa
\end{lemma}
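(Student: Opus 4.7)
The strategy is to treat Kerr as a perturbation of Schwarzschild using the expansion \eqref{eq:asymptoticexpansionfortheKerrmetric}, observe that all linearized quantities appearing in $\Ga_g$ and $\Ga_b$ vanish identically on the Schwarzschildian outgoing geodesic foliation, and then track the size of the Kerr perturbation through the definitions.

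First, I construct the relevant geometric data. The optical function $u$ normalized on $\II^+$ exists globally on the far region of Kerr by a standard eikonal argument; its level sets are asymptotic to the Schwarzschildian retarded time and differ from it by $O(a_0 m_0/r)$. Taking $e_4 = \vsi L$ with $L = -\g_{a_0,m_0}^{\alpha\beta}\partial_\beta u \partial_\alpha$ and fixing $\vsi$ by $L(\vsi)=0$, the affine parameter $s$ with $e_4(s)=1$ and $s/r=1$ on $\II^+$ provides the geodesic foliation $S(u,s)$. By construction, on the Schwarzschild background ($a_0 = 0$) this foliation coincides with the standard one for which
\[
\trch = \tfrac{2}{r},\quad \trchb = -\tfrac{2\Up}{r},\quad \omb = \tfrac{m_0}{r^2},\quad \rho = -\tfrac{2m_0}{r^3},\quad \mu = \tfrac{2m_0}{r^3},
\]
and all remaining elements of $\Ga_g \cup \Ga_b$ (namely $\chih,\chibh,\eta,\ze,\xib,\aa,\a,\b,\bb,\vsocheck,\Ombcheck$, and the derivatives $e_4(r)-1$, $e_3(r)+\Up$, $e_4(m)$, $e_3(m)$) are \emph{identically zero}. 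Consequently the linearized quantities $\widecheck{\trch},\widecheck{\trchb},\widecheck{\omb},\widecheck{\rho},\widecheck{\mu},\widecheck{K}$ also vanish on the Schwarzschild background.

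Next, I insert the expansion \eqref{eq:asymptoticexpansionfortheKerrmetric} into the definitions. Each Ricci and curvature coefficient depends smoothly and algebraically on $\g_{a_0,m_0}$, its inverse, and its first two derivatives, and similarly the null frame $(e_3,e_4,e_1,e_2)$ depends smoothly on solving the eikonal equation and the ODE defining $s$ and the parallel propagation of $(e_1,e_2)$. Differentiating \eqref{eq:asymptoticexpansionfortheKerrmetric} and its inverse preserves the $O(a_0 m_0/r^2)$ size (with appropriate $r$-weights), so each quantity $q \in \Ga_g$, resp.\ $q\in\Ga_b$, satisfies $q = q_{Schw} + O(a_0 m_0/r^2) \cdot r^{-2}$, resp.\ $q = q_{Schw} + O(a_0 m_0/r^2) \cdot r^{-1}$. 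Since $q_{Schw} = 0$ for all listed quantities and $r \geq r_0$, this gives
\[
\|\Ga_g\|_{k,\infty} \lesssim \frac{a_0 m_0}{r_0}\cdot r^{-2} = \epg\, r^{-2},\qquad \|\Ga_b\|_{k,\infty} \lesssim \frac{a_0 m_0}{r_0}\cdot r^{-1} = \epg\, r^{-1},
\]
for every $k \leq s_{max}$, establishing \textbf{A1}. For \textbf{A2}, since the Hawking mass of the standard spheres in Schwarzschild equals $m_0$ identically and $\trch\trchb$ depends smoothly on the metric, the same perturbation argument gives $|m/m_0-1|\lesssim a_0 m_0/r_0 = \epg$.

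For \textbf{A3}, I work in two stereographic coordinate charts $(y^1_N,y^2_N)$ and $(y^1_S,y^2_S)$ built from the Kerr angular variables $(\th_0,\vphi_0)$ (which coincide with the round-sphere angular coordinates on $\II^+$ up to $O(a_0 m_0/r)$ corrections). In Schwarzschild, $e_3$ preserves these angular coordinates, so $\undB^a = \tfrac{1}{2}e_3(y^a)$ and $Z^a$ vanish identically; the Kerr expansion \eqref{eq:asymptoticexpansionfortheKerrmetric} then forces $\undB^a, Z^a$ into $r^{-1}\Ga_b$ and $\Ga_b$ respectively, with the quoted bounds. Similarly, the round metric on $\SSS^2$ in stereographic coordinates is exactly $\frac{4}{(1+|y|^2)^2}\de_{ab}$, so $\widecheck{g}_{ab}$ vanishes on Schwarzschild and inherits the $O(a_0 m_0/r^2)$ perturbation, which fits the $r\Ga_g$ bound. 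Finally, for \textbf{A4}, I take $\Jp$ to be the pullbacks of the standard round-sphere spherical harmonics $\cos\th_0, \sin\th_0\cos\vphi_0, \sin\th_0\sin\vphi_0$. These exactly satisfy \eqref{eq:Jpsphericalharmonics} on the Schwarzschild round spheres (with zero error); the differences $\lap^{S(u,s)}-\lap^{\SSS^2/r^2}$ and $|S(u,s)|^{-1}\int_{S(u,s)}- \tfrac{1}{4\pi r^2}\int_{\SSS^2}$ are controlled by the deviation of $g^S_{ab}$ from the round metric, which by Step above is $O(a_0 m_0/r_0)=O(\epg)$, giving exactly the claimed error.

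The only genuinely delicate step is the construction of $u$ itself with sufficient regularity in $r \geq r_0$ so that all derivatives up to order $s_{max}$ can be controlled; this is handled by solving the eikonal equation with data given by the standard Schwarzschildian retarded time prescribed on a large sphere and using the asymptotic flatness of Kerr to extract decay of the correction. Once $u$ is constructed, everything else is a straightforward perturbation calculation from Schwarzschild, where the key algebraic miracle is that every quantity in $\Ga_g \cup \Ga_b$ is designed precisely to vanish on Schwarzschild, so its Kerr value is \emph{entirely} given by the $O(a_0 m_0/r^2)$ error term in \eqref{eq:asymptoticexpansionfortheKerrmetric}.
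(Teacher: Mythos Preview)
Your proposal is correct and follows essentially the same approach as the paper: both treat Kerr as a perturbation of Schwarzschild via \eqref{eq:asymptoticexpansionfortheKerrmetric}, observe that every quantity in $\Ga_g\cup\Ga_b$ vanishes identically on the Schwarzschild outgoing geodesic foliation, and then read off the $O(a_0 m_0/r_0)$ size of the Kerr correction to verify \textbf{A1}--\textbf{A4}. The paper's version is slightly more explicit---it writes out $u=u_{m_0}+O(a_0m_0/r_0)$ with $u_{m_0}=t_0-r_0-2m_0\log(r_0-2m_0)$ and displays the resulting frame vectors term by term---while you phrase the same computation more conceptually; the only cosmetic difference is that the paper defines $(\th,\vphi)$ and hence $\Jp$ by $e_4$-transport from $\II^+$ rather than directly from the Boyer--Lindquist angles $(\th_0,\vphi_0)$, but since these differ by $O(a_0m_0/r_0^2)$ this does not affect the $O(\epg)$ bounds.
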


\begin{proof}
 Let $u=u_{a_0, m_0} $ be   the desired  optical function  for the metric   $\g_{a_0, m_0}$. Also,   let $u_{m_0}:= t_0-r_0-2m_0\log(r_0-2m_0)$ the corresponding  canonical Schwarzschild optical function. Then, in  view  of the  asymptotic  expansion \eqref{eq:asymptoticexpansionfortheKerrmetric} of $\g_{a_0,m_0}$,  we deduce,
 \beaa
 u=u_{m_0}+O\left(\frac{a_0m_0}{r_0}\right).
 \eeaa
 The corresponding null  geodesic  gradient of $u$ is given by
\beaa
e_4 &=&-\g_{a_0,m_0}^{\a\b}\pr_\a u\pr_\b=      \frac{1}{1-\frac{2m_0}{r_0}}\pr_{t_0}+\pr_{r_0}+O\left(\frac{a_0m_0}{(r_0)^2}\right)\left(\pr_{t_0}, \pr_{r_0}, \frac{1}{r_0}\pr_{\th_0}, \frac{1}{r_0}\pr_{\vphi_0}\right)
\eeaa
from which we  easily calculate the affine parameter $s$, $e_4(s)=1$, the   area radius $r$  of the spheres $S(u, s)$  and the coordinates $\th, \vphi$  for which  $e_4(\th)= e_4(\vphi)=0$,
\beaa
&& s = r_0+O\left(\frac{a_0m_0}{r_0}\right),\qquad\qquad\, r = r_0+O\left(\frac{a_0m_0}{r_0}\right),\\
&&\th = \th_0+O\left(\frac{a_0m_0}{(r_0)^2}\right),\qquad \qquad \varphi = \varphi_0+O\left(\frac{a_0m_0}{(r_0)^2}\right).
\eeaa
The   frame adapted to the spheres $S(u, s)$ is given by
\beaa
e_4 &=& \frac{1}{1-\frac{2m_0}{r_0}}\pr_{t_0}+\pr_{r_0}+O\left(\frac{a_0m_0}{(r_0)^2}\right)\left(\pr_{t_0}, \pr_{r_0}, \frac{1}{r_0}\pr_{\th_0}, \frac{1}{r_0}\pr_{\vphi_0}\right),\\
e_3 &=& \pr_{t_0}-\left(1-\frac{2m_0}{r_0}\right)\pr_{r_0}+O\left(\frac{a_0m_0}{(r_0)^2}\right)\left(\pr_{t_0}, \pr_{r_0}, \frac{1}{r_0}\pr_{\th_0}, \frac{1}{r_0}\pr_{\vphi_0}\right),\\
e_1 &=& \frac{1}{r_0}\pr_{\th_0}+O\left(\frac{a_0m_0}{(r_0)^2}\right)\left(\pr_{t_0}, \pr_{r_0}, \frac{1}{r_0}\pr_{\th_0}, \frac{1}{r_0}\pr_{\vphi_0}\right),\\
e_2 &=& \frac{1}{r_0\sin(\th_0)}\pr_{\vphi_0}+O\left(\frac{a_0m_0}{(r_0)^2}\right)\left(\pr_{t_0}, \pr_{r_0}, \frac{1}{r_0}\pr_{\th_0}, \frac{1}{r_0}\pr_{\vphi_0}\right).
\eeaa
This immediately yields for the Ricci coefficients associated to the frame $(e_4, e_3, e_1, e_2)$
\beaa
\Ga &=& \Ga_{m_0}+O\left(\frac{a_0m_0}{(r_0)^3}\right),
\eeaa
where $\Ga_{m_0}$ denotes the corresponding value of the Ricci coefficients for the Schwarzschild metric $g_{m_0}$. We have a similar statement for the curvature components, so that  the  assumptions ${\bf A1}$ and ${\bf A2}$ are indeed verified in the region $\RR=\{r\geq r_0\}$ with the smallness constants 
\beaa
\epg=\frac{a_0m_0}{r_0}, \qquad \dg=\frac{a_0m_0}{r_0}.
\eeaa
The statement for ${\bf A4}$ follows from the definition of $\Jp$, and the above asymptotics for $\th$ and $\vphi$. Finally, one can easily define two coordinate systems $(y^1_N, y^2_N)$ and $(y^1_S, y^2_S)$, initialized by stereographic coordinates on $\II_+$ and  transported  in the interior by $e_4$, so that  ${\bf A3}$ holds as well.
\end{proof}


\subsection{$O(\epg)$-spheres}
\lab{section:epg-spheres}


\begin{definition}
Given a compact    $2$-surface $\S\subset \RR$, not necessarily   a leaf  $S(u,s)$ of the  background geodesic foliation of $\RR$, we  denote
\begin{itemize}
\item by $\chi^\S$, $\chib^\S$, $\ze^\S$,..., the corresponding Ricci coefficients, 

\item by $\a^\S$, $\b^\S$, $\rho^\S$, ..., the corresponding curvature coefficients, 

\item by $r^\S$, $m^\S$, $K^\S$ and $\mu^\S$ respectively the corresponding area radius, Hawking mass, Gauss curvature  and mass aspect function,

\item by $\dddS_1, \dddS_2$,
   $\ddsSone, \ddsStwo$  the corresponding Hodge operators and by  $\nab^\S$ the corresponding  covariant derivative.
\end{itemize}
\end{definition}

\begin{remark}\lab{rem:intrinsicdefinitionofrSKSmS}
Note that the quantities $r^\S,  \chi^\S, \chib^\S, \ze^\S, \a^\S, \b^\S, \rho^\S, \rhod^\S, \bb^\S, \aa^\S, \mu^\S, m^\S$   are well defined on $\S$
 and, in addition,  $m^\S$, $K^\S$, $\rho^\S$ and $\mu^\S$ are   invariant with respect to change of scale transformations    $\la\to (\la e_4^\S, \la^{-1}e_3^\S)$, where $(e_4^\S, e_3^\S, e_1^\S, e_2^\S)$ is a null frame adapted to $\S$. See also Remark \ref{remark:welldefinedGa}.
\end{remark}

\begin{definition}
Given a scalar $h$ on $\S$, we denote  by $ \ov{h}^\S $  and  $\widecheck{h}^\S$ the average and average
 free part\footnote{Note that  the operation $\,\widecheck{}\, $ has a different meaning  here than  the one we used 
 earlier   in the definition of  $\Ga_g, \Ga_b$.   To avoid confusion we will always  use  $\widecheck{}\,^\S$   to refer to  the    average free part of a  scalar function on $\S$.}  of  $h$, i.e. 
 \beaa
 \ov{h}^\S=\frac{1}{|\S|} \int_\S h, \qquad \widecheck{h}^\S=h- \ov{h}^\S.
 \eeaa
\end{definition}

\begin{definition}
We  will work with  the following weighted Sobolev norms  on $\S$
   \bea
   \lab{definition:spaceH^k(boldS)}
\| f\|_{\hk_s(\S)} &:=& \sum_{i=0}^s \|( \dkb^\S )^i f\|_{L^2(\S)}, \qquad   \dkb^\S =r^\S \nab^\S.
\eea
\end{definition}

The goal of this paper is to construct  new spheres $\S\subset \RR$   which  verify  special properties we call GCM conditions. In particular these  spheres are  close  to being a round sphere in the sense of the definition  below.

\begin{definition}\lab{def:defintionofOofepgspheres}
A compact surface $\S\subset \RR$ of area radius $r^\S$ is called a $O(\epg)$-sphere  provided that the Gauss curvature $K^\S$ of $\S$ verifies
\bea
K ^\S&=& \frac{1+O(\epg)}{(r ^\S)^2}
\eea
as well as
\bea
\left\|K^\S -\frac{1}{(r^\S)^2}\right\|_{\hk_{s_{max}-1}(\S)} &\les& (r^\S)^{-1}\epg,
\eea
and the area radius $r^\S$ verifies 
\bea
\sup_\S |r^\S-r| &\les& \epg.
\eea
\end{definition}

\begin{remark}
Note that the  spheres $S(u,s)$ of the background foliation of $\RR$ are $O(\epg)$-spheres.
\end{remark}


\subsubsection{Definition of  $\ell=1$ modes on $O(\epg)$-spheres}


We give below a general definition of $\ell=1$ modes   on any $O(\epg)$-sphere  $\S\subset \RR$.

\begin{definition}
\lab{definition-spherical harmonicsS}  Let $\S\subset\RR $   be a  $O(\epg)$ sphere as defined   above. 
  We say that  a triplet  $\JpS$,  $p\in\big\{ -, 0, +\} $,  of  smooth functions on $\S$  is a basis of    $\ell=1$   modes  on $\S$ if  the following are verified 
\bea\lab{eq:basicpropertiesJpSonOepsphere}
\bsplit
\Big((r^\S)^2 \Delta^\S +2\Big)\JpS  &=  O(\epg), \qquad p=0,+,-,\\
\frac{1}{|\S|} \int_{\S}  \JpS J^{( \S, q )} &=  \frac{1}{3}\de_{pq} +O(\epg), \qquad p,q=0,+,-,\\
\frac{1}{|\S|}  \int_S\JpS   &=O(\epg),\qquad p=0,+,-.
\end{split}
\eea
\end{definition}

 \begin{definition}
 We define the $\ell=1$ modes  of scalars and 1-forms on an $O(\epg)$-sphere $\S$  as follows.
  \lab{definition:ell=1modes} 
  \begin{enumerate}
\item    If $\la$ is  a scalar function  on $\S$, we define the triplet 
\bea\lab{eq:defell=1forscalarsonOofepgspheres}
\la_{\ell=1}= \left\{ \int_\S \JpS \la, \qquad p\in\big\{ -, 0, +\big\} \right\}
\eea
  and set
  \beaa
  |(\la)_{\ell=1}| &=&   \sum_{p=0,+,-}\left|\int_\S \JpS\la\right|.
  \eeaa
  
 \item  If   $f$ is a 1-form  on $\S$, we define the  sextet\footnote{Recall  that  $ \ddd_1f=(\div f, \curl f)$.} 
  \beaa
 ( f)_{\ell=1} :&=&  \left\{ \int_\S \JpS \ddd^\S_1f, \quad p\in\big\{ -, 0, +\big\} \right\}
 \eeaa
 and set
  \beaa
  |(f)_{\ell=1}| &=&   \sum_{p=0,+,-}\left|\int_\S \JpS\ddd^\S_1f\right|.
  \eeaa
 \end{enumerate}
    \end{definition}

\begin{lemma}
\lab{Lemma-spherical harmonicsS}
Assume $\S\subset \RR$ is a  sphere endowed with a basis of  $\ell=1$  modes as in  Definition \ref{definition-spherical harmonicsS}  above.
Then, provided that  $\epg>0$ is chosen small enough, the following Poincar\'e inequality holds for any 1-form $f$ on $\S$
\bea
\lab{Hodge:ddd1-dds2}
\int_\S|\ddd^\S_1f|^2 &\les& \int_\S|\ddsStwo  f |^2+r^{-2}  |(f)_{\ell=1}|^2.
\eea
 Note also the obvious inequality
  \bea
  \lab{estimate:badmodeless}
  |(f)_{\ell=1}| &\les & r\|\ddd^\S_1f\|_{L^2(\S)}.
  \eea
\end{lemma}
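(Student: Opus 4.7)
My plan is to combine the Hodge identities \eqref{eq:dcalident} applied on $\S$ with the Hodge decomposition of $f$ and the approximate spectral information in \eqref{eq:basicpropertiesJpSonOepsphere}. For the easy bound \eqref{estimate:badmodeless}, Cauchy--Schwarz gives, for each $p\in\{-,0,+\}$,
\[
\left|\int_\S \JpS\,\ddd_1^\S f\right| \le \|\JpS\|_{L^2(\S)}\,\|\ddd_1^\S f\|_{L^2(\S)},
\]
and the second line of \eqref{eq:basicpropertiesJpSonOepsphere} yields $\|\JpS\|_{L^2(\S)}^2 = |\S|/3 + O(\epg|\S|) \les (r^\S)^2$, so summing over $p$ gives \eqref{estimate:badmodeless}.

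For the main estimate \eqref{Hodge:ddd1-dds2}, subtracting the two relations in \eqref{eq:dcalident} produces the operator identity $\ddsSone\ddd_1^\S - 2\ddd_2^\S\ddsStwo = 2K^\S$; pairing with $f$ and using adjointness yields
\[
\int_\S |\ddd_1^\S f|^2 = 2 \int_\S |\ddsStwo f|^2 + 2 \int_\S K^\S |f|^2.
\]
Since $\S$ is an $O(\epg)$-sphere with $K^\S = (r^\S)^{-2}(1 + O(\epg))$, this reduces \eqref{Hodge:ddd1-dds2} to the 1-form Poincar\'e inequality
\[
\int_\S |f|^2 \les (r^\S)^2 \int_\S |\ddsStwo f|^2 + |(f)_{\ell=1}|^2.
\]
To prove this I would apply the Hodge decomposition on the topological sphere $\S$: there exist unique mean-zero scalars $\phi, \psi$ with $f = \ddsSone(\phi, \psi)$, since $\ddd_1^\S\ddsSone = -\Delta^\S$ and $\S$ carries no harmonic 1-forms. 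Integration by parts then yields $\int_\S|f|^2 = \int_\S(|\nab^\S\phi|^2 + |\nab^\S\psi|^2)$ and $\int_\S|\ddd_1^\S f|^2 = \int_\S((\Delta^\S\phi)^2 + (\Delta^\S\psi)^2)$.

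Next I would split $\phi = \phi_h + \phi_\perp$, where $\phi_h$ is the $L^2(\S)$-projection onto the linear span of $\{\JpS\}_{p=0,+,-}$ and $\phi_\perp$ is orthogonal to it, and similarly for $\psi$. For the orthogonal parts, the spectral gap of $\Delta^\S$ on $\S$ (whose next nontrivial eigenvalue is forced to exceed $6(r^\S)^{-2}(1-O(\epg))$ by a perturbation argument from the round sphere using \eqref{eq:basicpropertiesJpSonOepsphere}), combined with the commutator identity above, controls $\int_\S(|\nab^\S\phi_\perp|^2 + |\nab^\S\psi_\perp|^2) \les (r^\S)^2 \int_\S|\ddsStwo f|^2$. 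For the horizontal parts, integration by parts and the first line of \eqref{eq:basicpropertiesJpSonOepsphere} give
\[
\int_\S \JpS\,\div^\S f = -\int_\S \JpS\,\Delta^\S\phi = \frac{2}{(r^\S)^2}\int_\S \JpS\,\phi + O(\epg)(r^\S)^{-2}\|\phi\|_{L^2(\S)},
\]
with an analogous identity for $\curl^\S f$ and $\psi$, so that the projection coefficients of $\phi, \psi$ onto $\{\JpS\}$ are controlled by $(r^\S)^2 |(f)_{\ell=1}|$ modulo absorbable $\epg$-errors. Combining the two contributions yields the Poincar\'e estimate and hence \eqref{Hodge:ddd1-dds2}. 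The main technical obstacle is precisely that the $\JpS$ are only approximate eigenfunctions of $\Delta^\S$: the perturbative spectral gap argument for $(\phi_\perp, \psi_\perp)$ and the inversion for the horizontal coefficients both introduce $O(\epg)$ cross-terms, and the identity $\ddsStwo\ddsSone(\JpS,0) = O(\epg)/r^\S$ is needed to decouple $\ddsStwo f_h$ from $\ddsStwo f_\perp$. These errors must be absorbed into the left-hand side, which is exactly where the smallness hypothesis on $\epg$ in the statement is used.
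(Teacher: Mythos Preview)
Your proof is correct and follows essentially the same route as the paper: both use the identity $\int_\S|\ddd_1^\S f|^2 = 2\int_\S|\ddsStwo f|^2 + 2\int_\S K^\S|f|^2$ from the Hodge relations \eqref{eq:dcalident}, the decomposition $f=\ddsSone(h,\dual h)$ with mean-zero potentials, and a perturbative spectral gap for $\Delta^\S$ relative to the round sphere.

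The only organizational difference is that the paper does not isolate an intermediate Poincar\'e inequality on $\int_\S|f|^2$; after substituting the Hodge potentials it works directly with the quadratic form
\[
\int_\S|\ddsStwo f|^2=\tfrac12\int_\S|\Delta^\S h|^2-\frac{1+O(\epg)}{(r^\S)^2}\int_\S|\nab^\S h|^2+(\text{same for }\dual h),
\]
and bounds it from below in one step by $\gtrsim \int_\S|\Delta^\S h|^2-r^{-2}|(\Delta^\S h)_{\ell=1}|^2$ (and likewise for $\dual h$), which is exactly $\gtrsim\int_\S|\ddd_1^\S f|^2-r^{-2}|(f)_{\ell=1}|^2$. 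Your detour through the $\phi_h/\phi_\perp$ splitting and the near-kernel identity $\ddsStwo\ddsSone(\JpS,0)=O(\epg)(r^\S)^{-1}$ is equivalent but slightly longer; the paper's packaging avoids having to separately decouple $\ddsStwo f_h$ from $\ddsStwo f_\perp$.
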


\begin{proof}
There exists a pair of scalar functions $(h, \dual h)$ on $S$ such that 
\beaa
f=\dds^\S_1(h,\dual h), \qquad \int_\S h=\int_\S \dual h=0, \qquad (f)_{\ell=1}=((\Delta^\S h)_{\ell=1}, (\Delta^\S\dual h)_{\ell=1}).
\eeaa
In particular, we have
\beaa
\ddd^\S_1f = (-\Delta^\S h, \Delta^\S\dual h). 
\eeaa
We infer
\beaa
\int_\S|\dds^\S_2f|^2 &=& \int_\S f\ddd^\S_2\dds^\S_2f = \int_\S f(\dds^\S_1\ddd^\S_1-2K)f= \int_\S|\ddd^\S_1f|^2 -\frac{2+O(\epg)}{(r^\S)^2}\int_\S|f|^2\\ 
&=& \int_\S|\Delta^\S h|^2 -\frac{2+O(\epg)}{(r^\S)^2}\int_\S|\nabla^\S h|^2 + \int_\S|\Delta^\S\dual h|^2 -\frac{2+O(\epg)}{(r^\S)^2}\int_\S|\nabla^\S\dual h|^2.
\eeaa

Now, comparing $(r^\S)^2\Delta^\S$ with the Laplace-Beltrami\footnote{Recall that the two first non zero eigenvalues of $-\Delta_{\SSS^2}$ are given respectively by 2 and 6.} $\Delta_{\SSS^2}$ on the standard sphere $\SSS^2$, we infer by a we have by a standard perturbation argument that
\beaa
\int_\S|\Delta^\S h|^2 -\frac{2+O(\epg)}{(r^\S)^2}\int_\S|\nabla^\S h|^2 &\geq& \int_\S|\Delta^\S h|^2 -r^{-2}\left((\Delta^\S h)_{\ell=1}\right)^2,\\
\int_\S|\Delta^\S\dual h|^2 -\frac{2+O(\epg)}{(r^\S)^2}\int_\S|\nabla^\S\dual h|^2 &\geq& \int_\S|\Delta^\S \dual h|^2 -r^{-2}\left((\Delta^\S \dual h)_{\ell=1}\right)^2.
\eeaa
We infer
\beaa
\int_\S|\dds^\S_2f|^2 &=& \int_\S|\Delta^\S h|^2 -\frac{2+O(\epg)}{(r^\S)^2}\int_\S|\nabla^\S h|^2 + \int_\S|\Delta^\S\dual h|^2 -\frac{2+O(\epg)}{(r^\S)^2}\int_\S|\nabla^\S\dual h|^2\\
&\geq& \int_\S\big(|\Delta^\S h|^2+|\Delta^\S\dual h|^2\big) - r^{-2}\left((\Delta^\S h)_{\ell=1}\right)^2 - r^{-2}\left((\Delta^\S \dual h)_{\ell=1}\right)^2
\eeaa
and hence
\beaa
\int_\S|\dds^\S_2f|^2 &\geq&  \int_\S|\dddS_1  f |^2 -r^{-2}((f)_{\ell=1})^2
\eeaa
as desired.
\end{proof}

  
\subsubsection{Elliptic lemma for Hodge systems}


 \begin{lemma} 
\label{prop:2D-Hodge}
Let $\S\subset \RR $ be a $O(\epg)$-sphere endowed  with a basis of $\ell=1$ modes  as in Definition  \ref{definition-spherical harmonicsS}. Then, for all $k\le s_{max}$,
\begin{enumerate}
\item  If   $  f  \in \SS_1(\S)$
\bea
 \|  f\|_{\hk_{k+1}   (\S)}    \les r   \|\dddS_1   f  \|_{\hk_k(\S)}.
\eea

\item If $v\in \SS_2(\S)$
\bea
 \|  v\|_{\hk_{k+1}   (\S)}    \les r   \|\dddS_2   v  \|_{\hk_k(\S)}.
\eea

\item  If  $\la, \mu\in \SS_0(\S)$
\bea
  \| (\widecheck{\la}, \widecheck{\mu})\|_{\hk_{k+1} (\S) } \les r  \|\ddsSone\, (\la, \mu)\|_{\hk_{k}(\S)}.
\eea

\item   If  $   f  \in \SS_1(\S) $
\bea
  \|  f\|_{\hk_{k+1}  (\S)}&\les&  r  \|\ddsStwo\, f\|_{\hk_k(\S)}+\big| (f)_{\ell=1}\big|.
  \eea

\end{enumerate}
\end{lemma}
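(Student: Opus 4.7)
The plan is first to handle the base case $k=0$ of each inequality using the Hodge identities \eqref{eq:dcalident} together with the positivity and near-flatness of the Gauss curvature on an $O(\epg)$-sphere, and then to iterate to higher $k$ via a Bochner-type induction on the weighted derivative $\dkb^\S = r^\S\nab^\S$.

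For part (1), integration by parts with $\dds^\S_1\ddd^\S_1 = -\Delta_1 + K^\S$ against a $1$-form $f$ yields
$$\int_\S |\ddd^\S_1 f|^2 \;=\; \int_\S \bigl(|\nab^\S f|^2 + K^\S |f|^2\bigr).$$
By Definition~\ref{def:defintionofOofepgspheres}, $K^\S = (r^\S)^{-2}(1+O(\epg))$ is positive and comparable to $(r^\S)^{-2}$, which controls both $\|f\|_{L^2(\S)}$ and $r^\S\|\nab^\S f\|_{L^2(\S)}$ by $r^\S\|\ddd^\S_1 f\|_{L^2(\S)}$, settling the $k=0$ case. Part (2) is analogous, starting from $\dds^\S_2\ddd^\S_2 = -\frac{1}{2}\Delta_2 + K^\S$. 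For part (3), the identity $\ddd^\S_1\dds^\S_1 = -\Delta$ gives $\int_\S |\dds^\S_1(\la,\mu)|^2 = \int_\S(|\nab^\S\la|^2+|\nab^\S\mu|^2)$, and the standard Poincar\'e inequality applied to the mean-zero $\widecheck{\la},\widecheck{\mu}$ (uniform on $O(\epg)$-spheres, by perturbation of $\SSS^2$) closes the $k=0$ case. Part (4) is immediate from the $k=0$ case of part (1) combined with the Poincar\'e-type inequality \eqref{Hodge:ddd1-dds2} supplied by Lemma~\ref{Lemma-spherical harmonicsS}.

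For the higher-order estimates I proceed by induction on $k \le s_{max}$. Commuting $\dkb^\S$ with any of $\ddd^\S_1,\ddd^\S_2,\dds^\S_1,\dds^\S_2$ produces commutators of the schematic form $K^\S \cdot (\text{lower-order})$; using the bound $\|K^\S - (r^\S)^{-2}\|_{\hk_{s_{max}-1}(\S)}\lesssim (r^\S)^{-1}\epg$ from the $O(\epg)$-sphere assumption, together with product estimates on $\S$, these commutators are absorbed by the inductive hypothesis and by the leading elliptic term. The regularity ceiling $k\le s_{max}$ in the statement matches exactly the regularity available for $K^\S-(r^\S)^{-2}$.

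The main obstacle I anticipate lies in part (4): when commuting derivatives through, one must avoid introducing higher-order $\ell=1$ contributions such as $|(\nab^\S f)_{\ell=1}|$ on the right-hand side, since the statement only permits the base-level term $|(f)_{\ell=1}|$. This is dealt with by invoking \eqref{estimate:badmodeless} at each inductive step: any such higher-mode contribution is already dominated by $r^\S\|\ddd^\S_1 f\|_{\hk_k(\S)}$, hence, after one more application of Lemma~\ref{Lemma-spherical harmonicsS}, by $\|\dds^\S_2 f\|_{\hk_k(\S)}+|(f)_{\ell=1}|$ as required.
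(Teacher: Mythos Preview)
Your proposal is correct and follows essentially the same route as the paper, which simply cites \cite{Ch-Kl} for the $k=0$ case of (1)--(3), invokes Lemma~\ref{Lemma-spherical harmonicsS} for (4), and then appeals to standard elliptic regularity together with the $O(\epg)$-control of $K^\S$ for higher $k$. Your anticipated obstacle in part~(4) is in fact avoidable: writing $-\Delta_1 f = 2\,\ddd^\S_2\dds^\S_2 f + K^\S f$ and using elliptic regularity for $\Delta_1$, the $\ell=1$ contribution enters only at the base of the induction via the $k=0$ estimate, so no higher-order modes need be tracked.
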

\begin{remark}
Note that, in view of  our  ${\bf A1, A3}$ assumptions  the results of Lemma \ref{prop:2D-Hodge}  hold true for the spheres  $S$ of the background foliation.
\end{remark}

\begin{proof}
The case $k=0$  for the first three estimates can be found in \cite{Ch-Kl}, and concerning the last estimate, it follows from Lemma \ref{Lemma-spherical harmonicsS}. The, case $1\leq k\leq s_{max}$ follows by standard elliptic regularity and the control of $K^\S$ for an  $O(\epg)$-sphere $\S$.
\end{proof}


\subsubsection{A lemma concerning the solvability of $\Delta^\S+2/(r^\S)^2$}


\begin{lemma}\lab{lemma:sovabilityoftheoperatorDeltaSplus2overrsquare}
    Let $\S\subset\RR $   be a  $O(\epg)$-sphere  endowed with a triplet   $\JpS$ of  $\ell=1$ modes as in
    Definition \ref {definition-spherical harmonicsS}. The following hold true.
\begin{enumerate}
\item The operator $\Delta^\S +2/(r^\S)^2$ admits three eigenvalues $\nu_p$ with corresponding eigenfunction on $S$  $\jp$, $p=0,+,-$, verifying
\beaa
\nu_p=O\left(\frac{\epg}{r^2}\right), \qquad \jp=\JpS+O\left(\epg\right), \quad p=0,+,-.
\eeaa

\item Any other eigenvalue $\nu$ of  $ \Delta^\S+2/(r^\S)^2$ satisfies $|\nu|\geq r^{-2}$. 

\item Consider the equation 
\beaa
\left(\Delta^\S+\frac{2}{(r^\S)^2}\right)\la &=& h+\sum_pc_pJ^{(p,\S)},
\eeaa
where $\la$ and $h$ are scalar functions and $c_p$ are constants. Then, given three constants $\la_p$, $p=0,+,-$, there exists unique constants $c_p$, and a unique scalar function $(\la)^\perp$ such that the solution $\la$ is given by
\beaa
\la=(\la)^\perp+\sum_p\la_p j^{(p)}, \qquad \int_\S(\la)^\perp  j^{(p)}=0,
\eeaa
with $c_p$ and $(\la)^\perp$ verifying 
\beaa
\sum_p|c_p| + r^{-3}\|\widecheck{(\la)^\perp}^\S\|_{\hk_2(\S)} &\les& r^{-1}\|\widecheck{h}^\S\|_{L^2(\S)}+\frac{\epg}{r^2}\sum_p|\la_p|,
\eeaa 
\beaa
|\ov{(\la)^\perp}^\S| &\les& r^2|\ov{h}^\S|+\epg \sum_p|\la_p|,
\eeaa
 and for $0\leq s\leq s_{max}-1$,
\beaa
r^{-3}\|\widecheck{(\la)^\perp}^\S\|_{\hk_{s+2}(\S)} &\les& r^{-1}\|\widecheck{h}^\S\|_{\hk_s(\S)}+\frac{\epg}{r^2}\sum_p|\la_p|,
\eeaa
where 
\beaa
\widecheck{(\la)^\perp}^\S = (\la)^\perp - \ov{(\la)^\perp}^\S, \qquad \widecheck{h}^\S = h - \ov{h}^\S.
\eeaa
\end{enumerate}
\end{lemma}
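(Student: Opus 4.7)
The plan is to view $L := \Delta^\S + 2/(r^\S)^2$ as an $O(\epg)$-perturbation, in the appropriate Sobolev topology, of the corresponding operator on the round sphere of radius $r^\S$. The spectrum of the latter is $\{(2-\ell(\ell+1))/(r^\S)^2 : \ell\geq 0\}$: the $\ell=1$ eigenvalue vanishes, with a $3$-dimensional eigenspace spanned by the standard spherical harmonics, and every other eigenvalue has absolute value at least $2/(r^\S)^2$ (attained at $\ell=0$). Since, by Definition \ref{def:defintionofOofepgspheres} and assumption ${\bf A1}$, the geometry of $\S$ differs from the round geometry by $O(\epg)$, analytic perturbation theory yields three near-zero eigenvalues $\nu_p = O(\epg/r^2)$ with (nearly) $L^2$-orthonormal eigenfunctions $\jp$, identified with $\JpS$ by comparing the near-eigenvalue equation $((r^\S)^2\Delta^\S+2)\JpS = O(\epg)$ to $(r^\S)^2 L\jp = (r^\S)^2\nu_p\jp = O(\epg)$ and using the normalization in \eqref{eq:basicpropertiesJpSonOepsphere}. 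This gives part (1). The same perturbation argument keeps every other eigenvalue of $L$ within $O(\epg/r^2)$ of a nonzero eigenvalue of the unperturbed operator, hence $|\nu|\geq r^{-2}$ for $\epg$ small enough, which is part (2).

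For part (3), I decompose $\la = (\la)^\perp + \sum_p \la_p \jp$ with $(\la)^\perp$ forced $L^2(\S)$-orthogonal to every $\jp$, so that
\begin{equation*}
L\la = L(\la)^\perp + \sum_p \la_p\nu_p \jp.
\end{equation*}
Projecting $L\la = h + \sum_p c_p \JpS$ against each $j^{(q)}$ produces a $3\times 3$ linear system for the unknowns $c_p$ whose matrix is the Gram matrix $\bigl(\int_\S \JpS j^{(q)}\bigr)_{p,q} = (|\S|/3)\,\mathrm{Id} + O(\epg)$, as follows from the second line of \eqref{eq:basicpropertiesJpSonOepsphere} combined with $\jp=\JpS+O(\epg)$; this system is invertible and uniquely determines the $c_p$. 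With the $c_p$ fixed, the right-hand side of $L(\la)^\perp = h + \sum_p c_p \JpS - \sum_p \la_p\nu_p \jp$ is $L^2$-orthogonal to every $\jp$, and invertibility of $L$ on that complement with norm $\les r^2$ (by part (2)) produces the unique $(\la)^\perp$.

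The quantitative bounds then come by splitting off the $\ell=0$ mode, which is \emph{not} in the near-kernel: $L$ acts there as multiplication by $\approx 2/(r^\S)^2$, giving only a factor $r^2$ gain. Integrating the equation over $\S$ and using $\int_\S \jp, \int_\S \JpS = O(\epg)$ produces the stated estimate on $|\ov{(\la)^\perp}^\S|$. For the mean-zero, $\jp$-orthogonal component $\widecheck{(\la)^\perp}^\S$, the operator $L$ has spectral gap $\ges r^{-2}$ by part (2), and Lemma \ref{prop:2D-Hodge} (applied twice, to factor $L$ through the Hodge system $\ddd^\S_1 \dds^\S_1$) promotes the $L^2$ inversion to a two-derivative gain in the $\hk_s(\S)$ scale, yielding the $\hk_{s+2}$ bound. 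Higher regularity up to $s=s_{max}-1$ is obtained by commuting $L$ with $\dkb^\S$ and controlling the commutators using the bound on $K^\S$ from Definition \ref{def:defintionofOofepgspheres} and ${\bf A1}$, which is what forces the stated range.

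The main technical obstacle I anticipate is the careful bookkeeping of the $O(\epg)$ errors in the linear algebra determining the $c_p$: they enter from three independent sources — the mismatch $\jp - \JpS = O(\epg)$, the smallness $\nu_p = O(\epg/r^2)$ of the eigenvalues, and the off-diagonal entries of the Gram matrix — and must be propagated through to the final bounds without degrading the clean $(\epg/r^2)\sum_p|\la_p|$ and $\epg\sum_p|\la_p|$ forms. A secondary delicate point is that the $\ell=0$ mode, although separated from the small-eigenvalue subspace, still contributes the weakest decay $r^2$, which accounts for the different scaling in the estimates for $|\ov{(\la)^\perp}^\S|$ versus $\|\widecheck{(\la)^\perp}^\S\|_{\hk_{s+2}(\S)}$.
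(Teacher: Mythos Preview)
Your approach is essentially the paper's: perturbation theory for parts (1)--(2), then for part (3) the decomposition $\la=(\la)^\perp+\sum_p\la_p\jp$, determination of the $c_p$ by projecting onto the $j^{(q)}$, and invertibility of $L$ on the orthogonal complement via the spectral gap, followed by standard elliptic regularity.

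One point to sharpen: you write $\int_\S \jp = O(\epg)$, but in fact $\int_\S \jp = 0$ \emph{exactly}. This follows by integrating the eigenvalue equation $(\Delta^\S + 2/(r^\S)^2)\jp = \nu_p\jp$ over $\S$, which gives $(2/(r^\S)^2 - \nu_p)\int_\S\jp = 0$, and $\nu_p = O(\epg/r^2) \neq 2/(r^\S)^2$. The exact vanishing matters: when you project the equation against $j^{(q)}$ to solve for the $c_p$, the contribution of $h$ is $\int_\S h\,j^{(q)} = \int_\S \widecheck{h}^\S j^{(q)}$, with no $\ov{h}^\S$ term. With only the $O(\epg)$ statement you would pick up an unwanted $\epg|\ov{h}^\S|$ in the $c_p$ estimate, and the lemma's clean splitting between $\widecheck{h}^\S$ (governing $c_p$ and $\widecheck{(\la)^\perp}^\S$) and $\ov{h}^\S$ (governing only $\ov{(\la)^\perp}^\S$) would be lost. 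The paper uses this exact vanishing at both the $c_p$ step and the average step.

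A minor remark: the $\hk_{s+2}$ gain is just standard elliptic regularity for $\Delta^\S$ on scalars; there is no need to route through the Hodge system $\ddd^\S_1\dds^\S_1$, and Lemma~\ref{prop:2D-Hodge} is not really the right tool here.
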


\begin{remark}
Since we have $(\Delta^\S+2/(r^\S)^2-\nu_p)j^{(p)} = 0$, we infer, after integrating on $\S$, and since $2/(r^\S)^2-\nu_p\neq 0$, 
\bea\lab{eq:theeigenvectorsjphavezeroaverageonS}
\int_\S j^{(p)} &=& 0, \quad p=0,+,-.
\eea
\end{remark}

\begin{proof}
The first two statements follow from comparing $(r^\S)^2\Delta^\S+2$ with the operator  $\Delta_{\SSS^2}+2$  and using a standard perturbation argument. 

Next, we focus on the third statement. We plug the decomposition 
\beaa
\la=(\la)^\perp+\sum_p\la_p j^{(p)}, \qquad \int_\S(\la)^\perp  j^{(p)}=0
\eeaa
in the equation for $\la$ and find 
\beaa
\left(\Delta^\S+\frac{2}{(r^\S)^2}\right)(\la)^\perp &=& h-\sum_p\la_p\left(\Delta^\S+\frac{2}{(r^\S)^2}\right)j^{(p)}+\sum_pc_pJ^{(p,\S)}\\
&=& h-\sum_p\la_p\nu_pj^{(p)}+\sum_pc_pJ^{(p,\S)}.
\eeaa
We then choose $c_p$ such that 
\beaa
\int_\S\left(h-\sum_p\la_p\nu_pj^{(p)}+\sum_pc_pJ^{(p,\S)}\right)j^{(q)} &=& 0, \qquad q=0,+,-,
\eeaa
i.e. 
\beaa
\sum_pc_p\left(\int_\S J^{(p,\S)}j^{(q)} \right)&=& -\int_\S\left(\widecheck{h}^\S -\sum_p\la_p\nu_pj^{(p)}\right)j^{(q)}, \qquad q=0,+,-,
\eeaa
where we used in particular \eqref{eq:theeigenvectorsjphavezeroaverageonS}. In view of the properties of $j^{(q)}$, and the assumptions for $J^{(p,\S)}$, we have
\beaa
\frac{1}{|\S|}\int_\S J^{(p,\S)}j^{(q)}  &=& \frac{1}{3}\de_{pq}+O(\epg), \quad p, q=0,+,-,
\eeaa
so that the above formula uniquely defines the constants $c_p$, $p=0,+,-$, and yields
\beaa
\sum_p|c_p| &\les& r^{-1}\|\widecheck{h}^\S\|_{L^2(\S)}+\sum_p|\la_p||\nu_p|\\
&\les& r^{-1}\|\widecheck{h}^\S\|_{L^2(\S)}+\frac{\epg}{r^2}\sum_p|\la_p|.
\eeaa

The above choice of the  constants $c_p$ yields the existence of a unique $(\la)^\perp$. To estimate $(\la)^\perp$, we take the average and the average free part of its equation and find, using in particular \eqref{eq:theeigenvectorsjphavezeroaverageonS}, 
\beaa
\frac{2}{(r^\S)^2}\ov{(\la)^\perp}^\S &=& \ov{h}^\S + \sum_pc_p\ov{J^{(p,\S)}}^\S,\\
\left(\Delta^\S+\frac{2}{(r^\S)^2}\right)\widecheck{(\la)^\perp}^\S &=& \widecheck{h}^\S-\sum_p\la_p\nu_pj^{(p)}+\sum_pc_p\Big(J^{(p,\S)} - \ov{J^{(p,\S)}}^\S\Big).
\eeaa
In view of  the above choice of the constants $c_p$, and using the fact that the eigenvalues $\nu$ of $\Delta^\S+2/(r^\S)^2$ with $\nu\neq \nu_p$ satisfy $|\nu|\geq 1$, we infer
\beaa
|\ov{(\la)^\perp}^\S| &\les& r^2|\ov{h}^\S|+r^2\sum_p|c_p|\\
&\les& r^2|\ov{h}^\S|+\epg \sum_p|\la_p|
\eeaa
and
\beaa
\|\widecheck{(\la)^\perp}^\S\|_{\hk_2(\S)} &\les& r^2\|\widecheck{h}^\S\|_{L^2(S)}+r^3\sum_p|\la_p||\nu_p|+r^3\sum_p|c_p|\\
&\les& r^2\|\widecheck{h}^\S\|_{L^2(S)}+\epg r\sum_p|\la_p|.
\eeaa 
Finally, higher order estimates for $\widecheck{(\la)^\perp}^\S$ follow from standard elliptic regularity.
\end{proof}

\begin{remark}
In the generic case where $(\nu_0, \nu_+, \nu_-)\neq (0, 0, 0)$, $(\la)^\perp$ actually depends on $\la_p$ through the term
\beaa
-\sum_p\la_p\nu_pj^{(p)}
\eeaa
appearing on the right-hand side of the equation for $(\la)^\perp$ in the proof above.
\end{remark}


\section{Frame transformations}
\lab{sec:frametransformation}



\subsection{General null frame transformations}


\begin{lemma}
\lab{Lemma:Generalframetransf}
Given a null frame $(e_3, e_4, e_1, e_2)$, a general null transformation     from  the null frame  $(e_3, e_4, e_1, e_2)$  to another null frame   $(e_3', e_4', e_1', e_2')$         can be written in   the form,
 \bea
 \lab{eq:Generalframetransf}
 \bsplit
  e_4'&=\la\left(e_4 + f^b  e_b +\frac 1 4 |f|^2  e_3\right),\\
  e_a'&= \left(\de_{ab} +\frac{1}{2}\fb_af_b\right) e_b +\frac 1 2  \fb_a  e_4 +\left(\frac 1 2 f_a +\frac{1}{8}|f|^2\fb_a\right)   e_3,\qquad a=1,2,\\
 e_3'&=\la^{-1}\left( \left(1+\frac{1}{2}f\c\fb  +\frac{1}{16} |f|^2  |\fb|^2\right) e_3 + \left(\fb^b+\frac 1 4 |\fb|^2f^b\right) e_b  + \frac 1 4 |\fb|^2 e_4 \right),
 \end{split}
 \eea
  where $\la$ is a scalar, $f$ and $\fb$ are horizontal 1-forms. The dot product and magnitude  $|\c |$ are taken with respect to the standard euclidian norm of $\RRR^2$.  We call $(f, \fb, \la)$  the transition coefficients of the change of frame.  
  \end{lemma}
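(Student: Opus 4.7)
The proof is essentially a verification together with a degree-of-freedom count, so my plan is to give both an existence argument (any null frame can be written in the stated form) and a direct verification (the formulas always produce a valid null frame).

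First, for the parametrization, I would expand $e_4'$ in the basis $(e_4, e_3, e_1, e_2)$ as
$e_4' = a e_4 + b e_3 + c^i e_i$.
The null condition $\g(e_4', e_4')=0$ uses only $\g(e_3,e_4)=-2$ and $\g(e_a,e_b)=\de_{ab}$, and collapses to $-4ab + |c|^2 = 0$. Assuming $a \neq 0$ (true whenever $e_4'$ is linearly independent of $e_3$, which is the generic case), this forces $b = |c|^2/(4a)$, so setting $\la := a$ and $f^b := c^b/\la$ recovers the stated formula for $e_4'$. Similarly, writing $e_3' = \la^{-1}\big(\alpha\,e_3 + \beta\,e_4 + \gamma^i e_i\big)$, the two conditions $\g(e_3',e_3')=0$ and $\g(e_3',e_4')=-2$ give two scalar equations in $(\alpha,\beta,\gamma^i)$; parametrizing the horizontal part as $\gamma^b = \fb^b + \tfrac14 |\fb|^2 f^b$ and solving for $\alpha$ and $\beta$ yields exactly the formulas in \eqref{eq:Generalframetransf}. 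Finally, $(e_1',e_2')$ must lie in the plane $g$-orthogonal to $\{e_3',e_4'\}$ and satisfy $\g(e_a',e_b')=\de_{ab}$, which determines them up to a horizontal $O(2)$ rotation; the expression in \eqref{eq:Generalframetransf} picks the canonical representative of this $O(2)$ orbit.

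For the direct verification, I would simply substitute the proposed formulas into each of the six metric identities
\[
\g(e_4',e_4')=0,\quad \g(e_3',e_3')=0,\quad \g(e_3',e_4')=-2,\quad \g(e_a',e_4')=0,\quad \g(e_a',e_3')=0,\quad \g(e_a',e_b')=\de_{ab},
\]
using only $\g(e_3,e_4)=-2$, $\g(e_a,e_b)=\de_{ab}$, and the fact that all other inner products among $(e_3,e_4,e_1,e_2)$ vanish. For instance $\g(e_4',e_4')=\la^2\big(2\cdot\tfrac14|f|^2\cdot(-2)+|f|^2\big)=0$ immediately. The pairing $\g(e_3',e_4')=-2$ reduces, after collecting terms, to the identity
\[
1+\tfrac12 f\!\cdot\!\fb+\tfrac{1}{16}|f|^2|\fb|^2-\tfrac12\Big(f\!\cdot\!\fb+\tfrac14|\fb|^2|f|^2\Big)-\tfrac12\cdot\tfrac14|f|^2|\fb|^2 = 1,
\]
which holds; and similarly for $\g(e_a',e_4')=0$ one collects $\tfrac12\fb_a\cdot 1 +\tfrac12\fb_a\cdot\tfrac14|f|^2\cdot(-2) -2\big(\tfrac12 f_a+\tfrac18|f|^2\fb_a\big)+\big(\de_{ab}+\tfrac12\fb_a f_b\big)f^b=0$ after cancellation.

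The counting is consistent: the Lorentz group has dimension six, the horizontal rotation subgroup has dimension one, and the quotient has dimension five, matching the five parameters $(\la, f, \fb)$. The only real obstacle is the bookkeeping — keeping track of the quadratic and quartic contributions in $f$ and $\fb$ through each inner product — but no essentially nonlinear difficulty arises, since each null condition decomposes cleanly into contributions proportional to $\g(e_3,e_4)=-2$ and $\g(e_a,e_b)=\de_{ab}$.
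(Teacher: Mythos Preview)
Your approach matches the paper's: the paper's proof is exactly the direct verification of the six inner products $\g(e_4',e_4')=0$, $\g(e_4',e_a')=0$, $\g(e_a',e_b')=\de_{ab}$, $\g(e_3',e_4')=-2$, $\g(e_3',e_a')=0$, $\g(e_3',e_3')=0$, carried out term by term. In fact your proposal goes further than the paper, which \emph{only} does the verification and never writes down the existence/parametrization argument or the dimension count; your observation that $(e_1',e_2')$ is determined only up to an $O(2)$ rotation, and that \eqref{eq:Generalframetransf} selects a canonical representative, is a point the paper leaves implicit.

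One caveat: your sample identities contain arithmetic slips. In the $\g(e_3',e_4')$ check, the last contribution comes from $\tfrac14|\fb|^2\cdot\tfrac14|f|^2\,\g(e_4,e_3)=-\tfrac18|f|^2|\fb|^2$, so after dividing by $-2$ the final term should be $+\tfrac{1}{16}|f|^2|\fb|^2$, not $-\tfrac12\cdot\tfrac14|f|^2|\fb|^2$; as written your displayed identity evaluates to $1-\tfrac{3}{16}|f|^2|\fb|^2$ rather than $1$. Likewise, in your $\g(e_a',e_4')$ expression the term ``$\tfrac12\fb_a\cdot 1$'' has no source (since $\g(e_4,e_4)=0$); the correct three contributions are $-f_a-\tfrac14|f|^2\fb_a$ from $\g(e_4,\cdot)$, $f_a+\tfrac12|f|^2\fb_a$ from $f^b\g(e_b,\cdot)$, and $-\tfrac14|f|^2\fb_a$ from $\tfrac14|f|^2\g(e_3,\cdot)$. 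These are bookkeeping errors, not conceptual ones, and the strategy is exactly right.
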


 \begin{remark}
 Note that we have in particular the following identities 
 \beaa
   e_a'&=& e_a +\frac 1 2  \fb_a \la^{-1} e_4'  +\frac 1 2 f_a e_3,\\
    e_3'&=&\la^{-1}\left(  e_3 +  \fb^ae_a' -\frac 1 4 |\fb|^2\la^{-1} e_4'\right). 
 \eeaa
 \end{remark}

\begin{proof}
 Clearly  $e_4'$ is null. Also, we have
\beaa
\la^{-1}\g(e_4', e_a') &=& \g\left(e_4 + f^b  e_b +\frac 1 4 |f|^2  e_3, \left(\de_a^c +\frac{1}{2}\fb_af^c\right) e_c +\frac 1 2  \fb_a  e_4+\left(\frac 1 2 f_a +\frac{1}{8}|f|^2\fb_a\right)   e_3\right)\\
&=&  f^b  \left(\de_a^c +\frac{1}{2}\fb_af^c\right)\de_{bc} -2\left(\frac 1 2 f_a +\frac{1}{8}|f|^2\fb_a\right)-\frac 1 4 |f|^2\fb_a\\
&=& f_a +\frac{1}{2}|f|^2\fb_a - f_a -\frac{1}{4}|f|^2\fb_a -\frac 1 4 |f|^2\fb_a = 0.
\eeaa
Similarly,
\beaa
\g(e_a', e_b') 
&=& \left(\de_a^c +\frac{1}{2}\fb_af^c\right)\left(\de_b^d +\frac{1}{2}\fb_bf^d\right)\de_{cd} -  \fb_a\left(\frac 1 2 f_b +\frac{1}{8}|f|^2\fb_b\right) \\
&&-\left(\frac 1 2 f_a +\frac{1}{8}|f|^2\fb_a\right)\fb_b = \de_{ab}
\eeaa
and
\beaa
\g(e_3', e_4') &=&
  \left(\fb^b+\frac 1 4 |\fb|^2f^b\right)f_b -2\left(1+\frac{1}{2}f\c\fb  +\frac{1}{16} |f|^2  |\fb|^2\right) -\frac 1 8 |\fb|^2 |f|^2 = -2.
\eeaa

Also, we have
\beaa
 \la\g(e_3', e_a') &=& \left(\fb^b+\frac 1 4 |\fb|^2f^b\right)\left(\de_a^c +\frac{1}{2}\fb_af^c\right)\de_{bc} -\left(1+\frac{1}{2}f\c\fb  +\frac{1}{16} |f|^2  |\fb|^2\right) \fb_a \\
&& -\frac 1 2 |\fb|^2\left(\frac 1 2 f_a +\frac{1}{8}|f|^2\fb_a\right)\\
&=& \fb_a+\frac 1 4 |\fb|^2f_a+\left(f\cdot\fb+\frac 1 4 |\fb|^2|f|^2\right)\frac{1}{2}\fb_a\\
&& -\left(1+\frac{1}{2}f\c\fb  +\frac{1}{16} |f|^2  |\fb|^2\right) \fb_a  -\frac 1 2 |\fb|^2\left(\frac 1 2 f_a +\frac{1}{8}|f|^2\fb_a\right) = 0.
\eeaa
Finally 
\beaa
 \la^2\g(e_3', e_3') &=& \left|\fb+\frac 1 4 |\fb|^2f\right|^2 -  |\fb|^2\left(1+\frac{1}{2}f\c\fb  +\frac{1}{16} |f|^2  |\fb|^2\right)\\
 &=& |\fb|^2+\frac{1}{2}|\fb|^2f\cdot\fb+\frac{1}{16} |\fb|^4|f|^2 -  |\fb|^2\left(1+\frac{1}{2}f\c\fb  +\frac{1}{16} |f|^2  |\fb|^2\right) = 0.
\eeaa
This concludes the proof of the lemma.
\end{proof}


\subsection{Transformation formulas for Ricci and Curvature  coefficients}


While  we only need the transformation formulas for $\chi$, $\chib$, $\ze$ and $\rho$ for this paper, we nevertheless derive below the transformation formulas for all connection coefficients and curvature components for completeness.

\begin{proposition}
\lab{Prop:transformation-formulas-generalcasewithoutassumptions}
Under a general transformation of type \eqref{eq:Generalframetransf}, the Ricci coefficients  transform as follows:
\begin{itemize}
\item The transformation formula for $\xi$ is given by 
\bea
\bsplit
\la^{-2}\xi' &= \xi +\frac{1}{2}\nab_{\la^{-1}e_4'}f+\frac{1}{4}(\trch f -\atrch\dual f)+\om f +\err(\xi,\xi'),\\
\err(\xi,\xi') &= \frac{1}{2}f\c\chih+\frac{1}{4}|f|^2\eta+\frac{1}{2}(f\c \ze)\,f -\frac{1}{4}|f|^2\etab \\
&+ \la^{-2}\left( \frac{1}{2}(f\c\xi')\,\fb+ \frac{1}{2}(f\c\fb)\,\xi'   \right)  +\lot
       \end{split}
\eea

\item The transformation formula for $\xib$ is given by 
\bea
\bsplit
\la^2\xib' &= \xib + \frac{1}{2}\la\nab_3'\fb' +    \omb\,\fb + \frac{1}{4}\trchb\,\fb - \frac{1}{4}\atrchb\dual\fb +\err(\xib, \xib'),\\
\err(\xib, \xib') &=   \frac{1}{2}\fb\c\chibh - \frac{1}{2}(\fb\c\ze)\fb +  \frac 1 4 |\fb|^2\etab  -\frac 1 4 |\fb|^2\eta'+\lot
       \end{split}
\eea

\item The transformation formulas for $\chi $ are  given by 
\bea
\bsplit
\la^{-1}\trch' &= \trch  +  \div'f + f\c\eta + f\c\ze+\err(\trch,\trch')\\
\err(\trch,\trch') &= \fb\c\xi+\frac{1}{4}\fb\c\left(f\trch -\dual f\atrch\right) +\om (f\c\fb)  -\omb |f|^2 \\
& -\frac{1}{4}|f|^2\trchb -  \frac 1 4 ( f\c\fb) \la^{-1}\trch' +\frac 1 4  (\fb\wedge f) \la^{-1}\atrch'+\lot,
\end{split}
\eea
\bea
\bsplit
\la^{-1}\atrch' &= \atrch  +  \curl'f + f\wedge\eta + f\wedge\ze +\err(\atrch,\atrch'),\\
\err(\atrch,\atrch') &= \fb\wedge\xi+\frac{1}{4}\left(\fb\wedge f\trch +(f\c\fb)\atrch\right) +\om f\wedge\fb   \\
& -\frac{1}{4}|f|^2\atrchb -  \frac 1 4 ( f\c\fb) \la^{-1}\atrch' +\frac 1 4   \la^{-1}(f\wedge \fb)\trch'+\lot,
\end{split}
\eea
\bea
\bsplit
\la^{-1}\chih' &= \chih  +  \nab'\hot f + f\hot\eta + f\hot\ze+\err(\chih,\chih'),\\
\err(\chih,\chih') &=\fb\hot\xi+\frac{1}{4}\fb\hot\left(f\trch -\dual f\atrch\right) +\om f\hot\fb  -\omb f\hot f -\frac{1}{4}|f|^2\atrchb\\
&  +\frac 1 4  (f\hot\fb) \la^{-1}\trch' +\frac 1 4  (\dual f\hot\fb) \la^{-1}\atrch' +\frac 1 2  \fb\hot (f\c\la^{-1}\chih')+\lot
\end{split}
\eea

\item The transformation formulas for $\chib $ are  given by 
\bea
\bsplit
\la\trchb' &= \trchb +\div'\fb +\fb\c\etab  -  \fb\c\ze +\err(\trchb, \trchb'),\\
\err(\trchb, \trchb') &= \frac{1}{2}(f\c\fb)\trchb+f\c\xib -|\fb|^2\om + (f\c\fb)\omb   -\frac 1 4 |\fb|^2\la^{-1}\trch'+\lot,
\end{split}
\eea
\bea
\bsplit
\la\atrchb' &= \atrchb +\curl'\fb +\fb\wedge\etab  -  \ze\wedge\fb+\err(\atrchb, \atrchb'),\\
\err(\atrchb, \atrchb') &= \frac{1}{2}(f\c\fb)\atrchb+f\wedge\xib  + (f\wedge\fb)\omb   -\frac 1 4 |\fb|^2\la^{-1}\atrch'+\lot,
\end{split}
\eea
\bea
\bsplit
\la\chibh' &= \chibh +\nab'\hot\fb +\fb\hot\etab  -  \fb\hot\ze +\err(\chibh, \chibh'),\\
\err(\chibh, \chibh') &= \frac{1}{2}(f\hot\fb)\trchb  +f\hot\xib -(\fb\hot\fb)\om + (f\hot\fb)\omb   -\frac 1 4 |\fb|^2\la^{-1}\chih'+\lot
\end{split}
\eea

\item  The transformation formula for $\ze$ is given by 
\bea
\bsplit
\ze' &= \ze -\nab'(\log\la)  -\frac{1}{4}\trchb f +\frac{1}{4}\atrchb \dual f +\om\fb -\omb f +\frac{1}{4}\fb\trch\\
&+\frac{1}{4}\dual\fb\atrch+\err(\ze, \ze'),\\
\err(\ze, \ze') &= -\frac{1}{2}\chibh\c f + \frac{1}{2}(f\c\ze)\fb -  \frac{1}{2}(f\c\etab)\fb +\frac{1}{4}\fb(f\c\eta) + \frac{1}{4}\fb(f\c\ze)  \\
& + \frac{1}{4}\dual\fb(f\wedge\eta) + \frac{1}{4}\dual\fb(f\wedge\ze) +  \frac{1}{4}\fb\div'f  + \frac{1}{4}\dual\fb \curl'f +\frac{1}{2}\la^{-1}\fb\c\chih' \\
&  -\frac{1}{16}(f\c\fb)\fb\la^{-1}\trch' +\frac{1}{16}  (\fb\wedge f) \fb\la^{-1}\atrch'  -  \frac{1}{16}\dual\fb ( f\c\fb) \la^{-1}\atrch'\\
& +\frac{1}{16}\dual\fb \la^{-1}(f\wedge \fb)\trch' +\lot
\end{split}
\eea

\item   The transformation formula for $\eta$ is given by 
\bea
\bsplit
\eta' &= \eta +\frac{1}{2}\la \nab_3'f  +\frac{1}{4}\fb\trch -\frac{1}{4}\dual\fb\atrch -\omb\, f +\err(\eta, \eta'),\\
\err(\eta, \eta') &= \frac{1}{2}(f\c\fb)\eta +\frac{1}{2}\fb\c\chih
+\frac{1}{2}f(\fb\c\ze)  -  (\fb\c f)\eta'+ \frac{1}{2}\fb (f\c\eta') +\lot
\end{split}
\eea
\item   The transformation formula for $\etab$ is given by 
\bea
\bsplit
\etab' &= \etab +\frac{1}{2}\nab_{\la^{-1}e_4'}\fb +\frac{1}{4}\trchb f - \frac{1}{4}\atrchb\dual f -\om\fb +\err(\etab, \etab'),\\
\err(\etab, \etab') &=  \frac{1}{2}f\c\chibh + \frac{1}{2}(f\c\eta)\fb-\frac 1 4  (f\c\ze)\fb  -\frac 1 4 |\fb|^2\la^{-2}\xi'+\lot
\end{split}
\eea

\item   The transformation formula for $\om$ is given by
\bea
\bsplit
\la^{-1}\om' &=  \om -\frac{1}{2}\la^{-1}e_4'(\log\la)+\frac{1}{2}f\c(\ze-\etab) +\err(\om, \om'),\\
\err(\om, \om') &=   -\frac{1}{4}|f|^2\omb - \frac{1}{8}\trchb |f|^2+\frac{1}{2}\la^{-2}\fb\c\xi' +\lot
\end{split}
\eea

\item   The transformation formula for $\omb$ is given by
\bea
\bsplit
\la\omb' &= \omb+\frac{1}{2}\la e_3'(\log\la)  -\frac{1}{2}\fb\c\ze -\frac{1}{2}\fb\c\eta +\err(\omb,\omb'),\\
\err(\omb,\omb') &= f\c\fb\,\omb-\frac{1}{4} |\fb|^2\om  +\frac{1}{2}f\c\xib + \frac{1}{8}(f\c\fb)\trchb + \frac{1}{8}(\fb\wedge f)\atrchb \\
& -\frac{1}{8}|\fb|^2\trch  -\frac{1}{4}\la \fb\c\nab_3'f    +\frac{1}{2}  (\fb\c f)(\fb\c\eta')- \frac{1}{4}|\fb|^2 (f\c\eta')+\lot
\end{split}
\eea
\end{itemize}
where, for the transformation formulas of the Ricci coefficients above, $\lot$ denote expressions of the type
\beaa
         \lot&=O((f,\fb)^3)\Ga +O((f,\fb)^2) \Gac
\eeaa
containing no derivatives of $f$, $\fb$, $\Ga$ and $\Gac$. 

Also, the curvature components transform as follows
\begin{itemize}
\item The transformation formula for $\a, \aa $  are  given by
\bea
\bsplit
\la^{-2} \a'&=\a +\err(\a, \a'),\\
\err(\a, \a')&=  \big(  f\hot \b  -\dual f \hot \dual  \b )+ \left( f\hot f-\frac 1 2  \dual f \hot   \dual f \right) \rho
+  \frac 3  2 \big(  f \hot  \dual  f\big) \rhod +\lot,
\end{split}
\eea
\bea
\bsplit
\la^2\aa'&=\aa +\err(\a, \a'),\\
\err(\aa, \aa')&=  -\big(  \fb \hot \bb  -\dual \fb \hot \dual  \bb )+ \big( \fb \hot \fb-\frac 1 2  \dual \fb \hot   \dual \fb \big) \rho
+  \frac 3  2 \big(  \fb \hot  \dual  \fb\big) \rhod +\lot
\end{split}
\eea

\item   The transformation formula for $\b , \bb $  are  given by
   \bea
  \bsplit
\la^{-1}   \b'&=\b +\frac 3 2\big(  f \rho+\dual  f  \rhod\big)+\err(\b, \b'), \\
  \err(\b, \b')&= \frac 1 2 \a\c\fb+\lot,
  \end{split}
  \eea
  \bea
  \bsplit
  \la\bb'&=\bb -\frac 3 2\big(  \fb \rho+\dual  \fb  \rhod\big)+\err(\bb, \bb'), \\
  \err(\bb, \bb')&= -\frac 1 2  \aa\c f +\lot 
  \end{split}
  \eea
  \item The transformation formula for $\rho$ and $\rhod $  are  given by
  \bea
  \bsplit
 \rho' &= \rho +\err(\rho, \rho'),\\
\err(\rho, \rho') &= \fb\c\b - f\c\bb +\frac{3}{2}\rho(f\c\fb) -\frac{3}{2}\rhod (f\wedge\fb) +\lot
   \end{split}
  \eea
  \bea
  \bsplit
  \rhod' &= \rhod +\err(\rhod, \rhod'),\\
\err(\rhod, \rhod') &= -\fb\c\dual\b - f\c\dual\bb +\frac{3}{2}\rhod(f\c\fb) +\frac{3}{2}\rho (f\wedge\fb) +\lot
   \end{split}
  \eea  
\end{itemize}
where, for the transformation formulas of the curvature components above, $\lot$ denote expressions of the type
\beaa
         \lot&=O((f,\fb)^3)(\rho, \rhod) +O((f,\fb)^2)(\a,\b,\aa, \b)
\eeaa
containing no derivatives of $f$, $\fb$, $\a$, $\b$, $(\rho, \rhod)$, $\bb$, and $\aa$. 
\end{proposition}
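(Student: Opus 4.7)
The plan is to prove Proposition \ref{Prop:transformation-formulas-generalcasewithoutassumptions} by direct computation: substitute the frame transformation \eqref{eq:Generalframetransf} into the defining expressions for each Ricci coefficient and curvature component, expand using the structure equations \eqref{ricci}, and collect terms by order in the transition coefficients $(f,\fb,\ovla)$, where $\ovla = \la-1$. Linear and specific quadratic terms in $(f,\fb)$ should be kept explicit, everything cubic or higher in $(f,\fb)$ times $\Ga$, and everything quadratic or higher times $\check{\Ga}$, goes into $\lot$.

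I would begin with the curvature components since the Riemann tensor is multilinear and so no derivatives of $(f,\fb,\la)$ enter. For instance, for $\alpha'$ we have
\[
\la^{-2}\a'_{ab} \;=\; \la^{-2}\R(e'_a,e'_4,e'_b,e'_4),
\]
and substituting the expansions of $e'_a$ and $e'_4$ from \eqref{eq:Generalframetransf} and using the definitions $\a_{ab}=\R_{a4b4}$, $\b_a=\tfrac12\R_{a434}$, $\rho=\tfrac14\R_{3434}$, $\rhod=\tfrac14\dual\R_{3434}$ directly yields the stated formula after collecting terms of the right degree. The formulas for $\aa',\b',\bb',\rho',\rhod'$ follow by the same tensorial expansion, and all contributions from $e_4$-components in $e'_a$ that combine with the $e_3$-component in $e'_3$ or $e'_4$ fall into $\lot$ once we count the power of $(f,\fb)$.

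For the Ricci coefficients the work is analogous but with extra care required because the covariant derivative acts on the transition coefficients as well. For example, to compute $\chi'$ I would write
\[
\chi'_{ab} \;=\; g\!\left(\D_{e'_a} e'_4 ,\, e'_b\right)
\]
and substitute $e'_4 = \la(e_4+f^c e_c+\tfrac14|f|^2 e_3)$ and the formula for $e'_a$. Using Leibniz and expressing $e'_a f = \nab'_a f + (\text{connection terms involving }\om',\xi',\eta',\etab',\xib')$, the derivative term $\nab'\hot f$ emerges naturally for $\chih'$, $\div' f$ for $\trch'$, and $\curl' f$ for $\atrch'$. The purely algebraic contributions reproduce the explicit $\om,\omb,\trchb,\atrchb,\eta,\ze,\xi$-dependent terms written in the proposition. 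The same strategy applied to $e'_a e'_3$, $e'_3 e'_3$, $e'_4 e'_4$, $e'_3 e'_4$, $e'_4 e'_3$ yields the transformation laws of $\chib',\xib',\xi',\omb',\om',\eta',\etab'$; in each case the linear piece in $(f,\fb)$ comes from the mixing of $e_3,e_4$ into the tangential directions, while the terms $\tfrac12\nab_{\la^{-1}e'_4}f$, $\tfrac12\la\nab'_3\fb$, $\tfrac12\la\nab'_3 f$, $\tfrac12\nab_{\la^{-1}e'_4}\fb$, $\la^{-1}e'_4(\log\la)$, $\la e'_3(\log\la)$ arise from differentiating the coefficients of the expansion. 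The formula for $\ze'$ is obtained from $2\ze'_a = g(\D_{e'_a}e'_4,e'_3)$ in the same manner, with $\nab'\log\la$ appearing because $\la$ is being differentiated.

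The main obstacle is purely one of bookkeeping: ensuring that all terms of order $\le 2$ in $(f,\fb)$ coupled with a background connection or curvature quantity $\Ga$ appear explicitly, while strictly cubic-or-higher combinations (and quadratic combinations multiplied by $\check{\Ga}$) are absorbed into $\lot$. A useful reduction is that whenever a term of the form $\xi,\om$, or a renormalizable quantity on the right-hand side gets multiplied by $(f,\fb)^2$, we may freely trade it against its primed analogue plus a $\lot$ correction; this is why expressions like $\tfrac12(f\c\xi')\fb$, $\tfrac14|\fb|^2\la^{-1}\trch'$, $\tfrac12\la^{-2}\fb\c\xi'$ appear in the error terms rather than their unprimed versions. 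Once this convention is systematically applied, each of the displayed formulas is verified by a finite, mechanical computation, which concludes the proof.
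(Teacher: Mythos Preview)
Your proposal is correct and follows essentially the same approach as the paper: direct substitution of the frame transformation \eqref{eq:Generalframetransf} into the defining expressions $\g(\D_{e'_\mu}e'_\nu,e'_\rho)$ and $\R(e'_\mu,e'_\nu,e'_\rho,e'_\sigma)$, followed by systematic expansion and bookkeeping of terms by their degree in $(f,\fb)$. The one organizational device in the paper worth noting is the repeated use of the identity $e_a' = e_a + \tfrac{1}{2}\fb_a\la^{-1}e_4' + \tfrac{1}{2}f_a e_3$ to rewrite inner products with $e_a'$, which streamlines the extraction of the $\nab'$-derivative terms and the conversion of certain quadratic expressions into primed quantities.
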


\begin{proof}
 See Appendix \ref{sec:proofofProp:transformation-formulas-generalcasewithoutassumptions}.
\end{proof}


\subsection{Null frame transformations on $\RR$}



\subsubsection{Transformation formulas in a particular case}


In what follows we revisit some of   the  transformation formulas of Proposition \ref{Prop:transformation-formulas-generalcasewithoutassumptions} in the particular case where   the frame    $(e_3, e_4, e_1, e_2)$ is attached  to the geodesic foliation of $\RR$, while  $(e'_3, e'_4, e'_1, e'_2)$ is  an arbitrary    frame.  Recall that since  the unprimed frame is attached to the  geodesic foliation  we have
\beaa
\atrch=\atrchb=0, \qquad \xi=\om=0, \qquad \etab+\ze=0.
\eeaa
{\bf Notation.}  In the proposition below we write the error terms $\err$ schematically according to the following convention.
\begin{itemize}
\item We introduce the notation
\bea
F:= \{ f, \fb,\ovla\}, \qquad\qquad  \ovla:= \la-1.
\eea
$F^k$ denotes  an  arbitrary homogeneous  polynomial  of degree $k$ in the variables $F$. 
\item  $F^k \c \Ga$ denotes an arbitrary linear combination of  elements of $\Ga$ with coefficients in $F^k $. 
\item  Since the  components  of $F$ are supposed  to be small in all  our  applications here  we  ignore  $F^{k+1} \c \Ga$ if  $F^{k} \c \Ga$ appear among the error  terms.
\end{itemize}

\begin{proposition}
\lab{Prop:transformation formulas-integrtogeneral}
Under a  transformation of type \eqref{eq:Generalframetransf}, in the particular case where the frame $(e_3, e_4, e_1, e_2)$ is the one attached  to the geodesic foliation of $\RR$, and under the assumption 
\bea
|F|  \ll 1, 
\eea
the Ricci coefficients $\chi$, $\chib$ and  $\ze$, and the curvature component $\rho$,  transform as follows:
\begin{itemize}
\item The transformation formulas for $\chi $ are  given by 
\bea
\bsplit
\trch' &= \la\trch  +  \div'f  +\err(\trch,\trch'),\\
\err(\trch,\trch') &=   F\c\Ga_b+r^{-1}F^2+F\c\nab' F,
\end{split}
\eea
\bea
\bsplit
\atrch' &=   \la\curl'f  +\err(\atrch,\atrch'),\\
\err(\atrch,\atrch') &= F\c\Ga_b+r^{-1}F^2+F\c\nab' F,  
\end{split}
\eea
\bea
\bsplit
\chih' &= \chih  +  \nab'\hot f +\err(\chih,\chih'),\\
\err(\chih,\chih') &= F\c\Ga_b+r^{-1}F^2+F\c\nab' F.
\end{split}
\eea

\item The transformation formulas for $\chib $ are  given by 
\bea
\bsplit
\trchb' &= \la^{-1}\trchb +\div'\fb   +\err(\trchb, \trchb'),\\
\err(\trchb, \trchb') &= F\c\Ga_b+r^{-1}F^2+F\c\nab' F,
\end{split}
\eea
\bea
\bsplit
\atrchb' &= \la^{-1}\curl'\fb +\err(\atrchb, \atrchb'),\\
\err(\atrchb, \atrchb') &= F\c\Ga_b+r^{-1}F^2+F\c\nab' F,   
\end{split}
\eea
\bea
\bsplit
\chibh' &= \chibh +\nab'\hot\fb   +\err(\chibh, \chibh'),\\
\err(\chibh, \chibh') &= F\c\Ga_b+r^{-1}F^2+F\c\nab' F.
\end{split}
\eea

\item  The transformation formula for $\ze$ is given by 
\bea
\bsplit
\ze' &= \ze -\nab'\la  -\frac{1}{4}\trchb f  -\omb f +\frac{1}{4}\fb\trch +\err(\ze, \ze'),\\
\err(\ze, \ze') &= F\c\Ga_b+r^{-1}F^2+F\c\nab' F.
\end{split}
\eea

\item The transformation formula for $\rho$ is given by
  \bea
  \bsplit
 \rho' &= \rho +\err(\rho, \rho'),\\
\err(\rho, \rho') &= r^{-1}F\c\Ga_b +r^{-3}F^2.
   \end{split}
  \eea
\end{itemize}
\end{proposition}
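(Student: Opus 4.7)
The starting point is Proposition \ref{Prop:transformation-formulas-generalcasewithoutassumptions}, which gives the transformation formulas for an \emph{arbitrary} unprimed frame. The present statement is a specialization: the unprimed frame is tied to the outgoing geodesic foliation of $\RR$, so by Lemma \ref{le:outgoinggeodesicfoliation} we may substitute $\atrch=\atrchb=0$, $\xi=\om=0$, and $\etab=-\ze$ everywhere on the right-hand sides. I would carry this substitution out one Ricci/curvature component at a time, for the six components $\trch,\atrch,\chih,\trchb,\atrchb,\chibh$, the coefficient $\ze$, and finally the curvature component $\rho$, which are the only ones listed in the statement.

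\textbf{Normalizing the $\la$-factors.} The formulas in Proposition \ref{Prop:transformation-formulas-generalcasewithoutassumptions} display the primed quantities weighted by $\la^{\pm 1}$ on the left (so that the linearization is cleanest). In the present statement, however, $\trch'$, $\trchb'$, etc.\ appear unweighted. Writing $\la=1+\ovla$ with $|\ovla|\ll 1$, I multiply through by the appropriate power of $\la$. Each such multiplication contributes exactly one term of the form $\ovla\cdot(\text{leading linear expression})$: for instance $\la\,(\div'f)=\div'f+\ovla\div'f$, and the extra piece $\ovla\div'f$ is a schematic $F\cdot\nab'F$. Similarly $\la\,(f\cdot\eta+f\cdot\ze)$ is absorbed into $F\cdot\Ga_b$ since $\eta,\ze\in\Ga_b$ and higher powers of $F$ times $\Ga_b$ may be dropped per the proposition's convention.

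\textbf{Absorbing the remaining right-hand side terms.} In the general $\err$ expressions, the terms fall into three types: (i) $F$ times a background Ricci coefficient. Every Ricci coefficient appearing in those errors lies in $\Ga_b$ or $\Ga_g\subset r^{-1}\Ga_b$; they therefore go into the $F\cdot\Ga_b$ slot. (ii) Quadratic-in-$F$ products multiplied by a factor of $\trch,\trchb$, or $\omb$, whose Schwarzschildian main parts have size $r^{-1}$ or $r^{-2}$; these are exactly the $r^{-1}F^2$ slot (with the $\omb|f|^2$-type terms being even smaller, $r^{-2}F^2$, hence absorbed). (iii) Products of $F$ with $\la^{-1}\trch'$, $\la^{-1}\atrch'$, or $\la^{-1}\chih'$ on the right; here I substitute the (already derived) leading formulas for the primed quantities so that the factor becomes $\trch+\div'f+\err$ schematically, producing an $r^{-1}F^2$ piece plus an $F\cdot\nab'F$ piece. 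The cubic and higher remainders $\lot$ from Proposition \ref{Prop:transformation-formulas-generalcasewithoutassumptions} satisfy $|\lot|\les|F|^2|\Ga|+|F|^3|\Gac|$, which, in view of $|F|\ll 1$ and the convention of discarding $F^{k+1}\cdot\Ga$ when $F^k\cdot\Ga$ is already present, collapse into $F\cdot\Ga_b + r^{-1}F^2$. For $\ze$, note $\nab'\log\la=\nab'\ovla+O(\ovla\nab'\ovla)=\nab'\la+F\cdot\nab'F$, which explains the form $-\nab'\la$ appearing in the main linear part.

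\textbf{The curvature case.} For $\rho'=\rho+\err(\rho,\rho')$, I read off from Proposition \ref{Prop:transformation-formulas-generalcasewithoutassumptions} that $\err=\fb\cdot\b-f\cdot\bb+\tfrac{3}{2}\rho(f\cdot\fb)-\tfrac{3}{2}\rhod(f\wedge\fb)+\lot$. Since $r\b,r\bb,r\rhod,r\widecheck{\rho}\in\Ga_g$ and $\rho=-2m/r^3+r^{-1}\Ga_g$, every one of these terms is either of the form $F\cdot r^{-1}\Ga_g\subset r^{-1}F\cdot\Ga_b$ (taking $\Ga_g\subset r^{-1}\Ga_b$ per Remark \ref{remark:GagcupGab}) or of the form $r^{-3}F^2$ coming from the Schwarzschild piece of $\rho$ multiplied by $f\cdot\fb$. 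This is precisely the claimed error structure $r^{-1}F\cdot\Ga_b+r^{-3}F^2$, with no $F\cdot\nab'F$ term since no derivatives of $f,\fb$ appear in the transformation of $\rho$.

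\textbf{Main obstacle.} The only genuinely delicate point is the implicit appearance of the primed quantities $\trch',\atrch',\chih'$ inside the right-hand side of several of the error expressions of Proposition \ref{Prop:transformation-formulas-generalcasewithoutassumptions}. Strictly speaking this forces one to treat the six formulas for $\trch',\atrch',\chih',\trchb',\atrchb',\chibh'$ as a coupled system and to verify that, modulo the schematic form $F\cdot\Ga_b+r^{-1}F^2+F\cdot\nab'F$, one may substitute the leading linear expression for each primed quantity back into the errors without creating new terms outside this schematic class. This is a straightforward but careful bookkeeping exercise that uses only $|F|\ll1$ and the $\Ga_b$-membership of the background Ricci coefficients.
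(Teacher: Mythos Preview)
Your proposal is correct and follows essentially the same approach as the paper: substitute the geodesic-foliation relations $\atrch=\atrchb=\xi=\om=0$, $\etab=-\ze$ into the general formulas of Proposition \ref{Prop:transformation-formulas-generalcasewithoutassumptions}, and then use $|F|\ll 1$ to eliminate the primed quantities appearing implicitly on the right-hand side. The paper's own proof is a two-sentence reference to exactly this procedure, whereas you have spelled out the bookkeeping (the $\la$-renormalization, the three-way classification of error contributions, and the iterative substitution for the primed $\chi$-components) in detail; this extra care is appropriate and nothing in it is superfluous or incorrect.
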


\begin{proof}
Since     $(e_3, e_4, e_1, e_2)$ denotes the frame attached  to the geodesic foliation of $\RR$,   we have
\beaa
\atrch=\atrchb=\xi=\om=0, \qquad \etab+\ze=0.
\eeaa
The proposition then immediately follows from plugging these relations in Proposition \ref{Prop:transformation-formulas-generalcasewithoutassumptions} and getting rid of the $\Ga'$ on the RHS thanks to the assumption $|F|  \ll 1$. 
\end{proof}

\begin{remark}
For convenience in what follow  we will use the following notation
\bea
\aka=\atrch, \quad  \akab= \atrchb, \quad   \ka=\trch,  \quad  \kab=\trchb.
\eea
\end{remark}


\subsubsection{Schematic presentation for higher order error  terms}


We  introduce the following  schematic  presentation of the  error terms  which appear  in various calculations below.
\begin{definition}
\lab{Definition:errorterms-prime}
We denote by $\err_k$, $k=1,2$,   error terms\footnote{Note  however that  the precise error terms differ in each particular case and that we only emphasize here  their general structure.} which can be  written schematically in the form,
\bea
\bsplit
r\err_1&= F\c  (r\Ga_b)+   F^2   +   F\c  (r \nab') F= F\c  (r\Ga_b)+ F\c  (r \nab')^{\le 1} F,\\
r^2 \err_2 &= ( r\nab' )^{\le 1}( r\err_1)+( F+\Ga_b )\c  r \dk \Ga_b.
\end{split}
\eea
\end{definition}


\subsubsection{Transformation formula for the mass aspect function}


We start with the following
\begin{lemma}
\lab{Le:transf-mumu'}
The mass aspect function  $\mu=-\div \ze -\rho+\frac{1}{2}\chih\c \chibh $ verifies the transformation  formula.
\bea
\bsplit
\mu'&= \mu -\div' \left(- \nab' \la
  -\left(\omb +\frac 1 4 \kab \right) f +\left(\om +\frac 1 4 \ka \right) \fb\right) +\err_2(\mu, \mu'),
  \end{split}
\eea
with  $\err_2(\mu, \mu')$ an $\err_2$ type  term  as in Definition \ref{Definition:errorterms-prime}.
\end{lemma}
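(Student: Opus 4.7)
The plan is to start from the definition $\mu' = -\div'\ze' - \rho' + \tfrac{1}{2}\chih'\cdot\chibh'$ and plug in the transformation formulas from Proposition \ref{Prop:transformation formulas-integrtogeneral}, then match the result against $\mu = -\div\ze - \rho + \tfrac{1}{2}\chih\cdot\chibh$ modulo $\err_2$. Because the background frame is geodesic we have $\om=0$, so the linear correction that appears in the transformation of $\ze$ can be written as
\[
\ze' = \ze + X + \err_1(\ze,\ze'),\qquad X := -\nab'\la - \left(\omb + \tfrac14\kab\right)f + \left(\om + \tfrac14\ka\right)\fb,
\]
which already reproduces the $-\div' X$ term of the claimed formula upon taking $-\div'$.

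Next I would collect the remaining contributions. The transformation formula for $\rho$ gives $\rho' = \rho + \err_1(\rho,\rho')$; since $r^{-1}F\cdot\Ga_b + r^{-3}F^2$ fits into $r^{-2}\err_2$ after multiplication by $r$, this term is harmless. For the quadratic term I would write
\[
\tfrac12 \chih'\cdot\chibh' - \tfrac12 \chih\cdot\chibh = \tfrac12 \chih\cdot(\nab'\hot\fb) + \tfrac12 (\nab'\hot f)\cdot \chibh + \mathrm{lower},
\]
and observe that each cross term has the schematic form $(F+\Ga_b)\cdot r\dk\Ga_b$ after rescaling, hence lies in $\err_2$; the remaining quadratic-in-$\nabla'F$ and $\err_1$-times-$\err_1$ pieces similarly fall into $(r\nab')^{\le 1}(r\err_1)$. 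Finally the term $-\div'\err_1(\ze,\ze')$ is precisely of the form $(r\nab')^{\le 1}(r\err_1)$ after rescaling, so it is an $\err_2$ contribution.

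The one delicate step is to turn $-\div'\ze$ into $-\div\ze$ (so that the remainder really is $\mu$). To handle this I would expand the primed frame $(e_a')$ via \eqref{eq:Generalframetransf} in terms of $(e_3,e_4,e_1,e_2)$; the difference $(\div - \div')\ze$ then acquires only terms that are products of $F$ with $\dk\ze$, of $\nab' F$ with $\ze$, or of background connection coefficients with $F\cdot \ze$. Since $\ze\in\Ga_b$, all such contributions match the schematic structure of $\err_2$ given in Definition \ref{Definition:errorterms-prime}. Putting everything together yields
\[
\mu' = \mu - \div' X + \err_2(\mu,\mu'),
\]
which is the claim.

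The main obstacle I expect is the bookkeeping in this last step: one has to verify carefully, using the detailed form of \eqref{eq:Generalframetransf} and the propagation relations for the primed frame, that both $(\div - \div')\ze$ and $\div'\big(\err_1(\ze,\ze')\big)$ really reduce to the prescribed schematic form $(r\nab')^{\le 1}(r\err_1) + (F+\Ga_b)\cdot r\dk\Ga_b$, rather than producing rogue terms containing $\dk^2 F$ or $\Ga_g\cdot\Ga_g$ with the wrong weight. All the cubic and higher-order terms in $F$ are absorbed by our convention of ignoring $F^{k+1}\cdot\Ga$ once $F^k\cdot\Ga$ is present.
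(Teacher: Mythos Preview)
Your approach is correct and matches the paper's proof term by term: write $\mu'$ from its definition, insert the transformation formulas of Proposition~\ref{Prop:transformation formulas-integrtogeneral}, isolate $-\div' X$, and check that $(\div'-\div)\ze$, $\div'\err(\ze,\ze')$, $\err(\rho,\rho')$, and the $\chih'\cdot\chibh'-\chih\cdot\chibh$ pieces all fall into the $\err_2$ schematic. Two small bookkeeping corrections: $\ze\in\Ga_g$ (not $\Ga_b$), which only strengthens your $(\div'-\div)\ze$ estimate; and the cross terms $\chih\cdot(\nab'\hot\fb)$ and $(\nab'\hot f)\cdot\chibh$ are of the form $\Ga_b\cdot\nab'F$, so they sit inside $(r\nab')^{\le 1}(r\err_1)$ (via $\nab'$ acting on the $F\cdot r\Ga_b$ part of $r\err_1$) rather than in the $(F+\Ga_b)\cdot r\dk\Ga_b$ slot.
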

\begin{proof}
 Using the above transformation formulas for $\ze, \rho, \chih, \chibh$  we easily derive
\beaa
\div' \ze'&=&\div'\Big(\ze- \nab'\la
  - \frac 1 4\trchb f  + \frac 1 4 \trch \fb + \om\fb -\omb f +\err(\ze, \ze')\Big)\\
  &=&\div\ze +\div' \Big(- \nab' \la 
  - \frac 1 4\trchb f  + \frac 1 4 \trch \fb + \om\fb -\omb f\Big) 
  \\
  &+& \big(\div'-\div \big)\ze     +   \div' \err(\ze, \ze'),  \\
  \rho'&=&\rho+\err(\rho, \rho'),\\
  \chih'\c \chibh'&=&\chih\c\chibh +  \nab' F\c \nab' F +(\nab' F+ r^{-1} F +F^2 )\c \Ga_b. 
    \eeaa
    Note also that, using the equations for $\nab_3\ze, \nab_4\ze$,
    \beaa
     \big(\div'-\div \big)\ze&=&  f\c \nab_3\ze+\fb \nab_4\ze +\lot  = r^{-1} F\c \dk \Ga_b.
     \eeaa
    
   We deduce,
   \beaa
   \mu'&=& \mu -\div' \Big(- \nab'\la
  - \frac 1 4\trchb f  + \frac 1 4 \trch \fb + \om\fb -\omb f\Big) +\err(\mu, \mu')
   \eeaa
   with
   \beaa
   \err(\mu, \mu')&=&-\div' \, \err(\ze, \ze') -\err(\rho, \rho') +\Ga_g\c \Ga_b +  \nab' F\c \nab' F\\
   & +&(\nab' F+ r^{-1} F +F^2 )\c \Ga_b +           r^{-1} F\c \dk \Ga_b.
   \eeaa
   Thus,  taking into account the  structure of the terms in          $\err(\ze, \ze')$ and  $ \err(\rho, \rho') $   we  can write  schematically,
   \beaa
   \err(\mu, \mu')&=&\nab' \, \err_1  + r^{-1}  \err_1  + r^{-2} \Ga_b \c (r\Ga_b)+  r^{-2}  (  r  \nab' F)\c  (r\nab' F) \\
      &+&r^{-2}( F+\Ga_b )\c  (r\dk \Ga_b).
   \eeaa
   Hence\footnote{Note that $ \nab'(r)=\frac 1 2 f e_3(r)+\frac 1 2 \fb e_4(r)$ and hence  the term  $\nab'(r)  (r\err_1)$ is a lower order  term.},
   \beaa
   r^2 \err(\mu, \mu')&=&  r^2  \nab' \, \err_1+ r\err_1+ ( F+\Ga_b )\c  r \dk \Ga_b +(  r  \nab' F)\c  (r\nab' F)\\
   &=& r  \nab' \,( r \err_1)   +\nab'(r)  (r\err_1)  + r\err_1+ ( F+\Ga_b )\c  r \dk \Ga_b +(  r  \nab' F)\c  (r\nab' F)\\
   &=&  r  \nab' \,( r \err_1) + r\err_1 + ( F+\Ga_b )\c  r \dk \Ga_b +(  r  \nab' F)\c  (r\nab' F).
   \eeaa
   We simplify the expression by  including the terms  $(  r  \nab' F)\c  (r\nab' F)$ in the expression   $ 
     ( r\nab' )^{\le 1}( r\err_1)$. Hence,
   \beaa
   r^2 \err(\mu, \mu')&=& ( r\nab' )^{\le 1}( r\err_1)+  ( F+\Ga_b )\c  r \dk \Ga_b
   \eeaa
   as stated. 
\end{proof}

  
\subsubsection{Transformation formulas for the main GCM quantities}


We consider below the equations on $f, \fb, \la$ induced by the transformation formulas  for $\ka, \kab, \aka,  \akab, \mu$. Those will play a fundamental role in the  definition of GCM spheres.

\begin{lemma}
The following relations hold true for any frame $(e_1', e_2', e_3', e_4')$ connected to the reference frame $(e_1, e_2, e_3, e_4)$ by the transition coefficients $(f, \fb, \la)$.
\lab{Lemma: GCMS'}
 \bea
     \lab{GCMS'-1}
     \bsplit
\curl ' f  &=\aka'-\err_1(\aka, \aka'),\\
\curl ' \fb  &=\akab'-\err_1(\akab, \akab'),\\
\div' f + \ka \ovla &= \ka' -\ka -\err_1(\ka,\ka'),\\
\div' \fb - \kab \ovla &= \kab' -\kab -\err_1(\kab,\kab'),
\end{split}
\eea
and
\bea
 \lab{GCMS'-2}
  -\div' \Big(- \nab' \ovla
  -\left(\omb +\frac 1 4 \kab \right) f +\big(\om +\frac 1 4 \ka) \fb\Big)&=&\mu'-\mu  -\err_2(\mu, \mu'),
\eea
with error $ \err_1, \err_2$ error terms  as in Definition \ref{Definition:errorterms-prime}, and where we recall $\ovla=\la-1$.
\end{lemma}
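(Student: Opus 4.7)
The plan is to derive all five relations from the specialized transformation formulas recorded in Proposition \ref{Prop:transformation formulas-integrtogeneral}, supplemented by Lemma \ref{Le:transf-mumu'} for the mass aspect. Each identity is a purely algebraic rearrangement: one isolates the linear-in-$\ovla$ parts of $\la^{\pm 1}\ka$ and $\la^{\pm 1}\kab$, and verifies that the leftover terms fit into the schematic classes $\err_1,\err_2$ from Definition \ref{Definition:errorterms-prime}.

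For the antitrace identities, I would start from $\aka' = \la\,\curl' f + \err(\aka,\aka')$, which is immediate from Proposition \ref{Prop:transformation formulas-integrtogeneral} since $\atrch = 0$ in the background geodesic frame. Substituting $\la = 1 + \ovla$ and moving the $\ovla\,\curl' f$ contribution into the error gives $\curl' f = \aka' - \err_1(\aka,\aka')$; the transferred term has schematic form $F\cdot\nab' F$, which lies in $r\err_1 \supset F\cdot(r\nab')^{\le 1}F$. The argument for $\curl'\fb$ is analogous, starting from $\akab' = \la^{-1}\,\curl'\fb + \err(\akab,\akab')$ and using $\la^{-1} = 1 - \ovla + \ovla^2\la^{-1}$; the quadratic-in-$\ovla$ remainder is again absorbed into $\err_1$.

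For the trace identities, the formula $\ka' = \la\ka + \div' f + \err(\ka,\ka')$ directly yields $\div' f + \ka\,\ovla = \ka'-\ka - \err_1(\ka,\ka')$ after writing $\la\ka = \ka + \ovla\ka$. Likewise, from $\kab' = \la^{-1}\kab + \div'\fb + \err(\kab,\kab')$ and $\la^{-1}\kab = \kab - \ovla\,\kab + O(\ovla^2)\kab$, one gets $\div'\fb - \kab\,\ovla = \kab' - \kab - \err_1(\kab,\kab')$. Here the $O(\ovla^2)\kab$ correction has size $\sim F^2/r$, consistent with $r\err_1\supset F^2$ since $\kab$ is of order $r^{-1}$.

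Finally, the identity for $\mu$ is obtained by noting that $\nab'\la = \nab'\ovla$ and restating Lemma \ref{Le:transf-mumu'} verbatim. The only nontrivial point throughout is the bookkeeping required to confirm that each algebraic rearrangement produces errors within the schematic classes $\err_1, \err_2$; this is routine once one records that the background quantities $\ka,\kab,\om,\omb$ appearing in the manipulations can be expressed as $r^{-1}$-scale constants plus $\Ga_b$-type corrections.
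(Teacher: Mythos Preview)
Your proposal is correct and follows the same route as the paper's own proof, which simply cites Proposition \ref{Prop:transformation formulas-integrtogeneral}, Lemma \ref{Le:transf-mumu'}, and Definition \ref{Definition:errorterms-prime}; you have merely spelled out the algebraic rearrangements (expanding $\la = 1+\ovla$, $\la^{-1} = 1-\ovla + O(\ovla^2)$, and absorbing the resulting $F\cdot\nab' F$ and $r^{-1}F^2$ terms into $\err_1$) that the paper leaves implicit.
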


\begin{proof}
The proof follows immediately from Proposition \ref{Prop:transformation formulas-integrtogeneral}, Lemma \ref{Le:transf-mumu'} and Definition \ref{Definition:errorterms-prime}.
\end{proof}


\section{GCM spheres}
\lab{section:GCMspheres}



\subsection{Particular case of adapted spheres}


We apply the results of Lemma \ref{Lemma: GCMS'}   to the  case   when the  prime frame  is adapted to  
  an  $O(\epg) $-sphere  $\S\subset\RR$ as defined in  section \ref{section:epg-spheres}, i.e. the primed frame coincides with $(e_1^\S, e_2^\S,  e_3^\S, e_4^\S)$  where $e_1^\S, e_2^\S$ are tangent to $\S$. Moreover we assume that
   $\S$ is endowed  with a basis $\JpS$  of  $\ell=1$ modes.     We  denote   by $r^\S$ the area radius of $\S$ and by    and by  $\nab^\S, \div^\S, \curl^\S, \lap^\S$  the standard differential  operators on $\S$.   We also  denote by  $\Ga^\S, R^\S$      the corresponding Ricci coefficients  and curvature components, by $\mu^\S$ the corresponding mass aspect function,  and by $m^\S$ the corresponding Hawking mass.

   \begin{remark}
   \lab{remark:welldefinedGa}
    Note  that  while  the Ricci coefficients $\ka^\S, \kab^\S,  \chih^\S, \chibh^\S, \ze^\S$ as well as all curvature  components $\a^\S, \b^\S, \rho^\S, \rhod^\S, \bb^\S, \aa^\S$ and   mass aspect function $\mu^\S$     are well defined on $\S$, this in not the case  of $\eta^\S, \etab^\S, \xi^\S, \xib^\S, \om^\S, \omb^\S$ which require  the  derivatives of the frame in the $e_3^\S$ and $e_4^\S$ directions.  Taking this observation into account, the GCM construction will only involve the quantities well defined on $\S$.
    \end{remark}

 We rewrite the equations \eqref{GCMS'-1} \eqref{GCMS'-2} in the following form,
 \bea
     \lab{GCMS-1S}
     \bsplit
\curl ^\S f  &=\aka^\S-\err_1(\aka, \aka^\S),\\
\curl^\S \fb  &=\akab^\S-\err_1(\akab, \akab^\S),\\
\div^\S f + \ka \ovla &= \ka^\S -\ka -\err_1(\ka,\ka^\S),\\
\div^\S\fb - \kab \ovla &= \kab^\S -\kab -\err_1(\kab,\kab^\S),
\end{split}
\eea
and
\bea
 \lab{GCMS-2S}
  -\div^\S\left(- \nab^\S \ovla
  -\left(\omb +\frac 1 4 \kab \right) f +\left(\om +\frac 1 4 \ka\right) \fb\right)&=&\mu^\S-\mu  -\err_2(\mu, \mu^\S),
\eea
with error $ \err_1, \err_2$ error terms  as in Definition \ref{Definition:errorterms-prime} and which we repeat below.

\begin{definition}
\lab{Definition:errorterms}
We denote by $\err_k$, $k=1,2$,   error terms\footnote{Note  however that  the precise error terms differ in each particular case and that  here  we only emphasize  their general structure.} which can be  written schematically in the form,
\bea
\bsplit
r\err_1&=  F\c  (r\Ga_b)+ F\c  (r \nab^\S)^{\le 1} F+r^{-1} F,\\
r^2 \err_2 &= ( r\nab^\S )^{\le 1}( r\err_1)+( F+\Ga_b )\c  r \dk \Ga_b. 
\end{split}
\eea
\end{definition}

    Using these conventions we   rewrite equation \eqref{GCMS-1S} \eqref{GCMS-2S} in the following form.
    \begin{lemma} 
    \lab{Lemma-adaptedGCM-equations}
    The following relations hold true for any adapted  frame $(e_1^\S, e_2^\S, e_3^\S,  e_4^\S)$ to a given sphere $\S$  connected to the reference frame $(e_1, e_2, e_3, e_4)$ by the transition coefficients $(f, \fb, \la)$, with $\ovla=\la-1$,    
     \bea
     \lab{GCMS-4S}
     \bsplit
\curl ^\S f  &=\aka^\S-\err_1[\curl^\S  f ],\\
\curl^\S \fb  &=\akab^\S-\err_1[\curl^\S  \fb ],\\
\div^\S f + \ka \ovla &= \ka^\S -\ka -\err_1[\div^\S f ],\\
\div^\S\fb - \kab \ovla &= \kab^\S -\kab -\err_1[\div^\S \fb ],\\
\lap^\S\ovla + V\ovla &=\mu^\S-\mu -\left(\omb +\frac 1 4 \kab \right) \big(\ka^\S-\ka \big)+\left(\om +\frac 1 4 \ka \right) \big(\kab^\S-\kab \big)+\err_2[ \lap^\S\ovla],
\end{split}
\eea  
where  the error terms  $\err_1[ \curl^\S f],  \err_1[  \div^\S  f ]$,  $\err_1[ \curl^\S_1 \fb],  \err_1[  \div^\S  \fb ]$  and $\err_2[ \lap^\S\ovla] $   are consistent with Definition
\ref{Definition:errorterms} but their  exact expressions   differ, of course, for each equation, and where $V$ is given by
\bea
V &:=& -\left(\frac 1 2 \ka \kab +\ka \omb+\kab \om\right).
\eea 
    \end{lemma}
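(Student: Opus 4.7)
The plan is to derive the five equations in \eqref{GCMS-4S} directly from Lemma \ref{Lemma: GCMS'} specialized to the case when the primed frame is the adapted frame of $\S$, i.e.\ from \eqref{GCMS-1S}--\eqref{GCMS-2S}. The first four equations are essentially identical to the four equations in \eqref{GCMS-1S}: the only thing to check is that the error terms $\err_1(\aka,\aka^\S)$, $\err_1(\akab,\akab^\S)$, $\err_1(\ka,\ka^\S)$, $\err_1(\kab,\kab^\S)$, which are guaranteed by Lemma \ref{Lemma: GCMS'} to have the schematic structure of Definition \ref{Definition:errorterms-prime} with $\nab'=\nab^\S$, indeed fit into the schematic form of Definition \ref{Definition:errorterms}. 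Since the primed $\ell=1$ mode correction $r^{-1}F$ allowed in Definition \ref{Definition:errorterms} is strictly more general than the schema of Definition \ref{Definition:errorterms-prime}, the renaming of the error terms to $\err_1[\curl^\S f]$ etc.\ is immediate.

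The substantive content is in the fifth equation, which requires manipulating \eqref{GCMS-2S}. The plan is to distribute the divergence on the left-hand side:
\begin{align*}
-\div^\S\!\Big(-\nab^\S\ovla - (\omb+\tfrac14\kab)f + (\om+\tfrac14\ka)\fb\Big)
&= \lap^\S\ovla + (\omb+\tfrac14\kab)\div^\S f - (\om+\tfrac14\ka)\div^\S\fb\\
&\quad + f\!\cdot\!\nab^\S(\omb+\tfrac14\kab) - \fb\!\cdot\!\nab^\S(\om+\tfrac14\ka).
\end{align*}
Into the two middle terms I substitute the third and fourth identities of Lemma \ref{Lemma: GCMS'}, namely $\div^\S f = (\ka^\S-\ka) - \ka\ovla - \err_1(\ka,\ka^\S)$ and $\div^\S\fb = (\kab^\S-\kab) + \kab\ovla - \err_1(\kab,\kab^\S)$. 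Collecting the coefficients of $\ovla$ yields exactly
\[
-\big[(\omb+\tfrac14\kab)\ka + (\om+\tfrac14\ka)\kab\big]\ovla \;=\; -\big(\tfrac12\ka\kab + \ka\omb + \kab\om\big)\ovla \;=\; V\ovla,
\]
which is the source of the potential $V$ in the statement. The remaining pieces $(\omb+\tfrac14\kab)(\ka^\S-\ka)$ and $-(\om+\tfrac14\ka)(\kab^\S-\kab)$ are moved to the right-hand side with the correct sign, matching the claimed form of the equation.

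The main obstacle, and really the only thing to monitor carefully, is to check that the leftover terms
\[
f\!\cdot\!\nab^\S(\omb+\tfrac14\kab) - \fb\!\cdot\!\nab^\S(\om+\tfrac14\ka)
\;-\; (\omb+\tfrac14\kab)\err_1(\ka,\ka^\S) + (\om+\tfrac14\ka)\err_1(\kab,\kab^\S)
\;+\; \err_2(\mu,\mu^\S)
\]
assemble into a single $\err_2[\lap^\S\ovla]$ term in the sense of Definition \ref{Definition:errorterms}. This is where the bookkeeping matters: the $\err_2(\mu,\mu^\S)$ already has the required structure $(r\nab^\S)^{\le 1}(r\err_1) + (F+\Ga_b)\!\cdot\!r\dk\Ga_b$ by Lemma \ref{Le:transf-mumu'}; the two $F\!\cdot\!\nab^\S\Ga_b$-type contributions fit into the $(F+\Ga_b)\!\cdot\!r\dk\Ga_b$ slot (noting that $\omb,\kab,\om,\ka$ belong to $\Ga_b$ or $\Ga_g\subset r^{-1}\Ga_b$ by Remark \ref{remark:GagcupGab}); and the products $\Ga_b\cdot \err_1$ are absorbed into $(r\nab^\S)^{\le 1}(r\err_1)$ after multiplying by $r$. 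Once these checks are carried out, the fifth identity follows and the lemma is proved.
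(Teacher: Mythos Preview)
Your proof is correct and follows essentially the same route as the paper: expand the divergence in \eqref{GCMS-2S}, substitute the $\div^\S f$ and $\div^\S\fb$ relations from \eqref{GCMS-1S}, and collect the $\ovla$ coefficient to produce $V$. The only minor point is that your claim that $f\cdot\nab^\S(\omb+\tfrac14\kab)$ fits directly into the $(F+\Ga_b)\cdot r\dk\Ga_b$ slot glosses over the fact that $\nab^\S$ is not literally $\dk$; the paper handles this by writing out $e_a^\S(\Ga) = e_a(\Ga) + \tfrac12\fb_a\nab_4\Ga + (\tfrac12 f_a + \dots)\nab_3\Ga$ from the frame transformation \eqref{eq:Generalframetransf}, which makes the $\err_2$ structure transparent.
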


\begin{proof}    
We   rewrite equation \eqref{GCMS-2S} in the form
\beaa
\Delta^\S\ovla&=&-\left(\omb +\frac 1 4 \kab \right) \div^\S f +\left(\om +\frac 1 4 \ka\right) \div^\S \fb+\mu^\S-\mu+  \err_2\\
&-&\nab^\S \left(\omb +\frac 1 4 \kab \right) \c  f +\nab^\S \left(\om +\frac 1 4 \ka\right) \c \fb
\eeaa
In view of the transformation formulas \eqref{eq:Generalframetransf},  for  every scalar Ricci coefficient $\Ga$,
\beaa
e_a^\S ( \Ga )&=&   \left(\de_{ab} +\frac{1}{2}\fb_af_b\right) e_b ( \Ga)  
+\frac 1 2  \fb_a\nab_4\Ga +\left(\frac 1 2 f_a +\frac{1}{8}|f|^2\fb_a\right) \nab_3\Ga.
\eeaa
Thus, we can easily check  that $- \nab^\S (\omb +\frac 1 4 \kab) \c  f +\nab^\S(\om +\frac 1 4 \ka) \c \fb$
is an $\err_2$ term. Making use of the $ \div^\S$ equations in \eqref{GCMS-1S} we  deduce,
\beaa
 \Delta^\S\ovla&=&-\left(\omb +\frac 1 4 \kab \right) \big(-\ka \ovla +\ka^\S-\ka +\err_1\big)
+\big(\om +\frac 1 4 \ka)\big(\kab \ovla +\kab^\S-\kab +\err_1\big)\\
&+& \mu^\S-\mu+\err_2\\
&=&- V\ovla + \mu^\S-\mu -\left(\omb +\frac 1 4 \kab \right) \big(\ka^\S-\ka \big)+\left(\om +\frac 1 4 \ka \right) \big(\kab^\S-\kab \big)+\err_2
\eeaa
where,
\beaa
V&=&-\left(\frac 1 2 \ka \kab +\ka \omb+\kab \om\right)
\eeaa
i.e.,
\beaa
\Delta^\S\ovla + V\ovla &=&\mu^\S-\mu -\left(\omb +\frac 1 4 \kab \right) \big(\ka^\S-\ka \big)+\left(\om +\frac 1 4 \ka \right) \big(\kab^\S-\kab \big)+\err_2
\eeaa
as stated.
\end{proof}

In \eqref{GCMS-4S}, the terms $\ka^\S -\ka$ and $\kab^\S -\kab$ on the right-hand side of the equations for $\div^\S f$ and $\div^\S\fb$ contain in fact implicitly a linear term, proportional to the scalar  $r-r^\S$, which will be denoted by the auxiliary function $\ovb$ below.  This term should be put on the left-hand side which is the purpose of the following reformulation of \eqref{GCMS-4S}.
    \begin{lemma} 
    \lab{Lemma-adaptedGCM-equations:bis}
    Under the assumptions of Lemma \ref{Lemma-adaptedGCM-equations}, the following relations hold true   
     \bea
     \lab{GCMS-4Sbis}
     \bsplit
\curl ^\S f  &=\aka^\S-\err_1[\curl^\S  f ],\\
\curl^\S \fb  &=\akab^\S-\err_1[\curl^\S  \fb ],\\
\div^\S f + \ka \ovla -\frac{2}{(r^\S)^2}\ovb &= \ka^\S-\frac{2}{r^\S} -\left(\ka-\frac{2}{r}\right) -\err_1[\div^\S f ] -\frac{2(r-r^\S)^2}{r(r^\S)^2},\\
\div^\S\fb - \kab \ovla+\frac{2}{(r^\S)^2}\ovb &= \kab^\S+\frac{2}{r^\S} -\left(\kab+\frac{2}{r}\right) -\err_1[\div^\S \fb ]+\frac{2(r-r^\S)^2}{r(r^\S)^2},\\
\Delta^\S\ovla + V\ovla &=\mu^\S-\mu -\left(\omb +\frac 1 4 \kab \right) \big(\ka^\S-\ka \big)\\
&+\left(\om +\frac 1 4 \ka \right) \big(\kab^\S-\kab \big)+\err_2[ \lap^\S\ovla],\\
\Delta^\S\ovb &= \div^\S\left(\frac{e_4(r)}{2}\fb   +\frac{e_3(r)}{2}\left(f +\frac{1}{4}|f|^2\fb\right)\right) , \quad \ov{\ovb}^\S=\ov{r}^\S-r^\S.
\end{split}
\eea  
    \end{lemma}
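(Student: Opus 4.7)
The plan is to observe that \eqref{GCMS-4Sbis} is obtained from \eqref{GCMS-4S} by a purely algebraic rearrangement of the $\div^\S f$ and $\div^\S \fb$ equations, together with the introduction of a single auxiliary scalar $\ovb$ that captures the discrepancy $r - r^\S$. The first two (curl) equations and the fifth (Laplacian) equation of \eqref{GCMS-4Sbis} are identical to the corresponding ones in \eqref{GCMS-4S}, so nothing is to be done there.

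For the div equations, I would start from $\div^\S f + \ka \ovla = \ka^\S - \ka - \err_1[\div^\S f]$ and split
\[
\ka^\S - \ka = \Big(\ka^\S - \tfrac{2}{r^\S}\Big) - \Big(\ka - \tfrac{2}{r}\Big) + \Big(\tfrac{2}{r^\S} - \tfrac{2}{r}\Big),
\]
then use the elementary identity
\[
\frac{2}{r^\S} - \frac{2}{r} = \frac{2(r - r^\S)}{r r^\S} = \frac{2(r - r^\S)}{(r^\S)^2} - \frac{2(r - r^\S)^2}{r (r^\S)^2},
\]
which is just $\tfrac{1}{r r^\S} = \tfrac{1}{(r^\S)^2} - \tfrac{r - r^\S}{r (r^\S)^2}$. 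Identifying $\ovb = r - r^\S$ (justified in the next step), the linear term $\tfrac{2(r - r^\S)}{(r^\S)^2}$ becomes $\tfrac{2}{(r^\S)^2}\ovb$, which I move to the left-hand side; the quadratic term $-\tfrac{2(r - r^\S)^2}{r(r^\S)^2}$ remains on the right, giving exactly the third equation of \eqref{GCMS-4Sbis}. The fourth equation follows in the same way from the $\div^\S \fb$ equation in \eqref{GCMS-4S}, with the opposite sign because of the $-\kab$ on the left.

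For the new sixth equation, I use the fact that on the background foliation $r = r(u, s)$ depends only on $(u, s)$, so that $e_a(r) = 0$ for $a = 1, 2$. Applying the frame transformation formula \eqref{eq:Generalframetransf} of Lemma \ref{Lemma:Generalframetransf} to the scalar $r$ then gives
\[
e_a^\S(r) = \frac{e_4(r)}{2}\fb_a + \frac{e_3(r)}{2}\Big(f_a + \tfrac{1}{4}|f|^2\fb_a\Big),
\]
which is precisely the $a$-component of $\nab^\S r$ in the adapted frame. Taking $\div^\S$ on both sides yields $\Delta^\S r$ on the left and the stated right-hand side. Since the constant $r^\S$ lies in the kernel of $\Delta^\S$, the function $\ovb := r - r^\S$ solves the PDE with average $\ov{r}^\S - r^\S$, and this determines $\ovb$ uniquely by the standard solvability theory for Poisson's equation on $\S$ with prescribed mean. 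Substituting this value of $\ovb$ recovers the required equalities in the third and fourth lines.

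The whole argument is purely algebraic and elliptic-theoretic, so there is no substantial obstacle; the only point one must track carefully is that the quadratic remainder $\tfrac{2(r - r^\S)^2}{r(r^\S)^2}$ is displayed explicitly rather than absorbed into $\err_1$. The reason for doing so, and the real value of this reformulation, is that in the forthcoming iterative scheme the $\ovb$-dependence must couple linearly to the system in order to close the estimates, and burying $r - r^\S$ inside $\err_1$ would obscure this coupling.
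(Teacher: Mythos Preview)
Your proposal is correct and follows essentially the same route as the paper: the paper also leaves the curl and $\ovla$ equations untouched, splits $\ka^\S-\ka$ and $\kab^\S-\kab$ exactly as you do, invokes the same identity $\tfrac{2}{r^\S}-\tfrac{2}{r}=\tfrac{2(r-r^\S)}{(r^\S)^2}-\tfrac{2(r-r^\S)^2}{r(r^\S)^2}$, and derives the $\ovb$ equation by applying $e_a^\S$ to $r$ via the frame transformation (using $e_b(r)=0$) and then taking $\div^\S$. Your remark that the sign flip in the $\fb$ equation comes from ``the $-\kab$ on the left'' is slightly imprecise---it is really because one adds $2/r^\S$ and $2/r$ to $\kab^\S$, $\kab$ rather than subtracting---but the computation and conclusion are correct.
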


\begin{remark}
Though      $\ovb=r-r^\S$,   in  the treatment of the  system \eqref{GCMS-4Sbis}  we  will consider  it simply as a 
 solution its corresponding  elliptic equation.
\end{remark}

 \begin{proof}
Recall \eqref{GCMS-4S}
     \beaa
     \bsplit
\curl ^\S f  &= -\err_1[\curl^\S  f ],\\
\curl^\S \fb  &= -\err_1[\curl^\S  \fb ],\\
\div^\S f + \ka \ovla &= \ka^\S -\ka -\err_1[\div^\S f ],\\
\div^\S\fb - \kab \ovla &= \kab^\S -\kab -\err_1[\div^\S \fb ],\\
\Delta^\S\ovla + V\ovla &=\mu^\S-\mu -\left(\omb +\frac 1 4 \kab \right) \big(\ka^\S-\ka \big)+\left(\om +\frac 1 4 \ka \right) \big(\kab^\S-\kab \big)+\err_2[ \lap^\S\ovla].
\end{split}
\eeaa  
We rewrite this system as
     \beaa
     \bsplit
\curl ^\S f  &= -\err_1[\curl^\S  f ],\\
\curl^\S \fb  &= -\err_1[\curl^\S  \fb ],\\
\div^\S f + \ka \ovla -\left(\frac{2}{r^\S}-\frac{2}{r}\right) &= \ka^\S-\frac{2}{r^\S} -\left(\ka-\frac{2}{r}\right) -\err_1[\div^\S f ],\\
\div^\S\fb - \kab \ovla+\left(\frac{2}{r^\S}-\frac{2}{r}\right) &= \kab^\S+\frac{2}{r^\S} -\left(\kab+\frac{2}{r}\right) -\err_1[\div^\S \fb ],\\
\Delta^\S\ovla + V\ovla &=\mu^\S-\mu -\left(\omb +\frac 1 4 \kab \right) \big(\ka^\S-\ka \big)+\left(\om +\frac 1 4 \ka \right) \big(\kab^\S-\kab \big)+\err_2[ \lap^\S\ovla].
\end{split}
\eeaa  
Now, we have
\beaa
\frac{2}{r^\S}-\frac{2}{r} &=& \frac{2(r-r^\S)}{rr^\S}=\frac{2(r-r^\S)}{(r^\S)^2} -\frac{2(r-r^\S)^2}{r(r^\S)^2}\\
&=&\frac{2\ovb}{(r^\S)^2} -\frac{2(r-r^\S)^2}{r(r^\S)^2}
\eeaa
where
\beaa
\ovb=r-r^\S,
\eeaa 
which allows us to rewrite the system in the desired form \eqref{GCMS-4Sbis}
      \beaa
     \bsplit
\curl ^\S f  &= -\err_1[\curl^\S  f ],\\
\curl^\S \fb  &= -\err_1[\curl^\S  \fb ],\\
\div^\S f + \ka \ovla  -\frac{2}{(r^\S)^2}\ovb &= \ka^\S-\frac{2}{r^\S} -\left(\ka-\frac{2}{r}\right) -\err_1[\div^\S f ] -\frac{2(r-r^\S)^2}{r(r^\S)^2},\\
\div^\S\fb - \kab \ovla+\frac{2}{(r^\S)^2}\ovb &= \kab^\S+\frac{2}{r^\S} -\left(\kab+\frac{2}{r}\right) -\err_1[\div^\S \fb ] +\frac{2(r-r^\S)^2}{r(r^\S)^2},
\end{split}
\eeaa
\beaa
\Delta^\S\ovla + V\ovla &=&\mu^\S-\mu -\left(\omb +\frac 1 4 \kab \right) \big(\ka^\S-\ka \big)+\left(\om +\frac 1 4 \ka \right) \big(\kab^\S-\kab \big)+\err_2[ \lap^\S\ovla].
\eeaa

Next, we derive an equation for $\ovb$. Recall that 
\beaa
\ovb=r-r^\S.
\eeaa
In particular,  the scalar $\ovb$ is uniquely defined by 
\beaa
\Delta^\S\ovb = \Delta^\S\left(r-r^\S\right), \qquad \ov{\ovb}^\S=\ov{r}^\S-r^\S.
\eeaa
Note also that we have, using the null frame transformation from the background frame to the frame of $\S$, 
\beaa
 \nab^\S_a(r) &=&  \left(\left(\de_{ab} +\frac{1}{2}\fb_af_b\right) e_b +\frac 1 2  \fb_a  e_4 +\left(\frac 1 2 f_a +\frac{1}{8}|f|^2\fb_a\right)   e_3\right)r\\
 &=&  \left(\frac 1 2  \fb_a  e_4 +\left(\frac 1 2 f_a +\frac{1}{8}|f|^2\fb_a\right)   e_3\right)r
\eeaa
and hence
\beaa
 \nab^\S\left(r-r^\S\right) &=& \nab^\S(r)\\
 &=&   \frac{e_4(r)}{2}\fb   +\frac{e_3(r)}{2}\left(f +\frac{1}{4}|f|^2\fb\right). 
  \eeaa
Thus,   the scalar $\ovb$ is uniquely defined by
\beaa
\Delta^\S\ovb=\div^\S \nab^\S\left(r-r^\S\right) = \div^\S\left(\frac{e_4(r)}{2}\fb  +\frac{e_3(r)}{2}\left(f +\frac{1}{4}|f|^2\fb\right)\right) , \qquad \ov{\ovb}^\S=\ov{r}^\S-r^\S,
\eeaa
as desired.
\end{proof}

Finally, we rewrite \eqref{GCMS-4Sbis} as follows.
    \begin{corollary} 
    \lab{Lemma-adaptedGCM-equations:ter}
    Under the assumptions of Lemma \ref{Lemma-adaptedGCM-equations}, the following relations hold true   
     \bea
     \lab{GCMS-4Ster}
     \bsplit
\curl ^\S f  &=\aka^\S-\err_1[\curl^\S  f ],\\
\curl^\S \fb  &=\akab^\S-\err_1[\curl^\S  \fb ],\\
\div^\S f + \ka \ovla -\frac{2}{(r^\S)^2}\ovb &= \ka^\S-\frac{2}{r^\S} -\left(\ka-\frac{2}{r}\right) -\err_1[\div^\S f ] -\frac{2(r-r^\S)^2}{r(r^\S)^2},\\
\div^\S\fb - \kab \ovla +\frac{2}{(r^\S)^2}\ovb &= \kab^\S+\frac{2}{r^\S} -\left(\kab+\frac{2}{r}\right) -\err_1[\div^\S \fb ] +\frac{2(r-r^\S)^2}{r(r^\S)^2},\\
\Delta^\S\ovla + V\ovla &=\mu^\S-\mu -\left(\omb +\frac 1 4 \kab \right) \big(\ka^\S-\ka \big)\\
&+\left(\om +\frac 1 4 \ka \right) \big(\kab^\S-\kab \big)+\err_2[ \lap^\S\ovla],\\
\Delta^\S\ovb &= \frac{1}{2}\div^\S\left(\fb - \Up f +\err_1[\Delta^\S \ovb ] \right) , \quad \ov{\ovb}^\S=\ov{r}^\S-r^\S,
\end{split}
\eea  
where  the error term  $\err_1[ \Delta^\S\ovb]$  is consistent with Definition
\ref{Definition:errorterms}.
    \end{corollary}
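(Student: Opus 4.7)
The plan is to derive \eqref{GCMS-4Ster} directly from Lemma \ref{Lemma-adaptedGCM-equations:bis} by a single algebraic manipulation of the source term in the equation for $\Delta^\S \ovb$; the other five equations carry over unchanged. The idea is to replace $e_4(r)$ and $e_3(r)$ by their Schwarzschildian principal parts $1$ and $-\Up$, at the cost of error pieces that fit the schematic template for $\err_1$ given in Definition \ref{Definition:errorterms}.

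First I would use the trivial decompositions $e_4(r) = 1 + (e_4(r)-1)$ and $e_3(r) = -\Up + (e_3(r)+\Up)$. By \eqref{definition:Ga_gGa_b}, the scalar $r^{-1}(e_4(r)-1)$ lies in $\Ga_g$ and $r^{-1}(e_3(r)+\Up)$ lies in $\Ga_b$. Substituting into the source term of the last equation in \eqref{GCMS-4Sbis} and isolating the principal part produces
\[
\frac{e_4(r)}{2}\fb+\frac{e_3(r)}{2}\left(f+\frac{1}{4}|f|^2\fb\right) \;=\; \frac{1}{2}(\fb-\Up f) \;+\; E,
\]
with the remainder
\[
E \;=\; \frac{1}{2}(e_4(r)-1)\fb \;+\; \frac{1}{2}(e_3(r)+\Up)f \;+\; \frac{1}{8}e_3(r)|f|^2\fb.
\]

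Next I would show that $E$ can be identified with $\frac{1}{2}\err_1[\Delta^\S \ovb]$ in the sense of Definition \ref{Definition:errorterms}. The first piece $(e_4(r)-1)\fb$ is of type $(r\Ga_g)\cdot F$, and by Remark \ref{remark:GagcupGab} is also of type $\Ga_b \cdot F$, which fits the $F\cdot(r\Ga_b)$ slot of $r\err_1$. The second piece $(e_3(r)+\Up)f$ is directly of type $(r\Ga_b)\cdot F$, again matching the $F\cdot(r\Ga_b)$ slot. For the third piece, decomposing $e_3(r) = -\Up + (e_3(r)+\Up)$ once more produces a contribution $-\frac{\Up}{8}|f|^2\fb$, which is a cubic monomial in $F$ absorbed into the $F\cdot(r\nab^\S)^{\le 0}F = F\cdot F$ portion of $r\err_1$ up to the bounded factor $\Up$, plus a strictly higher-order piece $\frac{1}{8}(e_3(r)+\Up)|f|^2\fb$ of type $F^2\cdot(r\Ga_b)$ which is absorbed similarly. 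Collecting these three contributions into a single horizontal $1$-form $\err_1[\Delta^\S\ovb]$ yields the last equation of \eqref{GCMS-4Ster}.

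There is no substantive obstacle in this argument; it is a direct substitution followed by identification of the resulting terms with the schematic form of $\err_1$. The only care required is the bookkeeping of factors of $r$ when comparing $\Ga_g$ and $\Ga_b$, for which Remark \ref{remark:GagcupGab} is the decisive tool.
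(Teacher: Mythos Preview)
Your proposal is correct and follows essentially the same approach as the paper: both reduce the task to rewriting the source term in the $\Delta^\S\ovb$ equation via $e_4(r)=1+(e_4(r)-1)$ and $e_3(r)=-\Up+(e_3(r)+\Up)$, then absorb the remainders into an $\err_1$ term. The only cosmetic difference is that the paper leaves the cubic piece $\frac{e_3(r)}{4}|f|^2\fb$ intact and bins it directly into the $F\cdot F$ slot, whereas you split $e_3(r)$ once more; both are equally valid.
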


\begin{proof}
In view of \eqref{GCMS-4Sbis}, we only need to focus on the equation for $\ovb$.  
We have 
\beaa
&&\frac{e_4(r)}{2}\fb   +\frac{e_3(r)}{2}\left(f +\frac{1}{4}|f|^2\fb\right) \\
&=& \frac{1}{2}\left(\fb - \Up f +(e_4(r)-1)\fb   +(e_3(r)+\Up)f +\frac{e_3(r)}{4}|f|^2\fb\right)\\
&=& \frac{1}{2}\Big(\fb - \Up f +\err_1[\Delta^\S \ovb ]\Big)  
\eeaa
where
\beaa
\err_1[\Delta^\S \ovb ] &:=& (e_4(r)-1)\fb   +(e_3(r)+\Up)f +\frac{e_3(r)}{4}|f|^2\fb\\
&=& r\Ga_b F   + F\c F
\eeaa
so that $\err_1[\Delta^\S \ovb ]$ is indeed consistent with Definition \ref{Definition:errorterms}. 
\end{proof}


\subsection{Definition of GCM spheres}


\begin{definition}
\lab{definition:GCMS}
We say that $\S\subset \RR$, endowed with an adapted  frame\footnote{i.e.$ (e^\S_1, e_2 ^\S)$ are tangent to $\S$.}   $(e_1^\S, e_2^\S, e_3^\S, e_4^\S)$,   is a general  covariant modulated (GCM) sphere    if  the following hold true:
\bea
\lab{def:GCMC:00}
\bsplit
\ka^\S&=\frac{2}{r^\S},\\
\kab^\S &=-\frac{2}{r^\S}\Up^\S+  \Cb^\S_0+\sum_p \CbpS \JpS,\\
\mu^\S&= \frac{2m^\S}{(r^\S)^3} +   M^\S_0+\sum _p\MpS \JpS,
\end{split}
\eea
for some constants $\Cb^\S_0,\,  \CbpS, \, M^\S_0, \, \MpS, \, p\in\{-,0, +\}$.
 In addition, since the $\S$- frame  is  automatically integrable, we also have
 \bea
 \label{Conditions:GCMS-automatic}
\aka^\S=\akab^\S=0.
\eea
\end{definition}

We will construct our GCM spheres in Theorem \ref{Theorem:ExistenceGCMS1}  under the following assumptions,
\bea
\lab{assumptions:oldGCMS1}
\bsplit
\ka&=\frac{2}{r}+\dot{\ka},\\
\kab&=-\frac{2\Up}{r} +  \Cb_0+\sum_p \Cbp \Jp+\dot{\kab},\\
\mu&= \frac{2m^\S}{r^3} + M_0+\sum _p\Mp \Jp+\dot{\mu},
\end{split}
\eea
where
\bea
\lab{assumptions:oldGCMS2}
\bsplit
|\Cb_0, \Cbp| &\les r^{-2} \epg, \qquad  |M_0, \Mp| \les r^{-3} \epg,
\\
\big\| \kadot, \kabdot\|_{\hk_{s_{max}}(\S)}&\les r^{-1}\dg,\qquad 
\big\|\mudot\| _{\hk_{s_{max}}(\S)}\les r^{-2}\dg.
\end{split}
\eea
  In view of the GCM conditions we deduce,
   \beaa
  \ka^\S-\ka&=&\frac{2}{r^\S} -\frac 2  r -\kadot,\\
   \kab^\S -\kab&=& -\frac{2}{r^\S}\Up^\S+\frac 2 r \Up  +\big(\Cb^\S_0-\Cb_0\big)+ \sum_p\big( \CbpS -\Cbp\big) \JpS\\
   &+&\Cbp \big(\JpS -\Jp)-\kabdot,\\
   \mu^\S-\mu&=&\frac{2m^\S}{(r^\S)^3}-\frac{2m}{r^3} +   \big(M^\S_0-M_0) + \sum _p\big(\MpS- \Mp\big) \JpS\\
    &+&\Mp \big(\JpS -\Jp)-\mudot,
   \eeaa
   or, introducing the notation,
      \bea\lab{eq:notationforCbdotMdotandCbpdot}
   \bsplit
   \Cbdot_0:&=  \Cb^\S_0-\Cb_0, \qquad\qquad\,\,
   \Mdot_0:=  M^\S_0-  M_0,\\
   \Cbpdot:&=  \CbpS - \Cbp, \qquad  \Mpdot:= \MpS-   \Mp,
   \end{split}
   \eea
   we write,
   \bea
   \lab{eq:differencesofGCMS}
   \bsplit
  \ka^\S-\ka&=\frac{2}{r^\S} -\frac 2  r -\kadot,\\
   \kab^\S -\kab&=  \Cbdot_0+\sum_p \Cbpdot \JpS  -\frac{2}{r^\S}\Up^\S+\frac 2 r \Up +\Cbp \big(\JpS -\Jp)-\kabdot,\\
   \mu^\S-\mu&= \Mdot_0+\sum _p\Mpdot \JpS +\frac{2m^\S}{(r^\S)^3}-\frac{2m}{r^3} 
    +\Mp \big(\JpS -\Jp)-\mudot.
    \end{split}
   \eea

 
   \subsection{Linearized GCM equations}
   

\begin{definition}
\lab{definition:GCMSgen-equations}
Let   $\S\subset\RR$  a  smooth  $O(\epg)$-sphere. We say that $F=(f, \fb, \ovla)$  verifies
 the linearized GCM system on $\S$ if the following holds true,
 \bea
 \lab{GeneralizedGCMsystem}
\bsplit
\curl ^\S f &= h_1 -\ov{h_1}^\S,\\
\curl^\S \fb&= \underline{h}_1 - \ov{\underline{h}_1}^\S,\\
\div^\S f + \frac{2}{r^\S} \ovla  -\frac{2}{(r^\S)^2}\ovb &=  h_2,
\\
\div^\S\fb + \frac{2}{r^\S} \ovla +\frac{2}{(r^\S)^2}\ovb   
&=   \Cbdot_0+\sum_p \Cbpdot \JpS +\underline{h}_2,\\
\left(\Delta^\S+\frac{2}{(r^\S)^2}\right)\ovla  &=  \Mdot_0+\sum _p\Mpdot \JpS+\frac{1}{2r^\S}\left(\Cbdot_0+\sum_p \Cbpdot \JpS\right) +h_3,\\
\Delta^\S\ovb-\frac{1}{2}\div^\S\Big(\fb - f\Big) &= h_4 -\ov{h_4}^\S , \qquad \ov{\ovb}^\S=b_0,
\end{split}
\eea
 for  some choice of  constants $\Cbdot_0, \Mdot_0, \Cbpdot, \Mpdot$, $b_0$, and scalar functions $h_1$, $h_2$, $h_3$, $h_4$, $\underline{h}_1$, $\underline{h}_2$.
 \end{definition}
 
 \begin{remark}
 The system  \eqref{GeneralizedGCMsystem} is naturally connected  to the system  \eqref{GCMS-4Ster} 
 and  the notation  introduced in   \eqref{assumptions:oldGCMS1} and \eqref{eq:notationforCbdotMdotandCbpdot}
  with the following choices of  terms $h_1, \hb_1, h_2, \hb_2, h_3, h_4$.
  \beaa
h_1 =  -\err_1[\curl^\S  f ], \qquad\qquad \underline{h}_1=-\err_1[\curl^\S  \fb ],
\eeaa
\beaa
h_2 &=& -\left(\ka-\frac{2}{r^\S}\right) \ovla  + \ka^\S-\frac{2}{r^\S} -\kadot -\err_1[\div^\S f ] -\frac{2(r-r^\S)^2}{r(r^\S)^2}, \\ 
\underline{h}_2 &=& \left(\kab+\frac{2}{r^\S}\right) \ovla+ \left(\kab^\S+\frac{2\Up^\S}{r^\S} - \Cb^\S_0-\sum_p \CbpS \JpS\right)  -\kabdot + \frac{4m^\S}{(r^\S)^2} - \frac{4m}{r^2}\\
&& -\err_1[\div^\S \fb ]+\frac{2(r-r^\S)^2}{r(r^\S)^2},
\eeaa
\beaa
h_3 &=& -\left(V-\frac{2}{(r^\S)^2}\right)\ovla+ \left(\mu^\S -\frac{2m^\S}{(r^\S)^3}-M^\S_0-\sum _p\MpS \JpS\right) \\
&&-\mudot  +\frac{2m^\S}{(r^\S)^3}-\frac{2m}{r^3} -\left(\omb +\frac 1 4 \kab \right) \big(\ka^\S-\ka \big)+\left(\om +\frac 1 4 \ka \right) \big(\kab^\S-\kab \big)\\
&&-\frac{1}{2r^\S}\left(\Cbdot_0+\sum_p \Cbpdot \JpS\right)+\err_2[ \lap^\S\ovla], 
\eeaa
\beaa
h_4 = \frac{1}{2}\div^\S\left(\frac{2m}{r}f+\err_1[\Delta^\S \ovb ]\right),\qquad \qquad b_0 = \ov{r}^\S-r^\S.
\eeaa
In fact, with these choices,  the system  \eqref{GeneralizedGCMsystem} corresponds  precisely  to    \eqref{GCMS-4Ster} provided that
 we also have 
 \bea
 \lab{eq:cancellationofaverageerrakaakabneededfor} 
 \ov{\err_1[\curl^\S  f ]}^\S= \ov{\err_1[\curl^\S  \fb ]}^\S=0.
 \eea
 \end{remark}

 \begin{remark}
 \lab{remark-important-curls}
The following remarks motivate the introduction of the system  \eqref{GeneralizedGCMsystem}.
\begin{enumerate} 
\item The cancellation \eqref{eq:cancellationofaverageerrakaakabneededfor} holds true if the frame generated by $(f, \fb, \la)$ is adapted to $\S$. In particular, if the frame generated by $(f, \fb, \la)$ is adapted to $\S$, and if $(f, \fb, \la, \ovb)$ solves \eqref{GeneralizedGCMsystem} with the above particular choice for $h_1$, $h_2$, $h_3$, $h_4$, $\underline{h}_1$, $\underline{h}_2$, $b_0$, then $\S\subset \RR$  is a GCM sphere. 

\item The above particular choice for $h_1$, $h_2$, $h_3$, $h_4$, $\underline{h}_1$, $\underline{h}_2$, corresponds to the terms in  \eqref{GCMS-4Ster} which 
\begin{itemize}
\item either depend on $\ka-2/r$, $\kab+2\Up/r$, and $\mu-2m/r^3$, 

\item or contain an additional power of $r^{-1}$ compared to the other terms,

\item or are nonlinear.
\end{itemize}

\item The reason for subtracting   the averages $\ov{h_1}^\S$  and  $\ov{\underline{h}_1}^\S$ in the two first equations of \eqref{GeneralizedGCMsystem} is to ensure solvability of the system.
\end{enumerate}
 \end{remark}

 The following proposition provides existence, uniqueness and control of solutions to the linearized GCM system  \eqref{GeneralizedGCMsystem}.
 
 \begin{proposition}
 \lab{Thm.GCMSequations-fixedS}
  Assume  $\S$ is a given  $O(\epg)$-sphere  in $\RR$.  Then,   for every  triplets  $\La, \Lab\in \RRR^3$  and contant $b_0$,       there  exist unique  constants  
   $\Cbdot_0, \Mdot_0, \Cbpdot, \Mpdot$ such that the system \eqref{GeneralizedGCMsystem} has a unique solution $(\ovla, f, \fb)$   with prescribed  $\ell=1$ modes for $\div^\S f, \div^\S\fb$,
    \bea
    \lab{eq:badmodesforffb}
    (\div^\S f)_{\ell=1}=\La, \qquad (\div^\S \fb)_{\ell=1} =\Lab.
    \eea
Moreover, 
\bea
&&\|(f,\fb, \widecheck{\ovla}^\S)\|_{\hk_{s_{max}+1}(\S)}  +\sum_p\Big(r^2|\Cbpdot|+r^3|\Mpdot|\Big)\\
\nn&\les& r\|(\widecheck{h_1}^\S, \,\widecheck{\underline{h}_1}^\S, \,\widecheck{h_2}^\S,\,\widecheck{\underline{h}_2}^\S)\|_{\hk_{s_{max}}(\S)} +r^2\|\widecheck{h_3}^\S\|_{\hk_{s_{max}-1}(\S)}+r\|\widecheck{h_4}^\S\|_{\hk_{s_{max}-2}(\S)} +|\La|+|\Lab|,
\eea
and
\bea
\nn r^2|\Cbdot_0|+r^3|\Mdot_0|+r\Big|\ov{\ovla}^\S\Big|  &\les& r\|(\widecheck{h_1}^\S, \,\widecheck{\underline{h}_1}^\S,\, h_2, \,\underline{h}_2)\|_{L^2(\S)} +r^2\|h_3\|_{L^2(\S)}+r\|\widecheck{h_4}^\S\|_{L^2(\S)} \\
&&+|\La|+|\Lab|+|b_0|,
\eea   
where we have used the notation $\widecheck{h}^\S=h-\ov{h}^\S$  for a scalar function $h$ on $\S$.
 \end{proposition}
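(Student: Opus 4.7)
The overall strategy is to exploit the linearity of \eqref{GeneralizedGCMsystem} and to decouple the equations for the two scalar unknowns $\ovla$ and $\ovb$, both of which will be shown to satisfy elliptic equations driven by the operator $\Delta^\S+2/(r^\S)^2$. First I would subtract the third equation from the fourth to express $\div^\S(\fb-f)$ purely in terms of $\ovb$, the constants $\Cbdot_0,\Cbpdot$, and the data $h_2,\underline{h}_2$, and substitute the result into the sixth equation of \eqref{GeneralizedGCMsystem} to obtain the self-contained elliptic equation
\[
\left(\Delta^\S+\frac{2}{(r^\S)^2}\right)\ovb=\frac{1}{2}\Big(\Cbdot_0+\sum_p\Cbpdot\,\JpS+\underline{h}_2-h_2\Big)+h_4-\ov{h_4}^\S.
\]
Together with the fifth equation of \eqref{GeneralizedGCMsystem}, both scalar unknowns are thus governed by the same operator, to which Lemma \ref{lemma:sovabilityoftheoperatorDeltaSplus2overrsquare} applies directly.

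Next I would determine the unknown constants. For the $\ovb$-equation above, the third part of Lemma \ref{lemma:sovabilityoftheoperatorDeltaSplus2overrsquare} shows that the coefficients of $\JpS$ on the right are pinned down by orthogonality against the eigenfunctions $j^{(p)}$, uniquely determining $\Cbpdot$; integrating the equation on $\S$ and enforcing the prescribed value $\ov{\ovb}^\S=b_0$ then fixes $\Cbdot_0$. Analogously, for the $\ovla$-equation, the coefficients of $\JpS$ determine the combination $\Mpdot+\frac{1}{2r^\S}\Cbpdot$, yielding $\Mpdot$; and the identity $\frac{2}{r^\S}\ov{\ovla}^\S-\frac{2}{(r^\S)^2}b_0=\ov{h_2}^\S$, obtained by integrating the third equation of \eqref{GeneralizedGCMsystem}, combined with the average of the $\ovla$-equation, determines $\Mdot_0$ and simultaneously the value of $\ov{\ovla}^\S$.

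With the constants chosen in this way, Lemma \ref{lemma:sovabilityoftheoperatorDeltaSplus2overrsquare} produces $\ovla$ and $\ovb$ up to six free $\ell=1$ mode coefficients. These six degrees of freedom are pinned down by the prescribed conditions \eqref{eq:badmodesforffb}: testing the third and fourth equations of \eqref{GeneralizedGCMsystem} against $\JpS$ and using the near-orthonormality relations \eqref{eq:basicpropertiesJpSonOepsphere} for $\JpS$ and the analogous ones for $j^{(q)}$ yields a linear $6\times6$ system for these modes whose principal part is diagonally dominant, hence invertible once $\epg$ is sufficiently small. Once $\ovla$ and $\ovb$ are thereby determined, the right-hand sides of the first four equations of \eqref{GeneralizedGCMsystem} are known scalar functions whose averages over $\S$ vanish by construction, so applying the first case of Lemma \ref{prop:2D-Hodge} to the Hodge systems $\ddd^\S_1 f=(\div^\S f,\curl^\S f)$ and $\ddd^\S_1\fb=(\div^\S\fb,\curl^\S\fb)$ yields the unique 1-forms $f$ and $\fb$ with the required regularity.

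The main obstacle is the quantitative bookkeeping required to recover the precise weighted norms in the statement: Lemma \ref{lemma:sovabilityoftheoperatorDeltaSplus2overrsquare} controls the averages and average-free parts of $\ovla,\ovb$ with genuinely different powers of $r$, and the $6\times6$ matching system has entries of different weights ($r^{-1}$ for the $\ovla$-modes and $r^{-2}$ for the $\ovb$-modes), so its inversion must be carried out with carefully chosen weights to produce both displayed inequalities. Once this is arranged, combining the quantitative estimates of Lemma \ref{lemma:sovabilityoftheoperatorDeltaSplus2overrsquare} for $\ovla,\ovb$, the Hodge estimates of Lemma \ref{prop:2D-Hodge} for $f,\fb$, and the explicit formulas derived above for the constants delivers both inequalities asserted in the proposition.
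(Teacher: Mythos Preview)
Your strategy is correct and essentially the same as the paper's: both proofs reduce the system to two scalar elliptic equations governed by $\Delta^\S+2/(r^\S)^2$, invoke Lemma \ref{lemma:sovabilityoftheoperatorDeltaSplus2overrsquare} to fix the constants $\Cbpdot,\Mpdot$ while leaving six free $\ell=1$ modes, match those modes to $(\La,\Lab)$, and then recover $f,\fb$ by Hodge. The only organizational difference is that the paper splits into $f\pm\fb$ and takes $\varpi:=\div^\S(f-\fb)$ as the second scalar unknown (deriving \eqref{eq:closedequationforlappdivfminusfbtobeusefullagainlater} for it), whereas you keep $\ovb$ itself; your equation for $\ovb$ and the paper's equation for $\varpi$ are equivalent, and the bookkeeping of weights works out identically in either formulation.
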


The proof of  Proposition \ref{Thm.GCMSequations-fixedS} is postponed to section \ref{sec:proofofThm.GCMSequations-fixedS}. 

The following proposition provides a priori estimates for the linearized GCM system \eqref{GeneralizedGCMsystem}.

\begin{proposition}
\lab{Thm.GCMSequations-fixedS:contraction}
Assume given a solution     $(f, \fb, \ovla, \Cbdot_0, \Mdot_0, \Cbpdot, \Mpdot, \ovb)$  of the system     \eqref{GeneralizedGCMsystem},      \eqref{eq:badmodesforffb} on $\S$. 
 Then, the following a priori estimates are verified 
\bea
\nn&&\|(f,\fb, \widecheck{\ovla}^\S)\|_{\hk_3(\S)} +\sum_p\Big(r^2|\Cbpdot|+r^3|\Mpdot|\Big)\\  &\les& r\|(\widecheck{h_1}^\S, \,\widecheck{\underline{h}_1}^\S, \,\widecheck{h_2}^\S,\,\widecheck{\underline{h}_2}^\S)\|_{\hk_2(\S)} +r^2\|\widecheck{h_3}^\S\|_{\hk_1(\S)}+r\|\widecheck{h_4}^\S\|_{L^2(\S)} +|\La|+|\Lab|,
\eea
and
\bea
\nn r^2|\Cbdot_0|+r^3|\Mdot_0|+r\Big|\ov{\ovla}^\S\Big|  &\les& r\|(\widecheck{h_1}^\S, \,\widecheck{\underline{h}_1}^\S,\, h_2, \,\underline{h}_2)\|_{L^2(\S)} +r^2\|h_3\|_{L^2(\S)}+r\|\widecheck{h_4}^\S\|_{L^2(\S)} \\
&&+|\La|+|\Lab|+|b_0|.
\eea   
 \end{proposition}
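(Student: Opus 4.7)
The plan is to mimic the strategy used to prove the existence statement Proposition \ref{Thm.GCMSequations-fixedS}, but now working only at the lower regularity level $\hk_3(\S)$ that is needed for later contraction/fixed-point arguments. The key observation is that the linearized GCM system \eqref{GeneralizedGCMsystem} decouples almost triangularly once we separate the average and $\ell=1$ modes from the higher spherical harmonics.

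First I would extract the $\ell=0$ and $\ell=1$ pieces. Averaging the third equation of \eqref{GeneralizedGCMsystem} and using $\int_\S\div^\S f=0$ yields $\frac{2}{r^\S}\ov{\ovla}^\S-\frac{2}{(r^\S)^2}b_0=\ov{h_2}^\S$, which controls $\ov{\ovla}^\S$ in terms of $b_0$ and $\ov{h_2}^\S$. Averaging the fourth equation and using $\ov{\JpS}^\S=O(\epg)$ determines $\Cbdot_0$ up to an $O(\epg)$ contribution of $\Cbpdot$. Taking the $\ell=1$ modes of the third and fourth equations against the $\JpS$, using the prescribed values $(\div^\S f)_{\ell=1}=\La$, $(\div^\S\fb)_{\ell=1}=\Lab$ and the near-orthogonality $\frac{1}{|\S|}\int_\S J^{(p,\S)}J^{(q,\S)}=\frac{1}{3}\de_{pq}+O(\epg)$, I solve the resulting perturbation-of-identity linear system to express $(\ovla)_{\ell=1}$ and the triplet $\Cbpdot$ in terms of $\La$, $\Lab$, $(\ovb)_{\ell=1}$ and the low modes of $(h_2,\underline{h}_2)$.

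Second I would estimate $\ovla$ itself via Lemma \ref{lemma:sovabilityoftheoperatorDeltaSplus2overrsquare} applied to the fifth equation. Viewed as $(\Delta^\S+2/(r^\S)^2)\ovla = (\Mdot_0+\tfrac{1}{2r^\S}\Cbdot_0+h_3)+\sum_p(\Mpdot+\tfrac{1}{2r^\S}\Cbpdot)\JpS$, the lemma simultaneously yields an $\hk_2(\S)$ bound on $\widecheck{(\ovla)^\perp}^\S$ and pins down $\Mpdot+\tfrac{1}{2r^\S}\Cbpdot$ (hence $\Mpdot$, since $\Cbpdot$ was determined in step one) together with $\Mdot_0+\tfrac{1}{2r^\S}\Cbdot_0$ (hence $\Mdot_0$) from the average. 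Then $\widecheck{\ovb}^\S$ is estimated at the $\hk_2$ level from standard elliptic theory on the $O(\epg)$-sphere applied to the sixth equation. Finally, I would plug the first four equations of \eqref{GeneralizedGCMsystem} into the right-hand sides of the Hodge systems $\ddd^\S_1 f$ and $\ddd^\S_1\fb$, whose entries now involve only already-controlled quantities, and apply Lemma \ref{prop:2D-Hodge}(1) to conclude the $\hk_3(\S)$ estimates on $f$ and $\fb$. Combining all of the above delivers the two displayed inequalities.

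The hard part is the apparent circular coupling between the constants $(\Cbdot_0,\Cbpdot,\Mdot_0,\Mpdot)$ and the low modes of $(\ovla,\ovb)$: one cannot directly estimate $\ovla$ without first knowing the constants sitting on the right-hand side of its equation, yet those same constants are themselves computed from the low modes of $\ovla$ via the divergence equations. The resolution is quantitative: because $\S$ is an $O(\epg)$-sphere and $\epg$ is small, the coupling between the low modes is a small perturbation of a block-diagonal identity and can be inverted explicitly with uniform bounds. The mild asymmetry in the number of derivatives imposed on the data in the final estimate (e.g.\ $\widecheck{h_1}^\S$ is controlled in $\hk_2$ while $\widecheck{h_4}^\S$ only in $L^2$) simply reflects that one Laplacian appears in the equations for $\ovla$ and $\ovb$, while the Hodge systems for $(f,\fb)$ effectively consume two.
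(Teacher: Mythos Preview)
Your overall strategy is in the right spirit, and the way you handle the $\ell=0$ and $\ell=1$ modes (averaging, testing against $\JpS$, using \eqref{eq:badmodesforffb}, and inverting a perturbation of the identity) is essentially what the paper does. The gap is elsewhere, and it is not the coupling between the constants and the low modes that you flag.

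The problem is the order-one coupling between $\widecheck{\ovb}^\S$ and $f-\fb$. You propose to ``estimate $\widecheck{\ovb}^\S$ at the $\hk_2$ level from standard elliptic theory applied to the sixth equation'' and then feed this into the Hodge system for $f,\fb$. But the sixth equation reads $\Delta^\S\widecheck{\ovb}^\S=\tfrac12\div^\S(\fb-f)+\widecheck{h_4}^\S$, so Poisson estimates only give $\|\widecheck{\ovb}^\S\|_{\hk_2}\lesssim r\|f-\fb\|_{\hk_1}+r^2\|\widecheck{h_4}^\S\|_{L^2}$. When you then go to the Hodge system for $f-\fb$, its divergence is $\div^\S(f-\fb)=\tfrac{4}{(r^\S)^2}\widecheck{\ovb}^\S+\ldots$, and plugging the previous bound back in returns $\|f-\fb\|_{\hk_3}\lesssim \|f-\fb\|_{\hk_1}+\ldots$ with \emph{no} small factor in front. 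This loop does not close; it is not an $O(\epg)$ perturbation.

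The paper breaks this loop by an algebraic elimination rather than by smallness: applying $\Delta^\S$ to the equation for $\div^\S(f-\fb)$ and substituting the sixth equation for $\Delta^\S\widecheck{\ovb}^\S$ produces the closed Helmholtz equation
\[
\Big(\Delta^\S+\tfrac{2}{(r^\S)^2}\Big)\div^\S(f-\fb)=\Delta^\S\big(\widecheck{h_2}^\S-\textstyle\sum_p\Cbpdot\JpS-\widecheck{\underline{h}_2}^\S\big)+\tfrac{4}{(r^\S)^2}\widecheck{h_4}^\S,
\]
which involves neither $\ovb$ nor $\ovla$. Testing this against $J^{(\S,q)}$ gives $\Cbpdot$ directly (Step~1 of the paper's proof), and then the spectral gap of $\Delta^\S+2/(r^\S)^2$ away from $\ell=1$ together with \eqref{eq:badmodesforffb} controls $\|\div^\S(f-\fb)\|_{\hk_2}$ (Step~6). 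Only after that is $f-\fb$ recovered from $\ddd_1^\S$. Equivalently, you could substitute the other way and get a closed Helmholtz equation for $\widecheck{\ovb}^\S$; either route works, but some such elimination is needed. Your proposal as written skips this step.
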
 
 
 The proof of  Proposition \ref{Thm.GCMSequations-fixedS:contraction} is postponed to section \ref{sec:proofofThm.GCMSequations-fixedS:contraction}.
 
\begin{remark}
Note that the constants $\Cbdot_0$, $\Mdot_0$, $\Cbpdot$, $\Mpdot$ are given in Proposition \ref{Thm.GCMSequations-fixedS:contraction}, while there are chosen in  Proposition \ref{Thm.GCMSequations-fixedS}. Both propositions will be applied in the context of an iterative scheme. Proposition \ref{Thm.GCMSequations-fixedS} will be used for the existence of the iterates and their boundedness, see Proposition \ref{prop:Estimates:Fn+1boundednessiterativescheme}, while Proposition \ref{Thm.GCMSequations-fixedS:contraction} will be used to prove contraction of the iterative scheme, see Proposition \ref{Prop:contractionforNN}. 
\end{remark}

 
 \subsection{Proof of  Proposition \ref{Thm.GCMSequations-fixedS}}
\lab{sec:proofofThm.GCMSequations-fixedS} 
 

\noindent{\bf Step 1.} We start with the solvability  for $\ovla$. Recall that we have
\beaa
\left(\Delta^\S+\frac{2}{(r^\S)^2}\right)\ovla  &=&  \Mdot_0+\sum _p\Mpdot \JpS +\frac{1}{2r^\S}\left(\Cbdot_0+\sum_p \Cbpdot \JpS\right)+h_3.
\eeaa 
Let $\la_p$, $p=0,+,-$, three constants which will be chosen later. We apply Lemma \ref{lemma:sovabilityoftheoperatorDeltaSplus2overrsquare} with 
\beaa
\la=\ovla, \qquad c_p= \Mpdot, \qquad h=\Mdot_0+\frac{1}{2r^\S}\left(\Cbdot_0+\sum_p \Cbpdot \JpS\right)+h_3,
\eeaa
which yields the existence and uniqueness of the constants $\Mpdot$ and of a scalar function $(\ovla)^\perp$ such that the solution $\ovla$ is given by 
\bea\lab{eq:controloflinearizedGCMequations:0}
\ovla &=& (\ovla)^\perp+\sum_p\la_pj^{(p)}, \qquad \int_\S (\ovla)^\perp j^{(q)}=0, \quad q=0,+,-,
\eea
with $\Mpdot$ and $(\ovla)^\perp$ verifying 
\bea\lab{eq:controloflinearizedGCMequations:1}
\sum_p|\Mpdot|  &\les& r^{-1}\|\widecheck{h_3}^\S\|_{L^2(\S)}+\frac{1}{r}\sum_p|\Cbpdot|+\frac{\epg}{r^2}\sum_p|\la_p|,
\eea 
\bea\lab{eq:controloflinearizedGCMequations:2}
\Big|\ov{(\ovla)^\perp}^\S\Big| &\les& r^2|\Mdot_0|+r\left(|\Cbdot_0|+\epg\sum_p |\Cbpdot|\right)+r^2|\ov{h_3}^\S|+\epg \sum_p|\la_p|,
\eea
 and 
\bea\lab{eq:controloflinearizedGCMequations:3}
r^{-3}\Big\|\widecheck{(\ovla)^\perp}^\S\Big\|_{\hk_{s_{max}+1}(\S)} &\les& r^{-1}\|\widecheck{h_3}^\S\|_{\hk_{s_{max}-1}(\S)}+\frac{1}{r}\sum_p |\Cbpdot|+\frac{\epg}{r^2}\sum_p|\la_p|.
\eea

\noindent{\bf Step 2.} Taking the average of the equation for $\ovla$, we have
\beaa
\frac{2}{(r^\S)^2}\ov{\ovla}^\S  &=&  \Mdot_0+\sum _p\Mpdot \ov{\JpS}^\S+\frac{1}{2r^\S}\left(\Cbdot_0+\sum _p\Cbpdot \ov{\JpS}^\S\right) +\ov{h_3}^\S.
\eeaa
In view of the average of $\ovb$, we infer
\beaa
&&\frac{1}{|\S|}\int_\S\left(-\frac{2}{r^\S} \ovla  +\frac{2}{(r^\S)^2}\ovb +  h_2\right)\\
 &=& -\frac{2}{r^\S}\ov{\ovla}^\S+\frac{2}{(r^\S)^2}b_0+\ov{h_2}^\S\\
 &=&  - r^\S\Mdot_0 -r^\S\sum _p\Mpdot \frac{1}{|\S|}\int_\S\JpS -\frac{1}{2}\left(\Cbdot_0+\sum _p\Cbpdot \frac{1}{|\S|}\int_\S\JpS\right)  -r^\S\ov{h_3}^\S \\
 && +\frac{2}{(r^\S)^2}b_0+\ov{h_2}^\S
\eeaa
and
\beaa
&&\frac{1}{|\S|}\int_\S\left( -\frac{2}{r^\S} \ovla -\frac{2}{(r^\S)^2}\ovb   +   \Cbdot_0+\sum_p \Cbpdot \JpS +\underline{h}_2\right) \\
&=& -\frac{2}{r^\S}\ov{\ovla}^\S-\frac{2}{(r^\S)^2}b_0+\Cbdot_0+\left(\sum_p\Cbpdot\right)\frac{1}{|\S|}\int_\S \JpS+\ov{\underline{h}_2}^\S\\
&=&   - r^\S\Mdot_0 -r^\S\sum _p\Mpdot \frac{1}{|\S|}\int_\S\JpS  -r^\S\ov{h_3}^\S   \\
&& -\frac{2}{(r^\S)^2}b_0+\frac{1}{2}\left(\Cbdot_0+\left(\sum_p\Cbpdot\right)\frac{1}{|\S|}\int_\S \JpS\right)+\ov{\underline{h}_2}^\S.
\eeaa
From now on, we  choose $\Mdot_0$ and $\Cbdot_0$ as follows  
\beaa
  r^\S\Mdot_0+\frac{1}{2}\Cbdot_0 &=&   -r^\S\sum _p\Mpdot \frac{1}{|\S|}\int_\S\JpS -\frac{1}{2}\sum _p\Cbpdot \frac{1}{|\S|}\int_\S\JpS  -r^\S\ov{h_3}^\S \\
 && +\frac{2}{(r^\S)^2}b_0+\ov{h_2}^\S,\\
    r^\S\Mdot_0 -\frac{1}{2}\Cbdot_0 &=&  -r^\S\sum _p\Mpdot \frac{1}{|\S|}\int_\S\JpS  -r^\S\ov{h_3}^\S   \\
    &&  -\frac{2}{(r^\S)^2}b_0+\frac{1}{2}\left(\sum_p\Cbpdot\right)\frac{1}{|\S|}\int_\S \JpS+\ov{\underline{h}_2}^\S.
\eeaa
With this choice, we have 
\bea
\bsplit
\int_\S\left(-\frac{2}{r^\S} \ovla  +\frac{2}{(r^\S)^2}\ovb +  h_2\right) &= 0,\\
\int_\S\left( \frac{2}{r^\S} \ovla -\frac{2}{(r^\S)^2}\ovb   +   \Cbdot_0+\sum_p \Cbpdot \JpS +\underline{h}_2\right)  &= 0.
\end{split}
\eea
Furthermore, $\Mdot_0$ and $\Cbdot_0$  satisfy
\bea\lab{eq:controloflinearizedGCMequations:5}
\nn r^2|\Cbdot_0|+r^3|\Mdot_0| &\les& r^2\epg\left(\sum_p|\Cbpdot|\right)+r^3\epg\left(\sum_p|\Mpdot|\right)+|b_0|\\
&&+r^2|\ov{h_2}^\S|+r^2|\ov{\underline{h}_2}^\S|+r^3|\ov{h_3}^\S|.
\eea

\noindent{\bf Step 3.} $f+\fb$ satisfies 
\beaa
\ddd_1^\S(f+\fb) &=& \left(-\frac{4}{r^\S} \ovla+h_2+\underline{h}_2 +\Cbdot_0+\sum_p \Cbpdot \JpS, h_1 -\ov{h_1}^\S +\underline{h}_1 - \ov{\underline{h}_1}^\S\right).
\eeaa
The choice of $\Mdot_0$ and $\Cbdot_0$ in Step 4 is such that the right-hand side of the equation is average free. Thus, we may solve for $f+\fb$, and we have
\beaa
\ddd_1^\S(f+\fb) &=& \left( -\frac{4}{r^\S} \widecheck{
\ovla}^\S+\widecheck{h_2}^\S+\widecheck{\underline{h}_2}^\S +\sum_p \Cbpdot \JpS-\sum_p\Cbpdot \frac{1}{|\S|}\int_\S\JpS, \widecheck{h_1}^\S  +\widecheck{\underline{h}_1}^\S\right).
\eeaa
We infer the estimates 
\bea\lab{eq:controloflinearizedGCMequations:6}
\|f+\fb\|_{\hk_{s_{max}+1}(\S)} \les r\|(\widecheck{h_1}^\S, \widecheck{\underline{h}_1}^\S, \widecheck{h_2}^\S, \widecheck{\underline{h}_2}^\S)\|_{\hk_{s_{max}}(\S)}  +\| \widecheck{\ovla}\|_{\hk_{s_{max}}(\S)}+r^2\sum_p|\Cbpdot|.
\eea

\noindent{\bf Step 4.} $f-\fb$ satisfies
\beaa
\curl ^\S(f -\fb) &=& \widecheck{h_1}^\S - \widecheck{\underline{h}_1}^\S,\\
\div^\S(f-\fb) -\frac{4}{(r^\S)^2}\ovb &=& h_2 -\Cbdot_0-\sum_p \Cbpdot \JpS -\underline{h}_2.
\eeaa
The choice of $\Mdot_0$ and $\Cbdot_0$ in Step 4 is such that the right-hand side of the second equation is average free. Hence, we may rewrite it as
\beaa
\div^\S(f-\fb) -\frac{4}{(r^\S)^2}\check{\ovb}^\S &=& \widecheck{h_2}^\S -\sum_p \Cbpdot \JpS +\sum_p\Cbpdot \frac{1}{|\S|}\int_\S\JpS -\widecheck{\underline{h}_2}^\S.
\eeaa
Since all terms have average 0, this is equivalent to solving 
\beaa
\Delta^\S\div^\S(f-\fb) -\frac{4}{(r^\S)^2}\Delta^\S\check{\ovb}^\S &=& \Delta^\S\left(\widecheck{h_2}^\S -\sum_p \Cbpdot \JpS  -\widecheck{\underline{h}_2}^\S\right).
\eeaa
In view of the definition of $\ovb$, this is equivalent to  
\bea\lab{eq:closedequationforlappdivfminusfbtobeusefullagainlater}
\left(\Delta^\S+\frac{2}{(r^\S)^2}\right)\div^\S(f-\fb) = \Delta^\S\left(\widecheck{h_2}^\S -\sum_p \Cbpdot \JpS  -\widecheck{\underline{h}_2}^\S\right)  +\frac{4}{(r^\S)^2}\widecheck{h_4}^\S.
\eea 

\noindent{\bf Step 5.} In view of Step 4, we consider the solution $\varpi$ to
\beaa
\left(\Delta^\S+\frac{2}{(r^\S)^2}\right)\varpi  &=&  \Delta^\S\left(\widecheck{h_2}^\S -\sum_p \Cbpdot \JpS  -\widecheck{\underline{h}_2}^\S\right)  +\frac{4}{(r^\S)^2}\widecheck{h_4}^\S.
\eeaa 
Let $\varpi_p$, $p=0,+,-$, three constants which will be chosen later. We apply Lemma \ref{lemma:sovabilityoftheoperatorDeltaSplus2overrsquare} with 
\beaa
&& \la=\varpi, \qquad c_p= \frac{2}{(r^\S)^2}\Cbpdot,\\ 
&& h=\Delta^\S\left(\widecheck{h_2}^\S   -\widecheck{\underline{h}_2}^\S\right)  +\frac{4}{(r^\S)^2}\widecheck{h_4}^\S -\sum_p \Cbpdot\left( \Delta^\S+\frac{2}{(r^\S)^2}\right)\JpS,
\eeaa
which yields the existence and uniqueness of the constants $\Cbpdot$ and of a scalar function $\varpi^\perp$ such that the solution $\varpi$ is given by 
\bea\lab{eq:controloflinearizedGCMequations:7}
\varpi &=& \varpi^\perp+\sum_p\varpi_pj^{(p)}, \qquad \int_\S\varpi^\perp j^{(q)}=0, \quad q=0,+,-,
\eea
with $\Cbpdot$ and $\varpi^\perp$ verifying 
\bea\lab{eq:controloflinearizedGCMequations:8}
\sum_p|\Cbpdot|  &\les& r^{-1}\|\widecheck{h_2}^\S\|_{L^2(\S)}+r^{-1}\|\widecheck{\underline{h}_2}^\S\|_{L^2(\S)}+r^{-1}\|\widecheck{h_4}^\S\|_{L^2(\S)}+\epg\sum_p|\varpi_p|,
\eea 
 and 
\bea\lab{eq:controloflinearizedGCMequations:9}
\nn r^{-1}\|\widecheck{\varpi^\perp}^\S\|_{\hk_{s_{max}}(\S)} &\les& r^{-1}\|\widecheck{h_2}^\S\|_{\hk_{s_{max}}(\S)}+r^{-1}\|\widecheck{\underline{h}_2}^\S\|_{\hk_{s_{max}}(\S)}\\
&&+r^{-1}\|\widecheck{h_4}^\S\|_{\hk_{s_{max}-2}(\S)}+\epg\sum_p|\varpi_p|.
\eea
Also, taking the average of the equation for $\varpi$, and in view of \eqref{eq:theeigenvectorsjphavezeroaverageonS}, we infer
\bea\lab{eq:controloflinearizedGCMequations:10}
\ov{\varpi}^\S =0, \qquad \ov{\varpi^\perp}^\S=0.
\eea

\noindent{\bf Step 6.} In view of Step 4 and the definition of $\varpi$ in Step 5, we have
\beaa
\div(f-\fb) &=& \varpi.
\eeaa 
Since $\ov{\varpi}^\S =0$ in view of Step 5, this is equivalent to
\beaa
\div(f-\fb) &=& \widecheck{\varpi}^\S
\eeaa 
and hence
\beaa
\ddd_1^\S(f-\fb) &=& \Big(\widecheck{\varpi}^\S, \widecheck{h_1}^\S - \widecheck{\underline{h}_1}^\S\Big).
\eeaa
Since the right-hand side has average 0, this system is solvable, and we obtain a unique $f-\fb$ satisfying 
\bea\lab{eq:controloflinearizedGCMequations:11}
\nn\|f-\fb\|_{\hk_{s_{max}+1}(\S)} &\les& r\|\widecheck{h_1}^\S\|_{\hk_{s_{max}}(\S)} +r\|\widecheck{\underline{h}_1}^\S\|_{\hk_{s_{max}}(\S)}\\
&& +r\| \widecheck{\varpi^\perp}^\S\|_{\hk_{s_{max}}(\S)}+r^2\sum_p|\varpi_p|.
\eea

\noindent{\bf Step 7.} It remains to ensure the conditions $(\div^\S f)_{\ell=1}=\La$ and $(\div^\S \fb)_{\ell=1} =\Lab$. 
Recall that we have
\beaa
\div^\S(f+\fb) &=& -\frac{4}{r^\S} \widecheck{
\ovla}^\S+\widecheck{h_2}^\S+\widecheck{\underline{h}_2}^\S +\sum_p \Cbpdot \JpS-\sum_p\Cbpdot \frac{1}{|\S|}\int_\S\JpS
\eeaa
which we rewrite
\beaa
\div^\S(f+\fb) &=& -\frac{4}{r^\S} \widecheck{
\ovla^\perp}^\S-\frac{4}{r^\S} \sum_p\la_p j^{(p)}+\widecheck{h_2}^\S+\widecheck{\underline{h}_2}^\S +\sum_p \Cbpdot \JpS\\
&&-\sum_p\Cbpdot \frac{1}{|\S|}\int_\S\JpS.
\eeaa
It is at this stage that we choose the constants $\la_p$ such that 
\beaa
\frac{4}{r^\S} \sum_p\la_p \int_\S j^{(p)}J^{(\S, q)} &=& -\La^{(q)}-\Lab^{(q)}+ \int_\S J^{(\S, q)}\Bigg[-\frac{4}{r^\S} \widecheck{
\ovla^\perp}^\S+\widecheck{h_2}^\S+\widecheck{\underline{h}_2}^\S\\
&& +\sum_p \Cbpdot \JpS -\sum_p\Cbpdot \frac{1}{|\S|}\int_\S\JpS\Bigg], \quad q=0,+,-.
\eeaa
This immediately yields 
\beaa
(\div^\S(f+\fb))_{\ell=1}=\La+\Lab
\eeaa
as well as the estimate
\bea\lab{eq:controloflinearizedGCMequations:12}
\nn\sum_p|\la_p| &\les& \frac{1}{r}(|\La|+|\Lab|)+r^{-1}\ep\|\widecheck{\ovla^\perp}^\S\|_{L^2(\S)}\\
&&+\|\widecheck{h_2}^\S\|_{L^2(\S)}+\|\widecheck{\underline{h}_2}^\S\|_{L^2(\S)}+r\sum_p|\Cbdot_p|,
\eea
where we used the fact that by the  properties of $\ovla^\perp$ and of $j^{(q)}$, we have
\beaa
\int_\S J^{(\S, q)}\widecheck{\ovla^\perp}^\S &=& \int_\S (J^{(\S, q)}-j^{(q)})\widecheck{\ovla^\perp}^\S=O(\epg)r\|\widecheck{\ovla^\perp}^\S\|_{L^2(\S)}.
\eeaa

Also, recall that we have
\beaa
\div^\S(f-\fb) &=& \widecheck{\varpi}^\S
\eeaa
which we rewrite 
\beaa
\div^\S(f-\fb) &=& \widecheck{\varpi^\perp}^\S+\sum_p\varpi_pj^{(p)}.
\eeaa
It is at this stage that we choose the constants $\varpi_p$ such that 
\beaa
\sum_p\varpi_p\int_\S j^{(p)}J^{(\S, q)} &=& \La^{(q)}-\Lab^{(q)} - \int_\S \widecheck{\varpi^\perp}^\S J^{(\S, q)}, \qquad q=0,+,-.
\eeaa
This immediately yields 
\beaa
(\div^\S(f-\fb))_{\ell=1}=\La-\Lab
\eeaa
as well as the estimate
\bea\lab{eq:controloflinearizedGCMequations:13}
\sum_p|\varpi_p| &\les& \frac{1}{r^2}(|\La|+|\Lab|)+r^{-1}\ep\|\widecheck{\varpi^\perp}^\S\|_{L^2(\S)},
\eea
where we used the fact that by the  properties of $\varpi^\perp$ and of $j^{(q)}$, we have
\beaa
\int_\S J^{(\S, q)}\widecheck{\varpi^\perp}^\S &=& \int_\S (J^{(\S, q)}-j^{(q)})\widecheck{\varpi^\perp}^\S=O(\epg)r\|\widecheck{\varpi^\perp}^\S\|_{L^2(\S)}.
\eeaa

\noindent{\bf Step 8.} We now gather the estimates \eqref{eq:controloflinearizedGCMequations:0}-\eqref{eq:controloflinearizedGCMequations:13}, closing the estimates in the following order
\begin{enumerate}
\item estimate  $\Cbpdot$,  $\varpi_p$ and $\widecheck{\varpi^\perp}^\S$ using \eqref{eq:controloflinearizedGCMequations:8}, \eqref{eq:controloflinearizedGCMequations:9} and \eqref{eq:controloflinearizedGCMequations:13},

\item estimate $f-\fb$ using  \eqref{eq:controloflinearizedGCMequations:11},

\item estimate $\Mpdot$,  $\la_p$ and $\widecheck{\ovla^\perp}^\S$ using \eqref{eq:controloflinearizedGCMequations:1}, 
\eqref{eq:controloflinearizedGCMequations:3} and \eqref{eq:controloflinearizedGCMequations:12},

\item estimate $f+\fb$ using \eqref{eq:controloflinearizedGCMequations:6},

\item estimate $\Cbdot_0$ and $\Mdot_0$ using \eqref{eq:controloflinearizedGCMequations:5},

\item estimate $\ov{\ovla^\perp}^\S$ using \eqref{eq:controloflinearizedGCMequations:2},
\end{enumerate}
which finally yields
\beaa
&&\|(f,\fb, \widecheck{\ovla}^\S)\|_{\hk_{s_{max}+1}(\S)}  +\sum_p\Big(r^2|\Cbpdot|+r^3|\Mpdot|\Big)\\
 &\les& r\|(\widecheck{h_1}^\S, \widecheck{\underline{h}_1}^\S, \widecheck{h_2}^\S, \widecheck{\underline{h}_2}^\S)\|_{\hk_{s_{max}}(\S)} +r^2\|\widecheck{h_3}^\S\|_{\hk_{s_{max}-1}(\S)}+r\|\widecheck{h_4}^\S\|_{\hk_{s_{max}-2}(\S)} +|\La|+|\Lab|,
\eeaa
and
\beaa
r^2|\Cbdot_0|+r^3|\Mdot_0|+r|\ov{\ovla}^\S|  &\les& r\|(\widecheck{h_1}^\S, \widecheck{\underline{h}_1}^\S, h_2, \underline{h}_2)\|_{L^2(\S)} \\
&&+r^2\|h_3\|_{L^2(\S)}+r\|\widecheck{h_4}^\S\|_{L^2(\S)} +|\La|+|\Lab|+|b_0|
\eeaa   
as desired. This concludes the proof of Proposition \ref{Thm.GCMSequations-fixedS}.

 
 \subsection{Proof of  Proposition \ref{Thm.GCMSequations-fixedS:contraction}}
\lab{sec:proofofThm.GCMSequations-fixedS:contraction} 
 

The proof is similar to the one of Proposition \ref{Thm.GCMSequations-fixedS}, and simpler since one does not need to prove existence and uniqueness of the system, but only a priori estimates. 

\noindent{\bf Step 1.} Recall that $\div^\S(f-\fb)$ satisfies equation \eqref{eq:closedequationforlappdivfminusfbtobeusefullagainlater}
\beaa
\left(\Delta^\S+\frac{2}{(r^\S)^2}\right)\div^\S(f-\fb) &=& \Delta^\S\left(\widecheck{h_2}^\S -\sum_p \Cbpdot \JpS  -\widecheck{\underline{h}_2}^\S\right)  +\frac{4}{(r^\S)^2}\widecheck{h_4}^\S
\eeaa
which we rewrite as
\beaa
\left(\Delta^\S+\frac{2}{(r^\S)^2}\right)\div^\S(f-\fb) &=& \frac{2}{(r^\S)^2}\sum_p \Cbpdot\JpS -\sum_p \Cbpdot\left(\Delta^\S+\frac{2}{(r^\S)^2}\right)\JpS  \\
&& +\Delta^\S\left(\widecheck{h_2}^\S   -\widecheck{\underline{h}_2}^\S\right)  +\frac{4}{(r^\S)^2}\widecheck{h_4}^\S.
\eeaa
Multiplying by $J^{(\S,q)}$ and integrating on $\S$, and using \eqref{eq:basicpropertiesJpSonOepsphere} and integration by parts, we infer,
\beaa
&&\frac{2}{(r^\S)^2}\sum_p \Cbpdot\int_\S\JpS J^{(\S,q)} \\
&=& \int_\S \div^\S(f-\fb) \left(\Delta^\S+\frac{2}{(r^\S)^2}\right)J^{(\S,q)} +O(\epg)\sum_p |\Cbpdot|  \\
&& +O\Big(r^{-1}\|(\widecheck{h_2}^\S, \widecheck{\underline{h}_2}^\S)\|_{L^2(\S)} +r^{-1}\|\widecheck{h_4}^\S\|_{L^2(\S)}\Big)\\
&=& O(r^{-2}\epg)\|(f, \fb)\|_{\hk_1(\S)}  +O(\epg)\sum_p |\Cbpdot|   +O\Big(r^{-1}\|(\widecheck{h_2}^\S, \widecheck{\underline{h}_2}^\S)\|_{L^2(\S)} +r^{-1}\|\widecheck{h_4}^\S\|_{L^2(\S)}\Big).
 \eeaa
Using again \eqref{eq:basicpropertiesJpSonOepsphere}, we deduce for $\epg>0$ small enough 
\beaa
r^2\sum_p |\Cbpdot| &\les& \epg\|(f, \fb)\|_{\hk_1(\S)}    +r\|(\widecheck{h_2}^\S, \widecheck{\underline{h}_2}^\S)\|_{L^2(\S)} +r\|\widecheck{h_4}^\S\|_{L^2(\S)}.
 \eeaa

\noindent{\bf Step 2.} Recall the equation for $\ovla$
\beaa
\left(\Delta^\S+\frac{2}{(r^\S)^2}\right)\ovla  &=&  \Mdot_0+\sum _p\Mpdot \JpS+\frac{1}{2r^\S}\left(\Cbdot_0+\sum_p \Cbpdot \JpS\right) +h_3.
\eeaa
Subtracting the average, we infer
\bea\lab{eq:ellipticeequationforovlaoncetheaveragehasbeensubtracted}
\left(\Delta^\S+\frac{2}{(r^\S)^2}\right)\widecheck{\ovla}^\S  &=&  \sum _p\left(\Mpdot+\frac{1}{2r^\S}\Cbpdot \right) \Big(\JpS-\ov{\JpS}^\S\Big) +\widecheck{h_3}^\S.
\eea 
Multiplying by $J^{(\S,q)}$ and integrating on $\S$, and using \eqref{eq:basicpropertiesJpSonOepsphere} and integration by parts, we infer,
\beaa
&&\sum _p\left(\Mpdot+\frac{1}{2r^\S}\Cbpdot \right)\int_\S\JpS J^{(\S,q)}\\
&=& \int_\S\widecheck{\ovla}^\S\left(\Delta^\S+\frac{2}{(r^\S)^2}\right)J^{(\S,q)}+O(r^2\epg)\left(\Mpdot+\frac{1}{2r^\S}\Cbpdot \right)+r\|\widecheck{h_3}^\S\|_{L^2(\S)}\\
&=& O(r^{-1}\epg)\|\widecheck{\ovla}^\S\|_{L^2(S)}+O(r^2\epg)\left(\Mpdot+\frac{1}{2r^\S}\Cbpdot \right)+r\|\widecheck{h_3}^\S\|_{L^2(\S)}.
\eeaa
Using again \eqref{eq:basicpropertiesJpSonOepsphere}, we deduce for $\epg>0$ small enough 
\beaa
 r^3\sum _p|\Mpdot| &\les& r^2\sum _p|\Cbpdot|+\epg\|\widecheck{\ovla}^\S\|_{L^2(S)}+r^2\|\widecheck{h_3}^\S\|_{L^2(\S)}.
\eeaa

\noindent{\bf Step 3.} $\div^\S(f+\fb)$ satisfies 
\beaa
\div^\S(f+\fb) &=&  -\frac{4}{r^\S} \widecheck{\ovla}^\S+\widecheck{h_2}^\S+\widecheck{\underline{h}_2}^\S +\sum_p \Cbpdot \JpS-\sum_p\Cbpdot \frac{1}{|\S|}\int_\S\JpS.
\eeaa
Multiplying by $J^{(\S,q)}$, integrating on $\S$, and using \eqref{eq:basicpropertiesJpSonOepsphere}, we infer,
\beaa
\frac{4}{r^\S} \int_\S\widecheck{\ovla}^\S J^{(\S,q)} &=& -\int_\S\div^\S(f+\fb) J^{(\S,q)} +O(r^2)\sum_p |\Cbpdot|  +O\Big(r\|(\widecheck{h_2}^\S, \widecheck{\underline{h}_2}^\S)\|_{L^2(\S)} \Big).
\eeaa
Using  \eqref{eq:badmodesforffb}, we deduce
\beaa
\frac{1}{r}\sum_p\left|\int_\S\widecheck{\ovla}^\S\JpS\right| &\les& |\La|+|\Lab| +r^2\sum_p |\Cbpdot|  +r\|(\widecheck{h_2}^\S, \widecheck{\underline{h}_2}^\S)\|_{L^2(\S)}.
\eeaa
In view of the properties of $j^{(p)}$ introduced in Lemma \ref{lemma:sovabilityoftheoperatorDeltaSplus2overrsquare}, we obtain
\beaa
\frac{1}{r}\sum_p\left|\int_\S\widecheck{\ovla}^\S j^{(p)}\right| \les |\La|+|\Lab| +r^2\sum_p |\Cbpdot|  +r\|(\widecheck{h_2}^\S, \widecheck{\underline{h}_2}^\S)\|_{L^2(\S)}+\epg\|\widecheck{\ovla}^\S\|_{L^2(\S)}.
\eeaa

\noindent{\bf Step 4.} Recall \eqref{eq:ellipticeequationforovlaoncetheaveragehasbeensubtracted}
\beaa
\left(\Delta^\S+\frac{2}{(r^\S)^2}\right)\widecheck{\ovla}^\S  &=&  \sum _p\left(\Mpdot+\frac{1}{2r^\S}\Cbpdot \right) \Big(\JpS-\ov{\JpS}^\S\Big) +\widecheck{h_3}^\S.
\eeaa
In view of the  definition and properties of $j^{(p)}$ introduced in Lemma \ref{lemma:sovabilityoftheoperatorDeltaSplus2overrsquare},  we deduce 
\beaa
\|\widecheck{\ovla}^\S\|_{\hk_2(\S)}  &\les&  r^2\sum _p\left(r|\Mpdot|+|\Cbpdot| \right) +r^2\|\widecheck{h_3}^\S\|_{L^2(\S)}+\frac{1}{r}\sum_p\left|\int_\S\widecheck{\ovla}^\S j^{(p)}\right|.
\eeaa
Coming back to \eqref{eq:ellipticeequationforovlaoncetheaveragehasbeensubtracted}, we infer from standard elliptic regularity 
\beaa
\|\widecheck{\ovla}^\S\|_{\hk_3(\S)}  &\les&  r^2\sum _p\left(r|\Mpdot|+|\Cbpdot| \right) +r^2\|\widecheck{h_3}^\S\|_{\hk_1(\S)}+\frac{1}{r}\sum_p\left|\int_\S\widecheck{\ovla}^\S j^{(p)}\right|.
\eeaa

\noindent{\bf Step 5.}  $f+\fb$ satisfies 
\beaa
\ddd_1^\S(f+\fb) &=& \left(-\frac{4}{r^\S} \ovla+h_2+\underline{h}_2 +\Cbdot_0+\sum_p \Cbpdot \JpS, h_1 -\ov{h_1}^\S +\underline{h}_1 - \ov{\underline{h}_1}^\S\right).
\eeaa
Subtracting the average, we infer
\beaa
\ddd_1^\S(f+\fb) &=& \left( -\frac{4}{r^\S} \widecheck{
\ovla}^\S+\widecheck{h_2}^\S+\widecheck{\underline{h}_2}^\S +\sum_p \Cbpdot \JpS-\sum_p\Cbpdot \frac{1}{|\S|}\int_\S\JpS, \widecheck{h_1}^\S  +\widecheck{\underline{h}_1}^\S\right).
\eeaa
We deduce 
\beaa
\|f+\fb\|_{\hk_3(\S)}  &\les& \|\widecheck{\ovla}^\S\|_{\hk_2(\S)}+ r\|(\widecheck{h_1}^\S, \widecheck{\underline{h}_1}^\S, \widecheck{h_2}^\S, \widecheck{\underline{h}_2}^\S)\|_{\hk_{2}(\S)}+ r^2\sum _p|\Cbpdot|.
\eeaa

\noindent{\bf Step 6.} Recall  \eqref{eq:closedequationforlappdivfminusfbtobeusefullagainlater}
\beaa
\left(\Delta^\S+\frac{2}{(r^\S)^2}\right)\div^\S(f-\fb) &=& \Delta^\S\left(\widecheck{h_2}^\S -\sum_p \Cbpdot \JpS  -\widecheck{\underline{h}_2}^\S\right)  +\frac{4}{(r^\S)^2}\widecheck{h_4}^\S.
\eeaa
In view of the  definition and properties of $j^{(p)}$ introduced in Lemma \ref{lemma:sovabilityoftheoperatorDeltaSplus2overrsquare},  we infer 
\beaa
\|\div^\S(f-\fb)\|_{\hk_2(\S)} &\les&   \|( \widecheck{h_2}^\S, \widecheck{\underline{h}_2}^\S)\|_{\hk_{2}(\S)}+\|\widecheck{h_4}^\S\|_{L^2(\S)}+ r\sum _p|\Cbpdot|\\
&&+\frac{1}{r}\sum_p\left|\int_\S \div^\S(f-\fb)j^{(p)}\right|.
\eeaa
Using again the properties of  $j^{(p)}$, together with \eqref{eq:badmodesforffb}, we have
\beaa
\frac{1}{r}\sum_p\left|\int_\S \div^\S(f-\fb)j^{(p)}\right| &\les& \frac{1}{r}\sum_p\left|\int_\S \div^\S(f-\fb)\JpS\right|+\epg\|\div^\S(f-\fb)\|_{L^2(\S)}\\
&\les& \frac{1}{r}(|\La|+|\Lab|)+\epg\|\div^\S(f-\fb)\|_{L^2(\S)}.
\eeaa
Using the smallness of $\epg$, we infer
\beaa
\|\div^\S(f-\fb)\|_{\hk_2(\S)} \les   \|( \widecheck{h_2}^\S, \widecheck{\underline{h}_2}^\S)\|_{\hk_{2}(\S)}+\|\widecheck{h_4}^\S\|_{L^2(\S)}+ r\sum _p|\Cbpdot|+\frac{1}{r}(|\La|+|\Lab|).
\eeaa

\noindent{\bf Step 7.} Since 
\beaa
\ddd_1^\S(f-\fb) &=& \Big(\div^\S(f-\fb), \widecheck{h_1}^\S - \widecheck{\underline{h}_1}^\S\Big),
\eeaa
we infer
\beaa
\|f-\fb\|_{\hk_3(\S)}  &\les& r\|\div^\S(f-\fb)\|_{\hk_2(\S)}+ r\|(\widecheck{h_1}^\S, \widecheck{\underline{h}_1}^\S)\|_{\hk_{2}(\S)}.
\eeaa
Together with Step 1 to Step 6, we deduce
\beaa
\nn&&\|(f, \fb, \widecheck{\ovla}^\S)\|_{\hk_3(\S)}+r^2\sum_p |\Cbpdot| +  r^3\sum _p|\Mpdot| \\
&\les&  r\|(\widecheck{h_1}^\S, \widecheck{\underline{h}_1}^\S, \widecheck{h_2}^\S, \widecheck{\underline{h}_2}^\S)\|_{\hk_{2}(\S)} +r^2\|\widecheck{h_3}^\S\|_{\hk_1(\S)}+r\|\widecheck{h_4}^\S\|_{L^2(\S)}+|\La|+|\Lab|.
 \eeaa

\noindent{\bf Step 8.} It remains to control $\Cbdot_0$, $\Mdot_0$ and $\ov{\ovla}^\S$. Taking the average of 
\beaa
\div^\S(f-\fb) -\frac{4}{(r^\S)^2}\ovb &=& h_2 -\Cbdot_0-\sum_p \Cbpdot \JpS -\underline{h}_2,
\eeaa
we infer, using also $\ov{\ovb}^\S=b_0$, 
\beaa
r^2|\Cbdot_0| &\les& |b_0|+r\|(h_2, \underline{h}_2)\|_{L^2(\S)}+r^2\sum_p|\Cbpdot|.
\eeaa
Also, taking the average of 
\beaa
\div^\S(f+\fb) &=& -\frac{4}{r^\S} \ovla+h_2+\underline{h}_2 +\Cbdot_0+\sum_p \Cbpdot \JpS.
\eeaa
we infer
\beaa
r|\ov{\ovla}^\S| &\les& r^2|\Cbdot_0|+r\|(h_2, \underline{h}_2)\|_{L^2(\S)}+r^2\sum_p|\Cbpdot|.
\eeaa
Finally,  taking the average of 
\beaa
\left(\Delta^\S+\frac{2}{(r^\S)^2}\right)\ovla  &=&  \Mdot_0+\sum _p\Mpdot \JpS+\frac{1}{2r^\S}\left(\Cbdot_0+\sum_p \Cbpdot \JpS\right) +h_3,
\eeaa
we infer
\beaa
 r^3|M_0| &\les& r^2|\Cbdot_0|+r|\ov{\ovla}^\S|+r^2\|h_3\|_{L^2(\S)}+r^2\sum_p|\Cbpdot|+r^3\sum_p|\Mpdot|.
\eeaa
Gathering the three above estimates, we obtain
\beaa
r^2|\Cbdot_0| +r^3|M_0|+r|\ov{\ovla}^\S| &\les& r\|(h_2, \underline{h}_2)\|_{L^2(\S)}+r^2\|h_3\|_{L^2(\S)}\\
&&+r^2\sum_p|\Cbpdot|+r^3\sum_p|\Mpdot|+|b_0|.
\eeaa
Together with Step 7, we deduce
\beaa
r^2|\Cbdot_0| +r^3|M_0|+r|\ov{\ovla}^\S| &\les& r\|(\widecheck{h_1}^\S, \widecheck{\underline{h}_1}^\S, h_2, \underline{h}_2)\|_{L^2(\S)}+r^2\|h_3\|_{L^2(\S)}+r\|\widecheck{h_4}^\S\|_{L^2(\S)}\\
&&+|b_0|+|\La|+|\Lab|.
\eeaa
This concludes the proof of Proposition \ref{Thm.GCMSequations-fixedS:contraction}.

 
\section{Deformations of surfaces}
\lab{sec:deformationofsurfaces}
 

  
\subsection{Deformations}
 

 We recall  that the region $\RR=\RR_N\cup \RR_S $  is covered by  the  coordinate systems  denoted  $(u, s, y_N^1, y_N^2)$ and   $(u, s, y_S^1, y_S^2)$. The passage  from  the South coordinate system  $S$ to the North one  in the equatorial region $\RR_{Eq}=\RR_N\cap\RR_S$ is given by the transition functions $\vphi_{SN}$ and $\vphi_{NS}$.  Recall also that  $\ovS=S(\ovu, \ovs)$  is a fixed    sphere of the  $(u, s)$  foliation of   $\RR$.   
 
  \begin{definition}
 \label{definition:Deformations}
 We say that    $\S$ is an $O(\epg)$\,  deformation of $ \ovS$ if there exist  smooth  scalar functions $U, S$ defined on $\ovS$ and a  map 
  a map $\Psi:\ovS\longrightarrow \S $  verifying, on either coordinate  chart  $(y^1, y^2) $ of $\ovS$,  
   \bea
 \Psi(\ovu, \ovs,  y^1, y^2)=\left( \ovu+ U(y^1, y^2 ), \, \ovs+S(y^1, y^2 ), y^1, y^2  \right).
 \eea
 \end{definition}

 
  \subsection{Pull-back map}  
 
 
 Consider a fixed  deformation. 
 We recall that given a scalar function $f$ on $\S$ 
 one defines its pull-back on $\ovS$ to be  the function,
  \beaa
 f^\#:=  \Psi^\# f =f\circ \Psi.
  \eeaa 
On the other hand, given a vectorfield  $X$ on $\ovS$ one defines its push-forward   $\Psi_\# X$ to be the vectorfield 
on $\S$  defined by,
\beaa
\Psi_\# X(f)=X(\Psi^\# f)= X( f\circ \Psi).
\eeaa
Given a  covariant tensor $U$ on $\S$,  one defines  its pull back  to $\ovS$ to be  the tensor 
\beaa
\Psi^\# U ( X_1, \ldots, X_k)=  U(\Psi_\#  X_1, \ldots,   \Psi_\#X_k).
\eeaa

In what follows we restrict ourselves  to  a  fixed  chart $(y^1, y^2)$, either North or South,  on $\ovS$ relative to which the spacetime metric takes the form 
\eqref{spacetimemetric-y-coordinates}
\beaa
\g &=& - 2\vsi du ds + \vsi^2\Omb  du^2 +g_{ab}\big( dy^a- \vsi \undB^a du\big) \big( dy^b-\vsi \undB^b du\big),
\eeaa
where
\beaa
\Omb=e_3(s), \qquad \undB^a =\frac{1}{2} e_3(y^a), \qquad g_{ab}=\g(\pr_{y^a}, \pr_{y^b}).
\eeaa

    \begin{lemma}
    \lab{Lemma:deformation1}
    Let $\ovS=S(\ovu, \ovs)$  be a fixed  sphere of the background foliation of $\RR$ 
    and consider a deformation $\Psi:\ovS\longrightarrow \S$  of the form
     \beaa
 \Psi(\ovu, \ovs,  y^1, y^2)=\left( \ovu+ U(y^1, y^2 ), \, \ovs+S(y^1, y^2 ), y^1, y^2  \right)
 \eeaa
 with $(y^1, y^2)$ representing any one  of the two charts of $\RR$. 
 \begin{enumerate}
 \item The  push-forward vectorfields $  \YY_{(a)} =\Psi_\# (\pr_{y^a} )$   on $\S$  have the form
 \bea
   \YY_{(a)}&=  \YY_{(a)}^4 e_4+  \YY_{(a)}^3 e_3+ \YY_{(a)}^c e_c 
   \eea
   with coefficients
 \bea
 \lab{coefficients-YYa-US}
 \bsplit
   \YY_{(a)}^4&= \pr_{y^a} S-\frac 1 2 (\vsi \Omb)^\#\,  \pr_{y^a}  U,\\
    \YY_{(a)}^3&=\frac 1 2 \vsi^\# \pr_{y^a}  U, \\
     \YY_{(a)}^c&=  (Y_{(a)}^c)^\# -  (\vsi Z^c)^\# \, \pr_{y^a}  U.
   \end{split}
 \eea
 
 \item The pull back metric     $g^{\S,\#} :=\Psi^\# (g^\S)$  on $\ovS$  is given, in the coordinates $y^1, y^2$,  by 
 \bea
 \lab{Relation-YY-gSab}
  g^{\S, \#}_{ab}\big|_p&=&\Big(-  2 \YY_{(a)}^4 \YY_{(b)}^3- 2 \YY_{(b)}^4 \YY_{(a)}^3  +\sum_{c=1,2} \YY^c_{(a)} \YY^c_{(b)}\Big)\Big|_{\Psi(p)}.
  \eea
    
 \item  The $L^2$ norm of  $f^\#=\psi^\# f$ with respect to the metric $\gS^{\S, \#}$ is the same as 
 as the $L^2$ norm of  $f$ with respect to the metric $\gS^\S$, i.e.,
 \beaa
 \int_{\ovS}    |f^\#|^2    da_{\gS^{\S, \#}}&=& \int_\S |f|^2  da_{\gS^\S}.
 \eeaa
  \end{enumerate}     
    \end{lemma}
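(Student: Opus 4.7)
The plan is to establish each of the three assertions by direct computation, leaning on the null-frame expressions for the coordinate vectorfields $\partial_s$, $\partial_u$, $\partial_{y^a}$ furnished by Lemma \ref{Lemma:geodesic-coordinates}.

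For part (1), I would start from the definition of the push-forward: since in coordinates the map reads $\Psi(\ovu,\ovs,y^1,y^2)=(\ovu+U,\ovs+S,y^1,y^2)$, we have at $\Psi(p)$
\[
\YY_{(a)} \,=\, (\partial_{y^a}U)\,\partial_u\big|_{\Psi(p)} + (\partial_{y^a}S)\,\partial_s\big|_{\Psi(p)} + \partial_{y^a}\big|_{\Psi(p)}.
\]
Substituting the identities $\partial_s=e_4$, $\partial_u=\vsi\bigl(\tfrac12 e_3-\tfrac12\Omb e_4 - Z^c e_c\bigr)$ and $\partial_{y^a}=Y_{(a)}^c e_c$, and then collecting the coefficients of $e_3$, $e_4$, $e_c$ separately, yields the stated formulas \eqref{coefficients-YYa-US}. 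The pull-back superscript $\#$ appears simply because the ambient coefficients $\vsi$, $\Omb$, $Z^c$, $Y_{(a)}^c$ are evaluated at $\Psi(p)\in\S$ rather than at $p\in\ovS$.

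For part (2), I would note that $\YY_{(a)}$ and $\YY_{(b)}$ are tangent to $\S$, so the induced metric $g^\S$ on $\S$ agrees with the ambient $\g$ on these vectors, giving
\[
g^{\S,\#}_{ab}\big|_p \,=\, \g\bigl(\YY_{(a)},\YY_{(b)}\bigr)\big|_{\Psi(p)}.
\]
Expanding with the null-frame decomposition from (1) and using the orthogonality relations $\g(e_3,e_4)=-2$, $\g(e_a,e_b)=\delta_{ab}$, $\g(e_3,e_3)=\g(e_4,e_4)=\g(e_3,e_a)=\g(e_4,e_a)=0$ produces the bilinear expression \eqref{Relation-YY-gSab}.

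For part (3), the claim is the standard change-of-variables formula on $\ovS$: the area form $da_{g^\S}$ pulled back by the diffeomorphism $\Psi$ coincides, by construction, with the area form $da_{g^{\S,\#}}$ of the pull-back metric, so
\[
\int_\S |f|^2\, da_{g^\S} \,=\, \int_{\ovS} (|f|^2\circ\Psi)\, \Psi^\#(da_{g^\S}) \,=\, \int_{\ovS} |f^\#|^2\, da_{g^{\S,\#}}.
\]
None of the three steps presents a genuine obstacle. The only point requiring care is in part (1), where the contribution $-\vsi Z^c\,\partial_{y^a}U$ arising from the horizontal piece of $\partial_u$ must be combined correctly with the coefficient $(Y_{(a)}^c)^\#$ coming from $\partial_{y^a}$ in order to recover the stated formula for $\YY_{(a)}^c$.
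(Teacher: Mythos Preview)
Your proposal is correct and follows essentially the same route as the paper: compute the push-forward of $\partial_{y^a}$ in coordinates, substitute the null-frame expressions for $\partial_s,\partial_u,\partial_{y^a}$ from Lemma~\ref{Lemma:geodesic-coordinates}, then read off the coefficients; for (2) use that $g^\S$ restricts from $\g$ and expand via the null-frame inner products; and (3) is the change-of-variables identity. The paper's own proof is exactly this computation (and in fact leaves part (3) implicit).
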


\begin{proof}
   Let   $\YY_{(a)}$, $a=1,2$,  denote  the push forwards   to $\S$ of the coordinate vectorfields $\pr_{y^a} $  on $\ovS$.  More precisely
     at every point $\Psi(p)$, $p\in \ovS$,
   \bea
   \YY_{(a)} &=&\Psi_\# (\pr_{y^a} )|_{\Psi(p)}=(\pr_{y^a}  U) \pr_u|_{\Psi(p)}  +(\pr_{y^a}  S) \pr_s|_{\Psi(p)} +\pr_{y^a} |_{\Psi(p)}.
   \eea 
   In view of Lemma  \ref{Lemma:geodesic-coordinates}   we have    at every point in $\RR$
   \beaa
\pr_s &=& e_4\\
\pr_u &=& \vsi \left(\frac{1}{2}e_3-\frac{1}{2}\Omb e_4 -  \sum_{c=1,2}  Z^c  e_c\right)\\
\pr_{y^a}&=& \sum_{c=1,2}  Y_{(a)}^c e_c, \qquad   a=1,2,\\
Z^c &=& \Bb^aY^c_{(a)}.
\eeaa
Denoting, at every point $p\in \ovS$,
\beaa
\vsi^\# (p)=\vsi(\Psi(p)), \quad \Omb^\#(p)=\Omb(\Psi(p) ), \quad  (Z^c)^\#(p)=Z^c(\Psi(p)), \quad
 (Y_{(a)}^c)^\#(p)=Y_{(a)}^c(\Psi(p)),
\eeaa
we deduce,
\beaa
\YY_{(a)}&=& \vsi^\#  (\pr_{y^a}  U) \left(\frac{1}{2}e_3-\frac{1}{2}\Omb^\# e_4 -  \sum_{c=1,2} (  Z^c)^\# e_c\right) +(\pr_{y^a}  S) e_4+\sum_{c=1,2}  (Y_{(a)}^c)^\# e_c\\
&=&\Big(\pr_{y^a} S-\frac 1 2 (\vsi\Omb)^\#\,  \pr_{y^a}  U\Big) e_4+\frac 1 2 \vsi^\# \pr_{y^a}  U \,  e_3 +\sum_{c=1,2} \Big( (Y_{(a)}^c)^\# -  (\vsi Z^c)^\# \, \pr_{y^a}  U\Big)e_c.
\eeaa
   We write in the form
   \beaa
   \YY_{(a)}&=&  \YY_{(a)}^4 e_4+  \YY_{(a)}^3 e_3+ \YY_{(a)}^c e_c 
   \eeaa
   with,
   \beaa
    \YY_{(a)}^4&=& \pr_{y^a} S-\frac 1 2 \vsi^\# \Omb^\#\,  \pr_{y^a}  U,\\
    \YY_{(a)}^3&=&\frac 1 2 \vsi^\# \pr_{y^a}  U, \\
     \YY_{(a)}^c&=&  (Y_{(a)}^c)^\# -  (\vsi Z^c)^\# \, \pr_{y^a}  U.
   \eeaa
    We denote by $g^{ \S, \#}=\Psi^\#(g^\S ) $  the  pull back to $\ovS$  of the metric   $g^\S$  on $\S$, i.e. at any point $p\in \ovS$,
    \beaa
    g^{\S, \#}(\pr_{y^a}, \pr_{y^b})&=&  g^\S(\YY_{(a)}, \YY_{(b)})=\g(\YY_{(a)}, \YY_{(b)})\\
    &=&\g\left(\YY_{(a)}^4 e_4+\YY_{(a)}^3 e_3+\sum_{c=1,2} \YY^c_{(a)} e_c, \YY_{(b)}^4 e_4+\YY_{(b)}^3 e_3+\sum_{d=1,2} \YY^d_{(b)} e_d\right)\\
    &=&- 2 \YY_{(a)}^4 \YY_{(b)}^3- 2 \YY_{(b)}^4 \YY_{(a)}^3  +\sum_{c=1,2} \YY^c_{(a)} \YY^c_{(b)}.
    \eeaa
    Hence
    \beaa
     g^{\S, \#}_{ab}&=& - 2 \YY_{(a)}^4 \YY_{(b)}^3- 2 \YY_{(b)}^4 \YY_{(a)}^3  +\sum_{c=1,2} \YY^c_{(a)} \YY^c_{(b)}
    \eeaa
    as desired.
 \end{proof}

    \begin{definition} Given a deformation $\Psi:\ovS\longrightarrow \S$  as above we denote:
    \begin{itemize}
 \item    At points $\Psi(p)$ in $\S$,
    \bea
      g^\S_{ab}(\Psi(p) )&:=&  g^\S\Big|_{\Psi(p)} \big(\YY_{(a)},\YY_{(b)} \big)= g^{\S, \#}_{ab}(p ).
    \eea
    With this definition,
    \bea
     g^{\S, \#}_{ab}&=& \Big(  g^\S_{ab}\Big)^\#.
    \eea    
    
    \item We denote by $\nab^\S$ the covariant derivative  operator on $\S$ induced by the metric  $g^\S$  and  by $\nab^{\S, \#}$   the covariant derivative  operator on $\ovS $ induced by the pull back metric  metric  $g^{\S, \#}.$ 
    \end{itemize}
    \end{definition}
    
    \begin{remark}
     Any geometric calculation   with respect to the $g^\S$ metric   can be reduced to a geometric calculation on $\ovS$  with respect to the metric $  \gS^{\S, \#}$.  More precisely, if  $U$ is a $k$ covariant tensor on $\S$ and $X_0, X_1, \ldots, X_k$
   vectorfields on $\ovS$,
   \beaa
\Big(  \nab^{\S, \#}  U^\#\Big)( X_0, X_1, \ldots, X_k) &=&\Bigg(\nab^\S U\Big(\Psi_{\#} X_0, \Psi_{\# }X_1, \ldots, \Psi_{\# }X_k\Big)\Bigg)^\#.
  \eeaa
  In particular, with respect to the coordinate vectorfields $\pr_{y^1}, \pr_{y^2}$ on $\ovS$,
  \beaa
  \nab^{\S, \#} _{a_0} U^\#_{a_1\ldots a_k}&=& \Big(\nab^\S U \big(\YY_{(a_0)}, \YY_{(a_1)},\ldots, \YY_{(a_n)}\big)\Big)^\#.
  \eeaa
    \end{remark}
    
    As a consequence of the remark we immediately deduce the following,
    \begin{lemma}\lab{lemma:pullbacksofcovderivatives}
    Let $\Psi:\ovS\longrightarrow \S$  be a deformation as above.  If $\U\in \SS_k(\S)$,  for   $k=0,1,2$ we have, for the  corresponding Hodge operators
    \bea
    \dddSdiez_k U^\#= \big(\dddS_k U\big)^\#, \qquad   \ddsSdiez_k\,  U^\#= \big(\ddsS_k U\big)^\#.
    \eea
    
    Also, if  $h$ is a scalar on $\S$ we have,
  \beaa
  \Big(\Delta^\S h\Big)^\# =\Delta^{\S, \#} \big(h^\#\big).
  \eeaa
    \end{lemma}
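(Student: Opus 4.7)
The plan is to exploit the fact that, by the very definition of the pull-back metric $g^{\S,\#} = \Psi^\# g^\S$, the map $\Psi:(\ovS, g^{\S,\#})\longrightarrow(\S, g^\S)$ is tautologically an isometry. All the operators in question---$\dddS_k$, $\ddsS_k$ and $\Delta^\S$---are intrinsic geometric objects built from the metric, its Levi-Civita connection, and the associated volume form. Consequently, each of them commutes with pull-back by an isometry. The lemma is therefore a direct verification rather than a genuinely new computation.

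Concretely, I would proceed in three steps. First, I would invoke the identity stated in the remark preceding the lemma, namely
\[
\bigl(\nab^{\S,\#} U^\#\bigr)(X_0,\ldots,X_k) = \bigl(\nab^\S U(\Psi_\# X_0,\ldots,\Psi_\# X_k)\bigr)^\#,
\]
to transfer covariant derivatives across $\Psi$. Second, since $g^{\S,\#}_{ab} = (g^\S_{ab})^\#$ in each chart by Lemma \ref{Lemma:deformation1}, the inverse metric $(g^{\S,\#})^{ab}$ is likewise the pull-back of $(g^\S)^{ab}$, so trace contractions (hence divergences and the metric-induced symmetric traceless projection used in $\nab\hot$) commute with the $\#$ operation. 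Third, since $\Psi$ is an orientation-preserving isometry of two-dimensional manifolds, the pull-back of the area form $\in_{g^\S}$ equals $\in_{g^{\S,\#}}$; in particular $\curl$ and the Hodge-dual based $\hot$ combinations are also compatible with $\#$.

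With these three compatibilities in hand, each of the six Hodge identities reduces to a one-line check. For the scalar Laplacian, write $\Delta^\S h = (g^\S)^{ab}\nab^\S_a\nab^\S_b h$, pull back using the first two facts, and read off $\Delta^{\S,\#}(h^\#)$. For $\dddS_1 f = (\div^\S f,\curl^\S f)$ and $\dddS_2 v = \div^\S v$, apply the covariant derivative identity to $f^\#$ and $v^\#$ respectively, contract with $(g^{\S,\#})^{ab}$ (for $\div$) or $\in^{\S,\#}$ (for $\curl$), and recognize the pulled-back expressions on the right. The adjoint operators $\ddsS_1(\la,\dual\la) = -\nab^\S\la + \dual\nab^\S\dual\la$ and $\ddsS_2 f = -\tfrac12\nab^\S\hot f$ are handled identically, since $\hot$ only involves the metric and the volume form.

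No genuine obstacle arises; the only matter requiring care is orientation, so that the pull-back of $\in^\S$ coincides with $\in^{\S,\#}$ without a sign flip (which is what validates the $\curl$ and $\hot$ identities). Once this orientation convention is fixed---for instance by declaring $\Psi$ orientation-preserving, which is automatic since it is the identity in the $(y^1,y^2)$ coordinates---the lemma follows immediately from the three compatibility statements above.
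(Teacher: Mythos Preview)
Your proposal is correct and takes essentially the same approach as the paper. The paper in fact offers no proof beyond the sentence ``As a consequence of the remark we immediately deduce the following,'' referring to the naturality of $\nab^{\S,\#}$ under pull-back stated just before the lemma; your write-up is a careful elaboration of exactly that isometry/naturality argument, with the added (and appropriate) attention to the orientation convention needed for the $\curl$ and $\hot$ pieces.
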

    
    \begin{corollary}
    If  $f\in \hk_k(\S)$ and $f^\#$ is its pull-back by $\Psi$ then,
 \beaa
 \|f^\#\|_{\hk_k(\ovS,\, \gS^{\S,\#})} = \| f\|_{\hk_k(\S)}.
 \eeaa
    \end{corollary}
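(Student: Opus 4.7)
The plan is to reduce everything to the base case $k=0$ (already established as part (3) of Lemma \ref{Lemma:deformation1}) by commuting $\Psi^\#$ through the weighted covariant derivatives $\dkb^\S = r^\S\nab^\S$. Concretely, since $r^\S$ is a constant on $\S$ (its area radius) it pulls back to the same scalar on $\ovS$, so the only genuine content is that $\nab^{\S,\#}$ commutes with pull-back. This is precisely Lemma \ref{lemma:pullbacksofcovderivatives}, applied in its scalar-Laplacian form and, inductively, in its tensorial form to the successive covariant derivatives.

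First I would verify by induction on $i$ that, for $0 \le i \le k$ and any $f \in \hk_k(\S)$, one has the identity of pulled-back horizontal tensors on $\ovS$
\begin{equation*}
\big((\dkb^\S)^i f\big)^\# \;=\; (\dkb^{\S,\#})^i\, f^\#.
\end{equation*}
The base case $i=0$ is just the definition of $f^\#$. For the inductive step, write $(\dkb^\S)^{i+1} f = r^\S\nab^\S\big((\dkb^\S)^i f\big)$ applied to tangent vectors of $\S$; the remark preceding Lemma \ref{lemma:pullbacksofcovderivatives} expresses the $\nab^\S$-covariant derivative of a tensor on $\S$ evaluated on $\Psi_\#$-pushed vectors as the $\nab^{\S,\#}$-covariant derivative of the pulled-back tensor on $\ovS$, evaluated on the corresponding vectors there. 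Combined with the fact that $r^\S$ is a constant, this gives $((\dkb^\S)^{i+1} f)^\# = \dkb^{\S,\#}(((\dkb^\S)^i f)^\#) = (\dkb^{\S,\#})^{i+1} f^\#$ by the induction hypothesis.

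Next, for each fixed $i$ the $L^2$-isometry statement of Lemma \ref{Lemma:deformation1}(3), applied to the scalar components of the horizontal tensor $(\dkb^\S)^i f$ (with the fibrewise inner products computed using $g^{\S,\#}$ on the $\ovS$-side and $g^\S$ on the $\S$-side, which match by construction of $g^{\S,\#}$), yields
\begin{equation*}
\big\|(\dkb^{\S,\#})^i f^\#\big\|_{L^2(\ovS,\,g^{\S,\#})} \;=\; \big\|(\dkb^\S)^i f\big\|_{L^2(\S)}.
\end{equation*}
Summing these identities over $0 \le i \le k$ and taking square roots, per the definition \eqref{definition:spaceH^k(boldS)}, produces the desired equality of norms.

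The only potentially delicate point is the extension of the $L^2$-isometry of Lemma \ref{Lemma:deformation1}(3) from scalars to horizontal tensors, but this is immediate: in any local orthonormal frame on $\S$ the pointwise norm $|(\dkb^\S)^i f|_{g^\S}$ is a scalar function on $\S$, and its pull-back is by construction $|(\dkb^{\S,\#})^i f^\#|_{g^{\S,\#}}$ on $\ovS$, so the scalar case suffices. No further estimates are needed, and the argument is purely formal once the commutation identity above is in hand.
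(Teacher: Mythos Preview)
Your proof is correct and is exactly the unpacking the paper has in mind: the corollary is stated without proof because it follows immediately from the commutation identity in the remark preceding Lemma~\ref{lemma:pullbacksofcovderivatives} together with the $L^2$-isometry of Lemma~\ref{Lemma:deformation1}(3), which is precisely what you do. One small slip: the $\hk_k$ norm in \eqref{definition:spaceH^k(boldS)} is defined as a \emph{sum} of $L^2$ norms, not a square root of a sum of squares, so once you have $\|(\dkb^{\S,\#})^i f^\#\|_{L^2(\ovS,\,g^{\S,\#})} = \|(\dkb^\S)^i f\|_{L^2(\S)}$ for each $i$ you simply sum---no square root is needed.
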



\subsection{Comparison results}


We start with the following lemma.
\begin{lemma}
\lab{Le:Transportcomparison}
Let     $\Psi:\ovS\longrightarrow \S $   be  a  deformation in $\RR$ as in Definition \ref{definition:Deformations},    $F$ a scalar function on $\RR$ and $F^\# $  its pull back to $\ovS$ by $\Psi$. We have
\bea
\big\| F^\#-F\big\|_{L^\infty(\ovS)}&\les&  \big \| ( U, S) \big\|_{L^\infty(\ovS)}  \sup_{\RR}\left(\big|  e_3  F \big|+ r^{-1} \big|\dk F\big|\right).
\eea
\end{lemma}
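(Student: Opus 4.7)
The idea is to compare $F^\#(p) = F(\Psi(p))$ with $F(p)$ by integrating $F$ along the straight line segment in $(u,s)$-coordinates that joins $p = (\ovu, \ovs, y^1, y^2)$ to $\Psi(p) = (\ovu + U, \ovs + S, y^1, y^2)$. Concretely, for each fixed $(y^1,y^2)$ on $\ovS$ I set $\gamma(t) := (\ovu + tU(y^1,y^2),\, \ovs + tS(y^1,y^2),\, y^1, y^2)$ for $t \in [0,1]$, which stays inside $\RR$ provided $\|(U,S)\|_{L^\infty(\ovS)} \leq \epg$. By the fundamental theorem of calculus,
\begin{equation*}
F^\#(p) - F(p) = \int_0^1 \bigl(U\,\pr_u F + S\,\pr_s F\bigr)\!\big|_{\gamma(t)}\,dt.
\end{equation*}

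The next step is to rewrite $\pr_u$ and $\pr_s$ in terms of the null frame using Lemma~\ref{Lemma:geodesic-coordinates}: $\pr_s = e_4$ and $\pr_u = \vsi\bigl(\tfrac12 e_3 - \tfrac12 \Omb\, e_4 - \undB^a \pr_{y^a}\bigr)$, while $\pr_{y^a} = \sum_c Y_{(a)}^c e_c$. Under assumption \textbf{A1}--\textbf{A3}, the lapse $\vsi$ and $\Omb$ are of size $O(1)$, the shift satisfies $\undB^a \in r^{-1}\Ga_b$, and $|Y_{(a)}^c| \lesssim r$. Therefore
\begin{equation*}
\big|U\,\pr_u F + S\,\pr_s F\big| \lesssim \bigl(|U| + |S|\bigr)\Bigl(|e_3 F| + |e_4 F| + |\nab F|\Bigr),
\end{equation*}
where the contribution from the shift term $\vsi\undB^a \pr_{y^a}F$ is absorbed since $|\undB^a||Y_{(a)}^c| \lesssim \Ga_b$ is small.

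Finally I observe that by the definition of $\dk = \{e_3, re_4, r\nab\}$, we have $r^{-1}|\dk F| \gtrsim |e_4 F| + |\nab F| + r^{-1}|e_3 F|$, so
\begin{equation*}
|e_3 F| + |e_4 F| + |\nab F| \lesssim |e_3 F| + r^{-1}|\dk F|.
\end{equation*}
Taking the supremum over $\RR$ in the integrand, then taking the $L^\infty$ norm over $\ovS$, yields the claimed estimate. The argument is essentially elementary; the only thing to check carefully is that $\gamma(t) \in \RR$ for all $t \in [0,1]$, which follows from the bound $\|(U,S)\|_{L^\infty(\ovS)} \lesssim \dg \leq \epg$ together with the definition \eqref{definition:RR(dg,epg)} of $\RR$. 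No essential obstacle is expected.
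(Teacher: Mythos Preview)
Your proof is correct and follows essentially the same approach as the paper: integrate along the straight segment in $(u,s)$-coordinates, rewrite $\pr_u$ and $\pr_s$ via Lemma~\ref{Lemma:geodesic-coordinates}, and use the assumptions on $\vsi$, $\Omb$, $Z$ together with the definition of $\dk$ to absorb the $e_4$ and $\nab$ contributions into $r^{-1}|\dk F|$. You are slightly more explicit about verifying $\gamma(t)\in\RR$, which the paper leaves implicit.
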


\begin{proof}
We have, for $y=(y^1, y^2)$,
\beaa
 F\big(\ovu+U(y),  \ovs+S(y), y\big)-   F\big(\ovu,  \ovs, y\big) =\int_0^1  \frac{d}{d\la} F\big(\ovu+\la U(y),  \ovs+\la S(y), y\big)
\eeaa
 \beaa
 \Big|  F\big(\ovu+U(y),  \ovs+S(y), y\big)-   F\big(\ovu,  \ovs, y\big) \Big| &\les&\int_0^1 \Big|  \frac{d}{d\la} F\big(\ovu+\la U(y),  \ovs+\la S(y), y\big)\Big|\\
 &\les&  \big| U(y)\big| \int_0^1\Big|\pr_u  F\big( \ovu+\la U(y),  \ovs+\la S(y), y\big)\Big|\\
 &+& \big| S(y)\big| \int_0^1\Big|\pr_s  F\big(\ovu+\la U(y),  \ovs+\la S(y), y\big)\Big|.
 \eeaa
Recalling
  \beaa
\pr_s &=& e_4,\qquad 
\pr_u = \vsi \left(\frac{1}{2}e_3-\frac{1}{2}\Omb e_4 -  \sum_{c=1,2}  Z^c  e_c\right), 
\eeaa
 using our assumptions   on $\Omb, \vsi, Z$ and the definition of $\dk$ we easily derive
\beaa
 \Big|  F\big( \ovu+U(y),  \ovs+S(y), y\big)-   F\big(\ovu,  \ovs, y\big) \Big| &\les& \big \| ( U, S) \big\|_{L^\infty(\ovS)}  \sup_{\RR}\left(\big|  e_3  F \big|+ r^{-1} \big|\dk F\big|\right)
\eeaa
as desired.
\end{proof}

 \begin{lemma}\lab{lemma:comparison-gaS-ga}
 Let $\ovS \subset \RR$.    Let  $\Psi:\ovS\longrightarrow \S $  be   a  deformation generated by the  functions $(U, S)$ as in Definition \ref{definition:Deformations}. Assume the bound
 \bea
 \label{assumption-UV-dg}
   \| (U, S)\|_{L^\infty(\ovS)} +r  \| \nabzero(U, S)\|_{L^\infty(\ovS)}  &\les&  \dg.
 \eea
  Then
 \begin{enumerate}
 \item   We have
 \bea
 \lab{equation:difference-ofgas}
\sum_{a,b=1}^2  \big| g^{\S, \#}_{ab} -\ovg_{ab} \big|&\les r\dg.
 \eea 

 \item  For every $f\in\mathcal{S}_k(\S)$ we have,
  \bea
 \label{eq:lemma:int-gaS-ga3}
 \|f^\#\|_{L^2(\ovS, g^{\S, \#})}   &=&   \|f^\#\|_{L^2(\ovS, \ovg)}  \Big(1+ O(r^{-1} \dg)  \Big).
 \eea
\item  As a corollary  of \eqref{eq:lemma:int-gaS-ga3} (choosing $f=1$) we deduce, 
  \bea
 \frac{r^\S}{\ovr}= 1 + O(r ^{-1}  \dg )
 \eea
 where $r^\S$ is the area radius of $\S$ and $\ovr$ that of $\ovS$.
\item If in addition  to \eqref{assumption-UV-dg}  we have
\bea
\big\|(U, S)\big\|_{\hk_{s_{max}+1}(\ovS)}  &\les r \dg ,
\eea
then
\bea
\sum_{a,b,c=1,2}\Big\|(\Ga^{\S, \#})_{ab}^c-(\ovGa)_{ab}^c\Big\|_{\hk_{s_{max}}(\ovS)} &\les r\dg
\eea
where $ \Ga^{\S, \#}, \, \ovGa$ denote the Christoffel symbols  of the metrics $g^{\S, \#}, \, \ovg$ relative to the coordinates $y^1, y^2$
on $\ovS$.
\end{enumerate}
\end{lemma}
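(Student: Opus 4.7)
The plan is to start from the explicit formula derived in Lemma \ref{Lemma:deformation1},
\[
g^{\S,\#}_{ab} = -2\YY^4_{(a)}\YY^3_{(b)} - 2\YY^4_{(b)}\YY^3_{(a)} + \sum_{c=1,2}\YY^c_{(a)}\YY^c_{(b)},
\]
with $\YY^4_{(a)} = \pr_{y^a}S - \tfrac12(\vsi\Omb)^\#\pr_{y^a}U$, $\YY^3_{(a)} = \tfrac12\vsi^\#\pr_{y^a}U$ and $\YY^c_{(a)} = (Y^c_{(a)})^\# - (\vsi Z^c)^\#\pr_{y^a}U$, and compare it to $\ovg_{ab} = \sum_c Y^c_{(a)}Y^c_{(b)}$. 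The two cross terms $\YY^4_{(a)}\YY^3_{(b)}$ and $\YY^4_{(b)}\YY^3_{(a)}$ are pointwise of order $|\nabla(U,S)|^2$, hence bounded by $r^{-2}\dg^2 \les \dg$, while the difference $\sum_c\YY^c_{(a)}\YY^c_{(b)} - \sum_c Y^c_{(a)}Y^c_{(b)}$ splits into (i) contributions from $(Y^c_{(a)})^\# - Y^c_{(a)}$, controlled by Lemma \ref{Le:Transportcomparison} applied to $F = Y^c_{(a)}$ combined with the background bounds in assumption {\bf A3} on $\pr_{y^a}$-coefficients, and (ii) terms involving $(\vsi Z^c)^\#\pr_{y^a}U$, small by the bound $|Z| \les \ep$ of {\bf A3} together with \eqref{assumption-UV-dg}. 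Since $|Y^c_{(a)}| \les r$ on $\ovS$ by \eqref{eq:assumptionY_a^b}, cross-multiplying a factor of $r$ into the $r^{-1}\dg$ bound from Lemma \ref{Le:Transportcomparison} yields the pointwise estimate \eqref{equation:difference-ofgas}.

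For (2), I will combine \eqref{equation:difference-ofgas} with the background lower bound $\ovg_{ab} \sim r^2$ to conclude that the volume elements satisfy $\sqrt{\det g^{\S,\#}}/\sqrt{\det\ovg} = 1 + O(r^{-1}\dg)$, using the standard identity $\det(I+A) = 1 + \tr A + O(|A|^2)$ applied to $\ovg^{-1}(g^{\S,\#}-\ovg)$ which has operator norm $O(r^{-1}\dg)$. Integrating $|f^\#|^2$ with respect to each measure and factoring this ratio then gives \eqref{eq:lemma:int-gaS-ga3}. Setting $f = 1$ and recalling $|S^\S| = 4\pi (r^\S)^2$, $|\ovS| = 4\pi \ovr^2$ immediately yields (3) after taking square roots.

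For (4), I will differentiate the formula for $g^{\S,\#}_{ab}$ and the background metric $\ovg_{ab}$ up to $s_{\max}+1$ times in the coordinates $(y^1,y^2)$ and bound the difference in $L^2(\ovS)$. The derivatives land either on $U,S$ (where I use $\|(U,S)\|_{\hk_{s_{max}+1}(\ovS)} \les r\dg$), on the pulled-back coefficients $(Y^c_{(a)})^\#, \vsi^\#, \Omb^\#, Z^\#$ (controlled by chain rule and assumptions {\bf A1}--{\bf A3} together with repeated use of Lemma \ref{Le:Transportcomparison} to convert $\hk$-norms of pull-backs into norms of the original quantities, at the cost of harmless $1 + O(r^{-1}\dg)$ factors), or on the background $Y^c_{(a)}$. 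Inserting this into the classical formula
\[
(\Ga^{\S,\#})_{ab}^c = \tfrac12 (g^{\S,\#})^{cd}\bigl(\pr_{y^a}g^{\S,\#}_{bd} + \pr_{y^b}g^{\S,\#}_{ad} - \pr_{y^d}g^{\S,\#}_{ab}\bigr)
\]
and subtracting the analogous expression for $\ovGa$, I will use that $|(g^{\S,\#})^{cd} - \ovg^{cd}| \les r^{-3}\dg$ (from (1) and the size $\ovg \sim r^2$) together with the already-obtained derivative bounds on $g^{\S,\#}_{ab} - \ovg_{ab}$ to close the estimate. The main obstacle is precisely this last step: keeping the weights of $r$ bookkept correctly while commuting the pull-back operator $\#$ past derivatives, since each coordinate derivative $\pr_{y^a}$ on a pulled-back quantity produces both a tangential background derivative and a transverse one weighted by $\pr_{y^a}(U,S)$, so that the higher-order Sobolev control of $(U,S)$ is the critical input and must be combined with the $\Ga_b$-type control of background transverse derivatives in assumption {\bf A1}.
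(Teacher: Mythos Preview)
Your proposal is correct and follows essentially the same route as the paper's proof: the paper also starts from the explicit expression of Lemma~\ref{Lemma:deformation1}, bounds the cross terms $\YY^4_{(a)}\YY^3_{(b)}$ by $(\dg)^2$ using the gradient assumption, and handles the remaining difference $\YY^c_{(a)}\YY^c_{(b)}-Y^c_{(a)}Y^c_{(b)}$ via Lemma~\ref{Le:Transportcomparison} applied to $Y^c_{(a)}$ together with \eqref{eq:assumptionY_a^b}; parts (2)--(3) are done by the same volume-element ratio argument, and for part (4) the paper simply states that it ``follows easily from the form of the Christoffel symbols\dots by following the calculations made for the first statement,'' so your outline is in fact more detailed than what the paper records. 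One minor bookkeeping remark: the cross terms $\YY^4_{(a)}\YY^3_{(b)}$ involve coordinate derivatives $\pr_{y^a}(U,S)$, each of size $\lesssim \dg$ (not $r^{-1}\dg$), so the correct pointwise bound is $(\dg)^2$ rather than $r^{-2}\dg^2$; this does not affect the conclusion since both are $\les r\dg$.
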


\begin{proof}
 Recall, see Lemma  \ref{Lemma:deformation1},   that the coefficients of the pull-back metric $  g^{\S, \#}_{ab}$ in the coordinates $y^1, y^2$ is given by
   \beaa
  g^{\S, \#}_{ab}\big|_p&=& g^{\S, \#}(\pr_{y^a}, \pr_{y^b})= \Big(-  2 \YY_{(a)}^4 \YY_{(b)}^3- 2 \YY_{(b)}^4 \YY_{(a)}^3  +\sum_{c=1,2} \YY^c_{(a)} \YY^c_{(b)}\Big)\Big|_{\Psi(p)}
  \eeaa
  where,
 \beaa
 \bsplit
   \YY_{(a)}^4&= \pr_{y^a} S-\frac 1 2 (\vsi \Omb)^\#\,  \pr_{y^a}  U,\\
    \YY_{(a)}^3&=\frac 1 2 \vsi^\# \pr_{y^a}  U, \\
     \YY_{(a)}^c&=  (Y_{(a)}^c)^\# -  (\vsi Z^c)^\# \, \pr_{y^a}  U.
   \end{split}
 \eeaa
 On the other hand, the metric   $\ovg$, induced by  the spacetime metric  on $\ovS$,  is given by
 \beaa
 \ovg_{ab}&=&  \ovga \big(\pr_{y^a}, \pr_{y^b} \big) =  \ovg\left( \sum_cY^c_{(a)} e_c,  \sum_dY^d_{(a)} e_d\right)=\sum_{c=1,2} \YY^c_{(a)} \YY^c_{(b)}.
 \eeaa
 Hence, at every point $p$,
 \beaa
 g^{\S, \#}_{ab}-  \ovg_{ab}&=&  \Big(-  2 \YY_{(a)}^4 \YY_{(b)}^3- 2 \YY_{(b)}^4 \YY_{(a)}^3\Big)(\Psi(p)) +\sum_{c=1,2}\Big( \YY^c_{(a)} \YY^c_{(b)} (\Psi(p))-  \YY^c_{(a)} \YY^c_{(b)}(p)\Big).
 \eeaa
 Note that
 \beaa
\sup_{\ovS}  \Big|\big( \YY_{(a)}^4 \YY_{(b)}^3- 2 \YY_{(b)}^4 \YY_{(a)}^3\big)(\Psi(p))\Big| &\les & 
 r^2 \| \nabzero(U, S))\|_{L^\infty(\ovS)}^2\les (\dg)^2.
 \eeaa

 For the remaining term $ \YY^c_{(a)} \YY^c_{(b)} (\Psi(p))-  \YY^c_{(a)} \YY^c_{(b)}(p)$ we make use  of Lemma \ref{Le:Transportcomparison} and estimate  \eqref{eq:assumptionY_a^b}  to derive
 \bea
 \lab{eq.defferencesofYY}
\Big| \YY^c_{(a)} \YY^c_{(b)} (\Psi(p))-  \YY^c_{(a)} \YY^c_{(b)}(p)\Big|&\les& r\dg.
 \eea
 Indeed 
  \beaa
 \YY^c_{(a)} \YY^c_{(b)} (\Psi(p))-  \YY^c_{(a)} \YY^c_{(b)}(p)&=&  \Big(\YY^c_{(a)}(\Psi(p)) - \YY^c_{(a)}(p) \Big) \YY^c_{(b)} (\Psi(p))\\
 &+&\YY^c_{(a)} (p)\Big( \YY^c_{(b)} (\Psi(p))-  \YY^c_{(b)}(p)\Big).
 \eeaa 
 We deduce,
 \beaa
 \Big| \YY^c_{(a)} \YY^c_{(b)} (\Psi(p))-  \YY^c_{(a)} \YY^c_{(b)}(p)\Big|&\les& r\sum_{a,c=1,2}\left|\YY^c_{(a)}(\Psi(p)) - \YY^c_{(a)}(p) \right|.
 \eeaa
 On the other hand, since we have from Lemma \ref{Le:Transportcomparison} 
 \beaa
  \left| Y^c_{(a)}(\Psi(p)) - Y^c_{(a)}(p)  \right|\les  \|(U, S)\|_{L^\infty} \sup_{\RR} \Big(\big| e_3(Y\big)| + r^{-1} \big|\dk Y \big|\Big)\les \dg,
\eeaa
we infer
 \beaa
 \left| \YY^c_{(a)}(\Psi(p)) - \YY^c_{(a)}(p)  \right|&\les& \left| Y^c_{(a)}(\Psi(p)) - Y^c_{(a)}(p)  \right|+\dg\les \dg.
 \eeaa
 We deduce that estimate \eqref{eq.defferencesofYY} holds true and therefore
 \beaa
 \sum_{a,b=1}^2  \big| g^{\S, \#}_{ab} -\ovg_{ab} \big|&\les r\dg 
 \eeaa
 as stated.
 
To prove the second part of the Lemma  we write,
 \beaa
 \int_{\ovS}    |f^\#|^2    da_{g^{\S, \#}} &=&  \int_{\ovS}  
   |f^\#|^2 \frac{\sqrt{\det g^{\S, \#}}}{\sqrt{\det\ovg}} da_{\ovg}\\
   &=& \int_{\ovS}     |f^\#|^2  da_{\ovg}+ \int_{\ovS}     |f^\#|^2\left( \frac{\sqrt{\det g^{\S, \#}}}{\sqrt{\det\ovg}}-1\right)  da_{\ovg}
 \eeaa
which yields, in view of  the first part,
 \beaa
 \int_{\ovS}    |f^\#|^2    da_{g^{\S, \#}} &=&  \int_{\ovS}     |f^\#|^2  da_{\ovg} \Big(1+ O(\ovr^{-1}\dg)  \Big).
 \eeaa
 The proof of the last statement follows easily from the form of the Christoffel symbols  of the two metrics in the  coordinates $y^1, y^2$ by following the calculations made for the first statement. 
\end{proof}

\begin{proposition}
\lab{Prop:comparison-gaS-ga:highersobolevregularity}
We assume
 \bea\lab{eq:boundonUSin h_snorms}
   \|( U, S)\|_{\hk_{s_{max}+1}(\ovS)}  &\les& r\dg.
 \eea
Then
\begin{enumerate}

 \item 
 If $V\in \hk_s(\S)$ and $V^\#$ is its pull-back by $\Psi$, we have for all $0\leq s\leq s_{max}+1$,
 \bea
 \lab{eq:Prop:comparison1}
 \|V\|_{\hk_s(\S)}= \|V^\#\|_{\hk_s(\ovS,\, g^{\S,\#})} = \| V^\#\|_{\hk_s(\ovS, \ovg)}\big(1+O(r^{-1} \dg)\big).
 \eea
 where, recall, $g^{\S,\#} $  denotes the pull-back by $\Psi$ of the metric  $g^\S$ on $\S$.
 \item  For any  scalar $h$  on $\RR$
 \bea
 \lab{eq:Prop:comparison2}
 \|h\|_{\hk_s(\S)} \les  r \sup_{\RR}\big|\dk^{\leq s}h\big|, \qquad 0\leq s \leq s_{max}.
 \eea
\end{enumerate}
\end{proposition}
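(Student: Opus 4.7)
The plan is to reduce all computations on $\S$ to the pulled-back metric on $\ovS$ via $\Psi$, and then compare with the background metric $\ovg$. For the first equality of Part~(1), note that by construction of $g^{\S,\#}$ the map $\Psi:(\ovS,g^{\S,\#})\to(\S,g^\S)$ is an isometry, so $L^2$-norms of tensors agree, and by Lemma~\ref{lemma:pullbacksofcovderivatives} the connections intertwine: $(\nab^\S V)^\# = \nab^{\S,\#}V^\#$. Iterating (recall that $r^\S$ is a constant that commutes trivially with pull-back) yields $((r^\S\nab^\S)^kV)^\#=(r^\S\nab^{\S,\#})^k V^\#$, hence the identity at every order $s$.

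For the second equality of Part~(1) I would induct on $s$. The case $s=0$ is exactly \eqref{eq:lemma:int-gaS-ga3}. For $s\geq 1$, write schematically
\[
\nab^{\S,\#} \;=\; \nabzero + A,\qquad A^c_{ab}:=(\Ga^{\S,\#})^c_{ab}-(\ovGa)^c_{ab},
\]
and expand $(\nab^{\S,\#})^k V^\#$ in the $\nabzero$-basis. This produces a sum of terms of the form $(\nabzero)^{k_0}A\cdot(\nabzero)^{k_1}A\cdots(\nabzero)^{k_j}V^\#$ with $k_0+\cdots+k_j+j\leq k$. Lemma~\ref{lemma:comparison-gaS-ga}(4) provides $\|A\|_{\hk_{s_{max}}(\ovS)}\les r\dg$, while Lemma~\ref{lemma:comparison-gaS-ga}(1) gives $|g^{\S,\#}-\ovg|\les r\dg$ pointwise and therefore the same pointwise $O(r^{-1}\dg)$ control of the metric contractions needed to pass between tensor norms. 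Combining these with product (Moser-type) estimates on the $O(\epg)$-sphere $\ovS$, whose Sobolev embedding is uniform thanks to the bound on $K^\S$ used in Lemma~\ref{prop:2D-Hodge}, delivers the claimed multiplicative factor $1+O(r^{-1}\dg)$.

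For Part~(2), Part~(1) reduces the estimate to bounding $\|h^\#\|_{\hk_s(\ovS,\ovg)}$ by $r\sup_{\RR}|\dk^{\leq s}h|$. From Lemma~\ref{Lemma:deformation1},
\[
\pr_{y^a} h^\# = (\YY_{(a)} h)^\#, \qquad \YY_{(a)} = \YY_{(a)}^4 e_4+\YY_{(a)}^3 e_3+\YY_{(a)}^c e_c,
\]
with coefficients given by \eqref{coefficients-YYa-US}. Assumption \eqref{eq:boundonUSin h_snorms} together with the background assumptions \textbf{A1}--\textbf{A3} provide uniform bounds on $\vsi\YY^4_{(a)}$, $\vsi\YY^3_{(a)}$ and $\YY^c_{(a)}$, so each application of $r\nabzero$ introduces at most one $\dk$-derivative of $h$ (composed with $\Psi$) times uniformly controlled coefficients. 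Iterating $s$ times and using $|\ovS|\les r^2$ yields the required pointwise-to-$L^2$ estimate.

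The main obstacle I anticipate is the top-order case $s=s_{max}+1$ in the second equality of Part~(1): the Christoffel difference $A$ is controlled only in $\hk_{s_{max}}(\ovS)$, so in product terms where $A$ carries $s_{max}$ derivatives the companion factor of $V^\#$ must be bounded in $L^\infty$. This forces $V^\#$ with one derivative to lie in $L^\infty(\ovS)$, which is available by Sobolev embedding on the nearly-round $\ovS$ provided $s_{max}-1\geq 2$, consistent with the standing assumption $s_{max}\geq 3$. A careful bookkeeping of which factor carries top-order derivatives and which carries $L^\infty$ bounds (and an analogous use of Sobolev embedding for low-order derivatives of $A$) is needed to avoid derivative loss at this top level.
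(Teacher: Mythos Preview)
Your proposal is correct and follows the same route as the paper: pull back to $(\ovS,g^{\S,\#})$ by isometry, compare the connections $\nab^{\S,\#}$ and $\nabzero$ via the Christoffel difference $A=\Ga^{\S,\#}-\ovGa$ controlled by Lemma~\ref{lemma:comparison-gaS-ga}, and close with the algebra property of $\hk_k(\ovS)$. The paper organizes the induction slightly differently---it writes $(\nab^\S)^{k+1}V=(\nab^\S)^k(\nab^\S V)$ and applies the $k$-th order induction hypothesis directly to the tensor $\nab^\S V$, so that only a single undifferentiated factor of $A$ appears at each step and the top-order product bookkeeping you flagged never arises.
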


\begin{proof}
The second statement is a consequence of the first. Indeed according to \eqref{eq:Prop:comparison1} it suffices to estimate
the norm $ \|h^\#\|_{\hk_s(\ovS)} $.  Clearly, by chain rule   and  assumption \eqref{eq:boundonUSin h_snorms},
 \beaa 
 \|h^\#\|_{\hk_s(\ovS)} &\les&r \big\|\dkb^{\leq s }h^\#\big\|_{L^\infty(\ovS)}\les  r \sup_{\RR}\big|\dk^{\leq s}h\big|.
 \eeaa

 To prove \eqref{eq:Prop:comparison1},   we  assume for simplicity that  $V$  is a one form on $\ovS$. Also, for a covariant $k$-tensor $H$ on $\ovS$, we denote 
  \bea
  |H|_{\ovg} &:=& \left((\ovg)^{a_1b_1}\cdots(\ovg)^{a_kb_k}H_{a_1\cdots a_k}H_{b_1\cdots b_k}\right)^{\frac{1}{2}}.
  \eea
  In particular, the properties of $\ovg$ in {\bf A3}  imply for any covariant $k$-tensor $H$ on $\ovS$
  \bea\lab{eq:comparisionbetweentensornormsonovS}
  r^{k}|H|_{\ovg}\les \sum_{a_1,\cdots, a_k=1,2}|H_{a_1\cdots a_k}|\les r^{k}|H|_{\ovg}.
  \eea
  
  {\bf Step 1.} We compare the first covariant derivatives of $V^\#$  with respect to the two connections
  \beaa
  \nab_a^\# V^\#_b&=& \pr_a V^\#_b-(\Ga^\#)_{ab}^c V^\#_c,\\
   \nabzero_a V^\#_b&=& \pr_a V^\#_b-(\ovGa)_{ab}^c V^\#_c.
  \eeaa
  Hence,
  \beaa
   \nab_a^\# V^\#_b-   \nabzero_a V^\#_b&=& -\big( (\Ga^\#)_{ab}^c-(\ovGa)_{ab}^c\big) V^\#_c
  \eeaa
  and thus, using \eqref{eq:comparisionbetweentensornormsonovS} and Lemma \ref{lemma:comparison-gaS-ga}, 
  \beaa
  \left|\nab^\# V^\# -   \nabzero V^\#\right|_{\ovg} &\les& r^{-2}\sum_{a,b=1,2}\left| \nab_a^\# V^\#_b-   \nabzero_a V^\#_b\right|\\
  &=& r^{-2}\sum_{a,b=1,2}\left| \big( (\Ga^\#)_{ab}^c-(\ovGa)_{ab}^c\big) V^\#_c\right|\\
  &\les& r^{-1}\left(\sum_{a,b,c=1,2}\left|(\Ga^\#)_{ab}^c-(\ovGa)_{ab}^c\right|\right)|V^\#|_{\ovg} \\
  &\les& r^{-1}\dg|V^\#|_{\ovg}.
  \eeaa
  Hence, 
  \beaa
  \big\|  \nab^\# V^\# \|_{L^2(\ovS, g^{\S, \#})} &=& \big\|  \nab^\# V^\# \|_{L^2(\ovS, \ovg)}  \big(1+O(r^{-1} \dg)\big)\\
  &\les&   \Big( \big\|  \nabzero V^\# \|_{L^2(\ovS, \ovg)}  + r^{-1} \dg  \|  V^\#   \|_{L^2(\ovS, \ovg)} \Big) \big(1+O(r^{-1} \dg)\big).
  \eeaa
  Thus, recalling the definition of the spaces   $\hk_s(\ovS)$,
  \bea
   \big\| \nab^\S V \big \|_{L^2(\S)} &\les & r^{-1}  \big\|  V^\# \big \|_{\hk_1(\ovS)}  \big(1+O(r^{-1} \dg)\big).
  \eea
  {\bf Step 2.} We assume by iteration  for $1\leq k\leq s_{max}$ 
  \beaa
  \big\| (\nab^\S )^k  V \big \|_{L^2(\S)} &\les & r^{-k}  \big\|  V^\# \big \|_{\hk_k(\ovS)}  \big(1+O(r^{-1} \dg)\big).
  \eeaa
Note that the iteration assumption holds for $k=1$ by Step 1, and our goal is to prove the analog estimate for $k+1$ derivatives. Writing $(\nab^\S )^{k+1}  V=(\nab^\S )^k (\nab^\S V)$, we have, using the iteration assumption 
  \beaa
  \big\| (\nab^\S )^{k+1}  V \big \|_{L^2(\S)} &\les & r^{-k}  \big\|  \nab^\# V^\# \big \|_{\hk_k(\ovS)}  \big(1+O(r^{-1} \dg)\big).
  \eeaa
  Then, decomposing $\nab^\# V^\#$ analogously to Step 1, using \eqref{eq:comparisionbetweentensornormsonovS} and Lemma \ref{lemma:comparison-gaS-ga}, and since $\hk_k(\ovS)$ is an algebra for $k\geq 2$ by the Sobolev embedding, we have
  \beaa
  \big\|  \nab^\# V^\# \big \|_{\hk_k(\ovS)} &\les&  \big\|  \nabzero V^\# \big \|_{\hk_k(\ovS)}+r^{-2}\sum_{a,b,c=1,2}\big\| \big( (\Ga^\#)_{ab}^c-(\ovGa)_{ab}^c\big) V^\#_c \big \|_{\hk_k(\ovS)}\\
  &\les&    \big\|  \nabzero V^\# \big \|_{\hk_k(\ovS)}+r^{-1}\sum_{a,b,c=1,2}\Big(\big\| (\Ga^\#)_{ab}^c-(\ovGa)_{ab}^c \big\|_{\hk_k(\ovS)}\\
  &&+\big\|  (\Ga^\#)_{ab}^c-(\ovGa)_{ab}^c \big\|_{\hk_1^\infty(\ovS)} \Big) \|V^\#\|_{\hk_k(\ovS)}\\
    &\les&  \big\|  \nabzero V^\# \big \|_{\hk_k(\ovS)}+r^{-1}\dg\big\| V^\# \big \|_{\hk_k(\ovS)}
  \eeaa
  and hence
   \beaa
  \big\| (\nab^\S )^{k+1}  V \big \|_{L^2(\S)} &\les & r^{-k}\Big( \big\|  \nabzero V^\# \big \|_{\hk_k(\ovS)}+r^{-1}\dg\big\| V^\# \big \|_{\hk_k(\ovS)}\Big)  \big(1+O(r^{-1} \dg)\big)\\
  &\les& r^{-k-1}  \big\|  V^\# \big \|_{\hk_{k+1}(\ovS)}  \big(1+O(r^{-1} \dg)\big)
  \eeaa
  which is the iteration assumption for $k+1$. Hence, we deduce that we have for all $1\leq k\leq s_{max}+1$
   \beaa
  \big\| (\nab^\S )^k  V \big \|_{L^2(\S)} &\les & r^{-k}  \big\|  V^\# \big \|_{\hk_k(\ovS)}  \big(1+O(r^{-1} \dg)\big).
  \eeaa
  Therefore,
  \beaa
  \big\|   V \big \|_{\hk_k(\S)} &\les &(r^\S)^k r^{-k}  \big\|  V^\# \big \|_{\hk_k(\ovS)}  \big(1+O(r^{-1} \dg)\big)
  \les  \big\|  V^\# \big \|_{\hk_k(\ovS)}  \big(1+O(r^{-1} \dg)\big)
  \eeaa
  as stated.
 \end{proof}
 
We have  the following corollary of Lemma \ref{lemma:comparison-gaS-ga} and Proposition \ref{Prop:comparison-gaS-ga:highersobolevregularity}.
 
  \begin{corollary}\lab{cor:comparison-gaS-ga-Oepgsphere}
 Let $\ovS \subset \RR$.    Let  $\Psi:\ovS\longrightarrow \S $  be   a  deformation generated by the  functions $(U, S)$ as in Definition \ref{definition:Deformations}. Assume the bound
 \beaa
   \| (U, S)\|_{L^\infty(\ovS)} +r  \| \nabzero(U, S)\|_{L^\infty(\ovS)}  &\les&  \dg.
 \eeaa
 Then, we have 
  \beaa
\sup_{\S}|r-r^\S| \les \dg,\qquad \sup_{\S}|m-\ov{m}^\S| \les \epg\dg.
 \eeaa
If we assume in addition that
\beaa
\big\|(U, S)\big\|_{\hk_{s_{max}+1}(\ovS)}  &\les r \dg 
\eeaa
then
 \beaa
K^\S = \frac{1+O(\epg)}{(r ^{\S})^2}, \qquad\quad \left\|K^\S -\frac{1}{(r^\S)^2}\right\|_{\hk_{s_{max}-1}(\S)}\les (r^\S)^{-1}\epg.
\eeaa
In particular, since $\dg\leq \epg$ by assumption, we infer that $\S$ is an $O(\epg)$-sphere.
\end{corollary}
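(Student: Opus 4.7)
The plan is to establish the three claims in turn, exploiting the intrinsic nature of each quantity and reducing everything to the comparison machinery of Lemmas \ref{Le:Transportcomparison}, \ref{lemma:comparison-gaS-ga} and Proposition \ref{Prop:comparison-gaS-ga:highersobolevregularity}.

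First, for the area radius I would apply Lemma \ref{Le:Transportcomparison} to the scalar $r$ on $\RR$. From the inclusion $r^{-1}(e_3(r)+\Up)\in\Ga_b$ in \eqref{definition:Ga_gGa_b} together with assumption {\bf A1}, one has $|e_3(r)|\les 1$ and $|\dk r|\les r$, so the lemma yields $\sup_{\ovS}|r\circ\Psi-\ovr|\les\dg$, i.e.\ $\sup_\S|r-\ovr|\les\dg$. Combined with $|r^\S-\ovr|\les\dg$ from Lemma \ref{lemma:comparison-gaS-ga}(3), the triangle inequality produces the first estimate. The mass estimate proceeds similarly: the inclusions $r^{-1}e_3(m)\in\Ga_b$ and $r^{-1}e_4(m)\in\Ga_g$ together with {\bf A1} give $|e_3 m|\les\epg$ and $|e_4 m|\les \epg/r$, so Lemma \ref{Le:Transportcomparison} applied to $m$ yields $\sup_\S|m-\mg|\les\dg\epg$. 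Averaging this estimate on $\S$ gives $|\ov{m}^\S-\mg|\les\dg\epg$ as well, and the triangle inequality produces $\sup_\S|m-\ov{m}^\S|\les\dg\epg$.

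For the Gauss curvature I would use that $K^\S$ is intrinsic to the Riemannian surface $(\S, g^\S)$, so its pull-back equals the Gauss curvature of the pull-back metric on $\ovS$, namely $(K^\S)^\#=K_{g^{\S,\#}}$. Under the strengthened regularity hypothesis on $(U,S)$, Lemma \ref{lemma:comparison-gaS-ga}(4) yields $\|\Ga^{\S,\#}-\ovGa\|_{\hk_{s_{max}}(\ovS)}\les r\dg$; since the two-dimensional Gauss curvature is a combination of one derivative of Christoffel symbols together with quadratic terms in Christoffels (normalized by $(\det g)^{-1}$, controlled via \eqref{equation:difference-ofgas}), a direct perturbative estimate produces $\|K_{g^{\S,\#}}-K^{\ovS}\|_{\hk_{s_{max}-1}(\ovS)}\les r^{-1}\dg$. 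The background Gauss curvature already satisfies $r(K^{\ovS}-1/\ovr^2)\in\Ga_g$ from \eqref{definition:Ga_gGa_b}, so $\|K^{\ovS}-1/\ovr^2\|_{\hk_{s_{max}-1}(\ovS)}\les r^{-1}\epg$ by {\bf A1}, while $|1/\ovr^2-1/(r^\S)^2|\les r^{-3}\dg$ by the first part. Adding these contributions and transferring the $\hk_{s_{max}-1}(\ovS,\ovg)$-norm to the $\hk_{s_{max}-1}(\S,g^\S)$-norm through Proposition \ref{Prop:comparison-gaS-ga:highersobolevregularity}, and using $\dg\leq\epg$, one concludes $\|K^\S-1/(r^\S)^2\|_{\hk_{s_{max}-1}(\S)}\les r^{-1}\epg$. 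The pointwise statement $K^\S=(1+O(\epg))/(r^\S)^2$ then follows from Sobolev embedding since $s_{max}-1\geq 2$. Combined with the earlier bound $\sup_\S|r-r^\S|\les\dg\leq\epg$, this verifies all the conditions of Definition \ref{def:defintionofOofepgspheres}, so $\S$ is an $O(\epg)$-sphere.

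The main obstacle is the Gauss curvature perturbation step: the non-linear dependence of $K$ on the metric coefficients (through $(\det g)^{-1}$ and derivatives of Christoffels) must be tracked carefully at the level of $\hk_s$-norms, which is precisely why the full regularity $\|(U,S)\|_{\hk_{s_{max}+1}(\ovS)}\les r\dg$ is required: one derivative is consumed to control the difference of Christoffel symbols in $\hk_{s_{max}}(\ovS)$, leaving exactly $\hk_{s_{max}-1}$ for the curvature. All remaining steps are direct applications of the transport and comparison lemmas from the preceding subsection.
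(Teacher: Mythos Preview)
The proposal is correct and follows essentially the same route as the paper: you invoke Lemma~\ref{Le:Transportcomparison} for the transport comparison of $r$ and $m$ (the paper performs the same fundamental-theorem-of-calculus argument inline), combine with Lemma~\ref{lemma:comparison-gaS-ga}(3) for $r^\S$ versus $\ovr$, and then run the Christoffel-symbol perturbation of Lemma~\ref{lemma:comparison-gaS-ga}(4) to control $K^{\S,\#}-\overset{\circ}{K}$ in $\hk_{s_{max}-1}(\ovS)$, before transferring norms via Proposition~\ref{Prop:comparison-gaS-ga:highersobolevregularity} and using Sobolev embedding. Your identification of the regularity loss (one derivative from Christoffels to curvature) and the reason for the full $\hk_{s_{max}+1}$ hypothesis on $(U,S)$ matches the paper's reasoning precisely.
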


\begin{proof}
We start with the estimate for $r-r^\S$ on $\S$. Consider a point $(y, \ug, \sg)$ on $\ovS$, and the corresponding point $(y, \ug+U(y), \sg+S(y))$ on $\S$. Then, we have
\beaa
r(y, \ug+U(y), \sg+S(y)) - \ovr &=& r(y, \ug+U(y), \sg+S(y)) - r(y, \ug, \sg)\\
&=& U(y)\int_0^1\pr_ur(y, \ug+\la U(y), \sg+\la S(y))d\la\\
&&+S(y)\int_0^1\pr_sr(y, \ug+\la U(y), \sg+\la S(y))d\la
\eeaa
and hence, we have 
\beaa
\sup_{\S}|r-\rg| &\les& \sup_{\RR}(|\pr_ur|+|\pr_sr|)\|U, S\|_{L^\infty(\ovS)}\les\dg
\eeaa
where we used  $|\pr_ur|+|\pr_sr|\les 1$ from the expression for $\pr_u$ and $\pr_s$ and from the control of the background foliation on $\RR$ given by assumptions {\bf A1}-{\bf A3}. Together with Lemma \ref{lemma:comparison-gaS-ga}, we infer 
\beaa
\sup_{\S}|r - r^\S| &\les& \dg
\eeaa
as desired.

Similarly, we have 
\beaa
\sup_{\S}|m-\mg| &\les& \sup_{\RR}(|\pr_um|+|\pr_sm|)\|U, S\|_{L^\infty(\ovS)}\les \epg\dg
\eeaa
where we used $|\pr_um|+|\pr_sm|\les \epg$ from the expression for $\pr_u$ and $\pr_s$ and from the control of the background foliation on $\RR$ given by assumptions {\bf A1}-{\bf A3}. Hence, we infer 
\beaa
\sup_{\S}|m-\ov{m}^\S| &\les& \epg\dg
\eeaa
as desired.

Also, using again Lemma \ref{lemma:comparison-gaS-ga}, we have
\beaa
\sum_{a,b,c=1,2}\Big\|(\Ga^{\S, \#})_{ab}^c-(\ovGa)_{ab}^c\Big\|_{\hk_{s_{max}}(\ovS)} &\les r\dg.
\eeaa
We deduce 
\beaa
\Big\|K^{\S, \#}-\overset{\circ}{K}\Big\|_{\hk_{s_{max}-1}(\ovS)} &\les r^{-1}\dg
\eeaa
and hence, using assumptions {\bf A1} on the sphere $\ovS=S(\ug, \sg)$, 
\beaa
\left\|K^{\S, \#}-\frac{1}{(\rg)^2}\right\|_{\hk_{s_{max}-1}(\ovS)} &\les r^{-1}\epg.
\eeaa
Together with Proposition \ref{Prop:comparison-gaS-ga:highersobolevregularity}, and the fact that $|r^\S-\rg|\les\dg$, we infer
\beaa
\left\|K^\S -\frac{1}{(r^\S)^2}\right\|_{\hk_{s_{max}-1}(\S)}\les (r^\S)^{-1}\epg
\eeaa
as desired. The $L^\infty$ estimate then follows using the Sobolev embedding and the fact that $s_{max}\geq 3$. 
\end{proof}


\subsection{Adapted $\ell=1$ modes}


Consider a deformation $\Psi:\ovS \longrightarrow \S$ and recall the existence of the family of scalar functions $\Jp$, $p\in\big\{0, +, -\big\}$, on $\RR$  introduced in assumption {\bf A4}, see  \eqref{eq:Jpsphericalharmonics}, which form a basis of the $\ell=1$ modes on the spheres $S(u,s)$ of $\RR$, and hence in particular on $\ovS$.

 \begin{definition} 
 \lab{def:ell=1sphharmonicsonS}
 We define the basis of adapted $\ell=1$ modes  $\JpS$   on $\S$ by
 \beaa
\JpS = \Jp\circ\Psi^{-1}, \qquad p\in\big\{ -, 0, +\big\}.
 \eeaa
 \end{definition}
 
 \begin{proposition}
 Assume  the deformation verifies the bounds  \eqref{eq:boundonUSin h_snorms}.
If $\Jp$ is an admissible triplet of $\ell=1$ modes on $\RR$ (and hence on $\ovS$), i.e. satisfying \eqref{eq:Jpsphericalharmonics}, then  $\JpS$ is  an admissible triplet of $\ell=1$ modes on $\S$, i.e.
\bea
\lab{eq:admissibleJpS}
\bsplit
 \Big((r^\S)^2\lap^\S +2\Big) \JpS  &= O(\epg),\\
 \frac{1}{|\S|} \int_{\S}  \JpS J^{(\S, q)}  &=  \frac{1}{3}\de_{pq} +O(\epg),\\
 \frac{1}{|\S|}  \int_{\S}   \JpS   &=O(\epg).
\end{split}
\eea

Moreover at all point  of $\S$  we have
\bea
\lab{eq:admissibleJpS2}
\Big|\JpS-\Jp\Big|&\les \epg.
\eea
 \end{proposition}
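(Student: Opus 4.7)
The strategy is to pull back every quantity from $\S$ to $\ovS$ via $\Psi$, use Lemma \ref{lemma:pullbacksofcovderivatives} to convert $\S$-operators into $\S,\#$-operators on $\ovS$, and then compare the resulting pulled-back geometry $g^{\S,\#}$ with the background metric $\ovg$ using Lemma \ref{lemma:comparison-gaS-ga} and Proposition \ref{Prop:comparison-gaS-ga:highersobolevregularity}. Assumption {\bf A4} then finishes the job on $\ovS$. Throughout, Sobolev embedding applied to \eqref{eq:boundonUSin h_snorms} yields $\|(U,S)\|_{L^\infty(\ovS)} + r\|\nabzero(U,S)\|_{L^\infty(\ovS)} \lesssim \dg$, and $\dg \le \epg$.

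First I would prove \eqref{eq:admissibleJpS2} directly. For $q = \Psi(p) \in \S$ with $p\in\ovS$, the definition of $\JpS$ gives $\JpS(q) = \Jp(p)$, so $|\JpS(q) - \Jp(q)| = |\Jp(p) - \Jp(\Psi(p))| = |(\Jp)^{\#^{-1}} - \Jp|$ evaluated at $q$. Applying Lemma \ref{Le:Transportcomparison} (with the role of $\Psi$ and $\Psi^{-1}$ interchanged, which makes no difference for the bound) to $F=\Jp$, and noting that the background assumptions {\bf A1}--{\bf A4} together with elliptic regularity for $r^2\lap \Jp = -2\Jp + O(\epg)$ yield $|e_3 \Jp| + r^{-1}|\dk \Jp| \lesssim r^{-1}$, we obtain $|\JpS - \Jp| \lesssim \dg \lesssim \epg$ pointwise.

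Next I would handle the average identities, i.e.\ the second and third lines of \eqref{eq:admissibleJpS}. Pulling back, $\int_{\S} \JpS J^{(\S,q)}\, dA_{g^\S} = \int_{\ovS} \Jp J^{(q)}\, dA_{g^{\S,\#}}$ and $|\S| = \int_{\ovS} dA_{g^{\S,\#}}$. Using Lemma \ref{lemma:comparison-gaS-ga} to compare the area elements (as in the proof of \eqref{eq:lemma:int-gaS-ga3}), we get
\[
\frac{1}{|\S|}\int_{\S} \JpS J^{(\S,q)}\, dA_{g^\S} \;=\; \frac{1}{|\ovS|}\int_{\ovS} \Jp J^{(q)}\, dA_{\ovg} \;+\; O(r^{-1}\dg)\,\|\Jp J^{(q)}\|_{L^\infty(\ovS)},
\]
which, together with $\|\Jp\|_{L^\infty}\lesssim 1$, $\dg \le \epg$, and assumption {\bf A4} on $\ovS$, yields $\tfrac{1}{3}\delta_{pq} + O(\epg)$. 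The third identity is handled identically.

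The main obstacle is the Laplacian identity. Since $\JpS^{\#} = \Jp$, Lemma \ref{lemma:pullbacksofcovderivatives} gives
\[
\big[\, (r^\S)^2\lap^\S \JpS + 2\JpS\, \big]^{\#} \;=\; (r^\S)^2\, \lap^{\S,\#}\Jp + 2\Jp,
\]
so it suffices to show this pulled-back scalar is $O(\epg)$ pointwise on $\ovS$. I would add and subtract $\ovr^2\,\lap^{\ovS}\Jp$, writing it as $[(r^\S)^2 - \ovr^2]\lap^{\S,\#}\Jp + \ovr^2(\lap^{\S,\#} - \lap^{\ovS})\Jp + (\ovr^2 \lap^{\ovS}\Jp + 2\Jp)$. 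The last term is $O(\epg)$ by {\bf A4}; the first is bounded using $r^\S = \ovr(1+O(r^{-1}\dg))$ from Lemma \ref{lemma:comparison-gaS-ga}; and for the middle term I would expand both Laplacians in the $(y^1,y^2)$-coordinates and apply the bounds on $|g^{\S,\#}_{ab} - \ovg_{ab}|$ and on $|(\Ga^{\S,\#})^c_{ab} - (\ovGa)^c_{ab}|$ (in $L^\infty$, via Sobolev embedding of the $\hk_{s_{max}}$ estimate of Lemma \ref{lemma:comparison-gaS-ga}(4)), together with the bounds on derivatives of $\Jp$ inherited from {\bf A4} and elliptic regularity. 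Each factor produces a gain of $\dg$ which, combined with the $\dg \le \epg$ hypothesis, gives the claimed $O(\epg)$. Finally, to go from this $\#$-pullback estimate back to an estimate on $\S$, I invoke \eqref{eq:Prop:comparison1} (for $L^\infty$ estimates one uses the Sobolev embedding on $\ovS$ paired with Proposition \ref{Prop:comparison-gaS-ga:highersobolevregularity}), completing the proof.
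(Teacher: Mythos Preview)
Your proposal is correct and follows essentially the same route as the paper: pull back to $\ovS$ via $\Psi$, invoke Lemma~\ref{lemma:pullbacksofcovderivatives} to trade $\S$-operators for $\S,\#$-operators, compare $g^{\S,\#}$ with $\ovg$ via Lemma~\ref{lemma:comparison-gaS-ga} (metric coefficients, Christoffel symbols, and area elements), and then use {\bf A4} on $\ovS$; the pointwise bound \eqref{eq:admissibleJpS2} is obtained in both cases from Lemma~\ref{Le:Transportcomparison}. The only differences are cosmetic---the order in which the three conclusions are treated and your explicit splitting off of the $(r^\S)^2-\ovr^2$ term in the Laplacian estimate, which the paper handles implicitly via $r^\S/\ovr=1+O(r^{-1}\dg)$.
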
 
 
 \begin{proof}
 According to Lemma \ref{lemma:pullbacksofcovderivatives}  and the definition of $\JpS$
 \beaa
\Big( \Delta^\S \JpS\Big)^\#&=&  \Delta^{\S, \#} \Jp  =  \lapzero \Jp  +\Big(\Delta^{\S, \#} -\lapzero\Big) \Jp  \\
 &=&-\frac{2}{(\ovr)^2}  \Jp +         O(\epg \ovr^{-2}    )+\Big(\Delta^{\S, \#} -\lapzero\Big) \Jp. 
 \eeaa
 Now,
 \beaa
 \Big(\Delta^{\S, \#} -\lapzero\Big) \Jp &=& \Big((g^{\S,\#})^{ab}-(\ovg)^{ab}\Big) \Big(\pr_{y^a}\pr_{y^b} \Jp+(\Ga^{\S, \#})^c_{ab}  \pr_{y^c} \Jp\Big)\\
 &&+ (\ovg)^{ab}\Big((\Ga^{\S, \#})^c_{ab} -(\ovGa)^c_{ab} \Big) \pr_{y^c} \Jp. 
 \eeaa
 In view of the estimates   of Lemma \ref{lemma:comparison-gaS-ga}, 
 we deduce
 \beaa
 \Big| \big(\Delta^{\S, \#} -\lapzero\big) \Jp  \Big|&\les& r^{-2} \dg \left(\sum_{a=1,2}\Big|\pr_{y^a}\Jp  \Big|+\sum_{a,b=1,2}\Big|\pr_{y^a}\pr_{y^b}\Jp  \Big|\right) \les r^{-2} \dg. 
 \eeaa
 Therefore
 \beaa
 \Big( \Delta^\S \JpS\Big)^\#&=& -\frac{2}{(\ovr)^2}( \JpS)^\# +  O(\epg \ovr^{-2}    )
 \eeaa
 from which we deduce,
 \beaa
  \Delta^\S \JpS&=&  -\frac{2}{(r^\S)^2}\JpS+  O(\epg \ovr^{-2}    )
 \eeaa
 as stated.
 
 Also,
 \beaa
  \int_{\S}  \JpS J^{(\S, q)} da_{g^\S} &=& \int_{\ovS}  \Jp  \ovJq da_{g^{\S, \#}}= \int_{\ovS}  \Jp  \ovJq \sqrt{\det g^{\S, \#}}\\
   &=&\int_{\ovS}  \Jp  \ovJq \frac{\sqrt{\det g^{\S, \#}}}{\sqrt{\det\ovg}} da_{\ovg}\\
   &=&\int_{\ovS}  \Jp  \ovJq  da_{\ovg}+\int_{\ovS}  \Jp  \ovJq \left( \frac{\sqrt{\det g^{\S, \#}}}{\sqrt{\det\ovg}}-1\right)  da_{\ovg}\\
   &=&|\ovS| \left( \frac{1}{3}\de_{pq} +\epg\right)+ O( r^{-1} \dg) |\ovS|.
 \eeaa
 We infer that
 \beaa
 \frac{1}{|\S|}  \int_{\S}  \JpS J^{(\S, q)}&=& \frac{1}{3}\de_{pq} +O(\epg)
 \eeaa
 as stated.
 The last statement in \eqref{eq:admissibleJpS}  is proved in the same manner. Finally, 
   property \eqref{eq:admissibleJpS2} follows from an application of Lemma \ref{Le:Transportcomparison} using 
   the fact that $\JpS = \Jp\circ\Psi^{-1}$ together with the bounds  \eqref{eq:boundonUSin h_snorms} for the deformation.
      \end{proof}


 \subsection{A corollary to Proposition \ref{Thm.GCMSequations-fixedS:contraction}}
 

The following corollary to Proposition \ref{Thm.GCMSequations-fixedS:contraction} will be used to prove contraction in an iterative scheme, see Proposition \ref{Prop:contractionforNN}. 
 \begin{corollary}
 \lab{Thm.GCMSequations-fixedS:contraction:deformationsphereversion}
 Let $\ovS \subset \RR$.    Let  $\Psi:\ovS\longrightarrow \S $  be   a  deformation generated by the  functions $(U, S)$ as in Definition \ref{definition:Deformations}. Assume the bound
\beaa
\big\|(U, S)\big\|_{\hk_{s_{max}+1}(\ovS)}  &\les r \dg. 
\eeaa
Let  $\La$, $\Lab$ in $\mathbb{R}^3$  and let $b_0$ a constant. Also, let $h_1$, $h_2$, $h_3$, $h_4$, $\underline{h}_1$ and $\underline{h}_2$ scalar functions on $\ovS$.    Assume given a solution     $(f, \fb, \ovla, \Cbdot_0, \Mdot_0, \Cbpdot, \Mpdot, \ovb)$ of the following system  on $\ovS$
  \bea
 \lab{GeneralizedGCMsystem:deformationsphereversion}
\bsplit
\curl^{\S,\#} f &= h_1 -\ov{h_1}^{\S,\#},\\
\curl^{\S,\#} \fb&= \underline{h}_1 - \ov{\underline{h}_1}^{\S,\#},\\
\div^{\S,\#} f + \frac{2}{r^{\S}} \ovla  -\frac{2}{(r^{\S})^2}\ovb &=  h_2,
\\
\div^{\S,\#}\fb + \frac{2}{r^{\S}} \ovla +\frac{2}{(r^{\S})^2}\ovb   
&=   \Cbdot_0+\sum_p \Cbpdot \Jp+\underline{h}_2,\\
\left(\Delta^{\S,\#}+\frac{2}{(r^{\S})^2}\right)\ovla  &=  \Mdot_0+\sum _p\Mpdot \Jp+\frac{1}{2r^{\S}}\left(\Cbdot_0+\sum_p \Cbpdot \Jp\right) +h_3,\\
\Delta^{\S,\#}\ovb-\frac{1}{2}\div^{\S,\#}\Big(\fb - f\Big) &= h_4 -\ov{h_4}^{\S,\#} , \qquad \ov{\ovb}^{\S,\#}=b_0,
\end{split}
\eea
and
     \bea
    \lab{eq:badmodesforffb:deformationsphereversion}
    (\div^{\S,\#} f)_{\ell=1}=\La, \qquad (\div^{\S,\#} \fb)_{\ell=1} =\Lab,
    \eea
 where  $g^{\S,\#} $  denotes the pull-back by $\Psi$ of the metric  $g^\S$ on $\S$,  where $\div^{\S,\#}$, $\curl^{\S,\#}$ and $\Delta^{\S,\#}$ are operator on $\ovS$ induced by the pull back metric, 
 and where the $\ell=1$ modes on $\ovS$ in \eqref{eq:badmodesforffb:deformationsphereversion} are defined with respect to $\Jp$.

   Then, the following a priori estimates are verified  
\bea
&&\|(f,\fb, \widecheck{\ovla}^{\S,\#})\|_{\hk_3(\ovS)} +\sum_p\Big(r^2|\Cbpdot|+r^3|\Mpdot|\Big)\\  
\nn&\les& r\|(\widecheck{h_1}^{\S,\#}, \,\widecheck{\underline{h}_1}^{\S,\#}, \,\widecheck{h_2}^{\S,\#},\,\widecheck{\underline{h}_2}^{\S,\#})\|_{\hk_2(\ovS)} +r^2\|\widecheck{h_3}^{\S,\#}\|_{\hk_1(\ovS)}+r\|\widecheck{h_4}^{\S,\#}\|_{L^2(\ovS)} +|\La|+|\Lab|,
\eea
and
\bea
\nn r^2|\Cbdot_0|+r^3|\Mdot_0|+r\Big|\ov{\ovla}^{\S,\#}\Big|  &\les& r\|(\widecheck{h_1}^{\S,\#}, \,\widecheck{\underline{h}_1}^{\S,\#},\, h_2, \,\underline{h}_2)\|_{L^2(\ovS)} +r^2\|h_3\|_{L^2(\ovS)}\\
&&+r\|\widecheck{h_4}^{\S,\#}\|_{L^2(\ovS)} +|\La|+|\Lab|+|b_0|.
\eea   
 \end{corollary}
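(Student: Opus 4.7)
The plan is to reduce the statement on $\ovS$ to the already established a priori estimate on $\S$, namely Proposition~\ref{Thm.GCMSequations-fixedS:contraction}, via the pull‑back map $\Psi$, and then transfer the resulting inequalities back to $\ovS$ using the comparison results of Proposition~\ref{Prop:comparison-gaS-ga:highersobolevregularity} together with Corollary~\ref{cor:comparison-gaS-ga-Oepgsphere}.

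First I would verify that $\S$ is an $O(\epg)$-sphere endowed with a basis of $\ell=1$ modes. This is exactly the content of Corollary~\ref{cor:comparison-gaS-ga-Oepgsphere}, which under the hypothesis $\|(U,S)\|_{\hk_{s_{max}+1}(\ovS)}\lesssim r\dg$ yields the required bounds on $K^\S-1/(r^\S)^2$, while Definition~\ref{def:ell=1sphharmonicsonS} together with \eqref{eq:admissibleJpS} furnishes the triplet $\JpS=\Jp\circ\Psi^{-1}$. Hence the full machinery of Proposition~\ref{Thm.GCMSequations-fixedS:contraction} is available on $\S$.

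Next I would push forward the data and the unknowns. Set $\widetilde f:=\Psi_\#f$, $\widetilde\fb:=\Psi_\#\fb$, $\widetilde{\ovla}:=\ovla\circ\Psi^{-1}$, $\widetilde{\ovb}:=\ovb\circ\Psi^{-1}$, $\widetilde h_i:=h_i\circ\Psi^{-1}$, $\widetilde{\underline h}_i:=\underline h_i\circ\Psi^{-1}$, and keep the constants $\Cbdot_0,\Mdot_0,\Cbpdot,\Mpdot,b_0$ unchanged. By Lemma~\ref{lemma:pullbacksofcovderivatives} the operators $\div^{\S,\#},\curl^{\S,\#},\Delta^{\S,\#}$ are the pull‑backs of $\div^\S,\curl^\S,\Delta^\S$, the averaging operation commutes with $\Psi^{\#}$ (the area forms agree on $\ovS$ when one uses $g^{\S,\#}$), and by Definition~\ref{def:ell=1sphharmonicsonS} one has $\Psi^{\#}\JpS=\Jp$. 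Consequently the system \eqref{GeneralizedGCMsystem:deformationsphereversion} together with \eqref{eq:badmodesforffb:deformationsphereversion} is the pull‑back by $\Psi$ of precisely the system \eqref{GeneralizedGCMsystem}--\eqref{eq:badmodesforffb} for $(\widetilde f,\widetilde\fb,\widetilde{\ovla},\widetilde{\ovb})$ on $\S$, with right‑hand sides $\widetilde h_i,\widetilde{\underline h}_i$ and with the same $\La,\Lab,b_0$. Proposition~\ref{Thm.GCMSequations-fixedS:contraction} then produces the desired estimates for the tilded quantities in the $\hk_k(\S)$ and $L^2(\S)$ norms.

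Finally I would translate those estimates back to $\ovS$ via Proposition~\ref{Prop:comparison-gaS-ga:highersobolevregularity}: since $\widetilde X^{\#}=X$ for every scalar $X\in\{f,\fb,\ovla,\ovb,h_i,\underline h_i\}$, the equivalences
\[
\|X\|_{\hk_s(\ovS,\,\ovg)}=\|\widetilde X\|_{\hk_s(\S)}\bigl(1+O(r^{-1}\dg)\bigr)
\]
up to $s=s_{max}+1$ convert each term on the right‑hand side of Proposition~\ref{Thm.GCMSequations-fixedS:contraction} into the corresponding $\hk_s(\ovS)$ norm, and analogously for the $L^2$ comparison coming from Lemma~\ref{lemma:comparison-gaS-ga}. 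The averages $\ov{h}^{\S,\#}$ on $\ovS$ coincide with $\ov{\widetilde h}^{\S}$ on $\S$, and $\ov{\ovla}^{\S,\#}=\ov{\widetilde{\ovla}}^\S$, so the scalar estimate on $\Cbdot_0,\Mdot_0,\ov{\ovla}^{\S,\#}$ also transfers directly. Absorbing the factor $1+O(r^{-1}\dg)$ into the implicit constant yields the stated bounds.

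The only delicate point I foresee is bookkeeping: one must check that the $\ell=1$ mode conditions, the averaging, and the constants $\Cbpdot,\Mpdot$ transform correctly under $\Psi$ (so that the same system on $\S$ indeed arises), and that the comparison constants coming from Proposition~\ref{Prop:comparison-gaS-ga:highersobolevregularity} stay uniform in the $\hk_3$ and $\hk_2$ norms used here; both are covered by the hypothesis $\|(U,S)\|_{\hk_{s_{max}+1}(\ovS)}\lesssim r\dg$ with $s_{max}\ge 3$. No further analysis is required.
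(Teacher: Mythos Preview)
Your proposal is correct and follows essentially the same approach as the paper: reduce to Proposition~\ref{Thm.GCMSequations-fixedS:contraction} on $\S$ via the pull-back/push-forward by $\Psi$, and then use Proposition~\ref{Prop:comparison-gaS-ga:highersobolevregularity} to compare the $\hk_s(\ovS,g^{\S,\#})$ and $\hk_s(\ovS,\ovg)$ norms. Your write-up is in fact more detailed than the paper's two-sentence proof, and your explicit verification that $\S$ is an $O(\epg)$-sphere (via Corollary~\ref{cor:comparison-gaS-ga-Oepgsphere}) and that the $\ell=1$ modes and averages transform correctly is a welcome clarification.
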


\begin{proof}
The proof follows by pulling back on $\ovS$ by the map $\Psi$  the statement of Proposition \ref{Thm.GCMSequations-fixedS:contraction} holding on $\S$, and by using Proposition \ref{Prop:comparison-gaS-ga:highersobolevregularity} to compare the norms $\hk_s(\ovS, g^{\S,\#})$ and $\hk_s(\ovS, \ovg)=\hk_s(\ovS)$ for $s=0,1,2,3$.
\end{proof}


 \subsection{Adapted frame transformations}
 

\begin{definition}
Given a deformation $\Psi:\ovS\longrightarrow \S$ we  say that 
 a new frame   $(e_3', e_4',  e_1', e_2')$ on $\S$, obtained from the standard frame $(e_3, e_4, e_1, e_2)$  via the transformation  \eqref{eq:Generalframetransf},  is  $\S$-adapted  if   the horizontal  vectorfields $e'_1, e'_2$   are tangent to $\S$.
\end{definition}

\begin{proposition}\lab{prop:mainpropequationsUSforadaptedspheres}
Consider a  fixed deformation $\Psi:\ovS\longrightarrow \S$  in $\RR$  generated by the functions $U, S:\ovS\longrightarrow \RRR$.   A new frame $e_4', e_3', e_1', e_2'$  on $\S$  generated by  $(f, \fb, \la)$   from the  reference frame  $e_4,  e_3,  e_1, e_2$    according to the transformation formulas  \eqref{eq:Generalframetransf} is $\S$-adapted if and only if   the following relations are satisfied
 \bea
 \lab{Compatibility-Deformation2}
 \bsplit
 \pr_{y^a} S&=  \Big( \SS(f, \fb, \Ga)_bY^b_{(a)} \Big)^\#,\\
 \pr_{y^a}  U&=\Big(\UU(f, \fb, \Ga)_bY^b_{(a)}\Big)^\#,
 \end{split}
 \eea
 where we have introduced the 1-forms $\SS(f, \fb, \Ga)$ and $\UU(f, \fb, \Ga)$ on $\S$ given by 
 \bea\lab{eq:defintionofmathcsalUandmathcalSforcompatibilitydeformation}
 \bsplit
 \SS(f, \fb, \Ga) &:= \frac{a_{22}}{a_{11}a_{22}-a_{12}a_{21}}f -\frac{a_{12}}{a_{11}a_{22}-a_{12}a_{21}}\left(\fb+\frac 1 4 |\fb|^2f\right), \\ 
 \UU(f, \fb, \Ga) &:=-\frac{a_{21}}{a_{11}a_{22}-a_{12}a_{21}}f +\frac{a_{11}}{a_{11}a_{22}-a_{12}a_{21}}\left(\fb+\frac 1 4 |\fb|^2f\right),
 \end{split}
 \eea
 with the scalars $a_{11}$, $a_{12}$, $a_{21}$, $a_{22}$ on $\S$  defined by
  \bea\lab{eq:defintionofmathcsalUandmathcalSforcompatibilitydeformation:defscalarsa11a12a21a22}
 \bsplit
 a_{11} &:= \vsi +\vsi Z\c f  -\frac{1}{4}|f|^2\vsi \Omb,\\
 a_{12} &:= \frac{1}{2}|f|^2,\\
 a_{21} &:= -\left(1+\frac{1}{2}f\c\fb  +\frac{1}{16} |f|^2  |\fb|^2\right)\vsi \Omb  +\vsi Z\c\left(\fb+\frac 1 4 |\fb|^2f\right)+\frac{1}{4}|\fb|^2\vsi,\\
  a_{22} &:= 2+f\c\fb  +\frac{1}{8} |f|^2  |\fb|^2.
  \end{split}
 \eea
\end{proposition}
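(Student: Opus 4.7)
The condition that the new frame be $\S$-adapted is equivalent to requiring that the null pair $(e_3', e_4')$ span the normal plane to $\S$ at each point, i.e. $\g(e_3', V) = \g(e_4', V) = 0$ for every vector $V$ tangent to $\S$. Since $T_{\Psi(p)}\S$ is spanned by the push-forward vectors $\YY_{(a)} = \Psi_\#(\pr_{y^a})$, $a=1,2$, this reduces to the four scalar conditions $\g(e_3', \YY_{(a)}) = \g(e_4', \YY_{(a)}) = 0$ for $a=1,2$. I would then compute these pairings directly.

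Using the transformation formulas \eqref{eq:Generalframetransf} together with the orthogonality relations $\g(e_3,e_4)=-2$, $\g(e_a,e_b)=\de_{ab}$, $\g(e_3,e_a)=\g(e_4,e_a)=0$, and inserting the decomposition $\YY_{(a)} = \YY^4_{(a)} e_4 + \YY^3_{(a)} e_3 + \sum_c \YY^c_{(a)} e_c$ from Lemma \ref{Lemma:deformation1}, a direct expansion yields
\begin{align*}
\la^{-1}\g(e_4', \YY_{(a)}) &= -2\YY^3_{(a)} - \tfrac{1}{2}|f|^2 \YY^4_{(a)} + f^c \YY^c_{(a)}, \\
\la\,\g(e_3', \YY_{(a)}) &= -2\big(1 + \tfrac{1}{2}f\cdot\fb + \tfrac{1}{16}|f|^2|\fb|^2\big)\YY^4_{(a)} - \tfrac{1}{2}|\fb|^2\YY^3_{(a)} + \big(\fb^c + \tfrac{1}{4}|\fb|^2 f^c\big)\YY^c_{(a)}.
\end{align*}
Setting these to zero and substituting the explicit expressions for $\YY^3_{(a)}, \YY^4_{(a)}, \YY^c_{(a)}$ in terms of $(\pr_{y^a}U,\pr_{y^a}S)$ supplied by Lemma \ref{Lemma:deformation1}, a careful rearrangement produces two linear relations in $(\pr_{y^a}S,\pr_{y^a}U)$ whose coefficients match precisely the scalars $a_{11}, a_{12}, a_{21}, a_{22}$ of \eqref{eq:defintionofmathcsalUandmathcalSforcompatibilitydeformation:defscalarsa11a12a21a22}, while the right-hand sides take the form $f^b (Y^b_{(a)})^\#$ and $\big(\fb+\tfrac{1}{4}|\fb|^2 f\big)^b(Y^b_{(a)})^\#$, respectively. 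Observe that $\la$ drops out entirely, consistent with the absence of $\la$ in the definitions of $\SS$ and $\UU$.

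To conclude, I would substitute the ansatz $\pr_{y^a}S = (\SS_b Y^b_{(a)})^\#$ and $\pr_{y^a}U = (\UU_b Y^b_{(a)})^\#$ on the left-hand side. Since both right-hand sides are already of the form $(\text{horizontal 1-form})_b \cdot (Y^b_{(a)})^\#$, stripping off the common factor $(Y^b_{(a)})^\#$ (possible for every $a=1,2$ since $Y^b_{(a)}$ is invertible) reduces the problem to the purely algebraic $2\times 2$ linear system
$$\begin{pmatrix} a_{11} & a_{12} \\ a_{21} & a_{22}\end{pmatrix}\begin{pmatrix} \SS \\ \UU \end{pmatrix} = \begin{pmatrix} f \\ \fb + \tfrac{1}{4}|\fb|^2 f \end{pmatrix}$$
on $\S$. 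Since $\det(a_{ij}) = 2\vsi + O(|(f,\fb)|)$ is non-vanishing under the smallness assumption $|F|\ll 1$, the matrix may be inverted explicitly to recover the formulas \eqref{eq:defintionofmathcsalUandmathcalSforcompatibilitydeformation}. The main obstacle is neither conceptual nor analytic but algebraic: the careful bookkeeping of indices required to recognize the four coefficients arising from the orthogonality relations as precisely the $a_{ij}$ of \eqref{eq:defintionofmathcsalUandmathcalSforcompatibilitydeformation:defscalarsa11a12a21a22}.
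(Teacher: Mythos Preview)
Your proposal is correct and follows essentially the same route as the paper: reduce $\S$-adaptedness to the four orthogonality conditions $\g(e_4',\YY_{(a)})=\g(e_3',\YY_{(a)})=0$, expand using the frame transformation \eqref{eq:Generalframetransf} together with the decomposition of $\YY_{(a)}$ from Lemma~\ref{Lemma:deformation1}, and then invert the resulting $2\times2$ linear system. The only cosmetic difference is that the paper solves the linear system directly for $(\pr_{y^a}U,\pr_{y^a}S)$ and then \emph{recognizes} the solution as having the form $(\UU_b Y^b_{(a)})^\#$, $(\SS_b Y^b_{(a)})^\#$, whereas you first make this ansatz and then strip off the factor $Y^b_{(a)}$; the two are equivalent. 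One caution: in your displayed $2\times2$ system the ordering of the unknowns should be $(\UU,\SS)^T$ rather than $(\SS,\UU)^T$, since $a_{11}$ arises as the coefficient of $\pr_{y^a}U$ in the $e_4'$-orthogonality relation---this is exactly the bookkeeping you flagged as the main obstacle.
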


\begin{remark}\lab{rem:formofmathcalUandmathcalSatthemainorder}
Note that \eqref{eq:defintionofmathcsalUandmathcalSforcompatibilitydeformation} and \eqref{eq:defintionofmathcsalUandmathcalSforcompatibilitydeformation:defscalarsa11a12a21a22} imply in particular in view of 
{\bf A1}, {\bf A3} and \eqref{eq:assumptionY_a^b}
 \bea\lab{eq:formofmathcalUandmathcalSatthemainorder}
 \bsplit
 \SS(f, \fb, \Ga) &= f +O\Big(\epg |f|+|f|^2+|\fb|^2\Big), \\ 
 \UU(f, \fb, \Ga) &=\frac{1}{2}\Big(-\Up f +\fb\Big)+O\Big(\epg |f|+|f|^2+|\fb|^2\Big).
 \end{split}
 \eea
\end{remark}

\begin{proof}
The frame $(e_4', e_3', e_1', e_2')$ is adapted to $\S$   if   the horizontal  vectorfields $e'_1, e'_2$   are tangent to $\S$, i.e. 
if and only if 
\beaa
\g( \YY_{(a)}, \la^{-1}e_4')=0, \qquad \g( \YY_{(a)}, \la e_3')=0.
\eeaa
Since
\beaa
\la^{-1}e_4' &=&  e_4 + f^b  e_b +\frac 1 4 |f|^2  e_3,\\
\la e_3' &=&\left(1+\frac{1}{2}f\c\fb  +\frac{1}{16} |f|^2  |\fb|^2\right) e_3 + \left(\fb^b+\frac 1 4 |\fb|^2f^b\right) e_b  + \frac 1 4 |\fb|^2 e_4,
\eeaa
this is equivalent to 
\beaa
\YY_{(a)}^3-\frac{1}{2}f_b\YY_{(a)}^b+\frac{1}{4}|f|^2\YY_{(a)}^4 &=& 0,\\
\left(1+\frac{1}{2}f\c\fb  +\frac{1}{16} |f|^2  |\fb|^2\right)\YY_{(a)}^4-\frac{1}{2}\left(\fb^b+\frac 1 4 |\fb|^2f^b\right)\YY_{(a)}^b+\frac{1}{4}|\fb|^2\YY_{(a)}^3 &=& 0.
\eeaa
Now, recall \eqref{coefficients-YYa-US}, 
\beaa
 \bsplit
   \YY_{(a)}^4&= \pr_{y^a} S-\frac 1 2 (\vsi \Omb)^\#\,  \pr_{y^a}  U,\\
    \YY_{(a)}^3&=\frac 1 2 \vsi^\# \pr_{y^a}  U, \\
     \YY_{(a)}^c&=  (Y_{(a)}^c)^\# -  (\vsi Z^c)^\# \, \pr_{y^a}  U.
   \end{split}
 \eeaa
 We infer
 \beaa
\frac 1 2 \vsi^\# \pr_{y^a}  U -\frac{1}{2}(f_b)^\#\left( (Y_{(a)}^b)^\# -  (\vsi Z^b)^\# \, \pr_{y^a}  U\right)+\frac{1}{4}(|f|^2)^\#\left(\pr_{y^a} S-\frac 1 2 (\vsi \Omb)^\#\,  \pr_{y^a}  U\right) &=& 0,\\
\left(1+\frac{1}{2}f\c\fb  +\frac{1}{16} |f|^2  |\fb|^2\right)^\#\left(\pr_{y^a} S-\frac 1 2 (\vsi \Omb)^\#\,  \pr_{y^a}  U\right)\\
-\frac{1}{2}\left(\fb_b+\frac 1 4 |\fb|^2f_b\right)^\#\left( (Y_{(a)}^b)^\# -  (\vsi Z^b)^\# \, \pr_{y^a}  U\right)+\frac{1}{4}(|\fb|^2)^\#\frac 1 2 \vsi^\# \pr_{y^a}  U &=& 0.
\eeaa
We rewrite this system as
 \beaa
\left( \vsi +   \vsi f\c Z  -\frac{1}{4}|f|^2\vsi \Omb\right)^\#\pr_{y^a}  U +\frac{1}{2}(|f|^2)^\#\pr_{y^a} S &=& (f\c Y_{(a)})^\# ,
\eeaa
and
\beaa
&&\left(-\left(1+\frac{1}{2}f\c\fb  +\frac{1}{16} |f|^2  |\fb|^2\right)\vsi\Omb  +\vsi Z\c\left(\fb+\frac 1 4 |\fb|^2f\right)+\frac{1}{4}|\fb|^2\vsi \right)^\#\pr_{y^a}  U\\
&&+\left(2+f\c\fb  +\frac{1}{8} |f|^2  |\fb|^2\right)^\#\pr_{y^a} S\\
& =& \left(\left(\fb+\frac 1 4 |\fb|^2f\right)\c Y_{(a)}\right)^\#.
\eeaa
We infer
 \beaa
 \bsplit
 \pr_{y^a} S&=  \Big( \SS(f, \fb, \Ga)_bY^b_{(a)} \Big)^{\#},\\
 \pr_{y^a}  U&=\Big(\UU(f, \fb, \Ga)_bY^b_{(a)}\Big)^{\#},
 \end{split}
 \eeaa
 where we have introduced the notation
  \beaa
   \bsplit
 \SS(f, \fb, \Ga) &= \frac{a_{22}}{a_{11}a_{22}-a_{12}a_{21}}f -\frac{a_{12}}{a_{11}a_{22}-a_{12}a_{21}}\left(\fb+\frac 1 4 |\fb|^2f\right), \\ 
 \UU(f, \fb, \Ga) &=-\frac{a_{21}}{a_{11}a_{22}-a_{12}a_{21}}f +\frac{a_{11}}{a_{11}a_{22}-a_{12}a_{21}}\left(\fb+\frac 1 4 |\fb|^2f\right),
 \end{split}
 \eeaa
 with the scalars $a_{11}$, $a_{12}$, $a_{21}$, $a_{22}$ on $\S$  defined by
 \beaa
 a_{11} &=& \vsi +\vsi Z\c f  -\frac{1}{4}|f|^2\vsi \Omb,\\
 a_{12} &=& \frac{1}{2}|f|^2,\\
 a_{21} &=& -\left(1+\frac{1}{2}f\c\fb  +\frac{1}{16} |f|^2  |\fb|^2\right)\vsi \Omb  +\vsi Z\c\left(\fb+\frac 1 4 |\fb|^2f\right)+\frac{1}{4}|\fb|^2\vsi,\\
  a_{22} &=& 2+f\c\fb  +\frac{1}{8} |f|^2  |\fb|^2.
 \eeaa
This concludes the proof of the proposition.
\end{proof}

  \begin{corollary}\lab{cor:comparison-gaS-ga-Oepgsphere:mSminusm}
 Let $\ovS \subset \RR$    Let  $\Psi:\ovS\longrightarrow \S $  be   a  deformation generated by the  functions $(U, S)$ as in Definition \ref{definition:Deformations}. Assume the bound
 \beaa
   \| (U, S)\|_{L^\infty(\ovS)} +r  \| \nabzero(U, S)\|_{L^\infty(\ovS)}+r^2  \| \nabzero^2(U, S)\|_{L^\infty(\ovS)}  &\les&  \dg.
 \eeaa
 Then, we have 
  \beaa
\sup_{\S}|m-m^\S| \les \dg.
 \eeaa
\end{corollary}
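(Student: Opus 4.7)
The natural starting point is Corollary~\ref{cor:comparison-gaS-ga-Oepgsphere}, whose hypotheses are strictly weaker than those of the present statement and which already gives $\sup_\S|m-\mg|\les \epg\dg\les\dg$ (using $\epg\ll 1$). Since $m^\S$ is a constant on $\S$, it therefore suffices to establish the constant-versus-constant bound $|m^\S-\mg|\les\dg$; the only role of the new second-derivative hypothesis on $(U,S)$ will be to produce $C^1$ control of the transition coefficients $(f,\fb)$ of the frame adapted to $\S$.

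The first step is to record the intrinsic representation
$$8\pi\,m^\S \;=\; r^\S\!\int_\S\mu^\S,$$
obtained by combining the Hawking-mass definition with the Gauss equation~\eqref{eq:Gaussequation}, the Gauss--Bonnet theorem, and the vanishing of $\int_\S\div^\S\ze^\S$ under Stokes. The analogous identity $8\pi\mg = \ovr\int_{\ovS}\mu$ holds on the background leaf $\ovS$, since there the adapted frame coincides with the background frame; the task thus reduces to comparing the two mass-aspect integrals.

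I would then equip $\S$ with its adapted frame via Proposition~\ref{prop:mainpropequationsUSforadaptedspheres}, inverting the quasilinear system for $(f,\fb)$ near the zero solution from the bounds on $(U,S)$. In view of Remark~\ref{rem:formofmathcalUandmathcalSatthemainorder} this yields $|f|+|\fb|\les r^{-1}\dg$ together with $|\nab^\S f|+|\nab^\S\fb|\les r^{-2}\dg$, while the scale $\la$ may be fixed arbitrarily (e.g. $\la=1$) since $m^\S$ is scale-invariant by Remark~\ref{rem:intrinsicdefinitionofrSKSmS}. Lemma~\ref{Le:transf-mumu'}, after integration over $\S$ (killing the divergence), then gives
$$\int_\S\mu^\S \;=\; \int_\S\mu \;+\; \int_\S\err_2(\mu,\mu^\S).$$
The first integral is pulled back to $\ovS$ through $\Psi$ using Lemma~\ref{lemma:comparison-gaS-ga} for the volume-form ratio, one splits $\mu=2m/r^3+\muc$, Taylor-expands $m\circ\Psi$ and $r\circ\Psi$ to second order around $(\ug,\sg)$ against $(U,S)$ (invoking the hypothesis on $\nabzero^2(U,S)$ for the quadratic remainder), and exploits the key cancellation $\ov{\muc}^{\,\ovS}=0$, which is forced by the very definition of the background Hawking mass on $\ovS$. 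This produces $r^\S\int_\S\mu = 8\pi\mg + O(\dg)$.

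The main obstacle is the quantitative estimate $r^\S\bigl|\int_\S\err_2\bigr|\les\dg$. Unpacking Definition~\ref{Definition:errorterms-prime} with $|F|\les r^{-1}\dg$, $|\nab^\S F|\les r^{-2}\dg$ and $|\Ga_b|+|\dk\Ga_b|\les r^{-1}\epg$, the two schematic pieces $(r\nab^\S)^{\leq 1}(r\err_1)$ and $(F+\Ga_b)\cdot r\dk\Ga_b$ must each be shown to be $O(r^{-3}\dg\epg)$ pointwise, so that integration over the area $\sim r^2$ and multiplication by $r^\S\sim r$ produces the required $O(\dg\epg)\les\dg$. The delicate point is to avoid losing an $\epg$ by overestimating $(F+\Ga_b)\cdot r\dk\Ga_b$ by its purely $\Ga_b$-bilinear part: one must track that every such contribution to $\err_2$ in fact originates from a genuinely $F$-linear term in the transformation formula of Lemma~\ref{Le:transf-mumu'}. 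Combining the three ingredients yields $|m^\S-\mg|\les\dg$, and the triangle inequality with Corollary~\ref{cor:comparison-gaS-ga-Oepgsphere} concludes the proof.
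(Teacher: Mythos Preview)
Your approach via the identity $8\pi m^\S = r^\S\int_\S\mu^\S$ is correct but takes a different route from the paper. The paper works directly with the Hawking-mass definition in the form $\frac{2m^\S}{r^\S} = 1 + \frac{1}{16\pi}\int_\S\ka^\S\kab^\S$ and estimates $\int_\S(\ka^\S\kab^\S - \ka\kab)$ using only the transformation formulas for $\trch$ and $\trchb$, which involve $\err_1$-type errors; this gives $|\ka^\S\kab^\S - \ka\kab| \les r^{-2}\big(|f| + |\dkb^\S f| + |\fb| + |\dkb^\S\fb|\big)$ immediately and reduces the problem to $|m^\S - \mg| \les \dg + \|(f,\fb)\|_{\hk_1(\S)}$. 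The $\hk_1$ control of $(f,\fb)$ from the $C^2$ bound on $(U,S)$ via Proposition~\ref{prop:mainpropequationsUSforadaptedspheres} is common to both arguments. Your route through Lemma~\ref{Le:transf-mumu'} also works but forces you to unpack the $\err_2$ schematic of Definition~\ref{Definition:errorterms-prime} to exploit two structural facts you correctly flag: the highest-derivative piece of $\err_2$ is a genuine divergence (namely $-\div^\S\err(\ze,\ze^\S)$) and integrates to zero, else you would need second derivatives of $F$, which the hypotheses do not provide; and every surviving term is $F$-linear, so the ostensible $\Ga_b\cdot\dk\Ga_b$ contribution is spurious. The paper's approach sidesteps both subtleties. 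One minor point: the second-derivative hypothesis on $(U,S)$ is needed only for the $C^1$ control of $(f,\fb)$, not for the Taylor expansion of $m\circ\Psi$ and $r\circ\Psi$, where Lemma~\ref{Le:Transportcomparison} and the first-order bounds already suffice.
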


\begin{proof}
In view of the proof of Corollary \ref{cor:comparison-gaS-ga-Oepgsphere}, we have
\beaa
\sup_{\S}|m-\mg| &\les& \epg\dg.
\eeaa
Thus, from now on, we focus on proving 
\beaa
|m^\S-\mg| &\les& \dg.
\eeaa
We have 
\beaa
m^\S-\mg &=& \frac{r^\S}{2}+\frac{r^\S}{32\pi}\int_{\S}\ka^\S\kab^\S -\frac{\rg}{2}- \frac{\rg}{32\pi}\int_{\ovS}\ka\kab\\
&=& \frac{r^\S}{32\pi}\int_{\S}\left(\ka^\S\kab^\S+\frac{4}{(r^\S)^2}\right) - \frac{\rg}{32\pi}\int_{\ovS}\left(\ka\kab+\frac{4}{(\rg)^2}\right).
\eeaa
In view of Lemma \ref{lemma:comparison-gaS-ga}, we infer
\beaa
|m^\S-\mg| &\les& \dg+r\left|\int_{\S}\left(\ka^\S\kab^\S+\frac{4}{(r^\S)^2}\right) - \int_{\ovS}\left(\ka\kab+\frac{4}{(\rg)^2}\right)\right|\\
&\les& \dg+r\left|\int_{\S}\left(\ka^\S\kab^\S+\frac{4}{(r^\S)^2} - \left(\ka\kab+\frac{4}{(\rg)^2}\right)\circ\Psi^{-1}\right)\right|\\
&\les& \dg+r\left|\int_{\S}\left(\ka^\S\kab^\S+\frac{4}{(r^\S)^2} - \left(\ka\kab+\frac{4}{(\rg)^2}\right)\right)\right|
\eeaa
and hence
\beaa
|m^\S-\mg| &\les& \dg+r\left|\int_{\S}\Big(\ka^\S\kab^\S -\ka\kab\Big)\right|.
\eeaa

We denote by $(f, \fb, \la)$ the frame coefficients between the background frame of $\RR$ and the frame $(e^\S_1, e^\S_2, e^\S_4, e^\S_3)$ adapted to $\S$. Using the following frame transformation formulas of Proposition \ref{Prop:transformation formulas-integrtogeneral}
\beaa
\la^{-1}\trch^\S &=& \trch  +  \div^\S f  +\err(\trch,\trch^\S),\\
\la\trchb^\S &=& \trchb +\div^\S\fb   +\err(\trchb, \trchb^\S),
\eeaa
we infer
\beaa
\ka^\S\kab^\S &=& \ka\kab +\ka\div^\S\fb+\kab\div^\S f+(\ka+ \div^\S f  +\err(\trch,\trch^\S))\err(\kab, \kab^\S)\\
&&+(\kab+\div^\S\fb)\err(\ka,\ka^\S)
\eeaa
and hence
\beaa
|\ka^\S\kab^\S -\ka\kab| &\les& r^{-2}\Big(|f|+|\dkb^\S f|+|\fb|+|\dkb^\S\fb|\Big).
\eeaa
This implies
\beaa
|m^\S-\mg| &\les& \dg+\|f\|_{\hk_1(\S)}+\|\fb\|_{\hk_1(\S)}.
\eeaa
Now, since $(f, \fb, \la)$ are the frame coefficients between the background frame of $\RR$ and the frame $(e^\S_1, e^\S_2, e^\S_4, e^\S_3)$ adapted to $\S$, we have in view of \eqref{eq:formofmathcalUandmathcalSatthemainorder}, 
 \beaa
\|f\|_{\hk_1(\S)}+\|\fb\|_{\hk_1(\S)} &\les&  r\| \UU(f, \fb, \Ga)\|_{L^\infty(\S)}+  r\| \SS(f, \fb, \Ga)\|_{L^\infty(\S)}\\
&& +r^2\|\nab^\S \UU(f, \fb, \Ga)\|_{L^\infty(\S)}+r^2  \| \nab^\S\SS(f, \fb, \Ga)\|_{L^\infty(\S)} 
\eeaa
 which together with \eqref{Compatibility-Deformation2} and  the control for $Y_{(a)}$ provided by \eqref{eq:assumptionY_a^b}   yields
\beaa
\|f\|_{\hk_1(\S)}+\|\fb\|_{\hk_1(\S)} &\les& r \| \nabzero(U, S)\|_{L^\infty(\ovS)}+r^2 \| \nabzero^2(U, S)\|_{L^\infty(\ovS)}\les\dg 
\eeaa
and hence
\beaa
|m^\S-\mg| &\les& \dg.
\eeaa
This concludes the proof of the corollary.
\end{proof}


\section{Existence of GCM spheres}
\lab{sec:exitstenceofGCMspheres}



\subsection{Statement of the main theorem}


In what follows we  consider  deformations $\Psi:\ovS\longrightarrow \S$  endowed  with adapted frames  $(e_1^\S, e_2^\S, e_3^\S, e^\S_4)$ on $\S$.
 As in section \ref{section:GCMspheres}  we denote by   $\nab^\S$ the induced covariant derivative on $\S$ and by      $\Ga^\S$, $R^\S$ the corresponding Ricci  and curvature coefficients   associated to the  frame.

  The following theorem is the main result of this  paper.
  \begin{theorem}[Existence of GCM spheres]
\lab{Theorem:ExistenceGCMS1}
Let $m_0>0$ a constant.   Let $0<\dg\leq \epg $   two sufficiently   small   constants, and let  $(\ug, \sg, \rg)$ three real numbers with $\rg$ sufficiently large so that
\beaa
\epg\ll m_0, \qquad\qquad  \rg\gg m_0.
\eeaa
Let a fixed  spacetime region $\RR$, as in Definition \ref{defintion:regionRRovr}, together with a $(u, s)$ outgoing geodesic   foliation verifying the  assumptions ${\bf A1-A4}$, see section \ref{sec:defintionspacetimeregionRR}. Let  $\ovS=S(\ovu, \ovs)$   be  a fixed    sphere from this foliation, and let $\rg$ and $\mg$ denoting respectively its area radius and its Hawking mass.  Assume that  the GCM quantities   $\ka, \kab, \mu$  of the background foliation verify          the  following:
\bea
\bsplit
\ka&=\frac{2}{r}+\dot{\ka},\\
\kab&=-\frac{2\Up}{r} +  \Cb_0+\sum_p \Cbp \Jp+\dot{\kab},\\
\mu&= \frac{2m}{r^3} + M_0+\sum _p\Mp \Jp+\dot{\mu},
\end{split}
\eea
where
\bea
\lab{eq:GCM-improved estimate1-again}
|\Cb_0, \Cbp| &\les& r^{-2} \epg, \qquad  |M_0, \Mp| \les r^{-3} \epg,
\eea
and
\bea
\lab{eq:GCM-improved estimate2-again}
\big\| \kadot, \kabdot\|_{\hk_{s_{max} }(\S) }&\les& r^{-1}\dg,\qquad 
\big\|\mudot\| _{\hk_{s_{max} }(\S) }\les r^{-2}\dg.
\eea
 Then
for any fixed pair of triplets   $\La, \Lab \in \RRR^3$  verifying
\bea\lab{eq:assumptionsonLambdaabdLambdabforGCMexistence}
|\La|,\,  |\Lab|  &\les & \dg,
\eea
 there 
exists a unique  GCM sphere $\S=\S^{(\La, \Lab)}$, which is a deformation of $\ovS$, 
such that  the GCM conditions  of Definition \ref{definition:GCMS} are verified,
 i.e. there exist constants $\Cb^\S_0,\,  \CbpS$, \, $ M^\S_0$, \, $\MpS, \, p\in\{-,0, +\}$  for which
 \bea
\lab{def:GCMC}
\bsplit
\ka^\S&=\frac{2}{r^\S},\\
\kab^\S &=-\frac{2}{r^\S}\Up^\S+  \Cb^\S_0+\sum_p \CbpS \JpS,\\
\mu^\S&= \frac{2m^\S}{(r^\S)^3} +   M^\S_0+\sum _p\MpS \JpS,
\end{split}
\eea
where we recall that $\JpS=\Jp\circ\Psi^{-1}$, see Definition \ref{def:ell=1sphharmonicsonS}. Moreover, 
 \bea
\lab{GCMS:l=1modesforffb}
(\div^\S f)_{\ell=1}&=\La, \qquad   (\div^\S\fb)_{\ell=1}=\Lab,
\eea
where we recall that the $\ell=1$ modes for scalars  on $\S$ are defined by \eqref{eq:defell=1forscalarsonOofepgspheres}.

The resulting  deformation has the following additional properties:
\begin{enumerate}
\item The triplet $(f,\fb,\ovla)$ verifies
\bea
\lab{eq:ThmGCMS1}
\|(f,\fb, \ovla)\|_{\hk_{s_{max}+1}(\S)} &\les &\dg. 
\eea
\item The  GCM constants  $\Cb^\S_0,\,  \CbpS$, \, $ M^\S_0$, \, $\MpS, \, p\in\{-,0, +\}$  verify
\bea
\lab{eq:ThmGCMS2}
\bsplit
\big| \Cb^\S_0-\Cb_0\big|+\big| \CbpS-\Cbp\big|&\les r^{-2}\dg,\\
\big| M^\S_0-M_0\big|+\big| \MpS-\Mp\big|&\les r^{-3}\dg.
\end{split}
\eea

\item The volume radius $r^\S$  verifies
\bea
\lab{eq:ThmGCMS3}
\left|\frac{r^\S}{\rg}-1\right|\les  r^{-1} \dg.
\eea
\item  The parameter  functions $U, S$  of the deformation verify
\bea
\lab{eq:ThmGCMS4}
 \|( U, S)\|_{\hk_{s_{max}+1}(\ovS)}  &\les&  r \dg.
 \eea

\item The Hawking mass  $m^\S$  of $\S$ verifies the estimate
\bea
\lab{eq:ThmGCMS5}
 \big|m^\S-\ovm\big|&\les &\dg. 
 \eea
 \item The well defined\footnote{See Remark \ref{remark:welldefinedGa}.}
  Ricci and curvature coefficients of $\S$  verify,
   \bea
   \lab{eq:ThmGCMS6}
\bsplit
\| \Ga^\S_g\|_{\hk_{s_{max} }(\S) }&\les  \epg  r^{-1},\\
\| \Ga^\S_b\|_{\hk_{s_{max} }(\S) }&\les  \epg.
\end{split}
\eea
\end{enumerate}
 \end{theorem}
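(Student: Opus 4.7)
The plan is to construct $(U, S, f, \fb, \ovla)$ by the iterative scheme sketched in the introduction, and then to verify a posteriori that the resulting sphere $\S^{(\infty)}$ is genuinely a GCM sphere. Starting from the trivial quintet $\QQ^{(0)} = (0,0,0,0,0)$, I produce $\QQ^{(n+1)}$ from $\QQ^{(n)}$ in two stages. First, the pair $(U^{(n)}, S^{(n)})$ determines a deformation $\Psi^{(n)} : \ovS \to \S(n)$ and an adapted triplet of $\ell=1$ modes $J^{(\S(n), p)} = J^{(p)} \circ (\Psi^{(n)})^{-1}$. Using the transformation formulas of Lemma \ref{Lemma-adaptedGCM-equations:ter}, I apply Proposition \ref{Thm.GCMSequations-fixedS} on $\S(n)$ with the right-hand sides $h_1, \hb_1, h_2, \hb_2, h_3, h_4$ determined by the background $\Ga, R$ and by the previous iterate $(\fn, \fbn, \ovlan)$, and with prescribed $\ell=1$ modes $\La, \Lab$; this produces $(\fnn, \fbnn, \ovlann)$ together with the constants $\Cbdot_0^{(n+1)}, \Cbpdot^{(n+1)}, \Mdot_0^{(n+1)}, \Mpdot^{(n+1)}$ and the auxiliary scalar $\ovb^{(n+1)}$. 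Second, I solve the Poisson problems
\begin{equation*}
\lap^{\ovS} \Unn = \div^{\ovS}\Big((\UU(\fnn, \fbnn, \Ga))^{\#_n}\Big), \qquad \lap^{\ovS} \Snn = \div^{\ovS}\Big((\SS(\fnn, \fbnn, \Ga))^{\#_n}\Big),
\end{equation*}
with $\Unn, \Snn$ vanishing at a fixed base point of $\ovS$, producing the next sphere $\S(n+1)$.

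The next step is to establish uniform boundedness: I would prove by induction that $\|(\fn, \fbn, \ovlan)\|_{\hk_{s_{max}+1}(\S(n))} \les \dg$ and $\|(U^{(n)}, S^{(n)})\|_{\hk_{s_{max}+1}(\ovS)} \les r\dg$. The first bound follows from Proposition \ref{Thm.GCMSequations-fixedS} once the right-hand sides are estimated via the schematic form of $\err_1, \err_2$ in Definition \ref{Definition:errorterms} together with the assumptions \eqref{eq:GCM-improved estimate1-again}--\eqref{eq:GCM-improved estimate2-again} and \eqref{eq:assumptionsonLambdaabdLambdabforGCMexistence}; the key points are that $|\err_1| \les r^{-1}(\epg + \dg)|F^{(n)}| + r^{-2}|F^{(n)}|^2$ and that the ``large'' linear source $\GG(\Ga) = O(\epg r^{-2})$ combines with the dominant constants in \eqref{def:GCMC:00} to close the estimate at size $\dg$. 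The second bound follows from elliptic estimates on $\ovS$ applied to the Poisson equations, using \eqref{eq:formofmathcalUandmathcalSatthemainorder} and the comparison Proposition \ref{Prop:comparison-gaS-ga:highersobolevregularity} to transfer $\hk_s$ norms between $\S(n)$ and $\ovS$.

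Contraction is then proved at a lower level of regularity (say in $\hk_3$ for $(f,\fb,\ovla)$ and $\hk_4$ for $(U,S)$), using Corollary \ref{Thm.GCMSequations-fixedS:contraction:deformationsphereversion}. The delicate point is that the differences $(f^{(n+1)} - f^{(n)}, \ldots)$ live a priori on two different spheres $\S(n+1)$ and $\S(n)$; I handle this by pulling everything back to $\ovS$ via $\Psi^{(n)}$ and $\Psi^{(n-1)}$, comparing the resulting pulled-back metrics $g^{\S(n+1), \#}$ and $g^{\S(n), \#}$ using Lemma \ref{lemma:comparison-gaS-ga}, and bounding the commutators between the operators $\dddSn, \ddd^{\S(n-1)}$ against $r\|(U^{(n)} - U^{(n-1)}, S^{(n)} - S^{(n-1)})\|$. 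The uniform bounds plus the contraction give existence of a limit $\QQ^{(\infty)}$, whence the sphere $\S = \S^{(\infty)}$, the constants $\Cb^\S_0, \CbpS, M^\S_0, \MpS$, and the corresponding frame transition coefficients. The estimates \eqref{eq:ThmGCMS1}--\eqref{eq:ThmGCMS5} are immediate consequences of the uniform bounds, Corollary \ref{cor:comparison-gaS-ga-Oepgsphere} and Corollary \ref{cor:comparison-gaS-ga-Oepgsphere:mSminusm}, and \eqref{eq:ThmGCMS6} follows from the transformation formulas of Proposition \ref{Prop:transformation formulas-integrtogeneral} applied with the now-small $(f, \fb, \ovla)$.

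The main obstacle, and the step I expect to be hardest, is the verification that $\S$ is actually a GCM sphere in the sense of Definition \ref{definition:GCMS}. The limit quintet solves \eqref{systemUU-SS-derivedinfty-intro}, in which the equations for $(U^{(\infty)}, S^{(\infty)})$ are the Poisson equations $\lap^{\ovS} U = \div^{\ovS}(\UU^\#)$ rather than the pointwise compatibility system \eqref{Compatibility-Deformation2} of Proposition \ref{prop:mainpropequationsUSforadaptedspheres}; consequently a priori the frame produced by $(f^{(\infty)}, \fb^{(\infty)}, \la^{(\infty)})$ is not a priori tangent to $\S^{(\infty)}$, and the intrinsic frame $(e^\S_1, e^\S_2, e^\S_3, e^\S_4)$ adapted to $\S^{(\infty)}$ is a priori a different frame. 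To show they coincide, I would introduce the 1-form $\mathcal{E} := \nab^{\ovS}(U, S) - ((\UU, \SS)(f, \fb, \Ga))^\#$, which lies in the kernel of $\div^{\ovS}$ and has matching vanishing line integrals by the base-point normalization. A curl-free component of $\mathcal{E}$ has to be extracted: by exploiting the structure of $\UU, \SS$ and the fact that the frame transformation formulas \eqref{eq:Generalframetransf} are compatible with the GCM equations \eqref{GCMS-4Ster} on the intrinsic frame of $\S^{(\infty)}$, the curl of $\mathcal{E}$ is expressible (at the linearized level) in terms of $\aka^\S, \akab^\S$, both of which vanish because the adapted frame of a sphere is integrable. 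This should force $\mathcal{E} = 0$ and hence identify the two frames, completing the proof that $\S^{(\infty)}$ satisfies \eqref{def:GCMC} and \eqref{GCMS:l=1modesforffb}. Uniqueness then follows from the contraction estimate applied to the difference of two putative solutions.
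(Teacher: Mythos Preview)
Your overall architecture --- iterative scheme, uniform bounds via Proposition \ref{Thm.GCMSequations-fixedS}, contraction via Corollary \ref{Thm.GCMSequations-fixedS:contraction:deformationsphereversion} after pulling everything back to $\ovS$, and then an a posteriori identification of the limit as a GCM sphere --- is exactly the paper's, and your description of the first three stages is accurate.

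The gap is in your final paragraph. Your 1-form $\mathcal{E}$ is indeed divergence-free, but the claim that $\curl^{\ovS}\mathcal{E}$ is expressible in terms of $\aka^\S, \akab^\S$ conflates the two frames you are trying to identify. Since $\nab^{\ovS}U$ is a gradient, $\curl^{\ovS}\mathcal{E} = -\curl^{\ovS}\big((\UU(f^{(\infty)},\fb^{(\infty)},\Ga))^\#\big)$, and via the structure of $\UU,\SS$ this is (at the linear level) $\curl^\S f^{(\infty)}$ and $\curl^\S\fb^{(\infty)}$, hence $\aka^{(\infty)}, \akab^{(\infty)}$ for the \emph{limiting} frame, not $\aka^\S, \akab^\S$ for the adapted one. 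The limiting frame satisfies only $\curl^\S f^{(\infty)} = -\err_1 + \overline{\err_1}$, so $\aka^{(\infty)}$ is merely a small constant, not zero; you cannot conclude $\mathcal{E}=0$ this way without already knowing the frames coincide.

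The paper breaks the circularity by explicitly introducing the adapted frame of $\S^{(\infty)}$, with its own transition coefficients $(f,\fb)$ (and $\la=\la^{(\infty)}$), and comparing $(f,\fb)$ with $(f^{(\infty)},\fb^{(\infty)})$ directly. Since the adapted frame satisfies the pointwise compatibility \eqref{Compatibility-Deformation2}, it also satisfies the Poisson equations; subtracting and using \eqref{eq:formofmathcalUandmathcalSatthemainorder} gives $\|\div^\S(f-f^{(\infty)},\fb-\fb^{(\infty)})\|_{L^2}\les (r^{-1}+\epg)r^{-1}\|(f-f^{(\infty)},\fb-\fb^{(\infty)})\|_{\hk_1}$. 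Independently, both $f$ and $f^{(\infty)}$ satisfy curl equations of identical algebraic form (the adapted $f$ because $\atrch^\S=0$, with $\overline{\err_1}=0$ automatic for an adapted frame; the limiting $f^{(\infty)}$ by construction), so subtracting controls $\curl^\S(f-f^{(\infty)})$. Combining the two gives a homogeneous elliptic inequality for $\ddd_1^\S(f-f^{(\infty)})$ which forces $f=f^{(\infty)}$, $\fb=\fb^{(\infty)}$. One then verifies $\ovb^{(\infty)}=r-r^\S$ and reads off the GCM conditions by subtracting the limiting system from Lemma \ref{Lemma-adaptedGCM-equations}.
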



 \subsection{Structure of the  proof of Theorem  \ref{Theorem:ExistenceGCMS1}}


In view of Corollary \ref{Lemma-adaptedGCM-equations:ter} and Proposition \ref{prop:mainpropequationsUSforadaptedspheres}, 
 $\S$ is a GCM sphere which is a deformation of $\ovS$ if and only if the corresponding $(U,S, f,\fb, \ovla)$  solve the following coupled system 
         \bea\lab{eq:qequivalentGCMsystemwhicisnotsolvabledirectly:1}
     \bsplit
\curl ^\S f  &= -\err_1[\curl^\S  f ],\\
\curl^\S \fb  &= -\err_1[\curl^\S  \fb ],
\end{split}
\eea
         \bea\lab{eq:qequivalentGCMsystemwhicisnotsolvabledirectly:2}
     \bsplit
\div^\S f + \ka \ovla -\frac{2}{(r^\S)^2}\ovb &= \ka^\S-\frac{2}{r^\S} -\left(\ka-\frac{2}{r}\right) -\err_1[\div^\S f ] -\frac{2(r-r^\S)^2}{r(r^\S)^2},\\
\div^\S\fb - \kab \ovla +\frac{2}{(r^\S)^2}\ovb &= \kab^\S+\frac{2}{r^\S} -\left(\kab+\frac{2}{r}\right) -\err_1[\div^\S \fb ] +\frac{2(r-r^\S)^2}{r(r^\S)^2},\\
\Delta^\S\ovla + V\ovla &=\mu^\S-\mu -\left(\omb +\frac 1 4 \kab \right) \big(\ka^\S-\ka \big)\\
&+\left(\om +\frac 1 4 \ka \right) \big(\kab^\S-\kab \big)+\err_2[ \lap^\S\ovla],\\
\Delta^\S\ovb &= \frac{1}{2}\div^\S\left(\fb - \Up f +\err_1[\Delta^\S \ovb ] \right) , \quad \ov{\ovb}^\S=\ov{r}^\S-r^\S,
\end{split}
\eea
 \bea\lab{eq:qequivalentGCMsystemwhicisnotsolvabledirectly:3}
 \bsplit
 \pr_{y^a} S&=    \Big( \SS(f, \fb, \Ga)_bY^b_{(a)} \Big)^\#,\\
 \pr_{y^a}  U&=\Big(\UU(f, \fb, \Ga)_bY^b_{(a)}\Big)^\#,
 \end{split}
 \eea
together with  the GCM conditions \eqref{def:GCMC} and the prescribed $\ell=1$ conditions 
      \eqref{GCMS:l=1modesforffb}.  
      
      Note however that \eqref{eq:qequivalentGCMsystemwhicisnotsolvabledirectly:1} and \eqref{eq:qequivalentGCMsystemwhicisnotsolvabledirectly:3} are a priori not solvable. This forces us to  solve instead  the modified system 
         \bea\lab{eq:qequivalentGCMsystemwhicisnotsolvabledirectly:1:bis}
     \bsplit
\curl ^\S f  &= -\err_1[\curl^\S  f ]+\ov{\err_1[\curl^\S  f ]}^\S,\\
\curl^\S \fb  &= -\err_1[\curl^\S  \fb ]+\ov{\err_1[\curl^\S  \fb ]}^\S,
\end{split}
\eea
         \bea\lab{eq:qequivalentGCMsystemwhicisnotsolvabledirectly:2:bis}
     \bsplit
\div^\S f + \ka \ovla -\frac{2}{(r^\S)^2}\ovb &= \ka^\S-\frac{2}{r^\S} -\left(\ka-\frac{2}{r}\right) -\err_1[\div^\S f ] -\frac{2(r-r^\S)^2}{r(r^\S)^2},\\
\div^\S\fb - \kab \ovla +\frac{2}{(r^\S)^2}\ovb &= \kab^\S+\frac{2}{r^\S} -\left(\kab+\frac{2}{r}\right) -\err_1[\div^\S \fb ] +\frac{2(r-r^\S)^2}{r(r^\S)^2},\\
\Delta^\S\ovla + V\ovla &=\mu^\S-\mu -\left(\omb +\frac 1 4 \kab \right) \big(\ka^\S-\ka \big)\\
&+\left(\om +\frac 1 4 \ka \right) \big(\kab^\S-\kab \big)+\err_2[ \lap^\S\ovla],\\
\Delta^\S\ovb &= \frac{1}{2}\div^\S\left(\fb - \Up f +\err_1[\Delta^\S \ovb ] \right) , \quad \ov{\ovb}^\S=\ov{r}^\S-r^\S,
\end{split}
\eea
 \bea\lab{systemUU-SS-derived}
 \bsplit
\lapzero U&=\divzero\left(\big(\UU(f, \fb, \Ga)\big)^\#\right),\\
\lapzero S&=\divzero\left(\big(\SS(f, \fb, \Ga)\big)^\#\right),
 \end{split}
 \eea
together with  the GCM conditions \eqref{def:GCMC} and the prescribed $\ell=1$ conditions 
      \eqref{GCMS:l=1modesforffb}, and with  the values of $U, S$ fixed at the South Pole of $\ovS$ by
\bea
\lab{eq:US-southpole}
U(South)=S(South)=0.
\eea

The proof of Theorem  \ref{Theorem:ExistenceGCMS1} then proceeds as follows.
\begin{enumerate}
\item We introduce an iterative scheme for the resolution of the nonlinear system   \eqref{eq:qequivalentGCMsystemwhicisnotsolvabledirectly:1:bis}-\eqref{systemUU-SS-derived}, and we prove its convergence in section \ref{sec:iterativeschemeforGCMconstuction}.

\item We analyse  the limit $(U^{(\infty)},S^{(\infty)}, f^{(\infty)},\fb^{(\infty)}, \ovla^{(\infty)})$ of the iterative scheme, solution to  the nonlinear system \eqref{eq:qequivalentGCMsystemwhicisnotsolvabledirectly:1:bis}-\eqref{systemUU-SS-derived}, 
 in section  \ref{sec:limitoftheiterativescheme}. In particular, we exhibit two frames on the limiting sphere $\S^{(\infty)}$, one associated to the frame transformation coefficients $( f^{(\infty)},\fb^{(\infty)}, \ovla^{(\infty)})$, and one adapted to $\S^{(\infty)}$.

\item We then show in section \ref{sec:wheretheproofofthemaintheoremisfinallyconcluded}  that the two frame on $\S^{(\infty)}$ in fact coincide. This  implies  that  $(U^{(\infty)},S^{(\infty)}, f^{(\infty)},\fb^{(\infty)}, \ovla^{(\infty)})$ not only solves \eqref{eq:qequivalentGCMsystemwhicisnotsolvabledirectly:1:bis}-\eqref{systemUU-SS-derived}, but also solves the original system of equations \eqref{eq:qequivalentGCMsystemwhicisnotsolvabledirectly:1}-\eqref{eq:qequivalentGCMsystemwhicisnotsolvabledirectly:3} hence concluding the proof of Theorem  \ref{Theorem:ExistenceGCMS1}.  
\end{enumerate}

       
 \subsection{Definition and convergence of the iterative scheme}
 \lab{sec:iterativeschemeforGCMconstuction}


Starting with   the  trivial quintet  
  \beaa
  \QQ^{(0)}:=(U^{(0)}, S^{(0)}, \ovla^{(0)},  f^{(0)},  \fb^{(0)})=(0,0,0,0,0), 
  \eeaa
  corresponding to the undeformed sphere $\ovS$,  we define iteratively  the quintet 
 \beaa
 \QQ^{(n+1)}= \Big(U^{(n+1)}, S^{(n+1)}, \ovla^{(n+1)},  f^{(n+1)},  \fb^{(n+1)}\Big)= \Big(U^{(n+1)}, S^{(n+1)},
F^{(n+1)}\Big)
 \eeaa
  from
  \beaa
     \QQ^{(n)}= \Big(U^{(n)}, S^{(n)}, \ovla^{(n)},  f^{(n)},  \fb^{(n)}\Big)=
      \Big(U^{(n)}, S^{(n)},  F^{(n)}\Big)
  \eeaa
   as follows.

  {\bf Step 1.} The pair  $(U^{(n)}, S^{(n)})$ defines the deformation  sphere $\S(n)$ and the corresponding pull back map  $\#_n$ given by the map $\Psi^{(n)} :\ovS\longrightarrow \S(n)$,
  \bea
  \label{definition:Psin-iteration}
  (\ovu, \ovs,y^1, y^2) \longrightarrow (\ovu+U^{(n)}(y^1, y^2) , \ovs+S^{(n)}(y^1, y^2) ,  y^1, y^2).
  \eea
By induction we may assume that the following estimates hold true  for $ U^{(n)}$, $S^{(n)}$, $\fn$, $\fbn$, $\ovlan$, and the constants $\dot{\underline{C}}_0^{(n)}$, $\dot{\underline{C}}^{(n),p}$,  
\bea
 \label{eq:ExistenceGCMS-Thm-US-steon}
   \|\left(U^{(n)}, S^{(n)}\right)\|_{\hk_{s_{max}+1} (\ovS)} &\les&  \dg r,
  \eea
\bea
 \label{eq:ExistenceGCMS-Thm-US-steon:fnfbnovlan}
   \|(\fn,\fbn, \widecheck{\ovlan}^{\S(n-1)})\|_{\hk_{s_{max}+1}(\S(n-1))}  +r^2\sum_p|\Cbpndot| &\les&  \dg,
  \eea
and
\bea \label{eq:ExistenceGCMS-Thm-US-steon:ovovlanandCbdot}
r^2|\Cbndot_0|+ r\Big|\ov{\ovlann}^{\S(n)}\Big|  &\les& \epg.
\eea

\begin{remark}\lab{rem:SofnisanOofepgsphere}
In view of Corollary \ref{cor:comparison-gaS-ga-Oepgsphere}, \eqref{eq:ExistenceGCMS-Thm-US-steon} implies in particular that $\S(n)$ is an $O(\epg)$-sphere. 
\end{remark}

  The surface  $\S(n)$ also comes equipped  with  the  triplet $\JpSn$,  $p\in\{-,0, +\}$, of adapted   $\ell=1$ modes, see Definition \ref{def:ell=1sphharmonicsonS}, 
   \beaa
\Big( \JpSn\Big)^{\#_n}= \Jp, \qquad p\in\big\{ -, 0, +\big\}.
 \eeaa

  The area radius of $\S(n)$ is denoted by  $ r^{(n)}:=r^{\S(n)} $. The Hawking mass of  $\S(n)$ is denoted by $ m^{(n)} :=m^{\S(n)}$.

{\bf Step 2.}    We  define the triplet $(\fnn, \fbnn, \ovlann)$ as the  solution of the following  linear  system
 of equations 
    \bea
 \lab{GeneralizedGCMsystem-n+1-MainThm1}
\bsplit
\curlSn \fnn &=h_1^{(n)}-\ov{h_1^{(n)}}^{\S(n)},\\
\curlSn \fbnn&=\underline{h}_1^{(n)}-\ov{\underline{h}_1^{(n)}}^{\S(n)},
\end{split}
\eea
\bea
 \lab{GeneralizedGCMsystem-n+1-MainThm2}
\bsplit
\divSn \fnn + \frac{2}{r^{\S(n)}} \ovlann -\frac{2}{(r^{\S(n)})^2}\ovb^{(n+1)} &=h_2^{(n)},\\
\divSn\fbnn +\frac{2}{r^{\S(n)}}  \ovlann   +\frac{2}{(r^{\S(n)})^2}\ovb^{(n+1)}
&= \Cbnndot_0+\sum_p \Cbpnndot \JpSn +\underline{h}_2^{(n)},
\end{split}
\eea
\bea
 \lab{GeneralizedGCMsystem-n+1-MainThm3}
\bsplit
\left(\lapSn+\frac{2}{(r^{\S(n)})^2}\right) \ovlann &=\Mnndot_0+\sum _p\Mpnndot \JpSn +h_3^{(n)}\\
&+\frac{1}{2r^{\S(n)}}\left(\Cbnndot_0+\sum_p \Cbpnndot \JpSn\right),\\
\lapSn\ovb^{(n+1)}-\frac{1}{2}\divSn\left(\fbnn -  \fnn\right) &= h_4^{(n)}-\ov{h_4^{(n)}}^{\S(n)}, \\ 
\ov{\ovb^{(n+1)}}^{\S(n)} &=\ov{r}^{\S(n)}-r^{\S(n)},
\end{split}
\eea

  where 
  \bea\lab{eq:definitionofh1nhb1nh2nhb2nh3nh4n}
  \bsplit
  h_1^{(n)} &:= - \err_1[\curl^{\S(n-1)}\fn]\circ(\Psi^{(n-1)}\circ(\Psi^{(n)})^{-1}),\\
  \underline{h}_1^{(n)} &:= - \err_1[\curl^{\S(n-1)} \fbn]\circ(\Psi^{(n-1)}\circ(\Psi^{(n)})^{-1}),\\
  h_2^{(n)} &:= -\left(\ka-\frac{2}{r^{\S(n)}}\right) \ovlan\circ(\Psi^{(n-1)}\circ(\Psi^{(n)})^{-1})  -\kadot\\
  & -\err_1[\div^{\S(n-1)} \fn]\circ(\Psi^{(n-1)}\circ(\Psi^{(n)})^{-1}) -\frac{2(r-r^{\S(n)})^2}{r(r^{\S(n)})^2},\\
 \underline{h}_2^{(n)} &:= \left(\kab+\frac{2}{r^{\S(n)}}\right) \ovlan\circ(\Psi^{(n-1)}\circ(\Psi^{(n)})^{-1})  -\kabdot + \frac{4m^{\S(n)}}{(r^{\S(n)})^2} - \frac{4m}{r^2}\\
 &  -\err_1[\div^{\S(n-1)}\fbn]\circ(\Psi^{(n-1)}\circ(\Psi^{(n)})^{-1})+\frac{2(r-r^{\S(n)})^2}{r(r^{\S(n)})^2},\\
  h_3^{(n)} &:= -\left(V-\frac{2}{(r^{\S(n)})^2}\right)\ovlan\circ(\Psi^{(n-1)}\circ(\Psi^{(n)})^{-1}) -\mudot  +\frac{2m^{\S(n)}}{(r^{\S(n)})^3}-\frac{2m}{r^3} \\
  &+\left(\om +\frac 1 4 \ka \right) \left(\frac{2\Up}{r} -\frac{2\Up^{\S(n)}}{r^{\S(n)} }+  \dot{\underline{C}}_0^{(n)}+\sum_p\dot{\underline{C}}^{(n),p} \JpSn  -\kabdot \right)\\
  &-\left(\omb +\frac 1 4 \kab \right) \left(\frac{2}{r^{\S(n)}} -\frac{2}{r} -\kadot \right)  -\frac{1}{2r^{\S(n)}}\left( \dot{\underline{C}}_0^{(n)}+\sum_p\dot{\underline{C}}^{(n),p} \JpSn\right)\\
  &  +\err_2[ \lap^{\S(n-1)}\ovlan]\circ(\Psi^{(n-1)}\circ(\Psi^{(n)})^{-1}),\\
   h_4^{(n)} &:= \div^{\S(n-1)}\left(\frac{2m}{r}\fn+\err_1[\Delta^{\S(n-1)} \ovb^{(n)} ]\right)\circ(\Psi^{(n-1)}\circ(\Psi^{(n)})^{-1}),
   \end{split}
  \eea
  with the notations 
  \beaa
\Cbnndot_0 &=& \Cbnn_0-\Cb_0, \quad \Cbpnndot= \Cbpnn-\Cbp, \\
 \Mnndot&=& \Mnn_0-M_0, \quad \Mpnndot=\Mpnn-\Mp,
\eeaa
and where  the  error terms  $\err_1[\curl^{\S(n-1)} \fn]$, $\err_1[\curl^{\S(n-1)} \fbn]$, $\err_1[\div^{\S(n-1)} \fn]$, $\err_1[\div^{\S(n-1)} \fbn]$,  $ \err_2[ \lap^{\S(n-1)}\ovlan]$ and $ \err_1[ \lap^{\S(n-1)}\ovb^{(n)}]$ depend only on the  previous  iterates $(\fn, \fbn, \ovlan)$ defined on $\S(n-1)$.  

The existence, uniqueness and control  of  $(\fnn, \fbnn, \ovlann)$ is  ensured by the following proposition.

\begin{proposition}\lab{prop:Estimates:Fn+1boundednessiterativescheme}
Under the induction assumptions \eqref{eq:ExistenceGCMS-Thm-US-steon} \eqref{eq:ExistenceGCMS-Thm-US-steon:fnfbnovlan} \eqref{eq:ExistenceGCMS-Thm-US-steon:ovovlanandCbdot},  there exists unique constants $\Cbnn_0$, $\Cbpnn$, $\Mnn_0$, $\Mpnn$ such that  the system \eqref{GeneralizedGCMsystem-n+1-MainThm1}--\eqref{GeneralizedGCMsystem-n+1-MainThm3} has a unique solution $(\fnn, \fbnn, \ovlann)$ with prescribed  $\ell=1$  modes
\bea
\lab{GeneralizedGCMsystem-n+1-LaLab-MainThm:00}
( \div^\S \fnn)_{\ell=1}=\La, \qquad   (\div^\S \fbnn)_{\ell=1}=\Lab,
\eea
relative to the  given triplet  $\JpSn$ of $\S(n)$. Moreover, we have
 \bea
\nn&&\|(\fnn,\fbnn)\|_{\hk_{s_{max}+1}(\S(n))} +\|\widecheck{\ovlann}^{\S(n)}\|_{\hk_{s_{max}+2}(\S(n))}\\
\nn&&+\sum_p\Big(r^2|\Cbpnndot|+r^3|\Mpnndot|\Big)\\
 &\les&  \left(\frac{1}{\rg}+\epg\right)\dg+ r\|\kadot, \kabdot\|_{\hk_{s_{max}}(\S(n))}+r^2\|\mudot\|_{\hk_{s_{max}}(\S(n))}+|\La|+|\Lab|
\eea
and
\bea
 \nn r^2|\Cbnndot_0|+r^3|\Mnndot_0|+r\Big|\ov{\ovlann}^{\S(n)}\Big|  &\les& \dg + r\|\kadot, \kabdot\|_{\hk_{s_{max}}(\S(n))}+r^2\|\mudot\|_{\hk_{s_{max}}(\S(n))} \\
 &&+|\La|+|\Lab|
\eea 
uniformly for all $n\in\NNN$.
\end{proposition}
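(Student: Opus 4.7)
The plan is to apply Proposition \ref{Thm.GCMSequations-fixedS} with $\S = \S(n)$ to the system \eqref{GeneralizedGCMsystem-n+1-MainThm1}--\eqref{GeneralizedGCMsystem-n+1-MainThm3}, which is precisely an instance of the abstract linearized GCM system \eqref{GeneralizedGCMsystem} of Definition \ref{definition:GCMSgen-equations} with the particular choice of right-hand sides \eqref{eq:definitionofh1nhb1nh2nhb2nh3nh4n}. The hypotheses of Proposition \ref{Thm.GCMSequations-fixedS} are verified: by the induction hypothesis \eqref{eq:ExistenceGCMS-Thm-US-steon} together with Corollary \ref{cor:comparison-gaS-ga-Oepgsphere} (see Remark \ref{rem:SofnisanOofepgsphere}), $\S(n)$ is an $O(\epg)$-sphere; moreover it comes equipped with the adapted triplet $J^{\S(n),p}$ of $\ell=1$ modes obtained by pulling $\Jp$ back via $(\Psi^{(n)})^{-1}$, whose admissibility is ensured by \eqref{eq:admissibleJpS}. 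Proposition \ref{Thm.GCMSequations-fixedS} then gives the existence and uniqueness of the constants $\Cbnn_0$, $\Cbpnn$, $\Mnn_0$, $\Mpnn$ and of a unique solution $(\fnn,\fbnn,\ovlann,\ovb^{(n+1)})$ satisfying the prescribed $\ell=1$ conditions \eqref{GeneralizedGCMsystem-n+1-LaLab-MainThm:00}, together with estimates in terms of the sizes of $h_1^{(n)},\,\underline{h}_1^{(n)},\,h_2^{(n)},\,\underline{h}_2^{(n)},\,h_3^{(n)},\,h_4^{(n)}$ and of $b_0=\ov r^{\S(n)}-r^{\S(n)}$.

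The bulk of the proof is thus to estimate the six source terms \eqref{eq:definitionofh1nhb1nh2nhb2nh3nh4n} in the appropriate Sobolev norms on $\S(n)$. I will exploit three recurrent mechanisms. First, all terms of the form $\err_1[\,\cdot\,]$ and $\err_2[\,\cdot\,]$ are generated on $\S(n-1)$ from the previous iterate $F^{(n)}=(\fn,\fbn,\ovlan)$; by Definition \ref{Definition:errorterms} they satisfy schematically $r\err_1 \lesssim |F^{(n)}|\bigl(r|\Ga_b|+ (r\nabla^{\S(n-1)})^{\le 1}|F^{(n)}|\bigr)+r^{-1}|F^{(n)}|$ and similarly for $r^2\err_2$, and hence using the induction bound \eqref{eq:ExistenceGCMS-Thm-US-steon:fnfbnovlan} on $F^{(n)}$ and assumption {\bf A1} they are bounded by $(\rg^{-1}+\epg)\dg$ in the relevant $\hk_s$-norms. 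Second, the differences $\ka-2/r^{\S(n)}$, $\kab+2\Up^{\S(n)}/r^{\S(n)}$, $2m^{\S(n)}/(r^{\S(n)})^3 - 2m/r^3$ and $(r-r^{\S(n)})^2/(r(r^{\S(n)})^2)$ produce the explicit $\dot\ka$, $\dot\kab$, $\dot\mu$ contributions together with terms controlled by $|r-r^{\S(n)}|\lesssim\dg$ from Corollary \ref{cor:comparison-gaS-ga-Oepgsphere} and $|m-m^{\S(n)}|\lesssim\dg$ from Corollary \ref{cor:comparison-gaS-ga-Oepgsphere:mSminusm}; the explicit $\ell=1$ corrections in $h_3^{(n)}$ involving $\dot{\underline{C}}_0^{(n)}$ and $\dot{\underline{C}}^{(n),p}$ are bounded using \eqref{eq:ExistenceGCMS-Thm-US-steon:fnfbnovlan}--\eqref{eq:ExistenceGCMS-Thm-US-steon:ovovlanandCbdot}. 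Third, the composition $\Psi^{(n-1)}\circ(\Psi^{(n)})^{-1}$ that appears in \eqref{eq:definitionofh1nhb1nh2nhb2nh3nh4n} transports quantities from $\S(n-1)$ to $\S(n)$; Proposition \ref{Prop:comparison-gaS-ga:highersobolevregularity}, applied twice, shows that this operation preserves the Sobolev norms of scalar functions up to a factor $1+O(\rg^{-1}\dg)$, so the source terms inherit the same estimates on $\S(n)$.

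Combining these three observations, each of $\|h_1^{(n)}\|,\,\|\underline{h}_1^{(n)}\|$ will be bounded by $r^{-1}(\rg^{-1}+\epg)\dg$, each of $\|h_2^{(n)}\|,\,\|\underline{h}_2^{(n)}\|$ by $r^{-1}\bigl((\rg^{-1}+\epg)\dg + r\|\dot\ka,\dot\kab\|\bigr)$, the term $\|h_3^{(n)}\|$ by $r^{-2}\bigl((\rg^{-1}+\epg)\dg + r^2\|\dot\mu\|\bigr)$, and $\|h_4^{(n)}\|$ by $r^{-1}(\rg^{-1}+\epg)\dg$, with the constant $b_0=\ov r^{\S(n)}-r^{\S(n)}$ of size $\dg$ by Corollary \ref{cor:comparison-gaS-ga-Oepgsphere}. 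Plugging these estimates into the conclusion of Proposition \ref{Thm.GCMSequations-fixedS} yields the announced bounds
\begin{equation*}
\|(\fnn,\fbnn,\widecheck{\ovlann}^{\S(n)})\|_{\hk_{s_{max}+1}} + \sum_p(r^2|\Cbpnndot|+r^3|\Mpnndot|) \lesssim (\rg^{-1}+\epg)\dg + r\|\dot\ka,\dot\kab\| + r^2\|\dot\mu\| + |\La|+|\Lab|
\end{equation*}
and the corresponding estimate for $\Cbnndot_0,\Mnndot_0,\ov{\ovlann}^{\S(n)}$, uniformly in $n$.

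\textbf{Main obstacle.} The routine but genuinely delicate point is the faithful transport of quantities between the successive spheres $\S(n-1)$ and $\S(n)$: the operators $\div^{\S(n)},\curl^{\S(n)},\Delta^{\S(n)},\nab^{\S(n)}$, the adapted $\ell=1$ basis $J^{\S(n),p}$, and the mass aspect $\mu^{\S(n)}$ are all defined intrinsically on $\S(n)$, while the $h_i^{(n)}$ are assembled from quantities living on $\S(n-1)$ or $\RR$ via $\Psi^{(n-1)}\circ(\Psi^{(n)})^{-1}$. Ensuring that this composite map is of size $O(\dg)$ in $\hk_{s_{max}+1}(\ovS)$, so that Proposition \ref{Prop:comparison-gaS-ga:highersobolevregularity} can be used to compare norms with no loss and to control the distortion of the $\ell=1$ modes (using \eqref{eq:admissibleJpS}), is the main technical burden. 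Once this bookkeeping is in place, the estimates close because the dangerous $\dg$-size contributions in the source terms are always accompanied by an extra factor $\rg^{-1}+\epg$, and the only terms that survive without such a gain — the prescribed modes $\La,\Lab$ and the differences $\dot\ka,\dot\kab,\dot\mu$ — match exactly the right-hand side of the desired estimate.
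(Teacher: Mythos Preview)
Your proposal is correct and follows essentially the same route as the paper: invoke Proposition~\ref{Thm.GCMSequations-fixedS} on the $O(\epg)$-sphere $\S(n)$, then bound each source term $h_i^{(n)}$ using the induction hypotheses, the schematic error structure of Definition~\ref{Definition:errorterms}, the comparison Corollaries~\ref{cor:comparison-gaS-ga-Oepgsphere} and~\ref{cor:comparison-gaS-ga-Oepgsphere:mSminusm} for $|r-r^{\S(n)}|$, $|m-\ov m^{\S(n)}|$, $|m-m^{\S(n)}|$, and the norm-transport estimate~\eqref{eq:Prop:comparison1} for the composition $\Psi^{(n-1)}\circ(\Psi^{(n)})^{-1}$.

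One point worth flagging: to get the claimed bound on $r^2\|h_3^{(n)}\|$ (in particular without a spurious $r^{-1}\epg$ contribution from the $\dot{\underline C}_0^{(n)}$ terms, which are only $O(r^{-2}\epg)$ by \eqref{eq:ExistenceGCMS-Thm-US-steon:ovovlanandCbdot}), the paper first rewrites $h_3^{(n)}$ algebraically so that the coefficient of $\dot{\underline C}_0^{(n)}+\sum_p\dot{\underline C}^{(n),p}J^{\S(n),p}$ becomes $\frac{1}{2r}-\frac{1}{2r^{\S(n)}}+O(r^{-2}\epg)$ rather than $\frac{1}{2r^{\S(n)}}$; this cancellation comes from combining the explicit $-\frac{1}{2r^{\S(n)}}(\cdots)$ term with the leading part $\frac{1}{2r}$ of $(\om+\frac14\ka)$ in the line above it. Your proposal implicitly assumes this when you assert $\|h_3^{(n)}\|\lesssim r^{-2}((\rg^{-1}+\epg)\dg+r^2\|\dot\mu\|)$, so be sure to make the cancellation explicit when you write out the details.
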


\begin{proof}
Since $\S(n)$ is an $O(\epg)$-sphere, see Remark \ref{rem:SofnisanOofepgsphere}, we may apply Proposition  \ref{Thm.GCMSequations-fixedS}. We deduce the existence of unique constants $\Cbnn_0$, $\Cbpnn$, $\Mnn_0$, $\Mpnn$ such that the system  \eqref{GeneralizedGCMsystem-n+1-MainThm1} -\eqref{GeneralizedGCMsystem-n+1-MainThm3} has a unique solution $(\fnn, \fbnn, \ovlann)$ with prescribed  $\ell=1$  modes 
\beaa
( \div^\S \fnn)_{\ell=1}=\La, \qquad   (\div^\S \fbnn)_{\ell=1}=\Lab,
\eeaa
relative to the  given triplet  $\JpSn$ of $\S(n)$. Moreover, we have
\beaa
&&\|(\fnn,\fbnn, \widecheck{\ovlann}^{\S(n)})\|_{\hk_{s_{max}+1}(\S(n))} +\sum_p\Big(r^2|\Cbpnndot|+r^3|\Mpnndot|\Big)\\
 &\les& r\|(\widecheck{h_1^{(n)}}^{\S(n)}, \,\widecheck{\underline{h}_1^{(n)}}
^{\S(n)}, \,\widecheck{h_2^{(n)}}^{\S(n)},\,\widecheck{\underline{h}^{(n)}_2}^{\S(n)})\|_{\hk_{s_{max}}(\S(n))} \\
\nn&&+r^2\|\widecheck{h_3^{(n)}}^{\S(n)}\|_{\hk_{s_{max}-1}(\S(n))}+r\|\widecheck{h_4^{(n)}}^{\S(n)}\|_{\hk_{s_{max}-2}(\S(n))} +|\La|+|\Lab|,
\eeaa
and
\beaa
&& r^2|\Cbnndot_0|+r^3|\Mnndot_0|+r\Big|\ov{\ovlann}^{\S(n)}\Big| \\
 &\les& r\|(\widecheck{h_1^{(n)}}^{\S(n)}, \,\widecheck{\underline{h}_1^{(n)}}^{\S(n)}, \, h_2^{(n)},\,\underline{h}^{(n)}_2)\|_{L^2(\S(n))}   +r^2\|h_3^{(n)}\|_{L^2(\S(n))}+\|\widecheck{h_4^{(n)}}^{\S(n)}\|_{L^2(\S(n))} \\
&&+|\La|+|\Lab|+\sup_{\S(n)}|r-r^{\S(n)}|.
\eeaa 

Next,  we estimate $h_1^{(n)}, \cdots, h_4^{(n)}$. We have
\bea\lab{eq:simplyficationlongexpresssioninthedeifntionofh3}
\nn&& \left(\om +\frac 1 4 \ka \right) \left(\frac{2\Up}{r} -\frac{2\Up^{\S(n)}}{r^{\S(n)} }+  \dot{\underline{C}}_0^{(n)}+\sum_p\dot{\underline{C}}^{(n),p} \JpSn  -\kabdot \right)\\
 \nn &&-\left(\omb +\frac 1 4 \kab \right) \left(\frac{2}{r^{\S(n)}} -\frac{2}{r} -\kadot \right)  -\frac{1}{2r^{\S(n)}}\left( \dot{\underline{C}}_0^{(n)}+\sum_p\dot{\underline{C}}^{(n),p} \JpSn\right)\\
\nn  &=& \left(\frac{1}{2r} -\frac{1}{2r^{\S(n)}}\right)\left( \dot{\underline{C}}_0^{(n)}+\sum_p\dot{\underline{C}}^{(n),p} \JpSn\right)  +  \frac{2m^{\S(n)}}{r(r^{\S(n)})^2 }   -\frac{2m}{r^2r^{\S(n)}}  \\ 
 \nn && +  \left(\om +\frac 1 4 \left(\ka-\frac{2}{r}\right) \right) \left(\frac{2\Up}{r} -\frac{2\Up^{\S(n)}}{r^{\S(n)} }+  \dot{\underline{C}}_0^{(n)}+\sum_p\dot{\underline{C}}^{(n),p} \JpSn  \right)\\
  && -\left(\omb +\frac 1 4 \left(\kab+\frac{2\Up}{r}\right) \right) \left(\frac{2}{r^{\S(n)}} -\frac{2}{r} \right)   -\left(\om +\frac 1 4 \ka \right)\kabdot  +\left(\omb +\frac 1 4 \kab \right) \kadot  
\eea
and hence, in view of \eqref{eq:definitionofh1nhb1nh2nhb2nh3nh4n}, we may rewrite $h_3^{(n)}$ as
\bea\lab{eq:newformulafordefintionh3n}
\nn h_3^{(n)} &=& -\left(V-\frac{2}{(r^{\S(n)})^2}\right)\ovlan\circ(\Psi^{(n-1)}\circ(\Psi^{(n)})^{-1}) -\mudot  +\frac{m^{\S(n)}}{(r^{\S(n)})^3}-\frac{m}{r^3} \\
\nn && + \left(\frac{1}{2r} -\frac{1}{2r^{\S(n)}}\right)\left( \dot{\underline{C}}_0^{(n)}+\sum_p\dot{\underline{C}}^{(n),p} \JpSn\right)  +  \frac{2m^{\S(n)}}{r(r^{\S(n)})^2 }   -\frac{2m}{r^2r^{\S(n)}}  \\ 
\nn  && +  \left(\om +\frac 1 4 \left(\ka-\frac{2}{r}\right) \right) \left(\frac{2\Up}{r} -\frac{2\Up^{\S(n)}}{r^{\S(n)} }+  \dot{\underline{C}}_0^{(n)}+\sum_p\dot{\underline{C}}^{(n),p} \JpSn  \right)\\
\nn  && -\left(\omb +\frac 1 4 \left(\kab+\frac{2\Up}{r}\right) \right) \left(\frac{2}{r^{\S(n)}} -\frac{2}{r} \right)   -\left(\om +\frac 1 4 \ka \right)\kabdot  +\left(\omb +\frac 1 4 \kab \right) \kadot \\
  &&  +\err_2[ \lap^{\S(n-1)}\ovlan]\circ(\Psi^{(n-1)}\circ(\Psi^{(n)})^{-1}).
\eea
We also use,  for a scalar $\nu$ defined on $\S(n-1)$, the following consequence of \eqref{eq:Prop:comparison1}, which applies in view of \eqref{eq:ExistenceGCMS-Thm-US-steon},
\bea\lab{eq:inequalityusedforallscalarscomposedwtihpsinpsin-1}
\|\nu\circ(\Psi^{(n-1)}\circ(\Psi^{(n)})^{-1})\|_{L^2(\S^{(n)})} &\les& \|\nu\|_{L^2(\S^{(n-1)})}.
\eea
In view of the definition \eqref{eq:definitionofh1nhb1nh2nhb2nh3nh4n} of  $h_1^{(n)}, \cdots, \underline{h}_2^{(n)}$ and $h_4^{(n)}$,  using \eqref{eq:newformulafordefintionh3n} for $h_3^{(n)}$, and using \eqref{eq:inequalityusedforallscalarscomposedwtihpsinpsin-1} for the terms composed with $\Psi^{(n-1)}\circ(\Psi^{(n)})^{-1}$, 
we have
\beaa
&& r\|(\widecheck{h_1^{(n)}}^{\S(n)}, \,\widecheck{\underline{h}_1^{(n)}}
^{\S(n)}, \,\widecheck{h_2^{(n)}}^{\S(n)},\,\widecheck{\underline{h}^{(n)}_2}^{\S(n)})\|_{\hk_{s_{max}}(\S(n))} \\
&&+r^2\|\widecheck{h_3^{(n)}}^{\S(n)}\|_{\hk_{s_{max}-1}(\S(n))}+r\|\widecheck{h_4^{(n)}}^{\S(n)}\|_{\hk_{s_{max}-2}(\S(n))}\\
&\les& \left(\frac{1}{\rg}+\epg\right)\left(\dg+\sup_{\S(n)}|r-r^{\S(n)}|\right)+\sup_{\S(n)}|m-\ov{m}^{\S(n)}|+ r\|\kadot, \kabdot\|_{\hk_{s_{max}}(\S(n))}\\
&&+r^2\|\mudot\|_{\hk_{s_{max}}(\S(n))}
\eeaa
and
\beaa
&& r\|(\widecheck{h_1^{(n)}}^{\S(n)}, \,\widecheck{\underline{h}_1^{(n)}}
^{\S(n)},
 \, h_2^{(n)},\,\underline{h}^{(n)}_2)\|_{L^2(\S(n))}   +r^2\|h_3^{(n)}\|_{L^2(\S(n))}+\|\widecheck{h_4^{(n)}}^{\S(n)}\|_{L^2(\S(n))}\\
&\les& \left(\frac{1}{\rg}+\epg\right)\left(\dg+\sup_{\S(n)}|r-r^{\S(n)}|\right)+\sup_{\S(n)}|m-m^{\S(n)}|+ r\|\kadot, \kabdot\|_{\hk_{s_{max}}(\S(n))}\\
&&+r^2\|\mudot\|_{\hk_{s_{max}}(\S(n))}
\eeaa
where we have used \eqref{eq:ExistenceGCMS-Thm-US-steon}, \eqref{eq:ExistenceGCMS-Thm-US-steon:fnfbnovlan} and 
\eqref{eq:ExistenceGCMS-Thm-US-steon:ovovlanandCbdot}. 

We infer from the above 
 \beaa
&&\|(\fnn,\fbnn, \widecheck{\ovlann}^{\S(n)})\|_{\hk_{s_{max}+1}(\S(n))} +\sum_p\Big(r^2|\Cbpnndot|+r^3|\Mpnndot|\Big)\\
 &\les&  \left(\frac{1}{\rg}+\epg\right)\left(\dg+\sup_{\S(n)}|r-r^{\S(n)}|\right)+\sup_{\S(n)}|m-\ov{m}^{\S(n)}|\\
 &&+ r\|\kadot, \kabdot\|_{\hk_{s_{max}}(\S(n))}+r^2\|\mudot\|_{\hk_{s_{max}}(\S(n))}+|\La|+|\Lab|,
\eeaa
and
\beaa
&& r^2|\Cbnndot_0|+r^3|\Mnndot_0|+r\Big|\ov{\ovlann}^{\S(n)}\Big| \\
 &\les& \left(\frac{1}{\rg}+\epg\right)\dg +\sup_{\S(n)}|r-r^{\S(n)}|+\sup_{\S(n)}|m-m^{\S(n)}|+ r\|\kadot, \kabdot\|_{\hk_{s_{max}}(\S(n))}+r^2\|\mudot\|_{\hk_{s_{max}}(\S(n))} \\
 &&+|\La|+|\Lab|.
\eeaa 
Now, in view of \eqref{eq:ExistenceGCMS-Thm-US-steon} and Remark \ref{rem:SofnisanOofepgsphere}, we may apply \eqref{cor:comparison-gaS-ga-Oepgsphere} and \eqref{cor:comparison-gaS-ga-Oepgsphere:mSminusm} which yield 
\beaa
\sup_{\S(n)}|r-r^{\S(n)}| \les \dg,\qquad \sup_{\S(n)}|m-\ov{m}^{\S(n)}| \les \epg\dg, \qquad \sup_{\S(n)}|m-m^{\S(n)}| \les \dg.
 \eeaa
We deduce
 \beaa
&&\|(\fnn,\fbnn, \widecheck{\ovlann}^{\S(n)})\|_{\hk_{s_{max}+1}(\S(n))} +\sum_p\Big(r^2|\Cbpnndot|+r^3|\Mpnndot|\Big)\\
 &\les&  \left(\frac{1}{\rg}+\epg\right)\dg+ r\|\kadot, \kabdot\|_{\hk_{s_{max}}(\S(n))}+r^2\|\mudot\|_{\hk_{s_{max}}(\S(n))}+|\La|+|\Lab|,
\eeaa
and
\beaa
 r^2|\Cbnndot_0|+r^3|\Mnndot_0|+r\Big|\ov{\ovlann}^{\S(n)}\Big|  &\les& \dg + r\|\kadot, \kabdot\|_{\hk_{s_{max}}(\S(n))}+r^2\|\mudot\|_{\hk_{s_{max}}(\S(n))} \\
 &&+|\La|+|\Lab|
\eeaa 
as desired.
\end{proof}

{\bf Step 3.}   We use  the new pair     $( f^{(n+1)}, \fb^{(n+1)})$   to solve the equations on $\ovS$,
\bea
\lab{systemUU-SS-derivedn+1}
 \bsplit
\lapzero \Unn &=\divzero\left(\big(\UU(\fnn, \fbnn , \Ga)\big)^{\#_n}\right),\\
\lapzero \Snn &=\divzero\left(\big(\SS(\fnn, \fbnn , \Ga)\big)^{\#_n}\right),\\
\Unn(South)&=\Snn(South)=0,
 \end{split}
 \eea
where the pull back  $\#_{n}$ is defined with respect to the  map $\Psi^{(n)} :\ovS\longrightarrow \S(n)$.  The new  pair  $( U^{(n+1)} ,  S^{(n+1)}  )$ defines  the  new sphere $\S(n+1)$ and we can proceed with the next step  of the iteration. 
The  boundedness of  $(\Unn, \Snn)$ is  assured by the following proposition.

 \begin{proposition}
 \lab{Prop:estimatesUn+1Sn+1}
  The equation \eqref{systemUU-SS-derivedn+1} admits a unique solution $(U^{(1+n)}, S^{(1+n)})$, verifying the estimates
    \bea
 \label{eq:ExistenceGCMS-Thm-US-induction-n+1}
\nn  r^{-1}\Big \|\big(U^{(n+1)}, S^{(n+1)}\big)\Big\|_{\hk_{s_{max}+1}(\ovS) } &\les&  \left(\frac{1}{\rg}+\epg\right)\dg+ r\|\kadot, \kabdot\|_{\hk_{s_{max}}(\S(n))}\\
  &&+r^2\|\mudot\|_{\hk_{s_{max}}(\S(n))}+|\La|+|\Lab|
  \eea
uniformly for all  $n\in\NNN$. 
   \end{proposition}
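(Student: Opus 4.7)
The plan is to solve \eqref{systemUU-SS-derivedn+1} on the fixed background sphere $\ovS$ by standard elliptic theory for the Laplace--Beltrami operator, and then to estimate the solution by combining the bound on $(\fnn,\fbnn)$ from Proposition \ref{prop:Estimates:Fn+1boundednessiterativescheme} with the norm-comparison result Proposition \ref{Prop:comparison-gaS-ga:highersobolevregularity}, using the induction hypothesis \eqref{eq:ExistenceGCMS-Thm-US-steon} on $(U^{(n)},S^{(n)})$.

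For existence and uniqueness, observe that the right-hand side of each equation in \eqref{systemUU-SS-derivedn+1} is the $\ovS$-divergence of a smooth $1$-form on $\ovS$, hence automatically has vanishing mean with respect to $\ovg$. Since $\ovS$ is an $O(\epg)$-sphere by assumption \textbf{A1}, standard elliptic theory for $\lapzero$ on a compact surface provides a unique solution modulo additive constants; those constants are fixed by imposing $U^{(n+1)}(South)=S^{(n+1)}(South)=0$. By elliptic regularity on $\ovS$ we obtain
\[
\big\|(U^{(n+1)},S^{(n+1)})\big\|_{\hk_{s_{max}+1}(\ovS)}
\lesssim\;\ovr\,\Big(\big\|(\UU(\fnn,\fbnn,\Ga))^{\#_n}\big\|_{\hk_{s_{max}}(\ovS,\ovg)}
+\big\|(\SS(\fnn,\fbnn,\Ga))^{\#_n}\big\|_{\hk_{s_{max}}(\ovS,\ovg)}\Big).
\]

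Next, the induction hypothesis \eqref{eq:ExistenceGCMS-Thm-US-steon} on $(U^{(n)},S^{(n)})$ enables Proposition \ref{Prop:comparison-gaS-ga:highersobolevregularity} applied to $\Psi^{(n)}\colon\ovS\to\S(n)$, which identifies pull-back norms on $\ovS$ with intrinsic norms on $\S(n)$ up to a $1+O(r^{-1}\dg)$ factor, so that
\[
\big\|(\UU,\SS)^{\#_n}\big\|_{\hk_{s_{max}}(\ovS,\ovg)}
\lesssim \big\|\UU(\fnn,\fbnn,\Ga)\big\|_{\hk_{s_{max}}(\S(n))}
+\big\|\SS(\fnn,\fbnn,\Ga)\big\|_{\hk_{s_{max}}(\S(n))}.
\]
Using the explicit form \eqref{eq:formofmathcalUandmathcalSatthemainorder} together with the fact that $\hk_{s_{max}}(\S(n))$ is an algebra (since $s_{max}\ge 3$), assumption \textbf{A1} on $\Ga$, and the smallness of $(\fnn,\fbnn)$ from Step 2, one controls
\[
\big\|\UU(\fnn,\fbnn,\Ga)\big\|_{\hk_{s_{max}}(\S(n))}+\big\|\SS(\fnn,\fbnn,\Ga)\big\|_{\hk_{s_{max}}(\S(n))}
\lesssim \|(\fnn,\fbnn)\|_{\hk_{s_{max}}(\S(n))}.
\]
Invoking Proposition \ref{prop:Estimates:Fn+1boundednessiterativescheme} to bound the right-hand side then yields the stated estimate \eqref{eq:ExistenceGCMS-Thm-US-induction-n+1} with a constant independent of $n$.

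The only delicate point is that the norm-comparison step must be uniform in $n$; this is exactly why the iteration is designed so that the bound $\|(U^{(n)},S^{(n)})\|_{\hk_{s_{max}+1}(\ovS)}\lesssim r\dg$ is propagated as an induction assumption. The estimate \eqref{eq:ExistenceGCMS-Thm-US-induction-n+1} just proved for $(U^{(n+1)},S^{(n+1)})$, together with the smallness assumptions \eqref{eq:rangeofrgandepsilon}, \eqref{eq:GCM-improved estimate1-again}, \eqref{eq:GCM-improved estimate2-again} and \eqref{eq:assumptionsonLambdaabdLambdabforGCMexistence} on $\dg$, $\epg$, $\rg$, $\kadot$, $\kabdot$, $\mudot$, $\La$ and $\Lab$, will in turn close the induction and allow one to proceed to step $n+2$.
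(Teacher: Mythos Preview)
Your proof is correct and follows exactly the approach the paper outlines: the paper's own proof simply states that the argument is based on the bounds for $f^{(n+1)},\fb^{(n+1)}$ from Proposition~\ref{prop:Estimates:Fn+1boundednessiterativescheme}, standard elliptic estimates for $\lapzero$, and the norm-comparison estimates of Proposition~\ref{Prop:comparison-gaS-ga:highersobolevregularity}, and then leaves the details to the reader. You have supplied precisely those details.
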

   
   \begin{proof}
   The proof,   based on the    previously established  bounds          for  $f^{n+1}, \fb^{n+1} $  in Proposition \ref{prop:Estimates:Fn+1boundednessiterativescheme}, standard elliptic estimates for $\lapzero$ and the  comparison of norms estimates of 
   Proposition \ref{Prop:comparison-gaS-ga:highersobolevregularity}, is straightforward and thus  left to the reader. 
   \end{proof}

{\bf Step 4.} In view of Proposition \ref{prop:Estimates:Fn+1boundednessiterativescheme} and Proposition \ref{Prop:estimatesUn+1Sn+1}, and in view of the assumptions \eqref{eq:GCM-improved estimate2-again} on $\kadot$, $\kabdot$, $\mudot$, and \eqref{eq:assumptionsonLambdaabdLambdabforGCMexistence} on $\La$, $\Lab$, we obtain  the boundedness of  all quintets  $\QQ^{(n)}$. More precisely  we have, uniformly for all $n\in \NNN$,
 \bea
 \| \QQ^{(n)}\|_{s_{max}+1} &\les& \dg,
 \eea
 where
 \bea
\label{quintet-norm}
\bsplit
\| Q^{(n)}\|_k: &= r^{-1}   \Big\| \big(U^{(n)}, S^{(n)}\big)\Big \|_{\hk_{k}(\ovS)}  +\Big \|\big( f^{(n)},  \fb^{(n)}, \ovla^{(n)}\big) \Big\|_{\hk_{k}(\S)}.
\end{split}
\eea
To account for the constants $\Cbn_0, \Mn_0, \Cbpn, \Mpn$ we introduce the ninetets
\beaa
\NN^{(n)}:=\Big(U^{(n)}, S^{(n)}, \ovla^{(n)},  f^{(n)},  \fb^{(n)}; \, \Cbn_0, \Mn_0, \Cbpn, \Mpn\Big)
\eeaa
with norms,
\beaa
\big\| \NN^{(n)} \big\|_k= \| \QQ^{(n)}\|_k+ r^2 \left(\big| \Cbndot_0\big|+\sum_p\big| \Cbpndot\big| +r \big| \Mndot_0\big|+
r \sum_p\big| \Mpndot\big|\right)
\eeaa
where, recall,
\beaa
\Cbndot_0= \Cbn_0-\Cb_0, \,\,\,\, \Cbpndot= \Cbpn-\Cbp, \,\,\,\, \Mndot_0= \Mn_0-M_0, \,\,\,\, \Mpndot=\Mpn-\Mp.
\eeaa
According to  Proposition \ref{prop:Estimates:Fn+1boundednessiterativescheme},    we also have, uniformly in $n$,
\bea
\lab{eq:nintetbound}
\big\| \NN^{(n)} \big\|_{s_{max}+1} &\les&\dg.
\eea

{\bf Step 5.}  To insure convergence we also need  to establish  a contraction estimate.  We cannot  compare  directly
 the ninetets  $\NN^{(n)}$ so we compare instead the modified ninetets, well defined on $\ovS$,
 \bea
 \NN^{n,\#}:=\Big(U^{(n)}, S^{(n)}, \ovla^{n,\#},  f^{n,\#},  \fb^{n,\#}; \, \Cbn_0, \Mn_0, \Cbpn, \Mpn\Big)
 \eea
 where $\ovla^{n,\#},  f^{n,\#},  \fb^{n,\#}$ are the pull-backs by $\#_{n-1}$   of  the triplet $\ovla^{(n)}, f^{(n)}, \fb^{(n)} $
   defined on the sphere  $\S(n-1)$.
We also  introduce  the modified norms
\bea
\bsplit
\big\| \NN^{n,\#} \big\|_{k, \ovS} :&= r^{-1}   \Big\| \big(U^{(n)}, S^{(n)}\big)\Big \|_{\hk_{k}(\ovS)} +
\Big\|\big( f^{n,\#},  \fb^{n,\#}, \ovla^{n,\#}\big)\Big\| _{\hk_{k}(\ovS)}\\
&+ r^{2} \left(\big| \Cbndot_0\big|+\sum_p\big| \Cbpndot\big| +r \big| \Mndot_0\big|+
r \sum_p\big| \Mpndot\big|\right).
\end{split}
\eea
In view of the  Sobolev norm  comparison  of Proposition \ref{Prop:comparison-gaS-ga:highersobolevregularity},  we  deduce from
\eqref{eq:nintetbound}
 \bea
 \lab{eq:nintetboundmodified}
\big\| \NN^{n,\#} \big\|_{s_{max}+1, \ovS} &\les&\dg.
\eea
Contraction in this modified norms is established in the following.
\begin{proposition}
\lab{Prop:contractionforNN}
The following estimate holds true.
\bea
\nn\|\NN^{n+1,\#}-\NN^{n,\#}\|_{3,\ovS} &\les& (r^{-1}+\epg)\Big[\|\NN^{n,\#}-\NN^{n-1,\#}\|_{3,\ovS}+\|\NN^{n-1,\#}-\NN^{n-2,\#}\|_{3,\ovS}\\
&&+\|\NN^{n-2,\#}-\NN^{n-3,\#}\|_{3,\ovS}\Big].
\eea
\end{proposition}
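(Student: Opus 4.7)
\emph{Plan.} The proof proceeds by pulling all equations back to the fixed background sphere $\ovS$ and then applying the a priori estimate of Corollary~\ref{Thm.GCMSequations-fixedS:contraction:deformationsphereversion}. First, I would pull back the linear system \eqref{GeneralizedGCMsystem-n+1-MainThm1}--\eqref{GeneralizedGCMsystem-n+1-MainThm3} from $\S(n)$ to $\ovS$ via $\Psi^{(n)}$, so that $(f^{n+1,\#}, \fb^{n+1,\#}, \ovla^{n+1,\#})$ satisfies a system of the form \eqref{GeneralizedGCMsystem:deformationsphereversion} posed on $\ovS$, with the pulled-back operators $\div^{\S(n),\#_n}$, $\curl^{\S(n),\#_n}$, $\Delta^{\S(n),\#_n}$ and with source terms $h_i^{(n),\#_n}$. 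The same construction at level $n-1$ produces an analogous system with operators pulled back via $\Psi^{(n-1)}$ and sources $h_i^{(n-1),\#_{n-1}}$.

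\emph{Difference system and a priori bound.} Subtracting the two systems, one obtains a system on $\ovS$ for the differences $\delta F^{n+1} := F^{n+1,\#}-F^{n,\#}$, $\delta \ovb^{(n+1)}$, $\delta\Cbnndot_0$, $\delta\Cbpnndot$, $\delta\Mnndot_0$, $\delta\Mpnndot$, still of the form \eqref{GeneralizedGCMsystem:deformationsphereversion} with the level-$n$ pulled-back operators. The new source terms have two types of contributions: genuine source differences $h_i^{(n),\#_n}-h_i^{(n-1),\#_{n-1}}$, and operator differences of the form $\bigl(\div^{\S(n),\#_n}-\div^{\S(n-1),\#_{n-1}}\bigr)f^{n,\#}$ and analogous $\curl$ and $\Delta$ terms, acting on the previous iterate. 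For the $\ell=1$ prescription \eqref{eq:badmodesforffb:deformationsphereversion}, the two iterates enforce the same $(\La,\Lab)$ but relative to $\JpSn$ and $J^{(p,\S(n-1))}$ respectively, so the difference of the $\ell=1$ modes is of order $\|\NN^{n,\#}-\NN^{n-1,\#}\|_{3,\ovS}$ using Definition~\ref{def:ell=1sphharmonicsonS} and assumption \textbf{A4}. Applying Corollary~\ref{Thm.GCMSequations-fixedS:contraction:deformationsphereversion} to this difference system, which is legitimate since $\S(n)$ is an $O(\epg)$-sphere by \eqref{eq:ExistenceGCMS-Thm-US-steon} and Remark~\ref{rem:SofnisanOofepgsphere}, I reduce the proof to estimating the two types of source differences in the $\hk_2$, $\hk_1$ and $L^2$ norms on $\ovS$.

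\emph{Controlling the source and operator differences.} The operator differences on $\ovS$ are controlled in $\hk_2(\ovS)$ by the differences of the pulled-back metrics $g^{\S(n),\#_n}-g^{\S(n-1),\#_{n-1}}$ and their Christoffel symbols, which by (an iterated version of) Lemma~\ref{lemma:comparison-gaS-ga} are bounded by $r\|\NN^{n,\#}-\NN^{n-1,\#}\|_{3,\ovS}$ and contribute with a prefactor of $r^{-1}$ from the rescaling. For the source differences, I would unpack each $h_i^{(n),\#_n}$ using \eqref{eq:definitionofh1nhb1nh2nhb2nh3nh4n}: each one becomes, after pull-back, a universal polynomial expression in $F^{n,\#}$, in the ambient quantities $\ka, \kab, \mu, m, r, \Ga$ composed with $\Psi^{(n)}$, and in the pulled-back errors $\err_1[\cdot]\circ\Psi^{(n-1)}$ and $\err_2[\cdot]\circ\Psi^{(n-1)}$ (which themselves are polynomial expressions in $F^{n,\#}$, $\Ga\circ\Psi^{(n-1)}$ and covariant derivatives with respect to $g^{\S(n-1),\#_{n-1}}$). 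Taking differences with the level-$n-1$ version produces three types of contributions: direct $F$-differences giving $\|\NN^{n,\#}-\NN^{n-1,\#}\|_{3,\ovS}$; differences of the maps $\Psi^{(n)}$ vs $\Psi^{(n-1)}$ appearing in the outer composition, again giving $\|\NN^{n,\#}-\NN^{n-1,\#}\|_{3,\ovS}$; and differences of the maps $\Psi^{(n-1)}$ vs $\Psi^{(n-2)}$ and of the metrics $g^{\S(n-1),\#_{n-1}}$ vs $g^{\S(n-2),\#_{n-2}}$ (in the Christoffel symbols used in $\err_2$) appearing in the inner composition, giving $\|\NN^{n-1,\#}-\NN^{n-2,\#}\|_{3,\ovS}$. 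In each case the prefactor is $r^{-1}+\epg$, coming either from the overall size of $F$, $\Ga_b$ or from the explicit weights in the definition of $\err_1,\err_2$ in Definition~\ref{Definition:errorterms}.

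\emph{Closing the scheme and the third term.} Once $\|\delta F^{n+1}\|_{\hk_3(\ovS)}$ is controlled, I estimate $\delta(U^{n+1},S^{n+1}):=(U^{(n+1)}-U^{(n)}, S^{(n+1)}-S^{(n)})$ by subtracting the two instances of \eqref{systemUU-SS-derivedn+1}, which share the same operator $\lapzero$ on $\ovS$ and vanish at the South pole. The right-hand side difference $\divzero\bigl(\UU(\fnn,\fbnn,\Ga)^{\#_n}-\UU(\fn,\fbn,\Ga)^{\#_{n-1}}\bigr)$ is treated as in Step 3, using the Lipschitz form of $\UU$ and $\SS$ given by \eqref{eq:formofmathcalUandmathcalSatthemainorder}, producing the $\|\delta F^{n+1}\|_{\hk_2(\ovS)}$ term as well as an extra contribution from the difference $\#_n-\#_{n-1}$. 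Summing these bounds gives the desired inequality; the third term $\|\NN^{n-2,\#}-\NN^{n-3,\#}\|_{3,\ovS}$ arises because $\delta F^{n+1}$ depends on $\delta F^{n}$ through the sources $h_i^{(n)}-h_i^{(n-1)}$, whose innermost error terms $\err_1[\cdot],\err_2[\cdot]$ at level $n-1$ are themselves built from $F^{(n-1)}$ on $\S(n-2)$ with an additional composition, shifting the recursion by one further index when everything is pushed back to $\ovS$. The principal obstacle is bookkeeping: carefully expanding $\err_1,\err_2$ under the pull-backs and grouping each term so that the differences line up with one of the three successive $\NN$-differences, while keeping the prefactor $r^{-1}+\epg$ throughout to ensure genuine contraction.
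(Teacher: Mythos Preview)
Your high-level plan---pull back to $\ovS$, take differences, apply Corollary~\ref{Thm.GCMSequations-fixedS:contraction:deformationsphereversion}---matches the paper exactly, and your discussion of operator differences and $\ell=1$ mode differences is right. The gap is in your claim that ``in each case the prefactor is $r^{-1}+\epg$''. This is false for the \emph{averaged} part of the estimate. The source terms $\underline{h}_2^{(n)}$ and $h_3^{(n)}$ contain $m^{\S(n)}$ and $r^{\S(n)}$ explicitly (see \eqref{eq:definitionofh1nhb1nh2nhb2nh3nh4n}), and the boundary datum $b_0$ equals $\ov{r}^{\S(n)}-r^{\S(n)}$. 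When you take differences, the contributions $|m^{(n)}-m^{(n-1)}|$, $|r^{(n)}-r^{(n-1)}|$, and $|(\de b_0)^{(n)}|$ are each bounded by $r^{-1}\|(\de U^{(n)},\de S^{(n)})\|_{\hk_3(\ovS)}$ \emph{without} an extra small factor (Proposition~B.1 in the appendix; compare Corollary~\ref{cor:comparison-gaS-ga-Oepgsphere:mSminusm}). These terms are constants on $\S(n)$, so they disappear from the trace-free estimate but survive in the estimate for $\ov{\ovlanndot}$, $\de\Cbnn_0$, $\de\Mnn_0$. Without this observation you cannot explain why the contraction inequality needs three successive differences rather than one.

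The paper therefore splits $\NN^{n,\#}=\NN^{n,\#}_1+\NN^{n,\#}_2+\NN^{n,\#}_3$ with $\NN_1=(\widecheck{\ovla},f,\fb,\Cbp,\Mp)$, $\NN_2=(\ov{\ovla},\Cb_0,M_0)$, $\NN_3=(U,S)$, and establishes the three relations: $\|\de\NN_1^{n+1}\|\les(r^{-1}+\epg)\|\de\NN^n\|$; $\|\de\NN_2^{n+1}\|\les\|\de\NN_3^n\|+(r^{-1}+\epg)\|\de\NN^n\|$ (the first term has no smallness, coming from the Hawking mass and $b_0$); and $\|\de\NN_3^{n+1}\|\les\|\de\NN_1\|$ (no smallness, since $\UU,\SS$ are $O(1)$ in $f,\fb$). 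Cascading $\NN_2\to\NN_3\to\NN_1\to(\text{small})\times\NN$ is what produces the three-index shift. Your proposed mechanism---that the inner error terms $\err_1,\err_2$ at level $n-1$ live on $\S(n-2)$ and thereby shift the recursion one further step---is not correct: those contributions do exist (they give at most a two-index shift in the sources) but they already carry a factor of $\dg$ or $\epg$ from the quadratic structure of $\err_1,\err_2$, so they are not the source of the third term.
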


\begin{proof}
See Appendix \ref{sec:proofofProp:contractionforNN}.
\end{proof}


\subsection{Limit of the iterative scheme}
\lab{sec:limitoftheiterativescheme} 



\subsubsection{Limiting ninetet} 


We infer the existence of a  ninetet $\NN ^{\infty, \#}$ on $\ovS$ such that 
\bea
\lab{eq:boundednessPinfty}
 \| \NN^{\infty, \#}\|_{s_{max}+1}&\les& \dg
\eea
and, using interpolation between $3$ and $s_{max}+1$, 
\bea\lab{eq:PnconvergestoPinftyinhknormasngoestoinfty}
\lim_{n\to +\infty}\|\NN^{n, \#}-\NN^{\infty,\#}\|_{s_{max}}=0,
\eea
where
\beaa
\NN^{(\infty, \#)}=\Big(U^{(\infty)}, S^{(\infty)}, \ovla^{\infty, \#}, f^{\infty, \#}, \fb^{\infty, \#},   \Cb^{(\infty)}_0, M^{(\infty)}_0, \Cb^{(\infty),\,p}, M^{(\infty),\, p} \Big).
\eeaa
 The functions  $(U^{(\infty)}, S^{(\infty)})$ defines a sphere $\S^{(\infty)}$  parametrized by  the map 
\beaa
\Psi^{(\infty)}(\ug, \sg, y^1, y^2) &=& \Big(\ug+U^{(\infty)}(y^1, y^2 ), \sg+S^{(\infty)}(y^1, y^2), y^1, y^2\Big).
\eeaa
We then define
\beaa
\ovla^{(\infty)}=\ovla^{\infty,\#}\circ(\Psi^{(\infty)})^{-1},\quad  f^{(\infty)}=f^{\infty,\#}\circ(\Psi^{(\infty)})^{-1}, \quad \fb^{(\infty)}=\fb^{\infty,\#}\circ(\Psi^{(\infty)})^{-1}
\eeaa 
so that $\ovla^{(\infty)}, f^{(\infty)}, \fb^{(\infty)}$ are defined on $\S^{(\infty)}$  and 
\beaa
\ovla^{\infty,\#}= (\ovla^{(\infty)})^{\#_{\infty}}, \quad f^{\infty,\#}= (f^{(\infty)})^{\#_{\infty}},\quad \fb^{{\infty,\#}}= (\fb^{(\infty)})^{\#_{\infty}}.
\eeaa
We also define
\beaa
\NN^{(\infty)}=\Big(U^{(\infty)}, S^{(\infty)}, \ovla^{(\infty)}, f^{(\infty)}, \fb^{(\infty)},   \Cb^{(\infty)}_0, M^{(\infty)}_0, \Cb^{(\infty),\,p}, M^{(\infty),\, p} \Big).
\eeaa
From these definitions, in view of  \eqref{eq:boundednessPinfty}     and the norm comparison estimates of Proposition   \ref{Prop:comparison-gaS-ga:highersobolevregularity},  we  deduce
\begin{enumerate}
\item  Uniform bounds
\beaa
\|\NN^{(\infty)}\|_{s_{max}+1} &\les &\dg,
\eeaa
i.e.
\bea\lab{eq:controlofUSaffbforthelimitoftheiterationscheme}
 r^{-1}\|(U^{(\infty)}, S^{(\infty)})\|_{\hk_{s_{max}+1}(\ovS)}+\|(f^{(\infty)}, \fb^{(\infty)}, \ovla^{(\infty)})\|_{\hk_{s_{max}+1}(\S^{(\infty)})}\\
\nn + r^2\big| \Cbdot^{(\infty)}_0\big|+r^2\sum_p\big| \Cbdot^{\infty,p}\big| +r^3 \big| \Mdot^{(\infty)}_0\big|+
r^3 \sum_p\big| \Mdot^{\infty, p}\big| &\les& \dg.
\eea
\item    The following sequences converge
\begin{itemize}
\item  The sequences of pairs  $ (U^{(n)}, S^{(n)})$ converges to  $ (U^{(\infty)}, S^{(\infty)})$ in the norm ${\hk_{s_{max}}(\ovS)}$.

\item The sequence  $(\fn, \fbn, \ovlan)$ converges to  $(f^{(\infty)}, \fb^{(\infty)}, \ovla^{(\infty)})$ in  ${\hk_{s_{max}}(\S^{(\infty)})}$.

\item The sequence of GCM  constants  $(\Cbn_0, \Mn_0, \Cbpn, \Mpn)$ converges to the constants 
 $(\Cb^{(\infty)}_0, M^{(\infty)}_0$,  $\Cb^{(\infty), p}, M^{(\infty), p})$.
 
\end{itemize}
\end{enumerate}


\subsubsection{Limiting equations} 


Taking $n\to \infty$ in the equations \eqref{systemUU-SS-derivedn+1},  \eqref{GeneralizedGCMsystem-n+1-MainThm1}-\eqref{GeneralizedGCMsystem-n+1-MainThm3}, \eqref{eq:definitionofh1nhb1nh2nhb2nh3nh4n} and \eqref{GeneralizedGCMsystem-n+1-LaLab-MainThm:00}, 
 we derive
 \begin{proposition}
 The   triplet   $\NN^{(\infty)}$ verifies the following  equations
  \bea
\lab{systemUU-SS-derivedinfty}
\bsplit
\lapzero \Uinfty &=\divzero\left(\big(\UU(\finfty, \fbinfty , \Ga)\big)^{\#_{\infty}}\right),\\
 \lapzero \Sinfty &=\divzero\left(\big(\SS(\finfty, \fbinfty , \Ga)\big)^{\#_{\infty}}\right),\\
\Uinfty(South)&=\Sinfty(South)=0,
\end{split}
\eea
\bea
\lab{GeneralizedGCMsystem-infty}
\bsplit
\curlSinfty \finfty &= -\err_1[\curl \finfty] +\overline{\err_1[\curl \finfty]}^{\S(\infty)},\\
\curlSinfty \fbinfty&= -\err_1[\curl \fbinfty]  +\overline{\err_1[\curl \fbinfty]}^{\S(\infty)},
\end{split}
\eea
\bea
\lab{GeneralizedGCMsystem-infty:111}
\bsplit
\divSinfty \finfty + \ka \ovlainfty  -\frac{2}{(r^{\S^{(\infty)}})^2}\left(\ovb^{(\infty)} -\big(r-r^{\S^{(\infty)}}\big)\right) &=\ka^{(\infty)} -\ka -\err_1[\divSinfty \finfty],
\\
\divSinfty\fbinfty - \kab  \ovlainfty   + \frac{2}{(r^{\S^{(\infty)}})^2}\left(\ovb^{(\infty)} -\big(r-r^{\S^{(\infty)}}\big)\right)
&= \kab^{(\infty)} -\kab -\err_1[\divSinfty \fbinfty ],
\end{split}
\eea
\bea
\nn\lapSinfty\ovlainfty + V \ovlainfty &=& \muinfty-\mu -\left(\omb +\frac 1 4 \kab \right) \big(\kainfty-\ka \big)\\
&&+\left(\om +\frac 1 4 \ka \right) \big(\kabinfty-\kab \big)+\err_2[ \lapSinfty\ovlainfty],
\eea
with
\bea
\lab{eq:binftyequationafterconvergenceiterativescheme}
\bsplit
\Delta^{\S(\infty)}\ovb^{(\infty)} &= \frac{1}{2}\div^{\S(\infty)}\left(\fb^{(\infty)} - \Up^{(\infty)} f^{(\infty)} +\err_1[\Delta^{\S(\infty)} \ovb^{(\infty)} ] \right), \\
 \ov{\ovb^{(\infty)}}^{\S(\infty)} &=\ov{r}^{\S(\infty)} -r^{\S(\infty)},
\end{split}
\eea
\bea
\lab{GeneralizedGCMsystem-infty-GCMSinfty}
\bsplit
\kainfty&=\frac{2}{r^{\S(\infty)}},\\
\kabinfty &=-\frac{2}{r^{\S(\infty)} }\Up^{(\infty)}+  \Cbinfty_0+\sum_p \Cbpinfty \Jpinfty,\\
\muinfty&= \frac{2m^{\S(\infty)} }{(r^{\S(\infty)})^3} +   \Minfty_0+\sum _p\Mpinfty \Jpinfty,
\end{split}
\eea
and $\ell=1$  conditions,
\bea
\lab{GeneralizedGCMsystem-infty-LaLab}
( \divSinfty \finfty)_{\ell=1}=\La, \qquad   (\divSinfty \fbinfty)_{\ell=1}=\Lab,
\eea
with respect to the  $\ell=1$  modes\footnote{According to Definition \ref{def:ell=1sphharmonicsonS}.} $J^{\S^{(\infty)}, p} $ on $\S^{(\infty)}$.
\end{proposition}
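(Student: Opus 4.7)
The plan is to pass to the limit $n\to\infty$ in the equations \eqref{systemUU-SS-derivedn+1}, \eqref{GeneralizedGCMsystem-n+1-MainThm1}--\eqref{GeneralizedGCMsystem-n+1-MainThm3}, together with the definitions \eqref{eq:definitionofh1nhb1nh2nhb2nh3nh4n} of the forcing terms and the $\ell=1$ conditions \eqref{GeneralizedGCMsystem-n+1-LaLab-MainThm:00}. All the convergence will be deduced from \eqref{eq:PnconvergestoPinftyinhknormasngoestoinfty}, i.e.\ convergence of $\NN^{(n,\#)}$ to $\NN^{(\infty,\#)}$ in the $\hk_{s_{\max}}(\ovS)$ pulled-back norms, combined with the uniform bound \eqref{eq:controlofUSaffbforthelimitoftheiterationscheme} that provides compactness at the top regularity level via interpolation.

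First I would verify convergence of the ambient geometric objects attached to $\S(n)$. From $(U^{(n)},S^{(n)})\to(U^{(\infty)},S^{(\infty)})$ in $\hk_{s_{\max}}(\ovS)$ and Lemma \ref{lemma:comparison-gaS-ga}, the pull-back metrics $g^{\S(n),\#}$ converge to $g^{\S(\infty),\#}$ in $\hk_{s_{\max}-1}(\ovS)$. This gives convergence of $r^{\S(n)}\to r^{\S(\infty)}$, $m^{\S(n)}\to m^{\S(\infty)}$, and of the Christoffel symbols, hence of the pulled-back Hodge operators $\div^{\S(n),\#}$, $\curl^{\S(n),\#}$, $\Delta^{\S(n),\#}$ applied to any sufficiently regular function. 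The adapted $\ell=1$ modes $\JpSn$ are defined by $(\JpSn)^{\#_n}=\Jp$, so on $\ovS$ the pulled-back modes are \emph{constant in $n$}, which sidesteps the potentially delicate issue of passing to the limit in $\JpSn$ themselves.

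Next I would pull back all five equations \eqref{GeneralizedGCMsystem-n+1-MainThm1}--\eqref{GeneralizedGCMsystem-n+1-MainThm3} by $\Psi^{(n)}$ to the fixed sphere $\ovS$. The left-hand sides become differential operators depending smoothly on $g^{\S(n),\#}$ applied to the sequences $f^{n+1,\#}$, $\fb^{n+1,\#}$, $\ovla^{n+1,\#}$, $\ovb^{n+1,\#}$, and these converge by \eqref{eq:PnconvergestoPinftyinhknormasngoestoinfty}. For the right-hand sides, the terms appearing in \eqref{eq:definitionofh1nhb1nh2nhb2nh3nh4n} all involve either the background Ricci/curvature quantities $\ka$, $\kab$, $\mu$, $\omb$, $\om$, $r$, $m$ evaluated on $\S(n)$ (which under pull-back by $\Psi^{(n)}$ become compositions of smooth background scalars with $\Psi^{(n)}$) or previous iterate error terms of the form $\err_1[\cdot]\circ(\Psi^{(n-1)}\circ(\Psi^{(n)})^{-1})$. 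Pulling back by $\Psi^{(n)}$ turns the latter simply into $(\err_1[\cdot])^{\#_{n-1}}$, evaluated on $\ovS$, which converges because $\err_1,\err_2$ are polynomial in $(F^{(n)},\Ga,\nab^{\S(n-1),\#_{n-1}}F^{(n)})$ and every factor converges in $\hk_{s_{\max}-1}(\ovS)$. Convergence of $\La$, $\Lab$ in \eqref{GeneralizedGCMsystem-n+1-LaLab-MainThm:00} is immediate since they are fixed constants, and the $\ell=1$ projections are continuous on $\hk_{s_{\max}}(\S^{(\infty)})$. Passing to the limit in \eqref{systemUU-SS-derivedn+1} is also direct since both sides are expressed on $\ovS$ and involve only $(f^{n+1,\#},\fb^{n+1,\#})$ and the fixed coordinate divergence $\divzero$.

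Once all equations have been passed to the limit, the system \eqref{GeneralizedGCMsystem-infty}--\eqref{eq:binftyequationafterconvergenceiterativescheme} takes exactly the form of \eqref{GeneralizedGCMsystem} with the same structural $h_i$'s. To recover the GCM conditions \eqref{GeneralizedGCMsystem-infty-GCMSinfty} I would then back-substitute: by construction of the $h_i^{(n)}$ in \eqref{eq:definitionofh1nhb1nh2nhb2nh3nh4n}, the divergence and Laplace equations in the iterative system have been designed so that the three combinations $\ka^{\S(n)}-2/r^{\S(n)}$, $\kab^{\S(n)}+2\Up^{\S(n)}/r^{\S(n)}-\Cbn_0-\sum_p\Cbpn\JpSn$, and $\mu^{\S(n)}-2m^{\S(n)}/(r^{\S(n)})^3-\Mn_0-\sum_p\Mpn\JpSn$ vanish identically once $(f^{(n+1)},\fb^{(n+1)},\ovla^{(n+1)})$ is determined from the frame coefficients adapted to $\S(n)$; in the limit this identifies the three GCM scalars with the stated Schwarzschildian values plus the $\ell=0,1$ constants. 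The main obstacle, and the reason the present proposition does not already conclude Theorem \ref{Theorem:ExistenceGCMS1}, is that the frame generated on $\S^{(\infty)}$ by $(f^{(\infty)},\fb^{(\infty)},\la^{(\infty)})$ is not yet known to coincide with the geometric frame adapted to $\S^{(\infty)}$: the limit of the Laplacian-based equations \eqref{systemUU-SS-derivedinfty} only gives the Laplacian of the compatibility relations \eqref{Compatibility-Deformation2}, not the relations themselves, so the GCM identities for $\ka^{(\infty)},\kab^{(\infty)},\mu^{(\infty)}$ refer a priori to the transported frame and not to the intrinsically $\S^{(\infty)}$-adapted frame. This reconciliation is precisely what section \ref{sec:wheretheproofofthemaintheoremisfinallyconcluded} will resolve; for the present proposition it suffices to record the limiting equations, which the above passage-to-the-limit argument supplies.
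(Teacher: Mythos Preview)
Your proposal is correct and follows the same route as the paper, which merely says ``Taking $n\to\infty$'' without further detail; your pull-back-to-$\ovS$ argument is the natural way to make that limit rigorous. One clarification on your last paragraph: the identities \eqref{GeneralizedGCMsystem-infty-GCMSinfty} are not conditions that something ``vanishes identically'' along the iteration --- they are simply \emph{definitions} of the symbols $\kainfty,\kabinfty,\muinfty$. Once you have passed to the limit in \eqref{GeneralizedGCMsystem-n+1-MainThm2}--\eqref{GeneralizedGCMsystem-n+1-MainThm3} and substituted those definitions together with the limiting form of $h_2^{(n)},\underline{h}_2^{(n)},h_3^{(n)}$, the algebra of Lemma~\ref{Lemma-adaptedGCM-equations:bis} (reversing the expansion $\tfrac{2}{r^\S}-\tfrac{2}{r}=\tfrac{2(r-r^\S)}{(r^\S)^2}-\tfrac{2(r-r^\S)^2}{r(r^\S)^2}$) gives \eqref{GeneralizedGCMsystem-infty:111} directly. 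Your final remark that the proposition stops short of identifying the limiting frame with the $\S^{(\infty)}$-adapted one is exactly right and matches the paper's structure.
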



\subsubsection{The limiting frame}


Using  $(\ovlainfty, \finfty, \fbinfty )$ and transformation formula   \eqref{eq:Generalframetransf}  we define the corresponding  null frame $\einfty_1, \einfty_2, \einfty_3, \einfty_4$ and the associated  Ricci coefficients $\Ga^{(\infty)}, R^{(\infty)}$. Note that the frame   is a priori not  adapted to $\S^{(\infty)}$.  In fact $\aka^\infty, \akab^\infty$  do not necessarily vanish and thus     the distribution  generated by  $\einfty_1, \einfty_2$  may not even  be  integrable.


\subsubsection{The adapted  frame on $\S^{(\infty)}$}


We associate to the sphere $\S=\S^{(\infty)}$  a  second null  frame, which is adapted to $\S$, as follows. We  use the limiting  functions 
$U=U^{(\infty)}, S=S^{(\infty)}$  of the   deformation map $\Psi=\Psi^{(\infty)}:\ovS\longrightarrow \S$   to  define (see  Lemma \ref{Lemma:deformation1})  the tangent vectorfields  $\YY_{(a)} =\Psi_\# (\pr_{y^a} )$. Then, let $(f, \fb)$ denote the 1-forms such that, for $a=1,2$, we have
\beaa
\g\left(\YY_{(a)}, e_4 + f^b  e_b +\frac 1 4 |f|^2  e_3\right) &=& 0,\\
\g\left(\YY_{(a)},\left(1+\frac{1}{2}f\c\fb  +\frac{1}{16} |f|^2  |\fb|^2\right) e_3 + \left(\fb^b+\frac 1 4 |\fb|^2f^b\right) e_b  + \frac 1 4 |\fb|^2 e_4 \right) &=& 0.
\eeaa
With this choice of $(f, \fb)$, we then define the null frame $(e^\S_1, e^\S_2, e^\S_3, e^\S_4)$ as the one obtained from the background frame  $(e_1, e_2, e_3, e_4)$ using the frame transformation coefficients $(f, \fb, \la)$  with  $\la=1+\ovla$ chosen such that
 \bea
 \ovla=\ovla^{(\infty)}.
 \eea
In view of the choice of $(f, \fb)$, $e_4^\S$ and $e_3^\S$ are orthogonal to $\S$, and hence $(e^\S_1, e^\S_2, e^\S_3, e^\S_4)$  is adapted to $\S$ as desired. Furthermore, using \eqref{Compatibility-Deformation2}, \eqref{eq:assumptionY_a^b},  \eqref{eq:formofmathcalUandmathcalSatthemainorder} and the control of $U$ and $S$ to control  $(f, \fb)$,  it is straightforward to check that
 \beaa
\big \|(f, \fb)\big\|_{\hk_1(\S)}+\big \|(f, \fb)\big\|_{L^\infty(\S)} \les \dg.
 \eeaa


\subsection{End of the proof of Theorem \ref{Theorem:ExistenceGCMS1}}
\lab{sec:wheretheproofofthemaintheoremisfinallyconcluded} 


So far we have produced a sphere 
$\S=\S^{(\infty)}$, defined by  the functions $U=U^{(\infty)},  S=S^{(\infty)}$ and two frames 
\begin{itemize}
\item The frame  $\einfty_1, \einfty_2, \einfty_3, \einfty_4$ induced by
 the transition functions $(\ovlainfty, \finfty, \fbinfty )$.     The  functions $U, S$ and   transition functions   $(\ovlainfty, \finfty, \fbinfty )$
    verify  the  coupled system \eqref{systemUU-SS-derivedinfty}-\eqref{GeneralizedGCMsystem-infty-LaLab}.
    
\item  The geometric frame $e_1^\S, e_2^\S, e_3^\S, e_4^\S$,  induced by the deformation  map  defined by $U=U^{(\infty)},  S=S^{(\infty)}$, with corresponding transition functions $(\ovla=\ovla^{(\infty)},  f, \fb)$.
\end{itemize}
The  main remaining  hurdle in the proof of Theorem   \ref{Theorem:ExistenceGCMS1} is to show that the two frames coincide.

 {\bf Step 1.}  Since the frame $(e^\S_1, e^\S_2, e^\S_3, e^\S_4)$ is adapted to the sphere $\S$, we have on $\ovS$
\bea
\bsplit
\pr_{y^a} U&=\Big(\UU(f, \fb , \Ga)_bY^b_{(a)}\Big)^{\#},\\
\pr_{y^a} S&=\Big(\SS(f, \fb , \Ga)_bY^b_{(a)}\Big) ^{\#},\\
U(South)&=S(South)=0,
\end{split}
\eea
where $\#$ denotes the pull-back with respect to the  deformation map $\Psi$. 
We deduce
 \beaa
\lapzero U&=&\divzero\left(\big(\UU(f, \fb, \Ga)\big)^\#\right),\\
\lapzero S&=&\divzero\left(\big(\SS(f, \fb, \Ga)\big)^\#\right).
 \eeaa
On the other hand we have, see \eqref{systemUU-SS-derivedinfty},
 \beaa
\lapzero U&=& \divzero\left(\big(\UU(\finfty, \fbinfty, \Ga)\big)^\#\right),\\
\lapzero S&=& \divzero\left(\big(\SS(\finfty, \fbinfty, \Ga)\big)^\#\right).
 \eeaa
Subtracting the two equations we deduce
\beaa
\divzero\left(\big(\UU(f, \fb, \Ga)-\UU(\finfty, \fbinfty , \Ga)\big)^\#\right) &=& 0,\\
\divzero\left(\big(\SS(f, \fb, \Ga)-\SS(\finfty, \fbinfty , \Ga)\big)^\#\right) &=& 0,
\eeaa
or, introducing,
\beaa
\de\UU=\UU(f, \fb, \Ga)-\UU(\finfty, \fbinfty , \Ga), \qquad \de\SS=\SS(f, \fb, \Ga)-\SS(\finfty, \fbinfty, \Ga),
\eeaa
\bea
\lab{eqdivzerodeU-deS}
\divzero\left(\big(\de\UU\big)^\#\right) = 0,\qquad \qquad \divzero\left(\big(\de\SS\big)^\#\right) = 0.
\eea
Let  $g^{\S, \#}$ be the pull back of the  metric  of $\S$ to  $\ovS$  by  the map $\Psi:\ovS\longrightarrow  \S$. Denoting by
$\div^\#$  the divergence with respect  $g^{\S, \#}$ along $\ovS$  we  have, according to  the  connection   comparison  estimates
of Lemma \ref{lemma:comparison-gaS-ga},
\beaa
\left\|\divzero\left(\big(\de\UU\big)^\#\right) - \div^{\S, \#}\left(\big(\de\UU\big)^\#\right)\right\|_{L^2(\ovS)} &\les&\dg r^{-1} \big\| \de \UU \big\|_{\hk_{1}(\S)}, \\
\left\|\divzero\left(\big(\de\SS\big)^\#\right) - \div^{\S, \#}\left(\big(\de\SS\big)^\#\right)\right\|_{L^2(\ovS)} &\les&\dg r^{-1} \big\| \de \SS \big\|_{\hk_{1}(\S)}.
\eeaa
Thus, in view of \eqref{eqdivzerodeU-deS},
\beaa
\left\|\div^{\S, \#}\left(\big(\de\UU\big)^\#\right) \right\|_{L^2(\ovS)} &\les&\dg r^{-1}\big\| \de \UU \big\|_{\hk_{1}(\S)},\\
\left\|\div^{\S, \#}\left(\big(\de\SS\big)^\#\right) \right\|_{L^2(\ovS)} &\les&\dg  r^{-1}\big\| \de \SS \big\|_{\hk_{1}(\S)}.
\eeaa
In view of the norm comparison Proposition  \ref{Prop:comparison-gaS-ga:highersobolevregularity}, we deduce
\bea
\bsplit
\left\|\div^{\S}\big(\de\UU\big) \right\|_{L^2(\S)} &\les\dg r^{-1} \big\| \de \UU \big\|_{\hk_{1}(\S)},\\
\left\|\div^{\S}\big(\de\SS\big) \right\|_{L^2(\S)} &\les\dg r^{-1} \big\| \de \SS \big\|_{\hk_{1}(\S)}.
\end{split}
\eea

{\bf Step 2.} Recall from \eqref{eq:formofmathcalUandmathcalSatthemainorder} that we have
  \beaa
 \bsplit
 \SS(f, \fb, \Ga) &= f +O\Big(\epg |f|+|f|^2+|\fb|^2\Big), \\ 
 \UU(f, \fb, \Ga) &=\frac{1}{2}\Big(-\Up f +\fb\Big)+O\Big(\epg |f|+|f|^2+|\fb|^2\Big).
 \end{split}
 \eeaa
Hence
 \beaa
 \de\SS(f, \fb, \Ga) &=& f-\finfty +O\Big((r^{-1}+\epg)(|f-\finfty|+|\fb-\fbinfty|)\Big), \\ 
 \de\UU(f, \fb, \Ga) &=&\frac{1}{2}\Big(- (f-\finfty) +\fb-\fbinfty\Big)+O\Big((r^{-1}+\epg)(|f-\finfty|+|\fb-\fbinfty|)\Big).
 \eeaa
This yields
\beaa
\nn&&\left\|\de\SS(f, \fb, \Ga) - (f-\finfty)\right\|_{\hk^1(\S)}\\
 &\les& (r^{-1}+\epg)(\|f-\finfty\|_{\hk_{1}(\S)}+\|\fb-\fbinfty\|_{\hk_1(\S)}),\\
\nn&&\left\|\de\UU(f, \fb, \Ga) +\frac{1}{2}(f-\finfty) -\frac{1}{2}(\fb-\fbinfty)\right\|_{\hk_1(\S)} \\
&\les& (r^{-1}+\epg)(\|f-\finfty\|_{\hk_{1}(\S)}+\|\fb-\fbinfty\|_{\hk_1(\S)}).
\eeaa
Therefore, in view of Step 1, we infer
\bea
\lab{eq:div(f-finfty)}
\bsplit
\big\|\div^\S( f-\finfty)\big\|_{L^2(\S)}&\les (r^{-1}+\epg)r^{-1}\Big(\big\| f-\finfty \big\|_{\hk_{1}(\S)}+ \big\| \fb-\fbinfty \big\|_{\hk_{1}(\S)}\Big),\\
\big\|\div^\S( \fb-\fbinfty)\big\|_{L^2(\S)}&\les (r^{-1}+\epg)r^{-1}\Big(\big\| f-\finfty \big\|_{\hk_{1}(\S)}+ \big\| \fb-\fbinfty \big\|_{\hk_{1}(\S)}\Big).
\end{split}
\eea

{\bf Step 3.}  According to \eqref{GeneralizedGCMsystem-infty}  we  have,
\beaa
\curl^\S  \finfty   &=& \err_1[\curl^\S  \finfty ]-\ov{\err_1[\curl^\S \finfty]}^{\S},  \qquad \\
\curl^\S \fbinfty  &=& \err_1[\curl^\S  \fbinfty] -\ov{\err_1[\curl^\S \fbinfty]}^{\S}.\\
\eeaa
On the other hand, since the frame  $(e^\S_1, e^\S_2, e^\S_3, e^\S_4)$ is adapted to the sphere $\S$, we have $\atrchS=0$ and $\atrchbS=0$, i.e.  the transition functions $f, \fb$ must verify
\beaa
 \curl^\S(f)&=&- \err_1[\curl^\S f], \\ 
\curl^S(\fb)&=& - \err_1[\curl^\S\fb],
\eeaa
 with the same algebraic expressions for the errors  $\err_1[\curl^\S f], \err_1[\curl^\S \fb]$ as those  for $\err_1[\curl^\S  \finfty ], \err_1[\curl^\S  \fbinfty ]$. Moreover  
 \beaa
 \ov{\err_1[\curl^\S f]}^\S=  \ov{\err_1[\curl^\S \fb]}^\S=0.
 \eeaa
 Subtracting the two equations we derive
 \bea
 \lab{eq:curl(f-finfty)}
 \bsplit
\big\| \curl ^\S(f-\finfty)\big\|_{L^2(\S)}& \les  \dg  r^{-1}\Big(\big\| f-\finfty \big\|_{\hk_{1}(\S)}+ \big\| \fb-\fbinfty \big\|_{\hk_{1}(\S)}\Big),\\
\big\| \curl ^\S(\fb-\fbinfty)\big\|_{L^2(\S)}& \les  \dg r^{-1} \Big(\big\| f-\finfty \big\|_{\hk_{1}(\S)}+ \big\| \fb-\fbinfty \big\|_{\hk_{1}(\S)}\Big).
\end{split}
 \eea
 Combining  \eqref{eq:div(f-finfty)} with  \eqref{eq:curl(f-finfty)} we deduce,
 \bea
 \lab{eq:dddS(f-finfty)}
 \bsplit
\big\| \dddS_1 (f-\finfty)\big\|_{L^2(\S)}& \les  (r^{-1}+\epg) r^{-1}\Big(\big\| f-\finfty \big\|_{\hk_{1}(\S)}+ \big\| \fb-\fbinfty \big\|_{\hk_{1}(\S)}\Big),\\
\big\| \dddS_1 (\fb-\fbinfty)\big\|_{L^2(\S)}& \les  (r^{-1}+\epg)  r^{-1}\Big(\big\| f-\finfty \big\|_{\hk_{1}(\S)}+ \big\| \fb-\fbinfty \big\|_{\hk_{1}(\S)}\Big).
\end{split}
 \eea
Therefore, by elliptic estimates,
\beaa
\big\| f-\finfty \big\|_{\hk_{1}(\S)}+ \big\| \fb-\fbinfty \big\|_{\hk_{1}(\S)}&\les& (r^{-1}+\epg) \Big(\big\| f-\finfty \big\|_{\hk_{1}(\S)}+ \big\| \fb-\fbinfty \big\|_{\hk_{1}(\S)}\Big)
\eeaa
and thus, for $\epg$ small enough and $r$ large enough,
\bea
f=f^\infty, \qquad \fb=\fb^\infty.
\eea

 {\bf Step 4. } We have thus established that the limiting frame  $\einfty_1, \einfty_2, \einfty_3, \einfty_4$  is in fact adapted to $\S=\S^{(\infty)}$.  We now show that $\S$ endowed with this frame, and the  induced $\ell=1$  modes $\JpS$,
  is actually a GCM sphere. From now on, we denote $e_1^\S=\einfty_1$, $e_2^\S=\einfty_2$, $e_3^\S=\einfty_3$, $e_4^\S=\einfty_4$, 
 $\ovla=\ovlainfty, f= \finfty, \fb=\fbinfty$, $\ovb=\ovb^{(\infty)}$.    First, we prove that $\ovb=r-r^\S$. Indeed, we have, using the equality of the two frames, 
\beaa
&& \frac{e_4(r)}{2}\fb_a   +\frac{e_3(r)}{2}\left(f_a +\frac{1}{4}|f|^2\fb_a\right)\\
&=& \left(\left(\de_{ab} +\frac{1}{2}\fb_af_b\right) e_b +\frac 1 2  \fb_a  e_4 +\left(\frac 1 2 f_a +\frac{1}{8}|f|^2\fb_a\right)   e_3\right)r\\
&=& e^{\S}_a(r)\\
&=& e^{\S}_a\left(r-r^{\S}\right).
\eeaa
Now, recall that $\ovb$, taking into account the definition of $\err_1[ \Delta^\S\ovb]$,   is  uniquely  defined by
\beaa
\Delta^{\S}\ovb = \div^{\S}\left(\frac{e_4(r)}{2}\fb   +\frac{e_3(r)}{2}\left(f +\frac{1}{4}|f|^2\fb\right)\right),\qquad \ov{\ovb}^{\S}=\ov{r}^{\S} -r^{\S}.
\eeaa
We infer
\beaa
\Delta^{\S}\ovb = \div^{\S}\nab^{\S}\left(r-r^{\S}\right)=\Delta^{\S}\left(r-r^{\S}\right),\qquad  \ov{\ovb}^{\S}=\ov{r-r^\S}^{\S}.
\eeaa
The unique solution of the  above system of equation is provided by
\bea
\ovb=r-r^\S
\eea
as claimed.
  
Since $(f, \fb, \ovla)$       verifies    equations \eqref{GeneralizedGCMsystem-infty}-\eqref{GeneralizedGCMsystem-infty-LaLab}, and since $\ovb=r-r^\S$, we infer 
  \bea
\lab{GeneralizedGCMsystem-infty-again}
\bsplit
\div^\S f + \ka \ovla &=\ka^{(\infty)} -\ka -\err_1[\div^\S f ],
\\
\div^\S\fb - \kab  \ovla   
&= \kab^{(\infty)} -\kab -\err_1[\div^\S \fb ],\\
\lap^\S\ovla+ V \ovla &=\muinfty-\mu -\left(\omb +\frac 1 4 \kab \right) \big(\kainfty-\ka \big)+\left(\om +\frac 1 4 \ka \right) \big(\kabinfty-\kab \big)\\
&+\err_2[ \lap^\S\ovla],
\end{split}
\eea
with
\bea
\lab{GeneralizedGCMsystem-infty-GCMSinfty-again}
\bsplit
\kainfty&=\frac{2}{r^{\S}},\\
\kabinfty &=-\frac{2\Up^{\S}}{r^{\S}}+  \Cbinfty_0+\sum_p \Cbpinfty \JpS,\\
\muinfty&= \frac{2m^{\S} }{(r^{\S})^3} +   \Minfty_0+\sum _p\Mpinfty \JpS,
\end{split}
\eea
  and
  \bea
\lab{GeneralizedGCMsystem-infty-LaLab-again}
( \div^\S f )_{\ell=1}=\La, \qquad   (\div^\S\fb)_{\ell=1}=\Lab.
\eea

  On the other hand, according to 
Lemma \ref{Lemma-adaptedGCM-equations},     $\ovla, f, \fb$  verify the equations
     \bea
     \lab{GCMS-4Sagain}
     \bsplit
\div^\S f + \ka \ovla &= \ka^\S -\ka -\err_1[\div^\S f ],\\
\div^\S\fb - \kab \ovla &= \kab^\S -\kab -\err_1[\div^\S \fb ],\\
\Delta^\S\ovla + V\ovla &=\mu^\S-\mu -\left(\omb +\frac 1 4 \kab \right) \big(\ka^\S-\ka \big)+\left(\om +\frac 1 4 \ka \right) \big(\kab^\S-\kab \big)+\err_2[ \lap^\S\ovla].
\end{split}
\eea  
Subtracting we deduce
\beaa
\ka^\S &=& \kainfty=\frac{2}{r^{\S}},\\
 \kab^\S&=&\kabinfty =-\frac{2\Up^{\S}}{r^{\S} }+  \Cbinfty_0+\sum_p \Cbpinfty \Jpinfty,\\
 \mu^\S&=&\muinfty= \frac{2m^{\S} }{(r^{\S})^3} +   \Minfty_0+\sum _p\Mpinfty \Jpinfty.
\eeaa
Thus the GCM conditions \eqref{def:GCMC} are verified with  the constants 
\beaa
\Cb^\S_0:=\Cbinfty_0, \quad  \CbpS:=\Cbpinfty, \quad  M^\S_0:=  \Minfty_0, \quad \MpS=\Mpinfty.
\eeaa
Finally equation  \eqref{GCMS:l=1modesforffb} is   verified in view of 
 \eqref{GeneralizedGCMsystem-infty-LaLab-again}.
 
 {\bf Step 5.} The remaining  results of Theorem \ref{Theorem:ExistenceGCMS1} are now easy to  derive. \eqref{eq:ThmGCMS1}, \eqref{eq:ThmGCMS2} and \eqref{eq:ThmGCMS4} follow from \eqref{eq:controlofUSaffbforthelimitoftheiterationscheme}.  \eqref{eq:ThmGCMS3} follows  from Lemma  \ref{lemma:comparison-gaS-ga}, and \eqref{eq:ThmGCMS5} follows from Corollary \ref{cor:comparison-gaS-ga-Oepgsphere:mSminusm}. Finally,   the estimates \eqref{eq:ThmGCMS6}   follow from the transformation formulas of Proposition \ref{Prop:transformation formulas-integrtogeneral}, \eqref{eq:ThmGCMS1} and {\bf A1}. We note that the  transformation formulas for the well defined    
 quantities  $\ka^\S, \kab^\S,  \chih^\S, \chibh^\S, \ze^\S$, $ \a^\S, \b^\S, \rho^\S, \rhod^\S, \bb^\S, \aa^\S, \mu^\S$   (see Remark  \ref{remark:welldefinedGa}) involve only
 $\S$-tangential derivatives of $f, \fb, \ovla$ and can thus  indeed be estimated  using \eqref{eq:ThmGCMS1}.
   This concludes  the proof of Theorem  \ref{Theorem:ExistenceGCMS1}.

 
  \subsection{Differentiability with respect to the parameters $(\La, \Lab)$}
  

The following proposition investigates the differentiability with respect to $(\La, \Lab)$ of the various 
quantities appearing in Theorem \ref{Theorem:ExistenceGCMS1}.
\begin{proposition}
Under the assumptions of Theorem \ref{Theorem:ExistenceGCMS1}, let $\S^{(\La, \Lab)}$ the deformed spheres constructed in Theorem \ref{Theorem:ExistenceGCMS1} for parameter    $\La, \Lab \in \RRR^3$  verifying
\bea\lab{eq:assumptionsonLambdaabdLambdabforGCMexistence:bis}
|\La|,\,  |\Lab|  &\les & \dg.
\eea
Then
\begin{enumerate}
\item  The   transition parameters $(f, \fb, \ovla)$ are continuous and differentiable with respect to $\La, \Lab $ and verify
\bea
\bsplit
\frac{\pr f }{\pr \La}&=O\big( r^{-1}\big), \quad  \frac{ \pr f }{\pr \Lab}=O\big(\dg r^{-1} \big), \quad 
\frac{\pr\fb }{\pr \La}&=O\big(\dg  r^{-1}\big), \quad  \frac{\pr\fb}{\pr \Lab}=O\big( r^{-1} \big),\\
\frac{\pr\ovla  }{\pr \La}&=O\big(\dg  r^{-1}\big), \quad \frac{\pr\ovla  }{\pr \Lab}=O\big(\dg  r^{-1}\big).
\end{split}
\eea
\item  The parameter  functions $U, S$  of the deformation are continuous and differentiable with respect to $\La, \Lab $ and verify
\bea
\frac{\pr U}{\pr \La}= O(1), \qquad \frac{\pr U}{\pr \Lab}= O(1), \qquad \frac{\pr S}{\pr \La}= O(1), \qquad \frac{\pr S}{\pr \Lab}= O(\dg).
\eea
\item Relative to the  coordinate system induced by $\Psi$, the metric $g^\S$  of $\S=\S^{\La, \Lab}$  is continuous with respect to the parameters $\La, \Lab$ and verifies 
\beaa
\big\| \pr_\La g^\S, \,  \pr_{\Lab} g^\S\|_{L^\infty(\S)} &\les O( r^2).
\eeaa
\end{enumerate}
\end{proposition}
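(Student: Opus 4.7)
The plan is to differentiate the convergent iterative scheme of Section \ref{sec:iterativeschemeforGCMconstuction} with respect to the parameters $(\La, \Lab)$, thereby obtaining linearized iterative schemes whose limits are, by a difference quotient argument, the partial derivatives $\pr_\La \NN^{(\infty)}$ and $\pr_{\Lab} \NN^{(\infty)}$. The critical structural observation is that $(\La, \Lab)$ enter the iteration only through the linear $\ell=1$ mode constraints \eqref{GeneralizedGCMsystem-n+1-LaLab-MainThm:00}, so formal differentiation preserves the elliptic structure of the system and Proposition \ref{Thm.GCMSequations-fixedS} remains applicable at each step.

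First, I would establish joint continuity of $\NN^{(\infty)}$ in $(\La, \Lab)$ by running the contraction argument of Proposition \ref{Prop:contractionforNN} for the difference of two schemes corresponding to distinct parameter pairs; the inhomogeneity in the difference system reduces to the difference of the prescribed $\ell=1$ modes, which yields the Lipschitz estimate
\[
\|\NN^{(\infty)}(\La_1,\Lab_1)-\NN^{(\infty)}(\La_2,\Lab_2)\|_{3,\ovS}\les |\La_1-\La_2|+|\Lab_1-\Lab_2|.
\]
For differentiability, I would formally differentiate the limiting system \eqref{systemUU-SS-derivedinfty}--\eqref{GeneralizedGCMsystem-infty-LaLab} with respect to $\La$, obtaining a linear system for $(\pr_\La f,\pr_\La \fb,\pr_\La \ovla,\pr_\La U,\pr_\La S)$ with $\ell=1$ constraints $(\div^\S \pr_\La f)_{\ell=1}=I_{3}$, $(\div^\S \pr_\La \fb)_{\ell=1}=0$, and nonlinear forcing of the same schematic form as before, now bilinear in the controlled quantities and their $\La$-derivatives. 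Existence and uniqueness for this linearized system follow from Proposition \ref{Thm.GCMSequations-fixedS} applied to an associated iterative scheme, and comparison of the nonlinear difference quotients with the linearized solution, via the a priori contraction estimate of Proposition \ref{Thm.GCMSequations-fixedS:contraction}, identifies the latter as the genuine partial derivative. The case of $\pr_{\Lab}$ is analogous with the roles of $f$ and $\fb$ interchanged.

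The stated scales are then extracted by bookkeeping in the elliptic estimates of Proposition \ref{Thm.GCMSequations-fixedS} applied to the linearized system and Sobolev embedding on $\S$. The bound $\pr_\La f = O(r^{-1})$ is driven by the constraint $(\div^\S \pr_\La f)_{\ell=1}=I_3$, which supplies an $\ell=1$ contribution of unit size to $\div^\S \pr_\La f$ and hence a piece of $\pr_\La f$ of pointwise size $r^{-1}$, dominating the remaining contributions which are smaller by a factor of $\dg$. Analogously $\pr_\La \fb$ and $\pr_\La \ovla$ lack a direct $\La$-forcing and acquire bounds of order $\dg r^{-1}$ purely from nonlinear coupling, and similarly with $\La$ and $\Lab$ exchanged. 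The estimates for $\pr_\La U,\pr_\La S,\pr_{\Lab} U,\pr_{\Lab} S$ follow from the elliptic equations \eqref{systemUU-SS-derivedinfty}, the leading-order identification $\SS\sim f$ and $\UU\sim\frac{1}{2}(-\Up f+\fb)$ from \eqref{eq:formofmathcalUandmathcalSatthemainorder}, and the $r^2$ gain from inverting $\lapzero$ on $\ovS$; in particular $\pr_\La S=O(1)$ because $\pr_\La \SS\sim\pr_\La f=O(r^{-1})$, while $\pr_{\Lab} S=O(\dg)$ because $\pr_{\Lab} f=O(\dg r^{-1})$. The metric bound $\pr_\La g^\S=O(r^2)$ follows from the explicit expressions \eqref{Relation-YY-gSab}, \eqref{coefficients-YYa-US} for $g^\S_{ab}$ in terms of $\pr_{y^a}U$ and $\pr_{y^a}S$, combined with the preceding bounds on $\pr_\La U,\pr_\La S$ and the $O(r^2)$ background scale of the metric coefficients. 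The main obstacle will be the rigorous justification of the difference quotient limit: all geometric quantities live on varying spheres $\S^{(\La,\Lab)}$, so the comparison must be carried out after pulling everything back to the fixed reference sphere $\ovS$ via the deformation maps $\Psi^{(\La,\Lab)}$, and the joint continuity in $(\La,\Lab)$ of the norm comparisons from Lemma \ref{lemma:comparison-gaS-ga} and Proposition \ref{Prop:comparison-gaS-ga:highersobolevregularity} must be verified in order to pass to the limit inside the various Sobolev norms.
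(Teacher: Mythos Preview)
Your proposal is correct and follows essentially the same approach as the paper, which simply states that one differentiates the equations for $(f,\fb,\ovla)$ and $(U,S)$ with respect to $(\La,\Lab)$ and appeals to the estimates of Theorem \ref{Theorem:ExistenceGCMS1}, leaving the details to the reader. Your outline in fact supplies considerably more of those details---the difference-quotient justification, the pull-back to $\ovS$, and the scale bookkeeping via Propositions \ref{Thm.GCMSequations-fixedS} and \ref{Thm.GCMSequations-fixedS:contraction}---than the paper does.
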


\begin{proof}
The proof follows by differentiating the equations satisfied by $(f, \fb, \la)$ and $(U, S)$ with respect to $(\La, \Lab)$ and relying on the estimates derived for $(f, \fb, \la)$ and $(U, S)$ in Theorem \ref{Theorem:ExistenceGCMS1}. The details are cumbersome but straightforward, and left to the reader.
\end{proof}

 
  \subsection{Existence of GCM spheres in Kerr}
  

The following corollary of Theorem \ref{Theorem:ExistenceGCMS1} shows the existence of GCM spheres in Kerr. 

\begin{corollary}[Existence of GCM spheres in Kerr]
\lab{cor:ExistenceGCMS1inKerr}
Let $\g_{a_0,m_0}$, with $|a_0|\leq m_0$, denote a member of the Kerr family of metrics. Let $0<\de_0=\ep_0$   two sufficiently   small   constants, and let  $(\ug, \sg, \rg)$ three real numbers with $\rg$ sufficiently large so that
\beaa
\epg=\frac{a_0m_0}{\rg}, \qquad \dg=\frac{a_0m_0}{\rg}, \qquad \rg\gg m_0.
\eeaa
Let a fixed  spacetime region $\RR$ of Kerr  together with a $(u, s)$ outgoing geodesic   foliation, as discussed in Lemma \ref{lemma:controlfarspacetimeregionKerrassumptionRR}.  Let  $\ovS=S(\ovu, \ovs)$   be  a fixed    sphere from this foliation, and let $\rg$ and $\mg$ denoting respectively its area radius and its Hawking mass.   Then
for any fixed pair of triplets   $\La, \Lab \in \RRR^3$  verifying
\beaa
|\La|,\,  |\Lab|  &\les & \dg,
\eeaa
 there exists a unique  GCM sphere $\S=\S_{Kerr}^{(\La, \Lab)}$, which is a deformation of $\ovS$, 
such that  the GCM conditions  \eqref{def:GCMC} are verified, and 
 \beaa
(\div^\S f)_{\ell=1}&=\La, \qquad   (\div^\S\fb)_{\ell=1}=\Lab.
\eeaa
Furthermore, the deformation satisfies the properties \eqref{eq:ThmGCMS1}-\eqref{eq:ThmGCMS6}.
\end{corollary}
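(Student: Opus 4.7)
The plan is to directly apply Theorem \ref{Theorem:ExistenceGCMS1} to the Kerr spacetime region $\RR=\{r\geq r_0\}$ equipped with the foliation and coordinate structure described in Lemma \ref{lemma:controlfarspacetimeregionKerrassumptionRR}. That lemma already establishes that assumptions $\textbf{A1}$--$\textbf{A4}$ hold on $\RR$ with smallness constants $\epg=\dg=a_0m_0/\rg$, and in particular $\rg\gg m_0$ can be guaranteed by choosing $r_0=r_0(m_0)$ sufficiently large. The only remaining task is therefore to verify the structural decomposition of the GCM quantities $\ka$, $\kab$, $\mu$ of the Kerr background foliation into the form \eqref{assumptions:oldGCMS1} with constants and error terms satisfying \eqref{eq:GCM-improved estimate1-again}--\eqref{eq:GCM-improved estimate2-again}, after which the conclusions follow immediately from Theorem \ref{Theorem:ExistenceGCMS1}.

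To carry out this verification, I would exploit the asymptotic expansion \eqref{eq:asymptoticexpansionfortheKerrmetric} of $\g_{a_0,m_0}$ around the Schwarzschild metric $\g_{m_0}$. Denoting by $\ka_{m_0}, \kab_{m_0}, \mu_{m_0}$ the Ricci and mass aspect quantities computed in Schwarzschild relative to the canonical outgoing geodesic frame, one has the exact identities $\ka_{m_0}=2/r$, $\kab_{m_0}=-2\Up/r$, $\mu_{m_0}=2m_0/r^3$. Combining the frame asymptotics derived in the proof of Lemma \ref{lemma:controlfarspacetimeregionKerrassumptionRR} with $\Ga=\Ga_{m_0}+O(a_0m_0/r^3)$, a direct computation shows
\[
\ka-\frac{2}{r}=O\!\left(\frac{a_0m_0}{r^3}\right),\qquad \kab+\frac{2\Up}{r}=O\!\left(\frac{a_0m_0}{r^3}\right),\qquad \mu-\frac{2m}{r^3}=O\!\left(\frac{a_0m_0}{r^4}\right),
\]
in $L^\infty$ and in every $\hk_k$ norm up to $k=s_{max}$, uniformly for $r\geq \rg$.

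Since these pointwise deviations are already of size $\dg/r$, $\dg/r$ and $\dg/r^2$ respectively, they can be placed entirely in the error terms $\kadot$, $\kabdot$, $\mudot$. Accordingly I would take
\[
\Cb_0=\Cbp=0,\qquad M_0=\Mp=0,\qquad p\in\{0,+,-\},
\]
and set $\kadot:=\ka-2/r$, $\kabdot:=\kab+2\Up/r$, $\mudot:=\mu-2m/r^3$. The estimates \eqref{eq:GCM-improved estimate1-again} are then trivially satisfied, while \eqref{eq:GCM-improved estimate2-again} follow from the pointwise bounds above together with the volume estimate $|S(u,s)|\lesssim r^2$. The $\ell=1$ modes $\Jp$ used in the decomposition are precisely the ones constructed in Lemma \ref{lemma:controlfarspacetimeregionKerrassumptionRR} from the Boyer--Lindquist angles $(\th,\vphi)$ transported along $e_4$, so the formulation is consistent with assumption $\textbf{A4}$.

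I do not anticipate a serious obstacle in this corollary: conceptually it is a verification result and the only mildly delicate point is checking that the asymptotic expansion \eqref{eq:asymptoticexpansionfortheKerrmetric} is preserved under the differential operations needed to compute $\ka$, $\kab$, $\mu$ and their $\hk_{s_{max}}$ norms. This is a routine bookkeeping exercise using that $\pr_{t_0},\pr_{r_0},r_0^{-1}\pr_{\th_0},r_0^{-1}\pr_{\vphi_0}$ preserve the $O(a_0m_0/r^n)$ structure modulo decrease of $n$ by at most one. Once the hypotheses of Theorem \ref{Theorem:ExistenceGCMS1} are verified with $\epg=\dg=a_0m_0/\rg$, the existence, uniqueness, and the estimates \eqref{eq:ThmGCMS1}--\eqref{eq:ThmGCMS6} for the deformation producing $\S^{(\La,\Lab)}_{Kerr}$ are immediate consequences of that theorem applied to $\ovS=S(\ug,\sg)$.
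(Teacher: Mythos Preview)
Your approach is correct and coincides with the paper's: both reduce the corollary to an application of Theorem \ref{Theorem:ExistenceGCMS1} after invoking Lemma \ref{lemma:controlfarspacetimeregionKerrassumptionRR} for assumptions \textbf{A1}--\textbf{A4} with $\epg=\dg=a_0m_0/\rg$. The paper's proof is two lines and leaves implicit the verification of the decomposition \eqref{assumptions:oldGCMS1}--\eqref{eq:GCM-improved estimate2-again} that you spell out; note that this verification is in fact already contained in \textbf{A1}, since $\widecheck{\trch},\,\widecheck{\trchb},\,r\widecheck{\mu}\in\Ga_g$ and $\epg=\dg$ here, so setting $\Cb_0=\Cbp=M_0=\Mp=0$ and $\kadot=\widecheck{\trch}$, $\kabdot=\widecheck{\trchb}$, $\mudot=\widecheck{\mu}$ gives \eqref{eq:GCM-improved estimate2-again} directly from the $\Ga_g$ bound without any further computation on the Kerr metric.
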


\begin{proof}
Recall from Lemma \ref{lemma:controlfarspacetimeregionKerrassumptionRR} that the assumptions ${\bf A1}$-${\bf A4}$ are satisfied by the spacetime region $\RR=\{r\geq r_0\}$ of Kerr  provided  that $r_0=r_0(m_0)$ is sufficiently large, with smallness constants $\epg$ and $\dg$ given by 
\beaa
\epg=\frac{a_0m_0}{\rg}, \qquad \dg=\frac{a_0m_0}{\rg}.
\eeaa
Thus, Theorem \ref{Theorem:ExistenceGCMS1} applies, which concludes the proof of the corollary.
\end{proof}


\appendix



\section{Proof of Proposition \ref{Prop:transformation-formulas-generalcasewithoutassumptions}}
\lab{sec:proofofProp:transformation-formulas-generalcasewithoutassumptions} 



\subsection{Transformation formula for $\xi$}


We have
\beaa
2\xi_a' &=&  \g(\D_{e_4'}e_4', e_a')= \la^2\g(\D_{\la^{-1}e_4'}(\la^{-1}e_4'), e_a')\\
&=& \la^2\g\left(\D_{\la^{-1}e_4'}(\la^{-1}e_4'), e_a  +\frac 1 2 f_a e_3\right) =\la^2\g\left(\D_{\la^{-1}e_4'}\left(e_4+f^be_b+\frac{1}{4}|f|^2e_3\right), e_a  +\frac 1 2 f_a e_3\right)\\
&=& \la^2\g\left(\D_{\la^{-1}e_4'}e_4, e_a  +\frac 1 2 f_a e_3\right)+\la^2\D_{\la^{-1}e_4'}f_a+\la^2f^b\g\left(\D_{\la^{-1}e_4'}e_b, e_a  +\frac 1 2 f_a e_3\right)\\
&&+\frac{1}{4}|f|^2\la^2\g\left(\D_{\la^{-1}e_4'}e_3, e_a\right).
\eeaa
We compute the terms on the right-hand side
\beaa
\g\left(\D_{\la^{-1}e_4'}e_4, e_a  +\frac 1 2 f_a e_3\right) &=& \g\left(\D_{e_4+f^be_b+\frac{1}{4}|f|^2e_3 }e_4, e_a  +\frac 1 2 f_a e_3\right)\\
&=& 2\xi+f^b\chi_{ba}+\frac{1}{2}|f|^2\eta_a+2\om f_a+f\c \ze\,f_a +\lot\\
&=& 2\xi+\frac{1}{2}(\trch f_a -\atrch\dual f_a)+2\om f_a+f^b\chih_{ba}+\frac{1}{2}|f|^2\eta_a+f\c \ze\,f_a +\lot,
\eeaa
\beaa
&&f^b\g\left(\D_{\la^{-1}e_4'}e_b, e_a  +\frac 1 2 f_a e_3\right)\\
 &=& f^b\g\left(\D_{\la^{-1}e_4'}\left(e_b+\frac 1 2 f_b e_3\right), e_a  +\frac 1 2 f_a e_3\right) - \frac 1 2 f^bf_b\g\left(\D_{\la^{-1}e_4'}e_3, e_a  +\frac 1 2 f_a e_3\right)\\
&=& -f^b\g\left(\D_{\la^{-1}e_4'}\left(e_a+\frac 1 2 f_a e_3\right), e_b  +\frac 1 2 f_b e_3\right) - \frac 1 2 |f|^2\g\left(\D_{\la^{-1}e_4'}e_3, e_a\right)\\
&=& -f^b\g\left(\D_{\la^{-1}e_4'}\left(e_a'-\frac 1 2  \fb_a \la^{-1} e_4'\right), e_b'-\frac 1 2  \fb_b \la^{-1} e_4'\right) - \frac 1 2 |f|^2\g\left(\D_{e_4}e_3, e_a\right)+\lot\\
&=&  -f^b\g\left(\D_{\la^{-1}e_4'}e_a', e_b'\right)+\la^{-2}\fb_af^b\xi'_b+\la^{-2}f^b\fb_b\xi'_a  -  f^bf_b\etab_a +\lot
\eeaa
and
\beaa
|f|^2\g\left(\D_{\la^{-1}e_4'}e_3, e_a\right) &=& |f|^2\g\left(\D_{e_4}e_3, e_a\right) +\lot =  2|f|^2\etab_a +\lot
\eeaa
We infer
\beaa
2\la^{-2}\xi_a' &=& \g\left(\D_{\la^{-1}e_4'}e_4, e_a  +\frac 1 2 f_a e_3\right)+\D_{\la^{-1}e_4'}f_a+f^b\g\left(\D_{\la^{-1}e_4'}e_b, e_a  +\frac 1 2 f_a e_3\right)\\
&&+\frac{1}{4}|f|^2\g\left(\D_{\la^{-1}e_4'}e_3, e_a\right)\\
&=& 2\xi +\nab_{\la^{-1}e_4'}f_a+\frac{1}{2}(\trch f_a -\atrch\dual f_a)+2\om f_a+f^b\chih_{ba}+\frac{1}{2}|f|^2\eta_a+f\c \ze\,f_a \\
&& +\la^{-2}\fb_af^b\xi'_b+\la^{-2}f^b\fb_b\xi'_a  -  f^bf_b\etab_a +\frac{1}{2}|f|^2\etab_a +\lot
\eeaa
and hence
\beaa
\la^{-2}\xi' &=& \xi +\frac{1}{2}\nab_{\la^{-1}e_4'}f+\frac{1}{4}(\trch f -\atrch\dual f)+\om f +\err(\xi,\xi'),\\
\err(\xi,\xi') &=& \frac{1}{2}f\c\chih+\frac{1}{4}|f|^2\eta+\frac{1}{2}(f\c \ze)\,f -\frac{1}{4}|f|^2\etab +  \frac{1}{2}\Big(\la^{-2}(f\c\xi')\,\fb+\la^{-2}(f\c\fb)\,\xi'   \Big)  +\lot
\eeaa
as desired.


\subsection{Transformation formula for $\xib$}


We have
\beaa
2\xib_a' &=&  \g(\D_{e_3'}e_3', e_a') = \la^{-2}\g(\D_{\la e_3'}(\la e_3'), e_a') =\la^{-2}\g\left(\D_{\la e_3'}\left(e_3 +  \fb^be_b' -\frac 1 4 |\fb|^2\la^{-1} e_4'\right), e_a'\right)\\
&=& \la^{-2}\g\left(\D_{\la e_3'}e_3, e_a'\right) + \la^{-1}e_3'(\fb_a')+ \la^{-2}\fb^b\g\left(\D_{\la e_3'}e_b', e_a'\right) -\frac 1 2\la^{-2} |\fb|^2\eta_a'\\
&=& \la^{-2}\g\left(\D_{\la e_3'}e_3,  \left(\de_a^b +\frac{1}{2}\fb_af^b\right) e_b +\frac 1 2  \fb_a  e_4 \right) + \la^{-1}\nab_3'\fb_a' -\frac 1 2\la^{-2} |\fb|^2\eta_a'\\
&=& \la^{-2}\g\left(\D_{ e_3 + \fb^b e_b  + \frac 1 4 |\fb|^2 e_4}e_3,  e_a +\frac 1 2  \fb_a  e_4 \right) + \la^{-1}\nab_3'\fb_a' -\frac 1 2\la^{-2} |\fb|^2\eta_a'+\lot\\
&=& 2\la^{-2}\xib_a +  2  \fb_a\la^{-2}\omb + \fb^b\la^{-2}\chib_{ba} - \fb^b\fb_a\la^{-2}\ze_b +  \frac 1 2 |\fb|^2\la^{-2}\etab_a + \la^{-1}\nab_3'\fb_a' -\frac 1 2\la^{-2} |\fb|^2\eta_a'+\lot
\eeaa
and hence
\beaa
\la^2\xib' &=& \xib + \frac{1}{2}\la\nab_3'\fb' +    \omb\,\fb + \frac{1}{4}\trchb\,\fb - \frac{1}{4}\atrchb\dual\fb +\err(\xib, \xib'),\\
\err(\xib, \xib') &=&   \frac{1}{2}\fb\c\chibh - \frac{1}{2}(\fb\c\ze)\fb +  \frac 1 4 |\fb|^2\etab  -\frac 1 4 |\fb|^2\eta'+\lot
\eeaa
as desired.


\subsection{Transformation formulas for $\chi $}


Next, we have
\beaa
\la^{-1}\chi_{ab}' &=& \g\left(\D_{e_a'}(\la^{-1}e_4'), e_b' \right)=  \g\left(\D_{e_a'}(\la^{-1}e_4'), e_b   +\frac 1 2 f_b e_3\right) \\
&=&  \g\left(\D_{e_a'}\left(e_4+f^ce_c+\frac{1}{4}|f|^2e_3\right), e_b   +\frac 1 2 f_b e_3\right)\\
&=&  \g\left(\D_{e_a'}e_4, e_b   +\frac 1 2 f_b e_3\right)+  e_a'(f_b)+  f^c\g\left(\D_{e_a'}e_c, e_b   +\frac 1 2 f_b e_3\right) +\frac{1}{4}|f|^2\g\left(\D_{e_a'}e_3, e_b\right)\\
&=&  \g\left(\D_{e_a'}e_4, e_b   +\frac 1 2 f_b e_3\right)+  e_a'(f_b)-  f^c\g\left(\D_{e_a'}\left(e_b   +\frac 1 2 f_b e_3\right), e_c\right) +\frac{1}{4}|f|^2\g\left(\D_{e_a'}e_3, e_b\right).
\eeaa
We compute the terms on the right-hand side
\beaa
\g\left(\D_{e_a'}e_4, e_b   +\frac 1 2 f_b e_3\right) &=& \g\left(\D_{e_a'}e_4, e_b  \right)+\frac 1 2 f_b\g\left(\D_{e_a'}e_4,  e_3\right)\\
&=& \g\left(\D_{\left(\de_a^c +\frac{1}{2}\fb_af^c\right) e_c +\frac 1 2  \fb_a  e_4 +\left(\frac 1 2 f_a +\frac{1}{8}|f|^2\fb_a\right)   e_3}e_4, e_b  \right)\\
&&+\frac 1 2 f_b\g\left(\D_{\left(\de_a^c +\frac{1}{2}\fb_af^c\right) e_c +\frac 1 2  \fb_a  e_4 +\left(\frac 1 2 f_a +\frac{1}{8}|f|^2\fb_a\right)   e_3}e_4,  e_3\right)\\
&=& \left(\de_a^c +\frac{1}{2}\fb_af^c\right)\chi_{cb}+\fb_a\xi_b +f_a\eta_b+ f_b\ze_a +\om f_b\fb_a  -\omb f_b f_a+\lot, 
\eeaa
\beaa
&&f^c\g\left(\D_{e_a'}\left(e_b   +\frac 1 2 f_b e_3\right), e_c\right)\\
 &=& f^c\g\left(\D_{e_a'}\left(e_b   +\frac 1 2 f_b e_3\right), e_c +\frac 1 2 f_c e_3\right) -\frac 1 2 f^cf_c\g\left(\D_{e_a'}\left(e_b   +\frac 1 2 f_b e_3\right),  e_3\right) \\
&=&  f^c\g\left(\D_{e_a'}\left(e_b' -\frac 1 2  \fb_b \la^{-1} e_4'\right), e_c' -\frac 1 2  \fb_c \la^{-1} e_4'\right)+\frac 1 2|f|^2\g\left(\D_{e_a'}e_3, e_b   +\frac 1 2 f_b e_3\right)\\
&=&  f^c\g\left(\D_{e_a'}e_b', e_c' -\frac 1 2  \fb_c \la^{-1} e_4'\right) -\frac 1 2 f^c \fb_b \la^{-1}\g\left(\D_{e_a'}e_4', e_c' \right)+\frac 1 2 |f|^2\g\left(\D_{e_a}e_3, e_b\right)+\lot\\
&=&  f^c\g\left(\D_{e_a'}e_b', e_c'\right) +\frac 1 2  f^c\fb_c \la^{-1} \chi_{ab}'-\frac 1 2 f^c\fb_b \la^{-1}\chi_{ac}' +\frac 1 2 |f|^2\chib_{ab} +\lot,
\eeaa
and 
\beaa
 |f|^2\g\left(\D_{e_a'}e_3, e_b\right) &=&  |f|^2\chib_{ab} +\lot
\eeaa
We infer
\beaa
\la^{-1}\chi_{ab}' &=&  \g\left(\D_{e_a'}e_4, e_b   +\frac 1 2 f_b e_3\right)+  e_a'(f_b)-  f^c\g\left(\D_{e_a'}\left(e_b   +\frac 1 2 f_b e_3\right), e_c\right) +\frac{1}{4}|f|^2\g\left(\D_{e_a'}e_3, e_b\right)\\
&=& \chi_{ab}  +  \nab_a'f_b + f_a \eta_b + f_b\ze_a+\fb_a\xi_b+\frac{1}{2}\fb_af^c\chi_{cb}  +\om f_b\fb_a  -\omb f_b f_a \\
&& - \frac 1 2  f^c\fb_c \la^{-1} \chi_{ab}'+\frac 1 2  f^c\fb_b \la^{-1}\chi_{ac}' -\frac 1 2 f^cf_c\chib_{ab}   +\frac{1}{4}|f|^2\chib_{ab} +\lot
\eeaa
Hence
\beaa
\la^{-1}\trch' &=& \trch  +  \div'f + f\c\eta + f\c\ze+\err(\trch,\trch')\\
\err(\trch,\trch') &=& \fb\c\xi+\frac{1}{4}\fb\c\left(f\trch -\dual f\atrch\right) +\om (f\c\fb)  -\omb |f|^2 -\frac{1}{4}|f|^2\trchb\\
&& -  \frac 1 4 ( f\c\fb) \la^{-1}\trch' +\frac 1 4  (\fb\wedge f) \la^{-1}\atrch'+\lot,
\eeaa
\beaa
\la^{-1}\atrch' &=& \atrch  +  \curl'f + f\wedge\eta + f\wedge\ze +\err(\atrch,\atrch'),\\
\err(\atrch,\atrch') &=& \fb\wedge\xi+\frac{1}{4}\left(\fb\wedge f\trch +(f\c\fb)\atrch\right) +\om f\wedge\fb   -\frac{1}{4}|f|^2\atrchb\\
&& -  \frac 1 4 ( f\c\fb) \la^{-1}\atrch' +\frac 1 4   \la^{-1}(f\wedge \fb)\trch'+\lot,
\eeaa
and
\beaa
\la^{-1}\chih' &=& \chih  +  \nab'\hot f + f\hot\eta + f\hot\ze+\err(\chih,\chih')\\
\err(\chih,\chih') &=&\fb\hot\xi+\frac{1}{4}\fb\hot\left(f\trch -\dual f\atrch\right) +\om f\hot\fb  -\omb f\hot f \\
&&-  \frac 1 2 ( f\c\fb) \la^{-1} \chih'  +\frac 1 4  (f\hot\fb) \la^{-1}\trch' +\frac 1 4  (\dual f\hot\fb) \la^{-1}\atrch' +\frac 1 2  \fb\hot (f\c\la^{-1}\chih')+\lot
\eeaa
as desired.


\subsection{Transformation formula for $\chib$}


Next, we have
\beaa
\la\chib_{ab}' &=& \g\left(\D_{e_a'}(\la e_3'), e_b' \right)= \g\left(\D_{e_a'}\left(  e_3 +  \fb^ce_c' -\frac 1 4 |\fb|^2\la^{-1} e_4'\right), e_b' \right)\\
&=& \g\left(\D_{e_a'}e_3, e_b' \right)+e_a'(\fb_b) +\fb^c\g\left(\D_{e_a'}e_c', e_b' \right)-\frac 1 4 |\fb|^2\la^{-1}\chi_{ab}'\\
&=& \g\left(\D_{e_a'}e_3, e_b' \right)+\nab_a'\fb_b -\frac 1 4 |\fb|^2\la^{-1}\chi_{ab}'.
 \eeaa
We compute
\beaa
 \g\left(\D_{e_a'}e_3, e_b' \right) &=& \g\left(\D_{\left(\de_a^d +\frac{1}{2}\fb_af^d\right) e_d +\frac 1 2  \fb_a  e_4 +\frac 1 2 f_a  e_3}e_3, \left(\de_b^c +\frac{1}{2}\fb_bf^c\right) e_c +\frac 1 2  \fb_b  e_4  \right)+\lot\\
 &=& \chib_{ab}+\frac{1}{2}\fb_af^d\chib_{db}+\frac{1}{2}\fb_bf^c\chib_{ac} +\fb_a\,\etab_b+f_a\xib_b -\fb_b\,\fb_a\om + \fb_bf_a\omb  -  \fb_b\ze_a+\lot
\eeaa
and hence
\beaa
\la\chib_{ab}' &=& \chib_{ab} +\nab_a'\fb_b +\frac{1}{2}\fb_af^d\chib_{db}+\frac{1}{2}\fb_bf^c\chib_{ac} +\fb_a\,\etab_b+f_a\xib_b -\fb_b\,\fb_a\om + \fb_bf_a\omb \\
&&  -  \fb_b\ze_a -\frac 1 4 |\fb|^2\la^{-1}\chi_{ab}'+\lot
 \eeaa
We deduce
\beaa
\la\trchb' &=& \trchb +\div'\fb +\fb\c\etab  -  \fb\c\ze +\err(\trchb, \trchb'),\\
\err(\trchb, \trchb') &=& \frac{1}{2}(f\c\fb)\trchb+f\c\xib -|\fb|^2\om + (f\c\fb)\omb   -\frac 1 4 |\fb|^2\la^{-1}\trch'+\lot,
 \eeaa
\beaa
\la\atrchb' &=& \atrchb +\curl'\fb +\fb\wedge\etab  -  \ze\wedge\fb+\err(\atrchb, \atrchb'),\\
\err(\atrchb, \atrchb') &=& \frac{1}{2}(f\c\fb)\atrchb+f\wedge\xib  + (f\wedge\fb)\omb   -\frac 1 4 |\fb|^2\la^{-1}\atrch'+\lot,
 \eeaa
and
\beaa
\la\chibh' &=& \chibh +\nab'\hot\fb +\fb\hot\etab  -  \fb\hot\ze +\err(\chibh, \chibh'),\\
\err(\chibh, \chibh') &=& \frac{1}{2}(f\hot\fb)\trchb  +f\hot\xib -(\fb\hot\fb)\om + (f\hot\fb)\omb   -\frac 1 4 |\fb|^2\la^{-1}\chih'+\lot
 \eeaa
as desired.


\subsection{Transformation formula for $\ze$}


Next, we have
\beaa
2\ze_a' &=&  \g(\D_{e_a'}e_4', e_3') = -2e_a'(\log\la)+\g(\D_{e_a'}(\la^{-1}e_4'), \la e_3') = -2e_a'(\log\la)+\g\left(\D_{e_a'}(\la^{-1}e_4'), e_3 +  \fb^be_b' \right)\\
&=& -2e_a'(\log\la)+\g\left(\D_{e_a'}(\la^{-1}e_4'), e_3  \right)+\la^{-1}\fb^b\chi_{ab}'.
\eeaa
We compute the term on the right-hand side
\beaa
\g\left(\D_{e_a'}(\la^{-1}e_4'), e_3\right) &=& \g\left(\D_{e_a'}\left(e_4+f^be_b+\frac{1}{4}|f|^2e_3\right), e_3\right)\\
&=& \g\left(\D_{e_a'}e_4, e_3\right)+ f^b\g\left(\D_{e_a'}e_b, e_3\right)\\
&=& \g\left(\D_{\left(\de_a^c +\frac{1}{2}\fb_af^c\right) e_c +\frac 1 2  \fb_a  e_4 +\left(\frac 1 2 f_a +\frac{1}{8}|f|^2\fb_a\right)   e_3}e_4, e_3\right) \\
&&+ f^b\g\left(\D_{\left(\de_a^c +\frac{1}{2}\fb_af^c\right) e_c +\frac 1 2  \fb_a  e_4 +\left(\frac 1 2 f_a +\frac{1}{8}|f|^2\fb_a\right)   e_3}e_b, e_3\right)\\
&=& 2\ze_a +2\om\fb_a-2\omb f_a -\chib_{ba}f^b + \fb_af^c\ze_c -  \fb_a f^b\etab_b  +\lot
\eeaa
We infer
\beaa
2\ze_a' &=& -2e_a'(\log\la)+\g\left(\D_{e_a'}(\la^{-1}e_4'), e_3  \right)+\la^{-1}\fb^b\chi_{ab}'\\
&=& 2\ze_a -2e_a'(\log\la)  -\frac{1}{2}\trchb f_a +\frac{1}{2}\atrchb \dual f_a +2\om\fb_a-2\omb f_a +\frac{1}{2}\la^{-1}\fb_a\trch' +\frac{1}{2}\la^{-1}\dual\fb_a\atrch'\\
&& -\chibh_{ab}f^b + \fb_af^c\ze_c -  \fb_a f^b\etab_b +\la^{-1}\fb^b\chih_{ab}'+\lot
\eeaa
and hence, 
\beaa
\ze' &=& \ze -\nab'(\log\la)  -\frac{1}{4}\trchb f +\frac{1}{4}\atrchb \dual f +\om\fb -\omb f +\frac{1}{4}\la^{-1}\fb\trch' +\frac{1}{4}\la^{-1}\dual\fb\atrch' \\
&& -\frac{1}{2}\chibh\c f + \frac{1}{2}(f\c\ze)\fb -  \frac{1}{2}(f\c\etab)\fb +\frac{1}{2}\la^{-1}\fb\c\chih'+\lot
\eeaa
Using also the above transformation formulas for $\trch'$ and $\atrch'$, we infer
\beaa
\ze' &=& \ze -\nab'(\log\la)  -\frac{1}{4}\trchb f +\frac{1}{4}\atrchb \dual f +\om\fb -\omb f +\frac{1}{4}\fb\trch+\frac{1}{4}\dual\fb\atrch+\err(\ze, \ze'),\\
\err(\ze, \ze') &=& -\frac{1}{2}\chibh\c f + \frac{1}{2}(f\c\ze)\fb -  \frac{1}{2}(f\c\etab)\fb +\frac{1}{4}\fb(f\c\eta) + \frac{1}{4}\fb(f\c\ze)   + \frac{1}{4}\dual\fb(f\wedge\eta) + \frac{1}{4}\dual\fb(f\wedge\ze)\\
&& +  \frac{1}{4}\fb\div'f  + \frac{1}{4}\dual\fb \curl'f +\frac{1}{2}\la^{-1}\fb\c\chih'   -\frac{1}{16}(f\c\fb)\fb\la^{-1}\trch' +\frac{1}{16}  (\fb\wedge f) \fb\la^{-1}\atrch'\\
 &&  -  \frac{1}{16}\dual\fb ( f\c\fb) \la^{-1}\atrch' +\frac{1}{16}\dual\fb \la^{-1}(f\wedge \fb)\trch' +\lot
\eeaa
as desired.


\subsection{Transformation formula for $\eta$}


Next, we have
\beaa
2\eta_a' &=&  \g(\D_{e_3'}e_4', e_a') = \g(\D_{\la e_3'}(\la^{-1} e_4'), e_a') = \g\left(\D_{\la e_3'}(\la^{-1} e_4'), e_a +\frac 1 2  \fb_a \la^{-1} e_4'  +\frac 1 2 f_a e_3\right)\\
&=& \g\left(\D_{\la e_3'}(\la^{-1} e_4'), e_a  +\frac 1 2 f_a e_3\right)= \g\left(\D_{\la e_3'}\left(e_4+f^be_b+\frac{1}{4}|f|^2e_3\right), e_a  +\frac 1 2 f_a e_3\right)\\
&=& \g\left(\D_{\la e_3'}e_4, e_a  +\frac 1 2 f_a e_3\right)+\la e_3'(f_a)+f^b\g\left(\D_{\la e_3'}e_b, e_a  +\frac 1 2 f_a e_3\right)+\frac{1}{4}|f|^2\g\left(\D_{\la e_3'}e_3, e_a\right).
\eeaa
We compute the term on the right-hand side
\beaa
\g\left(\D_{\la e_3'}e_4, e_a  +\frac 1 2 f_a e_3\right) &=& \g\left(\D_{\left(1+\frac{1}{2}f\c\fb  +\frac{1}{16} |f|^2  |\fb|^2\right) e_3 + \left(\fb^b+\frac 1 4 |\fb|^2f^b\right) e_b  + \frac 1 4 |\fb|^2 e_4}e_4, e_a  +\frac 1 2 f_a e_3\right)\\
&=& \left(1+\frac{1}{2}f\c\fb  \right)\g\left(\D_{ e_3}e_4 , e_a +\frac 1 2 f_a e_3\right)+\fb^b\g\left(\D_{  e_b }e_4, e_a  +\frac 1 2 f_a e_3\right)+\lot\\
&=& 2\left(1+\frac{1}{2}f\c\fb  \right)\eta_a -2\omb f_a+\fb^b\chi_{ba}+f_a\fb^b\ze_b+\lot,
\eeaa
\beaa
&&f^b\g\left(\D_{\la e_3'}e_b, e_a  +\frac 1 2 f_a e_3\right) = - f^b\g\left(\D_{\la e_3'}\left(e_a  +\frac 1 2 f_a e_3\right), e_b\right)\\
&=&- f^b\g\left(\D_{\la e_3'}\left(e_a  +\frac 1 2 f_a e_3\right), e_b+\frac 1 2 f_b e_3\right) +\frac 1 2 |f|^2\g\left(\D_{\la e_3'}\left(e_a  +\frac 1 2 f_a e_3\right), e_3\right)\\
&=&- f^b\g\left(\D_{\la e_3'}\left(e_a'-\frac 1 2  \fb_a \la^{-1} e_4' \right), e_b'-\frac 1 2  \fb_b \la^{-1} e_4' \right) -\frac 1 2 |f|^2\g\left(\D_{\la e_3'}e_3, e_a  +\frac 1 2 f_a e_3\right)\\
&=& - f^b\g\left(\D_{\la e_3'}e_a', e_b' \right) -2  \fb_b f^b\eta_a'+ \fb_a  f^b\eta_b' +\lot,
\eeaa
and
\beaa
\frac{1}{4}|f|^2\g\left(\D_{\la e_3'}e_3, e_a\right) &=& \lot
\eeaa
We infer
\beaa
2\eta_a' &=& 2\left(1+\frac{1}{2}f\c\fb  \right)\eta_a  -2\omb f_a+\fb^b\chi_{ba}+f_a\fb^b\ze_b +\la \nab_3'f_a  -2  \fb_b f^b\eta_a'+ \fb_a  f^b\eta_b' +\lot
\eeaa
and hence
\beaa
\eta' &=& \eta +\frac{1}{2}\la \nab_3'f +\frac{1}{4}\fb\trch -\frac{1}{4}\dual\fb\atrch -\omb\, f  +\err(\eta, \eta'),\\
\err(\eta, \eta') &=& \frac{1}{2}(f\c\fb)\eta +\frac{1}{2}\fb\c\chih
+\frac{1}{2}f(\fb\c\ze)  -  (\fb\c f)\eta'+ \frac{1}{2}\fb (f\c\eta') +\lot
\eeaa
as desired.


\subsection{Transformation formula for $\etab$}


Next, we have
\beaa
2\etab_a' &=&  \g(\D_{e_4'}e_3', e_a')= \g(\D_{\la^{-1}e_4'}(\la e_3'), e_a') = \g\left(\D_{\la^{-1}e_4'}\left(e_3 +  \fb^be_b' -\frac 1 4 |\fb|^2\la^{-1} e_4'\right), e_a'\right)\\
&=& \g\left(\D_{\la^{-1}e_4'}e_3 , e_a'\right) +\g\left(\D_{\la^{-1}e_4'}\left( \fb^be_b' \right), e_a'\right) -\frac 1 2 |\fb|^2\la^{-2}\xi'_a.
\eeaa
We compute the terms on the right-hand side
\beaa
\g\left(\D_{\la^{-1}e_4'}e_3 , e_a'\right) &=& \g\left(\D_{\la^{-1}e_4'}e_3 , \left(\de_a^b +\frac{1}{2}\fb_af^b\right) e_b +\frac 1 2  \fb_a  e_4 \right)\\
&=& \left(\de_a^b +\frac{1}{2}\fb_af^b\right)\g\left(\D_{\la^{-1}e_4'}e_3 ,  e_b  \right)+\frac 1 2  \fb_a\g\left(\D_{\la^{-1}e_4'}e_3 ,   e_4 \right)\\
&=& \left(\de_a^b +\frac{1}{2}\fb_af^b\right)\g\left(\D_{e_4+f^ce_c+\frac{1}{4}|f|^2e_3}e_3 ,  e_b  \right)+\frac 1 2  \fb_a\g\left(\D_{e_4+f^be_b+\frac{1}{4}|f|^2e_3}e_3 ,   e_4 \right)\\
&=& \left(\de_a^b +\frac{1}{2}\fb_af^b\right)\left(2\eta_b+f^c\chib_{cb}+\frac{1}{2}|f|^2\xib_b\right)+\frac 1 2  \fb_a\big(-4\om-f\c\ze \big)+\lot
\eeaa
and
\beaa
\g\left(\D_{\la^{-1}e_4'}\left( \fb^be_b' \right), e_a'\right) &=& \la^{-1}e_4'(\fb_a)+ \fb^b\g\left(\D_{\la^{-1}e_4'}e_b', e_a'\right) = \la^{-1}e_4'(\fb_a) - \fb^b\g\left(\D_{\la^{-1}e_4'}e_a', e_b'\right)\\
&=& \nab_{\la^{-1}e_4'}\fb_a.
\eeaa
We infer
\beaa
2\etab_a' &=& \g\left(\D_{\la^{-1}e_4'}e_3 , e_a'\right) +\g\left(\D_{\la^{-1}e_4'}\left( \fb^be_b' \right), e_a'\right) -\frac 1 2 |\fb|^2\la^{-2}\xi'_a\\
&=& 2\etab_a+\nab_{\la^{-1}e_4'}\fb_a +\frac{1}{2}\trchb f_a- \frac{1}{2}\atrchb\dual f_a -2\om\fb_a+f^c\chibh_{ca} + \fb_af^b\eta_b -\frac 1 2  \fb_a (f\c\ze) \\
&&-\frac 1 2 |\fb|^2\la^{-2}\xi'_a+\lot
\eeaa
and hence
\beaa
\etab' &=& \etab +\frac{1}{2}\nab_{\la^{-1}e_4'}\fb +\frac{1}{4}\trchb f - \frac{1}{4}\atrchb\dual f -\om\fb +\err(\etab, \etab'),\\
\err(\etab, \etab') &=&  \frac{1}{2}f\c\chibh + \frac{1}{2}(f\c\eta)\fb-\frac 1 4  (f\c\ze)\fb  -\frac 1 4 |\fb|^2\la^{-2}\xi'+\lot
\eeaa
as desired.


\subsection{Transformation formula for $\om$}


Next, we have
\beaa
4\om' &=&  \g(\D_{e_4'}e_4', e_3') = -2e_4'(\log\la)+\la\g(\D_{\la^{-1}e_4'}(\la^{-1}e_4'), \la e_3')\\
&=& -2e_4'(\log\la)+\la\g\left(\D_{\la^{-1}e_4'}(\la^{-1}e_4'), e_3 +  \fb^ae_a' -\frac 1 4 |\fb|^2\la^{-1} e_4'\right)\\
&=& -2e_4'(\log\la)+\la\g\left(\D_{\la^{-1}e_4'}(\la^{-1}e_4'), e_3\right)+2\la^{-1}\fb^a\xi'_a\\
&=& -2e_4'(\log\la)+\la\g\left(\D_{\la^{-1}e_4'}\left(e_4+f^be_b+\frac{1}{4}|f|^2e_3\right), e_3\right)+2\la^{-1}\fb^a\xi'_a\\
&=& -2e_4'(\log\la)+ \la\g\left(\D_{\la^{-1}e_4'}e_4, e_3\right)+ \la f^b\g\left(\D_{\la^{-1}e_4'}e_b, e_3\right)+2\la^{-1}\fb^a\xi'_a.
\eeaa
We compute the terms on the right-hand side
\beaa
\g\left(\D_{\la^{-1}e_4'}e_4, e_3\right) &=& \g\left(\D_{e_4+f^a e_a+\frac{1}{4}|f|^2e_3}e_4, e_3\right)\\
&=& 4\om+2f\c\ze-|f|^2\omb
\eeaa
and
\beaa
\g\left(\D_{\la^{-1}e_4'}e_b, e_3\right) &=& -\g\left(\D_{\la^{-1}e_4'}e_3, e_b\right) = -2\etab_b - f^c\chib_{cb} +\lot
\eeaa
We infer
\beaa
4\la^{-1}\om' &=& -2\la^{-1}e_4'(\log\la)+ \g\left(\D_{\la^{-1}e_4'}e_4, e_3\right)+  f^b\g\left(\D_{\la^{-1}e_4'}e_b, e_3\right)+2\la^{-2}\fb^a\xi'_a\\
&=& -2\la^{-1}e_4'(\log\la)+ 4\om+2 f\c(\ze-\etab) -|f|^2\omb+  f^b\left( - f^c\chib_{cb} \right)+2\la^{-2}\fb^a\xi'_a+\lot
\eeaa
and hence
\beaa
\la^{-1}\om' &=&  \om -\frac{1}{2}\la^{-1}e_4'(\log\la)+\frac{1}{2}f\c(\ze-\etab) +\err(\om, \om'),\\
\err(\om, \om') &=&   -\frac{1}{4}|f|^2\omb - \frac{1}{8}\trchb |f|^2+\frac{1}{2}\la^{-2}\fb\c\xi' +\lot
\eeaa
as desired.


\subsection{Transformation formula for $\omb$}


Next, we have
\beaa
4\omb' &=&  \g(\D_{e_3'}e_3', e_4') = 2e_3'(\log\la)+\la^{-1}\g(\D_{\la e_3'}(\la e_3'), \la^{-1} e_4')\\
&=& 2e_3'(\log\la) -\la^{-1}\g(\D_{\la e_3'}(\la^{-1}e_4'), \la e_3')\\
&=& 2e_3'(\log\la) -\la^{-1}\g\left(\D_{\la e_3'}(\la^{-1}e_4'), e_3 +  \fb^ae_a' -\frac 1 4 |\fb|^2\la^{-1} e_4'\right)\\
&=& 2e_3'(\log\la) -\la^{-1}\g\left(\D_{\la e_3'}\left(e_4+f^a e_a+\frac{1}{4}|f|^2e_3\right), e_3 \right) -2\fb^a\la^{-1}\eta_a'\\
&=& 2e_3'(\log\la) -\la^{-1}\g\left(\D_{\la e_3'}e_4, e_3 \right) - f^a\la^{-1}\g\left(\D_{\la e_3'}e_a, e_3 \right)  -2\fb^a\la^{-1}\eta_a'.
\eeaa
We compute
\beaa
\g\left(\D_{\la e_3'}e_4, e_3 \right) &=& \g\left(\D_{\left(1+\frac{1}{2}f\c\fb \right) e_3 + \fb^b e_b  + \frac 1 4 |\fb|^2 e_4}e_4, e_3 \right)+\lot\\
&=& -4\left(1+\frac{1}{2}f\c\fb \right)\omb+2\fb^b\ze_b+ |\fb|^2\om+\lot
\eeaa
and
\beaa
f^a\g\left(\D_{\la e_3'}e_a, e_3 \right) &=& f^a\g\left(\D_{e_3 + \fb^b e_b  }e_a, e_3 \right)+\lot = -2f^a\xib_a - f^a\fb^b\chib_{ba}+\lot
\eeaa
and hence
\beaa
\la\omb' &=& \frac{1}{2}\la e_3'(\log\la) - \frac{1}{4}\g\left(\D_{\la e_3'}e_4, e_3 \right) - \frac{1}{4}f^a\g\left(\D_{\la e_3'}e_a, e_3 \right)  -\frac{1}{2}\fb^a\eta_a'\\
&=& \frac{1}{2}\la e_3'(\log\la) - \frac{1}{4}\left(-4\left(1+\frac{1}{2}f\c\fb \right)\omb+2\fb^b\ze_b+ |\fb|^2\om \right)\\
&& -\frac{1}{4}\Big(-2f^a\xib_a - f^a\fb^b\chib_{ba}\Big)  -\frac{1}{2}\fb^a\eta_a'+\lot
\eeaa
Together with the above transformation formula for $\eta'$, we deduce
\beaa
\la\omb' &=& \omb+\frac{1}{2}\la e_3'(\log\la)  -\frac{1}{2}\fb\c\ze -\frac{1}{2}\fb\c\eta +\err(\omb,\omb')\\
\err(\omb,\omb') &=& f\c\fb\,\omb-\frac{1}{4} |\fb|^2\om  +\frac{1}{2} f\c\xib + \frac{1}{8}(f\c\fb)\trchb + \frac{1}{8}(\fb\wedge f)\atrchb \\
&& -\frac{1}{8}|\fb|^2\trch  -\frac{1}{4}\la \fb\c\nab_3'f    +\frac{1}{2}  (\fb\c f)(\fb\c\eta')- \frac{1}{4}|\fb|^2 (f\c\eta')+\lot
\eeaa
as desired.


\subsection{Transformation formula for $\a$}


Next, we have
\beaa
\la^{-2}\a_{ab}'&=&\R( e_a' , e_4', e_b', e_4')=  \R\Big( e_a' ,  e_4 + f^c  e_b +\frac 1 4 |f|^2  e_3  , e_b ',  e_4 + f^d  e_d +\frac 1 4 |f|^2  e_3 \Big)\\
&=&  \R( e_a' , e_4, e_b', e_4) + f ^c   \R\Big( e_a' ,   e_c , e_b',  e_4 \Big)+ f ^d   \R\Big( e_a' ,   e_4  , e_b',  e_d \Big)\\
&+&\frac 1 4 |f|^2   \R\Big( e_a' ,   e_3  , e_b ',  e_4  \Big)+\frac 1 4 |f|^2   \R\Big( e_a' ,   e_4 , e_b ',  e_3  \Big)+\lot\\
&=&  \R( e_a' , e_4, e_b', e_4) + f ^c   \R\Big( e_a' ,   e_c , e_b',  e_4 \Big)+ f ^d   \R\Big( e_a' ,   e_4  , e_b',  e_d \Big)\\
&+&\frac 1 4 |f|^2  \Bigg( \R\Big( e_a ,   e_3  , e_b ,  e_4  \Big)+\R\Big( e_a ,   e_4   , e_b ,  e_3  \Big)\Bigg) + f^c f^d \R\big(e_a, e_c, e_b, e_d\big)  +\lot\\
 &=& \R( e_a' , e_4, e_b', e_4) + f ^c   \R\Big( e_a' ,   e_c , e_b',  e_4 \Big)+ f ^d   \R\Big( e_a' ,   e_4  , e_b',  e_d \Big)\\
 &-&\frac 1 2  |f|^2 \rho\de_{ab}   -\in_{ac}\in_{bd} f^c f^d \rho +      \lot
\eeaa
We have
\beaa
 \R( e_a' , e_4, e_b', e_4) &=&  \R\Big( (e_a   +\frac{1}{2}\fb_af^c e_c) +\frac 1 2 f_a   e_3 , e_4, (e_b+ \frac{1}{2}\fb_b f^d e_d)  +\frac 1 2 f_b    e_3    , e_4\Big)+\lot
 \\
&=&  \R\Big( (e_a   +\frac{1}{2}\fb_af^c e_c) , e_4, (e_b+ \frac{1}{2}\fb_b f^d e_d )    , e_4\Big)
+\frac 1 2 f_a \R( e_3,  e_4, e_b, e_4\big) \\
&+   &\frac 1 2 f_b  \R( e_a ,  e_4, e_3 , e_4\big) +  \frac 1 4  f_a  f_b  \R( e_3 , e_4, e_3, e_4) +\lot\\
 &=&\a_{ab} + \big( f_a \b_b+f_b \b_a) +  f_a f_b \rho+\lot
\eeaa
Also,
\beaa
 f ^c   \R\Big( e_a' ,   e_c , e_b',  e_4 \Big)&=& f ^c   \R\Big( e_a+\frac 1 2 f_a e_3+\frac 1 2 \fb e_4,   e_c , e_b+\frac 1 2 f_b e_3+\frac 1 2 \fb_b e_4 ,  e_4 \Big)+\lot
 \\
 &=& f ^c   \R\Big( e_a,   e_c , e_b,  e_4 \Big)+\frac 1 2 f^c f_a \R\Big( e_3,   e_c , e_b,  e_4 \Big)+ \frac 1 2 f^c f_b \R\Big( e_a,   e_c , e_3,  e_4 \Big)\\
 &+&\lot\\
 &=&-\in_{ac} \dual \b_b f^c+\frac 1 2 f^c f_a\big( \rho\de_{cb} -\rhod \in_{cb}\big)+\frac 1 2 f^c f_b\big(  2 \in_{ac}\rhod \big)+\lot
 \\
 &=&- \dual \b_b  \dual f_a +\frac 1 2 f_a f_b \rho +\frac 1 2  f_a \dual f_b \rhod + \dual f_a\, f_b \,\rhod+\lot
 \eeaa
 and,
\beaa
 f ^d   \R\Big( e_a' ,   e_4  , e_b',  e_d \Big)&=&f ^c   \R\Big(  e_b',  e_c,  e_a' ,   e_4  \Big)=- \dual \b_a  \dual f_b +\frac 1 2 f_b f_a \rho +\frac 1 2  f_b \dual f_a  \rhod + \dual f_b  f_a  \rhod+\lot
\eeaa
Consequently,
\beaa
\la^{-2}\a_{ab}'&=& \R( e_a' , e_4, e_b', e_4) + f ^c   \R\Big( e_a' ,   e_c , e_b',  e_4 \Big)+ f ^d   \R\Big( e_a' ,   e_4  , e_b',  e_d \Big)-\frac 1 2  |f|^2 \rho  - \dual f_a  \dual f_b \rho \\
&=&\a_{ab} + \big( f_a \b_b+f_b \b_a) +  f_a f_b \rho - \dual \b_b  \dual f_a +\frac 1 2 f_a f_b \rho +\frac 1 2  f_a \dual f_b \rhod + \dual f_a f_b \rhod\\
& -&\dual \b_a  \dual f_b +\frac 1 2 f_b f_a \rho +\frac 1 2  f_b \dual f_a  \rhod + \dual f_b  f_a  \rhod -\frac 1 2  |f|^2 \rho\de_{ab}+\lot\\
&=&\a_{ab} + \big( f_a \b_b+f_b \b_a) - \big( \dual f_a\dual  \b_b+\dual f_b\dual  \b_a)+ \big( 2  f_a f_b-\dual f_a \dual f_b  -\frac 1 2 |f|^2 \de_{ab}\big) \rho
\\
&+&  \frac 3  2 \Big(  f_a\, \dual  f _b +f_b\,  \dual  f_a) \rhod +\lot
\eeaa
Since $\a$ is traceless, we infer
\beaa
\bsplit
\la^{-2} \a'&=\a +\err(\a, \a')\\
\err(\a, \a')&=  \big(  f\hot \b  -\dual f \hot \dual  \b )+ \big( f\hot f-\frac 1 2  \dual f \hot   \dual f \big) \rho
+  \frac 3  2 \big(  f \hot  \dual  f\big) \rhod +\lot
\end{split}
\eeaa
as desired.


\subsection{Transformation formula for $\b$}


 To derive the transformation formula for $\b$  we  write
\beaa
 2 \la^{-1}\b_a' &=& \R( e_a' , e_4', e_3', e_4')= \R\Big( e_a' ,  e_4 + f^b  e_b +\frac 1 4 |f|^2  e_3  , e_3',  e_4 + f^b  e_b +\frac 1 4 |f|^2  e_3 \Big)\\
 &=& \R( e_a' , e_4, e_3', e_4)+ f ^b  \R\Big( e_a' ,   e_b , e_3',  e_4 \Big)+ f ^b  \R\Big( e_a' ,   e_4  , e_3',  e_b \Big)\\
 &+&\frac 1 4 |f|^2   \R( e_a' , e_3 , e_3', e_4)+ \frac 1 4 |f|^2   \R( e_a' , e_4, e_3', e_3)+ f^b f^c    \R( e_a' , e_b , e_3', e_c)+\lot\\
 &=& \R( e_a' , e_4, e_3', e_4)+ f ^b  \R\Big( e_a' ,   e_b , e_3',  e_4 \Big)+ f ^b  \R\Big( e_a' ,   e_4  , e_3',  e_b \Big)+\lot 
\eeaa
We have,
\beaa
\R( e_a' , e_4, e_3', e_4)&=& \R\big(  (\de_{ab} +\frac{1}{2}\fb_af_b) e_b +\frac 1 2  \fb_a  e_4 +\frac 1 2 f_a   e_3 , e_4, e_3', e_4\big)+\lot\\
&=&\R(e_a, e_4, e_3', e_4)+\frac 1 2 f_a  \R(e_3 , e_4, e_3', e_4) +\lot\\
&=&2  \left(1+\frac{1}{2}f\c\fb \right) \b_a+\fb^b \a_{ab}+ 2 f_a \rho+ \lot \\
&=& 2 \b_a + 2 f_a\rho+ \fb^b \a_{ab}+\lot
\eeaa
Since $\R\Big( e_a ,   e_b , e_3,  e_4 \Big)= 2\in_{ab} \rhod$  and  $ \R\Big( e_a ,   e_4  , e_b,  e_3 \Big)= -\rho \de_{ab}-\in_{ab} \rhod$
\beaa
 f^b   \R\Big( e_a' ,   e_b , e_3',  e_4 \Big)&=&f^b  \R\Big( e_a  + \frac 1 2 \fb_a e_4+\frac 1 2 f_b e_3    ,   e_b , e'_3,  e_4 \Big)+\lot =f ^b \R\Big( e_a ,   e_b , e_3,  e_4 \Big)+\lot\\
 &=& 2 \dual  f_a \rhod+\lot\\
  f ^b  \R\Big( e_a' ,   e_4  , e_3',  e_b \Big)&=&- f^b \R(e_a, e_4, e_b, e_3)+\lot = f_a \rho +\dual f _a\rhod+\lot
  \eeaa
  Hence,
  \beaa
   2 \b_a' &=& \R( e_a' , e_4, e_3', e_4)+ f ^b  \R\Big( e_a' ,   e_b , e_3',  e_4 \Big)+ f ^b  \R\Big( e_a' ,   e_4  , e_3',  e_b \Big)+\lot \\
   &=& 2 \b_a + 2 f_a\rho+ \fb^b \a_{ab} +2 \dual  f_a \rhod + f_a \rho +\dual f _a\rhod+\lot
  \eeaa
  Therefore,
  \beaa
  \bsplit
  \b_a'&=\b_a +\frac 3 2\big(  f_a \rho+\dual  f _a \rhod\big)+\err_a(\b, \b') \\
  \err_a(\b, \b')&= \frac 1 2 \fb^b\a_{ab}+\lot 
  \end{split}
  \eeaa
  as stated.


\subsection{Transformation formula for $\rho$}

  
We start with $\rho$. We have
\beaa
4\rho' &=& \R(e_4', e_3', e_4', e_3')\\
&=& \R\left(e_4', \la^{-1}\left(  e_3 +  \fb^ae_a' -\frac 1 4 |\fb|^2\la^{-1} e_4'\right), e_4', e_3'\right)\\
&=&  \la^{-1}\R\left(e_4',e_3, e_4', e_3'\right) + \la^{-1}\fb^a\R\left(e_4', e_a', e_4', e_3'\right)\\
&=&  \la^{-1}\R\left(e_4',e_3, e_4', \la^{-1}\left(  e_3 +  \fb^ae_a' -\frac 1 4 |\fb|^2\la^{-1} e_4'\right)\right) + \fb^a\R\left(\la^{-1}e_4', e_a', \la^{-1}e_4', \la e_3'\right),
\eeaa
and hence
\beaa
4\rho' &=&  \R\left(\la^{-1}e_4',e_3, \la^{-1}e_4', e_3\right) + \fb^a\R\left(\la^{-1}e_4',e_3, \la^{-1}e_4', e_a +\frac 1 2  \fb_a \la^{-1} e_4'  +\frac 1 2 f_a e_3\right) \\
&&+  \fb^a\R\left(\la^{-1}e_4',  e_a +\frac 1 2  \fb_a \la^{-1} e_4'  +\frac 1 2 f_a e_3, \la^{-1}e_4', \la e_3'\right) +\lot\\
&=&  \R\left(\la^{-1}e_4',e_3, \la^{-1}e_4', e_3\right) + \fb^a\R\left(\la^{-1}e_4',e_3, \la^{-1}e_4', e_a   +\frac 1 2 f_a e_3\right) \\
&&+  \fb^a\R\left(\la^{-1}e_4',  e_a  +\frac 1 2 f_a e_3, \la^{-1}e_4', \la e_3'\right) +\lot\\
&=&  \R\left(\la^{-1}e_4',e_3, \la^{-1}e_4', e_3\right) + \fb^a\R\left(\la^{-1}e_4',e_3, \la^{-1}e_4', e_a  \right)\\
&& +  \fb^a\R\left(\la^{-1}e_4',  e_a, \la^{-1}e_4', \la e_3'\right)   + 4 (f\c\fb)\rho+\lot
\eeaa  
  
We compute
\beaa
\R\left(\la^{-1}e_4',e_3, \la^{-1}e_4', e_3\right) &=& \R\left(e_4+f^ae_a,e_3, e_4+f^be_b, e_3\right)\\
&=& 4\rho+ f^a\R\left(e_a,e_3, e_4, e_3\right) + f^b\R\left(e_4,e_3, e_b, e_3\right) +\lot \\
&=& 4\rho - 4f\c\bb  +\lot,
\eeaa  
\beaa
\fb^a\R\left(\la^{-1}e_4',e_3, \la^{-1}e_4', e_a  \right) &=& \fb^a\R\left(e_4+f^be_b, e_3, e_4+f^ce_c, e_a  \right)+\lot\\
&=& \fb^a\R\left(e_4, e_3, e_4, e_a  \right)+ \fb^af^b\R\left(e_b, e_3, e_4, e_a  \right)\\
&&+\fb^af^c\R\left(e_4, e_3, e_c, e_a  \right)+\lot\\
&=& 2\fb\c\b -\fb^af^b(-\rho\de_{ba}+\rhod\in_{ba})+2\fb^af^c\in_{ac}\rhod+\lot\\
&=& 2\fb\c\b+\rho(f\c\fb) -3\rhod (f\wedge\fb)+\lot
\eeaa 
  and 
 \beaa
 \fb^a\R\left(\la^{-1}e_4',  e_a, \la^{-1}e_4', \la e_3'\right) &=& \fb^a\R\left(e_4+f^be_b,  e_a, e_4+f^ce_c, e_3+\fb^de_d\right)\\
 &=& \fb^a\R\left(e_4,  e_a, e_4, e_3\right)+\fb^af^b\R\left(e_b,  e_a, e_4, e_3\right)\\
 &&+ \fb^af^c\R\left(e_4,  e_a, e_c, e_3\right) +\lot\\
 &=& 2\fb\c\b+ \rho(f\c\fb) -3\rhod (f\wedge\fb)+\lot
 \eeaa 
 We infer
  \beaa
4\rho' &=&  \R\left(\la^{-1}e_4',e_3, \la^{-1}e_4', e_3\right) + \fb^a\R\left(\la^{-1}e_4',e_3, \la^{-1}e_4', e_a  \right)\\
&& +  \fb^a\R\left(\la^{-1}e_4',  e_a, \la^{-1}e_4', \la e_3'\right)   + 4 (f\c\fb)\rho+\lot\\
&=& 4\rho +4\fb\c\b - 4f\c\bb +6\rho(f\c\fb) -6\rhod (f\wedge\fb) +\lot
\eeaa  
and hence
  \beaa
\rho' &=& \rho +\err(\rho, \rho'),\\
\err(\rho, \rho') &=& \fb\c\b - f\c\bb +\frac{3}{2}\rho(f\c\fb) -\frac{3}{2}\rhod (f\wedge\fb) +\lot
\eeaa   
 as desired. Finally, the transformation formulas for $\aa$, $\bb$ and $\dual\rho$ follow respectively from the ones for $\a$, $\b$ and $\rho$ by symmetry consideration. This concludes the proof of Proposition \ref{Prop:transformation-formulas-generalcasewithoutassumptions}.

 
  \section{Proof of Proposition \ref{Prop:contractionforNN}}
  \lab{sec:proofofProp:contractionforNN} 
  

 
  \subsection{Notations for differences}
  
 
  To compare the ninetets
    \beaa
 \NN^{n,\#}=\Big(U^{(n)}, S^{(n)}, \ovla^{n,\#},  f^{n,\#},  \fb^{n,\#}; \, \Cbn_0, \Mn_0, \Cbpn, \Mpn\Big)
 \eeaa
 we   start by introducing notations for differences.
  
 Recall the notations
 \bea
 \ovlanndiez&= \big(\ovlann\big)^{\#_n}, \quad  \fnndiez= \big(\fnn\big)^{\#_n}, \quad   \fbnndiez= \big(\fbnn\big)^{\#_n}.
 \eea
  We also introduce  the operators,
\bea
\curln:=\big(\curlSn\big)^{\#_n}, \quad  \divn:=\big(\divSn\big)^{\#_n}, \quad \lapn:=\big(\lapSn\big)^{\#_n},
\eea
defined with respect to the pull back  metric  
\bea
\gn:=g^{\S(n), \#_n}, 
\eea
i.e. the pull back by  $\Psi^{(n)}$  of  the metric $g^{\S(n)}$. We also introduce a notation for the area radius and the Hawking mass of $\S(n)$
\bea
r^{(n)}:=r^{\S(n)}, \qquad\qquad  m^{(n)}:=m^{\S(n)},
\eea
as well as 
\bea
F^{n+1,\#}:=( \ovlanndiez, \fnndiez, \fbnndiez), \qquad\qquad {\ovb}^{n+1, \#}  := \big({\ovb}\,^{(n+1)}\big)^{\#_n}.
\eea

We  define the differences
 \bea
 \lab{eq:def-differencesfnn-diez}
 \bsplit
    \fnndot&:=\fnndiez- \fndiez, \\   
    \fbnndot&:=\fbnndiez - \fbndiez, \, \\
     \ovlanndot&:=\ovlanndiez - \ovlandiez,\\
     (\de\ovb)^{(n+1)} &:= {\ovb}^{n+1, \#} - {\ovb}^{n, \#}, 
     \end{split}
   \eea
    and
 \bea
 \bsplit
  \Cbnnde_0&:=  \Cbnndot_0- \Cbndot_0,\\
  \Cbpnnde&:=\Cbpnndot-\Cbpndot,\\
  \Mnnde&:=\Mnndot_0-\Mndot_0,\\
  \Mpnnde&:=\Mpnndot-\Mpndot.
  \end{split}
  \eea

 
 \subsection{Comparison results  for iterates}
 

  \begin{proposition}
  \lab{Prop:variouscomparisonsn}
 Recall that the sequence $U^{(n)}$ satisfies, in view of \eqref{eq:nintetbound},
 \beaa
 \big\|( \Un,  \Sn) \big\|_{\hk_{s_{max}+1}(\ovS)} &\les& r\dg
 \eeaa
 uniformly in $n$.    The following estimates hold true.
        \begin{enumerate}
        \item  We have, relative to the  coordinates $y^1, y^2$ on $\ovS$,
        \beaa
        \Big| \gn_{ab}- g^{(n-1)} _{ab}\Big| \les  \big\|( \de\Un,  \de\Sn) \big\|_{\hk_{3}(\ovS)}.
        \eeaa
         \item For every $f\in\mathcal{S}_k(\S)$ we have,
  \bea
 \label{eq:lemma:int-gaS-ga3-n}
 \|f^\#\|_{L^2(\ovS, \gn)}   &=&   \|f^\#\|_{L^2(\ovS, g^{(n-1)} )}  \Big(1+ O(r^{-2}\big\|( \de\Un,  \de\Sn) \big\|_{\hk_{3}(\ovS)})        \Big).
 \eea
\item  As a corollary  of \eqref{eq:lemma:int-gaS-ga3-n} (choosing $f=1$) we deduce
  \bea
  \lab{eq:lemma:int-gaS-ga4-n}
 \frac{r^{(n)}}{r^{(n-1)}}= 1 + O(r ^{-2}\big\|( \Un,  \Sn) \big\|_{\hk_{3}(\ovS)} ),  
 \eea
 where $r^{(n)} $ is the area radius of $\S(n)$ and $r^{(n-1)} $ that of $\S(n-1)$.
 \item We have
 \bea
 |m^{(n)}-m^{(n-1)}| &\les& r^{-1}\big\|( \Un,  \Sn) \big\|_{\hk_{3}(\ovS)}
 \eea
 where $m^{(n)}$ is the Hawking mass of $\S(n)$ and $m^{(n-1)}$  is the Hawking mass of $\S(n-1)$.
 \item  We have
 \bea
\sum_{a,b,c=1,2}\Big\|(\Ga^{(n)})_{ab}^c-(\Ga^{(n-1)})_{ab}^c\Big\|_{\hk_{2}(\ovS)} &\les \big\|( \de\Un,  \de\Sn) \big\|_{\hk_{3}(\ovS)}
\eea
where $ \Ga^{(n)}, \, \Ga^{(n-1)} $ denote the Christoffel symbols  of the metrics $g^{(n)}, \, g^{(n-1)} $ relative to the coordinates $y^1, y^2$
on $\ovS$ and $\hk_k(\ovS)$ the Sobolev  spaces w.r.t.  the metric $\ovg$.
\item  We have  for every $F\in\mathcal{S}_k(\ovS)$,   for all $k\le s_{max}$,
\bea
 \Big\|F\Big\|_{\hk_{k}(\ovS, \gn)} &=& \Big\|F \Big\|_{\hk_{k}(\ovS, g^{(n-1)})}\Big(1+ O(r^{-1}\dg)        \Big).
\eea

\item  We have  for every $F\in\mathcal{S}_k(\RR)$,   for all $k\le 2$,
\bea
 \Big\|F^{\#_n}-F^{\#_{n-1}}\Big\|_{\hk_{k}(\ovS)} &\les& r^{-1}\big\|( \de\Un,  \de\Sn) \big\|_{\hk_{3}(\ovS)}\sup_{\RR}|\dk^{\leq k+1}F|.
\eea
        \end{enumerate}
        \end{proposition}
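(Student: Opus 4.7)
The plan is to follow the template of Lemma \ref{lemma:comparison-gaS-ga} and Proposition \ref{Prop:comparison-gaS-ga:highersobolevregularity}, but rather than comparing a single pull-back metric $g^{\S,\#}$ with the reference metric $\ovg$, we now compare two pull-back metrics $g^{(n)}$ and $g^{(n-1)}$ coming from two consecutive iterate deformations $\Psi^{(n)}$ and $\Psi^{(n-1)}$ of $\ovS$. Throughout, the small quantity controlling all differences will be $\|(\de U^{(n)}, \de S^{(n)})\|_{\hk_3(\ovS)}$, with $\de U^{(n)}:= U^{(n)}-U^{(n-1)}$ and $\de S^{(n)}:=S^{(n)}-S^{(n-1)}$, and we will freely use the uniform bound \eqref{eq:nintetbound} for $\Un, \Sn$.

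First I would use Lemma \ref{Lemma:deformation1} to write $g^{(n)}_{ab}$, $g^{(n-1)}_{ab}$ explicitly in the coordinates $(y^1,y^2)$ as polynomial expressions in the coefficients $\YY^{(k),4}_{(a)}$, $\YY^{(k),3}_{(a)}$, $\YY^{(k),c}_{(a)}$, which themselves are built from $\pr_{y^a}U^{(k)}$, $\pr_{y^a}S^{(k)}$, and background quantities $(\vsi, \Omb, Y^c_{(a)}, Z^c)$ evaluated at $\Psi^{(k)}(p)$. Taking the difference, the contributions split into two types: terms proportional to $\pr_{y^a}(\de U^{(n)}), \pr_{y^a}(\de S^{(n)})$, and terms measuring how much a background quantity $F$ on $\RR$ differs at $\Psi^{(n)}(p)$ versus $\Psi^{(n-1)}(p)$. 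The latter is estimated by a direct adaptation of Lemma \ref{Le:Transportcomparison}: writing
\[
F(\Psi^{(n)}(p))-F(\Psi^{(n-1)}(p))=\int_0^1 \frac{d}{d\la}F\big(\ovu+\la U^{(n)}+(1-\la)U^{(n-1)}, \ovs+\la S^{(n)}+(1-\la)S^{(n-1)},y\big)\,d\la,
\]
one controls this by $\|(\de U^{(n)},\de S^{(n)})\|_{L^\infty(\ovS)}\sup_{\RR}(|e_3 F|+r^{-1}|\dk F|)$. This gives statement (1), and statement (7) for $k=0$.

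Next I would derive statement (2) exactly as in the proof of Lemma \ref{lemma:comparison-gaS-ga} by comparing $\sqrt{\det g^{(n)}}$ with $\sqrt{\det g^{(n-1)}}$ using (1); statement (3) follows by choosing $f=1$. Statement (4) follows by reproducing the argument of Corollary \ref{cor:comparison-gaS-ga-Oepgsphere:mSminusm}, writing
\[
m^{(n)}-m^{(n-1)}=\frac{r^{(n)}-r^{(n-1)}}{2}+\frac{1}{32\pi}\Big(r^{(n)}\!\!\int_{\S(n)}\ka^{\S(n)}\kab^{\S(n)}-r^{(n-1)}\!\!\int_{\S(n-1)}\ka^{\S(n-1)}\kab^{\S(n-1)}\Big)
\]
and bounding the integrand difference after pull-back by $\ovg$ using (1), (3) together with the frame transformation formulas of Proposition \ref{Prop:transformation formulas-integrtogeneral} applied to both $\S(n)$ and $\S(n-1)$. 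Statement (5) is proved by the same difference mechanism applied to the explicit formula for the Christoffel symbols of $g^{(n)}$ in the coordinates $(y^1,y^2)$, using (1) at the level of one derivative of the metric. The main technical point here is that all factors multiplying $\pr_{y^a}(\de U^{(n)})$ etc. are controlled uniformly in $n$ thanks to the uniform boundedness \eqref{eq:nintetbound}.

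Statement (6) is then obtained by iterating the comparison $\nab^{(n)}-\nab^{(n-1)}= -(\Ga^{(n)}-\Ga^{(n-1)})$ exactly as in Step 2 of the proof of Proposition \ref{Prop:comparison-gaS-ga:highersobolevregularity}, together with (5) and the Sobolev embedding (recall that $\hk_k$ is an algebra for $k\ge 2$). Statement (7) for $k\geq 1$ follows from the chain rule applied to $F\circ\Psi^{(n)}-F\circ\Psi^{(n-1)}$ together with the above $L^\infty$ bound and the uniform $\hk_{s_{max}+1}$ control of $(U^{(n)},S^{(n)})$. The main obstacle is bookkeeping: ensuring that at each step the differences are absorbed into factors of $\|(\de U^{(n)},\de S^{(n)})\|_{\hk_3(\ovS)}$ uniformly in $n$, which is guaranteed because all background quantities $(\vsi,\Omb,\Bb^a, Y^c_{(a)})$ and their $\dk$-derivatives are bounded on $\RR$ by assumptions ${\bf A1}$--${\bf A3}$ and the iterates themselves are uniformly controlled in $\hk_{s_{max}+1}$ by \eqref{eq:nintetbound}.
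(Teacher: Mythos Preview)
Your proposal is correct and follows essentially the same approach as the paper: the paper's own proof is a one-line remark that the result follows by a simple adaptation of the proofs of Lemma \ref{Le:Transportcomparison}, Lemma \ref{lemma:comparison-gaS-ga}, Proposition \ref{Prop:comparison-gaS-ga:highersobolevregularity} and Corollary \ref{cor:comparison-gaS-ga-Oepgsphere:mSminusm}, and what you have written is precisely a careful unpacking of that adaptation, item by item.
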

        \begin{proof}
        The proof   follows by a simple adaptation of the proofs  of Lemma \ref{Le:Transportcomparison}, Lemma \ref{lemma:comparison-gaS-ga}, Proposition \ref{Prop:comparison-gaS-ga:highersobolevregularity} and Corollary \ref{cor:comparison-gaS-ga-Oepgsphere:mSminusm}.
        \end{proof}

   
   \subsection{Equations  for $\fnndot, \fbnndot, \ovlanndot$}
   

\begin{lemma}\lab{lemma:equationssatisfiedbythesuccessivedifferences:contraction}
The quantities $\fnndot, \fbnndot, \ovlanndot$ verify the following system
\bea
\bsplit
\curln  \fnndot&= (\de h_1)^{(n)},\\
\curln \fbnndot&=(\de \underline{h}_1)^{(n)},
\end{split}
\eea
\bea
\bsplit
\divn  \fnndot+\frac{2}{r^{(n)}}\ovlanndot - \frac{2}{(r^{(n)})^2}(\de\ovb)^{(n+1)}&=(\de h_2)^{(n)},\\
\divn  \fbnndot +\frac{2}{r^{(n)}}\ovlanndot + \frac{2}{(r^{(n)})^2}(\de\ovb)^{(n+1)}&=  \Cbnnde_0+\sum_p \Cbpnnde \Jp\\
&+(\de \underline{h}_2)^{(n)},
\end{split}
\eea
\bea
\bsplit
  \left(\lapn + \frac{2}{(r^{(n)})^2}\right)\ovlanndot  &=\Mnnde_0+\sum _p\Mpnnde\Jp+(\de h_3)^{(n)}\\
  &+\frac{1}{2r^{(n)}}\left(  \Cbnnde_0+\sum_p \Cbpnnde \Jp\right),
  \end{split}
\eea
\bea
\bsplit
\lapn (\de\ovb)^{(n+1)} -\frac{1}{2}\divn\Big(\fbnndot - \fnndot\Big) &= (\de h_4)^{(n)},\\
\ov{(\de\ovb)^{(n+1)}}^{\ovS, \#} &= (\de b_0)^{(n)},
\end{split}
\eea
   and,
\bea
(\divn\fnndot)_{\ell=1}=(\de\La)^{(n)}, \qquad (\divn\fbnndot)_{\ell=1}=(\de\Lab)^{(n)},
\eea
   where $(\de h_1)^{(n)}$, $(\de h_2)^{(n)}$, $(\de h_3)^{(n)}$, $(\de h_4)^{(n)}$, $(\de \underline{h}_1)^{(n)}$, $(\de \underline{h}_2)^{(n)}$ and $(\de b_0)^{(n)}$ are given by
\bea\lab{eq:definitionofh1nhb1nh2nhb2nh3nh4n:contraction:1}
\bsplit
(\de h_1)^{(n)} &= -\de\err_1[\Fndiez]-\big( \curln-\curlnmin\big)f^{n,\#},\\
(\de \underline{h}_1)^{(n)} &=-\de\err_1[\Fndiez]-\big( \curln-\curlnmin\big)\fb^{n,\#},
\end{split}
\eea
 \bea\lab{eq:definitionofh1nhb1nh2nhb2nh3nh4n:contraction:2}
   \bsplit
(\de h_2)^{(n)} &=-\left(\frac{2}{r^{(n)}}-\frac{2}{r^{(n-1)}}\right)\ovla^{n,\#} +\left(\frac{2}{(r^{(n)})^2}-\frac{2}{(r^{(n-1)})^2}\right)\ovb^{n,\#}\\
& -\left(\ka^{\#_{n-1}}-\frac{2}{r^{(n)}}\right) \ovlandot -\left(\ka^{\#_{n-1}}-\ka^{\#_{n-2}}-\frac{2}{r^{(n)}}+\frac{2}{r^{(n-1)}}\right) \ovla^{n-1,\#}\\
& - \left(\kadot^{\#_{n}}-\kadot^{\#_{n-1}}\right) -\big( \divn -\divnmin\big)f^{n,\#}\\
& - \de\err_1[\Fndiez] -\left(\frac{2(r^{\#_n}-r^{(n)})^2}{r^{\#_n}(r^{(n)})^2} - \frac{2(r^{\#_{n-1}}-r^{(n-1)})^2}{r^{\#_{n-1}}(r^{(n-1)})^2} \right),\\
(\de \underline{h}_2)^{(n)} &=-\left(\frac{2}{r^{(n)}}-\frac{2}{r^{(n-1)}}\right)\ovla^{n,\#} -\left(\frac{2}{(r^{(n)})^2}-\frac{2}{(r^{(n-1)})^2}\right)\ovb^{n,\#}\\
&+\left(\kab^{\#_{n-1}}+\frac{2}{r^{(n)}}\right) \ovlandot + \left(\kab^{\#_{n-1}}-\kab^{\#_{n-2}}+\frac{2}{r^{(n)}}-\frac{2}{r^{(n-1)}}\right) \ovla^{n-1,\#}\\
& - \left(\kabdot^{\#_{n}}-\kabdot^{\#_{n-1}}\right) +\left(\frac{4m^{(n)}}{(r^{(n)})^2} - \frac{4m^{(n-1)}}{(r^{(n-1)})^2}\right) -\left(\frac{4m^{\#_n}}{(r^{\#_n})^2} - \frac{4m^{\#_{n-1}}}{(r^{\#_{n-1}})^2}\right)\\
& -\big( \divn -\divnmin\big)\fb^{n,\#}- \de\err_1[\Fndiez] \\
&+\left(\frac{2(r^{\#_n}-r^{(n)})^2}{r^{\#_n}(r^{(n)})^2} - \frac{2(r^{\#_{n-1}}-r^{(n-1)})^2}{r^{\#_{n-1}}(r^{(n-1)})^2} \right),
\end{split}
\eea
 \bea\lab{eq:definitionofh1nhb1nh2nhb2nh3nh4n:contraction:3}
   \bsplit
(\de h_3)^{(n)}  &= -\left(\frac{2}{(r^{(n)})^2}-\frac{2}{(r^{(n-1)})^2}\right){\ovla}^{n,\#} -\left(V^{\#_n}-\frac{2}{(r^{(n)})^2}\right)\ovlanndot\\
&-\left(V^{\#_n}-V^{\#_{n-1}}-\frac{2}{(r^{(n)})^2}+-\frac{2}{(r^{(n-1)})^2}\right){\ovla}^{n-1,\#}\\
& - \left(\mudot^{\#_{n}}-\mudot^{\#_{n-1}}\right)  +\left(\frac{2m^{(n)}}{(r^{(n)})^3} - \frac{2m^{(n-1)}}{(r^{(n-1)})^3}\right) -\left(\frac{2m^{\#_n}}{(r^{\#_n})^3} - \frac{2m^{\#_{n-1}}}{(r^{\#_{n-1}})^3}\right)\\
&+\Bigg[\left(\om +\frac 1 4 \ka \right) \left(\frac{2\Up}{r} -\frac{2\Up^{(n)}}{r^{(n)} }+  \dot{\underline{C}}_0^{(n)}+\sum_p\dot{\underline{C}}^{(n),p} \JpSn  -\kabdot \right)\\
  &-\left(\omb +\frac 1 4 \kab \right) \left(\frac{2}{r^{(n)}} -\frac{2}{r} -\kadot \right)  -\frac{1}{2r^{(n)}}\left( \dot{\underline{C}}_0^{(n)}+\sum_p\dot{\underline{C}}^{(n),p} \JpSn\right)\Bigg]^{\#_n}\\
  &-\Bigg[\left(\om +\frac 1 4 \ka \right) \left(\frac{2\Up}{r} -\frac{2\Up^{(n-1)}}{r^{(n-1)} }+  \dot{\underline{C}}_0^{(n-1)}+\sum_p\dot{\underline{C}}^{(n-1),p} \JpSn  -\kabdot \right)\\
  &-\left(\omb +\frac 1 4 \kab \right) \left(\frac{2}{r^{(n-1)}} -\frac{2}{r} -\kadot \right)  -\frac{1}{2r^{(n-1)}}\left( \dot{\underline{C}}_0^{(n-1)}+\sum_p\dot{\underline{C}}^{(n-1),p} J^{\S(n-1),p}\right)\Bigg]^{\#_{n-1}}\\
&-\big( \lapn-\lapnmin\big){\ovla}^{n,\#} +\de\err_2[ \Fndiez],\\
\end{split}
\eea
 \bea\lab{eq:definitionofh1nhb1nh2nhb2nh3nh4n:contraction:4}
   \bsplit
(\de h_4)^{(n)} &=\div^{(n-1)}\left(\frac{2m^{\#_n}}{r^{\#_n}}\fndot+\left(\frac{2m^{\#_n}}{r^{\#_n}}-\frac{2m^{\#_{n-1}}}{r^{\#_{n-1}}}\right)f^{n-1,\#}+ \de\err_1[\Fndiez]\right)\\
&+(\div^{(n-1)}-\div^{(n-2)})\left(\frac{2m^{\#_{n-1}}}{r^{\#_{n-1}}}f^{n-1,\#}+ \err_1[\Fndiez]\right)\\
& -\big( \lapn-\lapnmin\big)\ovb^{n, \#}+\frac{1}{2}\big( \divn-\divn\big)\Big(\fb^{n,\#} - f^{n,\#}\Big),\\
(\de b_0)^{(n)} &=\Big(\ov{r}^{\S(n)}-\ov{r}^{\S(n-1)}\Big)-\Big(r^{(n)}-r^{(n-1)}\Big)+\ov{{\ovb}^{n,\#}}^{(\ovS, g^{(n-1)})}-\ov{{\ovb}^{n,\#}}^{(\ovS, g^{(n)})},
\end{split}
\eea
and where $(\de\La)^{(n)}$ and $(\de\Lab)^{(n)}$ are given by
\bea\lab{eq:definitionofh1nhb1nh2nhb2nh3nh4n:contraction:5}
\bsplit
(\de\La)^{(n)} &=\int_{\ovS}\Big[(\div^{(n)}-\div^{(n-1)})f^{n,\#}\Jp da_{\gn}\Big]\\
&+\int_{\ovS}\Big[\div^{(n-1)}f^{n,\#}\Jp \big( da_{\gn}-da_{g^{(n-1)} }\big)\Big], \\
(\de\Lab)^{(n)} &=\int_{\ovS}\Big[(\div^{(n)}-\div^{(n-1)})\fb^{n,\#}\Jp da_{\gn}\Big]\\
&+\int_{\ovS}\Big[\div^{(n-1)}\fb^{n,\#}\Jp \big( da_{\gn}-da_{g^{(n-1)} }\big)\Big].
\end{split}
\eea
\end{lemma}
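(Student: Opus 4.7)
The statement is a bookkeeping result: it rewrites the difference between two consecutive iterates of the scheme \eqref{GeneralizedGCMsystem-n+1-MainThm1}--\eqref{GeneralizedGCMsystem-n+1-MainThm3} as a linear GCM system on $\ovS$ with explicit source terms. My plan is therefore to write down, side by side, the $(n+1)$-th and $n$-th equations, pull them both back to $\ovS$, and subtract. The conclusion will then be a pure reorganization of the resulting right-hand side.

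First I would apply $\#_n$ to the system \eqref{GeneralizedGCMsystem-n+1-MainThm1}--\eqref{GeneralizedGCMsystem-n+1-MainThm3}. By Lemma \ref{lemma:pullbacksofcovderivatives}, the pulled-back operators become $\curln$, $\divn$, $\lapn$, defined with respect to the pull-back metric $\gn$. Hence, for instance, $\curlSn \fnn$ becomes $\curln\fnndiez$, and similarly for the divergences and Laplacian; the right-hand side \eqref{eq:definitionofh1nhb1nh2nhb2nh3nh4n} is pulled back term by term using the definition $h_i^{(n)}\circ \Psi^{(n)}$, noting that composing with $\Psi^{(n-1)}\circ (\Psi^{(n)})^{-1}$ and then with $\Psi^{(n)}$ yields $\#_{n-1}$, which produces terms like $\err_1[\Fndiez]$, $\ka^{\#_{n-1}}\ovla^{n,\#}$, $\mu^{\#_{n-1}}$, etc. Then I would perform the same operation with index shifted by one, applying $\#_{n-1}$ to the system satisfied by $\fn, \fbn, \ovlan, \ovb^{(n)}$ on $\S(n-1)$.

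The second step is to subtract the two pulled-back systems. On the left, $\curln\fnndiez - \curln\fndiez = \curln\fnndot$ by definition \eqref{eq:def-differencesfnn-diez}, but $\curln\fndiez - \curlnmin\fndiez = (\curln-\curlnmin)f^{n,\#}$ is an operator-difference term which must be moved to the right and absorbed into $(\de h_1)^{(n)}$. The same mechanism produces the terms $(\divn - \divnmin)f^{n,\#}$, $(\lapn-\lapnmin)\ovla^{n,\#}$ and $(\lapn-\lapnmin)\ovb^{n,\#}$ appearing in \eqref{eq:definitionofh1nhb1nh2nhb2nh3nh4n:contraction:1}--\eqref{eq:definitionofh1nhb1nh2nhb2nh3nh4n:contraction:4}. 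Coefficients of the form $1/r^{(n)}$, $1/(r^{(n)})^2$, $V^{\#_n}$, $\Up^{(n)}/r^{(n)}$, $m^{(n)}/(r^{(n)})^3$ that multiply previous iterates on the left must be split as (coefficient at step $n-1$) $+$ (difference of coefficients), with the latter going to the right; this accounts for the explicit differences of Hawking masses, area radii and $V$ appearing in $(\de h_2)^{(n)}$, $(\de \underline h_2)^{(n)}$, $(\de h_3)^{(n)}$. The quadratic remainders $\err_1$ and $\err_2$ yield, upon subtraction, the contributions $\de\err_1[\Fndiez]$ and $\de\err_2[\Fndiez]$; these are purely definitional.

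The $\ell=1$ conditions $(\divn\fnndot)_{\ell=1}=(\de\La)^{(n)}$ require slightly more care: the $\ell=1$ mode on $\S(n)$ pulled back to $\ovS$ involves $\Jp$ and the area element $da_{\gn}$. Since the system at step $n+1$ imposes $(\divSn\fnn)_{\ell=1}=\La$ relative to $\JpSn$ and $da_{g^{\S(n)}}$, and the step $n$ system imposes the analogous identity relative to $J^{\S(n-1),p}$ and $da_{g^{\S(n-1)}}$, subtracting the two pulled-back identities exactly produces the expression \eqref{eq:definitionofh1nhb1nh2nhb2nh3nh4n:contraction:5}, once one uses $(\JpSn)^{\#_n}=\Jp$ from Definition \ref{def:ell=1sphharmonicsonS}. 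The equation for $(\de\ovb)^{(n+1)}$ and its average follow in the same way from the last line of \eqref{GeneralizedGCMsystem-n+1-MainThm3}, with the average discrepancy $\ov{{\ovb}^{n,\#}}^{(\ovS, g^{(n-1)})}-\ov{{\ovb}^{n,\#}}^{(\ovS, g^{(n)})}$ arising because the two systems specify the average with respect to different pull-back metrics.

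The only real obstacle is combinatorial: one must consistently split each term $A^{\#_n} X^{n+1,\#} - A^{\#_{n-1}} X^{n,\#}$ in the form $A^{\#_n}\cdot(\de X)^{(n+1)}+(A^{\#_n}-A^{\#_{n-1}})\cdot X^{n,\#}$, keep the first summand on the left when $X$ is an unknown (i.e.\ for $\ovla, f, \fb, \ovb$) and the second on the right, grouping it into the appropriate $(\de h_i)^{(n)}$. Once this is done systematically for each line of the system and for each source term in \eqref{eq:definitionofh1nhb1nh2nhb2nh3nh4n}, the identities announced in the lemma follow by inspection. No analytic estimate is needed at this stage; the estimates will enter only in Proposition \ref{Prop:contractionforNN}, where the $(\de h_i)^{(n)}$ are bounded using Proposition \ref{Prop:variouscomparisonsn}.
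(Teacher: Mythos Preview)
Your proposal is correct and follows essentially the same approach as the paper: pull back the system \eqref{GeneralizedGCMsystem-n+1-MainThm1}--\eqref{GeneralizedGCMsystem-n+1-MainThm3}, \eqref{eq:definitionofh1nhb1nh2nhb2nh3nh4n} to $\ovS$ via $\#_n$ (and the analogous system via $\#_{n-1}$), subtract, and reorganize. The paper's own proof is a single sentence stating exactly this; your write-up simply spells out the bookkeeping (operator differences, coefficient splittings, area-element discrepancies in the $\ell=1$ conditions and the average of $\ovb$) that the paper leaves implicit.
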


\begin{proof}
The proof follows by pulling back the system  \eqref{GeneralizedGCMsystem-n+1-MainThm1}-\eqref{GeneralizedGCMsystem-n+1-MainThm3}, \eqref{eq:definitionofh1nhb1nh2nhb2nh3nh4n} on $\ovS$ and then taking differences on $\ovS$ between successive iterates.
\end{proof}

 
 \subsection{Estimates for $\fnndot, \fbnndot, \ovlanndot$}
   

In view of Lemma \ref{lemma:equationssatisfiedbythesuccessivedifferences:contraction},  $(\fnndot, \fbnndot, \ovlanndot)$ satisfy the assumptions of Corollary \ref{Thm.GCMSequations-fixedS:contraction:deformationsphereversion}. As a consequence, the following  a priori estimates are verified 
\bea\lab{eq:basicestimateofdifferencesofffbovla:contraction:1}
\nn&&\|(\fnndot ,\fbnndot , \widecheck{\ovlanndot }^{\ovS,g^{(n)}})\|_{\hk_3(\ovS)} +\sum_p\Big(r^2|\Cbpnnde|+r^3|\Mpnnde|\Big)\\  
\nn&\les& r\|(\widecheck{(\de h_1)^{(n)}}^{\ovS,g^{(n)}}, \,\widecheck{(\de \underline{h}_1)^{(n)}}^{\ovS,g^{(n)}}, \,\widecheck{(\de h_2)^{(n)}}^{\ovS,g^{(n)}},\,\widecheck{(\de \underline{h}_2)^{(n)}}^{\ovS,g^{(n)}})\|_{\hk_2(\ovS)} \\
&&+r^2\|\widecheck{(\de h_3)^{(n)}}^{\ovS,g^{(n)}}\|_{\hk_1(\ovS)}+r\|\widecheck{(\de h_4)^{(n)}}^{\ovS,g^{(n)}}\|_{L^2(\ovS)} +|(\de\La)^{(n)}|+|(\de\Lab)^{(n)}|,
\eea
and
\bea\lab{eq:basicestimateofdifferencesofffbovla:contraction:2}
&&\nn r^2|\Cbnnde_0|+r^3|\Mnnde_0|+r\Big|\ov{\ovlanndot }^{\ovS,g^{(n)}}\Big|  \\
\nn&\les&  r\|(\widecheck{(\de h_1)^{(n)}}^{\ovS,g^{(n)}}, \,\widecheck{(\de \underline{h}_1)^{(n)}}^{\ovS,g^{(n)}}, \,(\de h_2)^{(n)},\,(\de \underline{h}_2)^{(n)})\|_{L^2(\ovS)} +r^2\|(\de h_3)^{(n)}\|_{L^2(\ovS)}\\
&&+r\|\widecheck{(\de h_4)^{(n)}}^{\ovS,g^{(n)}}\|_{L^2(\ovS)} +|(\de\La)^{(n)}|+|(\de\Lab)^{(n)}|+|(\de b_0)^{(n)}|.
\eea   

Next, we estimate each term in the RHS of \eqref{eq:basicestimateofdifferencesofffbovla:contraction:1} and \eqref{eq:basicestimateofdifferencesofffbovla:contraction:2}. In view of  the definition \eqref{eq:definitionofh1nhb1nh2nhb2nh3nh4n:contraction:1} of $(\de h_1)^{(n)}$ and $(\de \underline{h}_1)^{(n)}$, Proposition \ref{Prop:variouscomparisonsn}, and the uniform in $n$ bound \eqref{eq:nintetbound}, we have
\bea
\nn&& r\|(\widecheck{(\de h_1)^{(n)}}^{\ovS,g^{(n)}}, \,\widecheck{(\de \underline{h}_1)^{(n)}}^{\ovS,g^{(n)}})\|_{\hk_2(\ovS)} \\
&\les& \dg\left(\|(\fndot,\fbndot, \ovlandot)\|_{\hk_3(\ovS)}+r^{-1}\big\|( \de\Un,  \de\Sn) \big\|_{\hk_{3}(\ovS)}\right).
\eea

Next, in view of  the definition \eqref{eq:definitionofh1nhb1nh2nhb2nh3nh4n:contraction:2} of $(\de h_2)^{(n)}$ and $(\de \underline{h}_2)^{(n)}$, Proposition \ref{Prop:variouscomparisonsn}, and the uniform in $n$ bound \eqref{eq:nintetbound}, we have
\beaa
\nn&& r\|(\widecheck{(\de h_2)^{(n)}}^{\ovS,g^{(n)}}, \,\widecheck{(\de \underline{h}_2)^{(n)}}^{\ovS,g^{(n)}})\|_{\hk_2(\ovS)} \\
\nn&\les& \dg|r^{(n)}-r^{(n-1)}|+\dg|r^{(n-1)}-r^{(n-2)}|+r^{-2}\|r^{\#_n}- r^{\#_{n-1}}\|_{\hk_2(\ovS)}\\
\nn&&+r^{-1}\|m^{\#_n}- m^{\#_{n-1}}\|_{\hk_2(\ovS)}+r\|\kadot^{\#_{n}}-\kadot^{\#_{n-1}}\|_{\hk_2(\ovS)}+r\|\kabdot^{\#_{n}}-\kabdot^{\#_{n-1}}\|_{\hk_2(\ovS)}\\
&&+ \epg\left(\|(\fndot,\fbndot, \ovlandot)\|_{\hk_3(\ovS)}+r^{-1}\big\|( \de\Un,  \de\Sn) \big\|_{\hk_{3}(\ovS)}\right)
\eeaa
and
\beaa
\nn&& r\|((\de h_2)^{(n)}, \,(\de \underline{h}_2)^{(n)})\|_{L^2(\ovS)} \\
\nn&\les& |m^{(n)}-m^{(n-1)}|+(r^{-1}+\dg)|r^{(n)}-r^{(n-1)}|+\dg|r^{(n-1)}-r^{(n-2)}|+r^{-2}\|r^{\#_n}- r^{\#_{n-1}}\|_{\hk_2(\ovS)}\\
\nn&&+r^{-1}\|m^{\#_n}- m^{\#_{n-1}}\|_{\hk_2(\ovS)}+r\|\kadot^{\#_{n}}-\kadot^{\#_{n-1}}\|_{\hk_2(\ovS)}+r\|\kabdot^{\#_{n}}-\kabdot^{\#_{n-1}}\|_{\hk_2(\ovS)}\\
&&+ \epg\left(\|(\fndot,\fbndot, \ovlandot)\|_{\hk_3(\ovS)}+r^{-1}\big\|( \de\Un,  \de\Sn) \big\|_{\hk_{3}(\ovS)}\right).
\eeaa
Using Proposition \ref{Prop:variouscomparisonsn} and ${\bf A1}$, we deduce
\bea
\nn&& r\|(\widecheck{(\de h_2)^{(n)}}^{\ovS,g^{(n)}}, \,\widecheck{(\de \underline{h}_2)^{(n)}}^{\ovS,g^{(n)}})\|_{\hk_2(\ovS)} \\
&\les&  (r^{-1}+\epg)\left(\|(\fndot,\fbndot, \ovlandot)\|_{\hk_3(\ovS)}+r^{-1}\big\|( \de\Un,  \de\Sn) \big\|_{\hk_{3}(\ovS)}\right),
\eea
and
\bea
 \nn&&r\|((\de h_2)^{(n)}, \,(\de \underline{h}_2)^{(n)})\|_{L^2(\ovS)} \\
 &\les& r^{-1}\big\|( \de\Un,  \de\Sn) \big\|_{\hk_{3}(\ovS)}+ (r^{-1}+\epg)\|(\fndot,\fbndot, \ovlandot)\|_{\hk_3(\ovS)}.
\eea

Next, we estimate $(\de h_3)^{(n)}$. First, we have in view of the definition \eqref{eq:definitionofh1nhb1nh2nhb2nh3nh4n:contraction:3} of $(\de h_3)^{(n)}$, 
\beaa
   \bsplit
(\de h_3)^{(n)}  &= -\left(\frac{2}{(r^{(n)})^2}-\frac{2}{(r^{(n-1)})^2}\right){\ovla}^{n,\#} -\left(V^{\#_n}-\frac{2}{(r^{(n)})^2}\right)\ovlanndot\\
&-\left(V^{\#_n}-V^{\#_{n-1}}-\frac{2}{(r^{(n)})^2}+-\frac{2}{(r^{(n-1)})^2}\right){\ovla}^{n-1,\#}\\
& - \left(\mudot^{\#_{n}}-\mudot^{\#_{n-1}}\right)  +\left(\frac{2m^{(n)}}{(r^{(n)})^3} - \frac{2m^{(n-1)}}{(r^{(n-1)})^3}\right) -\left(\frac{2m^{\#_n}}{(r^{\#_n})^3} - \frac{2m^{\#_{n-1}}}{(r^{\#_{n-1}})^3}\right)\\
&+(\de h_3)^{(n)}_0-\big( \lapn-\lapnmin\big){\ovla}^{n,\#} +\de\err_2[ \Fndiez],
\end{split}
\eeaa
where $(\de h_3)^{(n)}_0$ is given by
\beaa
(\de h_3)^{(n)}_0 &:=&  \Bigg[\left(\om +\frac 1 4 \ka \right) \left(\frac{2\Up}{r} -\frac{2\Up^{(n)}}{r^{(n)} }+  \dot{\underline{C}}_0^{(n)}+\sum_p\dot{\underline{C}}^{(n),p} \JpSn  -\kabdot \right)\\
  &&-\left(\omb +\frac 1 4 \kab \right) \left(\frac{2}{r^{(n)}} -\frac{2}{r} -\kadot \right)  -\frac{1}{2r^{(n)}}\left( \dot{\underline{C}}_0^{(n)}+\sum_p\dot{\underline{C}}^{(n),p} \JpSn\right)\Bigg]^{\#_n}\\
  &&-\Bigg[\left(\om +\frac 1 4 \ka \right) \left(\frac{2\Up}{r} -\frac{2\Up^{(n-1)}}{r^{(n-1)} }+  \dot{\underline{C}}_0^{(n-1)}+\sum_p\dot{\underline{C}}^{(n-1),p} \JpSn  -\kabdot \right)\\
  &&-\left(\omb +\frac 1 4 \kab \right) \left(\frac{2}{r^{(n-1)}} -\frac{2}{r} -\kadot \right)  -\frac{1}{2r^{(n-1)}}\left( \dot{\underline{C}}_0^{(n-1)}+\sum_p\dot{\underline{C}}^{(n-1),p} J^{\S(n-1),p}\right)\Bigg]^{\#_{n-1}}.
\eeaa
In view of \eqref{eq:simplyficationlongexpresssioninthedeifntionofh3}, we infer
\beaa
(\de h_3)^{(n)}_0  &=&   \Bigg[\left(\frac{1}{2r} -\frac{1}{2r^{(n)}}\right)\left( \dot{\underline{C}}_0^{(n)}+\sum_p\dot{\underline{C}}^{(n),p} \JpSn\right)  +  \frac{2m^{(n)}}{r(r^{(n)})^2 }   -\frac{2m}{r^2r^{(n)}}  \\ 
 \nn && +  \left(\om +\frac 1 4 \left(\ka-\frac{2}{r}\right) \right) \left(\frac{2\Up}{r} -\frac{2\Up^{(n)}}{r^{(n)} }+  \dot{\underline{C}}_0^{(n)}+\sum_p\dot{\underline{C}}^{(n),p} \JpSn  \right)\\
  && -\left(\omb +\frac 1 4 \left(\kab+\frac{2\Up}{r}\right) \right) \left(\frac{2}{r^{(n)}} -\frac{2}{r} \right)   -\left(\om +\frac 1 4 \ka \right)\kabdot  +\left(\omb +\frac 1 4 \kab \right) \kadot \Bigg]^{\#_n}\\ 
  &&-\Bigg[  \left(\frac{1}{2r} -\frac{1}{2r^{(n-1)}}\right)\left( \dot{\underline{C}}_0^{(n-1)}+\sum_p\dot{\underline{C}}^{(n-1),p} J^{\S(n-1),p}\right)  +  \frac{2m^{(n-1)}}{r(r^{(n-1)})^2 }   -\frac{2m}{r^2r^{(n-1)}}  \\ 
 \nn && +  \left(\om +\frac 1 4 \left(\ka-\frac{2}{r}\right) \right) \left(\frac{2\Up}{r} -\frac{2\Up^{(n-1)}}{r^{(n-1)} }+  \dot{\underline{C}}_0^{(n-1)}+\sum_p\dot{\underline{C}}^{(n-1),p} J^{\S(n-1),p}  \right)\\
  && -\left(\omb +\frac 1 4 \left(\kab+\frac{2\Up}{r}\right) \right) \left(\frac{2}{r^{(n-1)}} -\frac{2}{r} \right)   -\left(\om +\frac 1 4 \ka \right)\kabdot  +\left(\omb +\frac 1 4 \kab \right) \kadot  \Bigg]^{\#_{n-1}}.
  \eeaa
Coming back to $(\de h_3)^{(n)}$, using  Proposition \ref{Prop:variouscomparisonsn} and  the uniform in $n$ bound \eqref{eq:nintetbound}, we have
\beaa
\nn&& r^2\|\widecheck{(\de h_3)^{(n)}}^{\ovS,g^{(n)}}\|_{\hk_1(\ovS)} \\
\nn&\les& (r^{-1}+\epg)\Big[|r^{(n)}-r^{(n-1)}|+|r^{(n-1)}-r^{(n-2)}|+r^{-1}\|r^{\#_n}- r^{\#_{n-1}}\|_{\hk_2(\ovS)}\Big]\\
\nn&&+r^{-1}\|m^{\#_n}- m^{\#_{n-1}}\|_{\hk_2(\ovS)}+r\|\kadot^{\#_{n}}-\kadot^{\#_{n-1}}\|_{\hk_2(\ovS)}+r\|\kabdot^{\#_{n}}-\kabdot^{\#_{n-1}}\|_{\hk_2(\ovS)}\\
&&+r^2\|\mudot^{\#_{n}}-\mudot^{\#_{n-1}}\|_{\hk_2(\ovS)}+\|\omb^{\#_{n}}-\omb^{\#_{n-1}}\|_{\hk_2(\ovS)}\\
&&+ \epg\left(\|(\fndot,\fbndot, \ovlandot)\|_{\hk_3(\ovS)}+r^{-1}\big\|( \de\Un,  \de\Sn) \big\|_{\hk_{3}(\ovS)}\right)\\
&&+r^2\epg\left(|\de\Cbndot_0|+\sum_p|\de\Cbpndot|\right)
\eeaa
and
\beaa
\nn&& r^2\|(\de h_3)^{(n)}\|_{L^2(\ovS)} \\
\nn&\les& |m^{(n)}-m^{(n-1)}| +(r^{-1}+\epg)\Big[|r^{(n)}-r^{(n-1)}|+|r^{(n-1)}-r^{(n-2)}|+r^{-1}\|r^{\#_n}- r^{\#_{n-1}}\|_{\hk_2(\ovS)}\Big]\\
\nn&&+r^{-1}\|m^{\#_n}- m^{\#_{n-1}}\|_{\hk_2(\ovS)}+r\|\kadot^{\#_{n}}-\kadot^{\#_{n-1}}\|_{\hk_2(\ovS)}+r\|\kabdot^{\#_{n}}-\kabdot^{\#_{n-1}}\|_{\hk_2(\ovS)}\\
&&+r^2\|\mudot^{\#_{n}}-\mudot^{\#_{n-1}}\|_{\hk_2(\ovS)}+\|\omb^{\#_{n}}-\omb^{\#_{n-1}}\|_{\hk_2(\ovS)}\\
&&+ \epg\left(\|(\fndot,\fbndot, \ovlandot)\|_{\hk_3(\ovS)}+r^{-1}\big\|( \de\Un,  \de\Sn) \big\|_{\hk_{3}(\ovS)}\right)\\
&&+r^2\epg\left(|\de\Cbndot_0|+\sum_p|\de\Cbpndot|\right).
\eeaa
Using Proposition \ref{Prop:variouscomparisonsn} and ${\bf A1}$, we deduce
\bea
\nn&& r^2\|\widecheck{(\de h_3)^{(n)}}^{\ovS,g^{(n)}}\|_{\hk_1(\ovS)} \\
\nn&\les&  (r^{-1}+\epg)\left(\|(\fndot,\fbndot, \ovlandot)\|_{\hk_3(\ovS)}+r^{-1}\big\|( \de\Un,  \de\Sn) \big\|_{\hk_{3}(\ovS)}\right)\\
&& +r^2\epg\left(|\de\Cbndot_0|+\sum_p|\de\Cbpndot|\right),
\eea
and
\bea
 \nn&&r^2\|(\de h_3)^{(n)}\|_{L^2(\ovS)} \\
 \nn&\les& r^{-1}\big\|( \de\Un,  \de\Sn) \big\|_{\hk_{3}(\ovS)}+ (r^{-1}+\epg)\|(\fndot,\fbndot, \ovlandot)\|_{\hk_3(\ovS)}\\
 &&+r^2\epg\left(|\de\Cbndot_0|+\sum_p|\de\Cbpndot|\right).
\eea

The estimates for $(\de h_4)^{(n)}$ and $(\de b_0)^{(n)}$, $(\de\La)^{(n)}$ and $(\de\Lab)^{(n)}$ are similar and in fact easier. We obtain for those quantities 
\bea
\nn&& r\|\widecheck{(\de h_4)^{(n)}}^{\ovS,g^{(n)}}\|_{L^2(\ovS)} +|(\de\La)^{(n)}|+|(\de\Lab)^{(n)}|\\
&\les&  (r^{-1}+\epg)\left(\|(\fndot,\fbndot, \ovlandot)\|_{\hk_3(\ovS)}+r^{-1}\big\|( \de\Un,  \de\Sn) \big\|_{\hk_{3}(\ovS)}\right),
\eea
and
\bea
  |(\de b_0)^{(n)}| \les r^{-1}\big\|( \de\Un,  \de\Sn) \big\|_{\hk_{3}(\ovS)}+ (r^{-1}+\epg)\|(\fndot,\fbndot, \ovlandot)\|_{\hk_3(\ovS)}.
\eea

Gathering the above estimates for $(\de h_1)^{(n)}$, $(\de h_2)^{(n)}$, $(\de h_3)^{(n)}$, $(\de h_4)^{(n)}$, $(\de \underline{h}_1)^{(n)}$, $(\de \underline{h}_2)^{(n)}$,  $(\de b_0)^{(n)}$, $(\de\La)^{(n)}$ and $(\de\Lab)^{(n)}$, and plugging in \eqref{eq:basicestimateofdifferencesofffbovla:contraction:1} \eqref{eq:basicestimateofdifferencesofffbovla:contraction:2}, we infer
\bea\lab{eq:basicestimateofdifferencesofffbovla:contraction:1:bis}
\nn&&\|(\fnndot ,\fbnndot , \widecheck{\ovlanndot }^{\ovS,g^{(n)}})\|_{\hk_3(\ovS)} +\sum_p\Big(r^2|\Cbpnnde|+r^3|\Mpnnde|\Big)\\  
\nn&\les& (r^{-1}+\epg)\left(\|(\fndot,\fbndot, \ovlandot)\|_{\hk_3(\ovS)}+r^{-1}\big\|( \de\Un,  \de\Sn) \big\|_{\hk_{3}(\ovS)}\right)\\
&& +r^2\epg\left(|\de\Cbndot_0|+\sum_p|\de\Cbpndot|\right),
\eea
and
\bea\lab{eq:basicestimateofdifferencesofffbovla:contraction:2:bis}
&&\nn r^2|\Cbnnde_0|+r^3|\Mnnde_0|+r\Big|\ov{\ovlanndot }^{\ovS,g^{(n)}}\Big|  \\
\nn&\les&   r^{-1}\big\|( \de\Un,  \de\Sn) \big\|_{\hk_{3}(\ovS)}+ (r^{-1}+\epg)\|(\fndot,\fbndot, \ovlandot)\|_{\hk_3(\ovS)}\\
 &&+r^2\epg\left(|\de\Cbndot_0|+\sum_p|\de\Cbpndot|\right).
\eea


  \subsection{Equations for  $\de \Unn, \de \Snn$}

  
  According to \eqref{systemUU-SS-derivedn+1} we have
 \beaa
\bsplit
\lapzero \Unn&=\divzero \Big((\UU(\fnn, \fbnn , \Ga))^{\#_{n}}\Big),\\
\lapzero \Snn&=\divzero \Big((\SS(\fnn, \fbnn , \Ga))^{\#_{n}}\Big),\\
\Unn(South)&=\Snn(South)=0.
\end{split}
\eeaa
Introducing  
\beaa
\de \Unn=\Unn-\Un, \qquad  \de \Snn= \Snn-\Sn,
\eeaa 
we find,
\beaa
\lapzero \de\Unn&=&\divzero\Bigg( \Big(\UU(\fnn, \fbnn , \Ga)\Big)^{\#_{n}}-  \Big(\UU(\fn, \fbn , \Ga)\Big)^{\#_{n-1}}\Bigg),\\
\lapzero \de\Snn&=&\divzero\Bigg( \Big(\SS(\fnn, \fbnn , \Ga)\Big)^{\#_{n}}-  \Big(\SS(\fn, \fbn , \Ga)\Big)^{\#_{n-1}}\Bigg),
\eeaa
or, setting
\beaa
\UU^{(n+1)}&=& \UU(\fnn, \fbnn , \Ga), \qquad \UU^{(n)}= \UU(\fn, \fbn , \Ga),\\
\SS^{(n+1)}&=& \SS(\fnn, \fbnn , \Ga), \qquad \SS^{(n)}= \SS(\fn, \fbn , \Ga),
\eeaa
we write,
\beaa
\lapzero \de\Unn&=& \divzero\Big(  \big(\UU^{(n+1)} \big)^{\#_{n}} -\big(\UU^{(n)}\big)^{\#_{n-1}} \Big),\\
\lapzero \de\Snn&=& \divzero\Big(  \big(\SS^{(n+1)} \big)^{\#_{n}} -\big(\SS^{(n)}\big)^{\#_{n-1}} \Big).
\eeaa
By elliptic estimates we  deduce,
\beaa
&& r^{-1}\|(\de\Unn, \de\Snn\|_{\hk_{3}(\ovS)} \\
 &\les&\Big\| \big(\UU^{(n+1)} \big)^{\#_{n}} -\big(\UU^{(n)}\big)^{\#_{n-1}} \Big\|_{\hk_{2}(\ovS)}+\Big\| \big(\SS^{(n+1)} \big)^{\#_{n}} -\big(\SS^{(n)}\big)^{\#_{n-1}} \Big\|_{\hk_{2}(\ovS)}.
\eeaa
In view of the definition of $\UU(f,\fb, \Ga)$ and $\SS(f, \fb, \Ga)$, see  \eqref{eq:defintionofmathcsalUandmathcalSforcompatibilitydeformation} \eqref{eq:defintionofmathcsalUandmathcalSforcompatibilitydeformation:defscalarsa11a12a21a22}, we infer
\bea\lab{eq:basicestimateofdifferencesofffbovla:contraction:3:bis}
 r^{-1}\|(\de\Unn, \de\Snn\|_{\hk_{3}(\ovS)} 
 &\les&\|(\fndot,\fbndot)\|_{\hk_2(\ovS)}.
\eea

We are now in position to conclude the proof of Proposition \ref{Prop:contractionforNN}. We decompose $\NN^{n,\#}$ as
 \bea\lab{eq:NNassumofNN1NN2NN3}
 \NN^{n,\#}&=&  \NN^{n,\#}_1+\NN^{n,\#}_2+\NN^{n,\#}_3,
 \eea
 where 
 \beaa
  \NN^{n,\#}_1&:=&\Big(\widecheck{\ovla^{n,\#}}^{\ovS, g^{(n)}},  f^{n,\#},  \fb^{n,\#}, \Cbpn, \Mpn\Big),\\
  \NN^{n,\#}_2&:=&\Big(\ov{\ovla^{n,\#}}^{\ovS, g^{(n)}},  \, \Cbn_0, \Mn_0\Big),\\
  \NN^{n,\#}_3&:=&\Big(U^{(n)}, S^{(n)}\Big).
 \eeaa
Then, \eqref{eq:basicestimateofdifferencesofffbovla:contraction:1:bis}, \eqref{eq:basicestimateofdifferencesofffbovla:contraction:2:bis} and \eqref{eq:basicestimateofdifferencesofffbovla:contraction:3:bis} yield
\beaa
\|\NN^{n+1,\#}_1-\NN^{n,\#}_1\|_{3,\ovS}  &\les& (r^{-1}+\epg)\|\NN^{n,\#}-\NN^{n-1,\#}\|_{3,\ovS} ,\\
\|\NN^{n+1,\#}_2-\NN^{n,\#}_2\|_{3,\ovS}  &\les& \|\NN^{n,\#}_3-\NN^{n-1,\#}_3\|_{3,\ovS} +(r^{-1}+\epg)\|\NN^{n,\#}-\NN^{n-1,\#}\|_{3,\ovS} ,\\
\|\NN^{n+1,\#}_3-\NN^{n,\#}_3\|_{3,\ovS}  &\les& \|\NN^{n,\#}_1-\NN^{n-1,\#}_1\|_{3,\ovS} .
\eeaa
In view of \eqref{eq:NNassumofNN1NN2NN3}, we infer
\beaa
\nn\|\NN^{n+1,\#}-\NN^{n,\#}\|_{3,\ovS} &\les& (r^{-1}+\epg)\Big[\|\NN^{n,\#}-\NN^{n-1,\#}\|_{3,\ovS}+\|\NN^{n-1,\#}-\NN^{n-2,\#}\|_{3,\ovS}\\
&&+\|\NN^{n-2,\#}-\NN^{n-3,\#}\|_{3,\ovS}\Big]
\eeaa
as desired. This concludes the proof of Proposition \ref{Prop:contractionforNN}.

    \end{document}